\newcommand{\eqnum}{\refstepcounter{equation}\textup{\tagform@{\theequation}}}
\makeatletter \@addtoreset{equation}{subsection} \makeatother
\renewcommand{\theequation}{\thesubsection.\arabic{equation}}
\newtheorem{thm}[equation]{Theorem}
\newtheorem*{thm*}{Theorem}
\newtheorem{lem}[equation]{Lemma}
\newtheorem{cor}[equation]{Corollary}
\newtheorem{prop}[equation]{Proposition}
\newtheorem{defthm}[equation]{Definition/Theorem}
\newtheorem{defprop}[equation]{Definition/Proposition}
\theoremstyle{definition}
\newtheorem{defn}[equation]{Definition}
\newtheorem{fakedefn}[equation]{``Definition''}
\newtheorem{rem}[equation]{Remark}
\newtheorem{remind}[equation]{Reminder}
\newtheorem{exam}[equation]{Example}
\newtheorem{exer}[equation]{Exercise}
\newtheorem{constr}[equation]{Construction}
\newtheorem{notat}[equation]{Notation}
\newtheorem{warn}[equation]{Warning}
\newtheorem{quest}[equation]{Question}
\newtheorem*{exam*}{Example}
\newcommand\arXiv[1]{\href{http://arxiv.org/abs/#1}{arXiv:#1}}
\newcommand\mathAG[1]{\href{http://arxiv.org/abs/math/#1}{math.AG/#1}}
\newcommand{\keroref}[1]{\href{https://kerodon.net/tag/#1}{#1}}
\newcommand{\kerocite}[1]{\cite[Tag~\keroref{#1}]{Kerodon}}
\newcommand{\spref}[1]{\href{https://stacks.math.columbia.edu/tag/#1}{#1}}
\newcommand{\spcite}[1]{\cite[Tag~\spref{#1}]{Stacks}}
\newcommand{\changelocaltocdepth}[1]{%
  \addtocontents{toc}{\protect\setcounter{tocdepth}{#1}}%
  \setcounter{tocdepth}{#1}}
\newcommand{\nc}{\newcommand}
\nc{\renc}{\renewcommand}
\nc{\ssec}{\subsection}
\nc{\sssec}{\subsubsection}
\nc{\on}{\operatorname}
\nc{\term}[1]{#1\xspace}
\tikzset{
  commutative diagrams/.cd,
  arrow style=tikz,
  diagrams={>=latex}}
\tikzset{
  column sep/.code=\def\pgfmatrixcolumnsep{\pgf@matrix@xscale*(#1)},
  row sep/.code   =\def\pgfmatrixrowsep{\pgf@matrix@yscale*(#1)},
  matrix xscale/.code=%
    \pgfmathsetmacro\pgf@matrix@xscale{\pgf@matrix@xscale*(#1)},
  matrix yscale/.code=%
    \pgfmathsetmacro\pgf@matrix@yscale{\pgf@matrix@yscale*(#1)},
  matrix scale/.style={/tikz/matrix xscale={#1},/tikz/matrix yscale={#1}}}
\def\pgf@matrix@xscale{1}
\def\pgf@matrix@yscale{1}
\setlist[enumerate,1]{label={(\alph*)},itemsep=\parskip}
\newlist{thmlist}{enumerate}{1}
\setlist[thmlist,1]{
  label={\em(\roman*)}, ref={(\roman*)},
  itemsep=0.5em,
  % leftmargin=0pt,
  align=right,widest=vi)}
\newlist{thmlistbis}{enumerate}{1}
\setlist[thmlistbis,1]{
  label={\em(\roman*~\textit{bis})},
  ref={(\roman*}~\textit{bis}\upshape{)},
  itemsep=0.5em,
  % leftmargin=0pt,
  align=right, widest=vi)}
\newlist{defnlist}{enumerate}{2}
\setlist[defnlist,1]{
  label={(\roman*)}, ref={(\roman*)},
  itemsep=0.5em,
  % leftmargin=0pt,
  align=right, widest=vi)}
\setlist[defnlist,2]{
  label={(\alph*)}, ref={(\alph*)},
  itemsep=0.75em,
  labelsep=0em,labelindent=0em,leftmargin=*,align=left,widest=vi),
  topsep=0.75em}
\newlist{inlinelist}{enumerate*}{1}
\setlist[inlinelist,1]{label={(\alph*)}}
\newlist{inlinedefnlist}{enumerate*}{1}
\definecolor{green}{HTML}{38550C}
\setlist[inlinedefnlist,1]{label={\color{green}(\roman*)}}
\nc{\sA}{\ensuremath{\mathcal{A}}\xspace}
\nc{\sB}{\ensuremath{\mathcal{B}}\xspace}
\nc{\sC}{\ensuremath{\mathcal{C}}\xspace}
\nc{\sD}{\ensuremath{\mathcal{D}}\xspace}
\nc{\sE}{\ensuremath{\mathcal{E}}\xspace}
\nc{\sF}{\ensuremath{\mathcal{F}}\xspace}
\nc{\sG}{\ensuremath{\mathcal{G}}\xspace}
\nc{\sH}{\ensuremath{\mathcal{H}}\xspace}
\nc{\sI}{\ensuremath{\mathcal{I}}\xspace}
\nc{\sJ}{\ensuremath{\mathcal{J}}\xspace}
\nc{\sK}{\ensuremath{\mathcal{K}}\xspace}
\nc{\sL}{\ensuremath{\mathcal{L}}\xspace}
\nc{\sM}{\ensuremath{\mathcal{M}}\xspace}
\nc{\sN}{\ensuremath{\mathcal{N}}\xspace}
\nc{\sO}{\ensuremath{\mathcal{O}}\xspace}
\nc{\sP}{\ensuremath{\mathcal{P}}\xspace}
\nc{\sQ}{\ensuremath{\mathcal{Q}}\xspace}
\nc{\sR}{\ensuremath{\mathcal{R}}\xspace}
\nc{\sS}{\ensuremath{\mathcal{S}}\xspace}
\nc{\sT}{\ensuremath{\mathcal{T}}\xspace}
\nc{\sU}{\ensuremath{\mathcal{U}}\xspace}
\nc{\sV}{\ensuremath{\mathcal{V}}\xspace}
\nc{\sW}{\ensuremath{\mathcal{W}}\xspace}
\nc{\sX}{\ensuremath{\mathcal{X}}\xspace}
\nc{\sY}{\ensuremath{\mathcal{Y}}\xspace}
\nc{\sZ}{\ensuremath{\mathcal{Z}}\xspace}
\nc{\bA}{\ensuremath{\mathbf{A}}\xspace}
\nc{\bB}{\ensuremath{\mathbf{B}}\xspace}
\nc{\bC}{\ensuremath{\mathbf{C}}\xspace}
\nc{\bD}{\ensuremath{\mathbf{D}}\xspace}
\nc{\bE}{\ensuremath{\mathbf{E}}\xspace}
\nc{\bF}{\ensuremath{\mathbf{F}}\xspace}
\nc{\bG}{\ensuremath{\mathbf{G}}\xspace}
\nc{\bH}{\ensuremath{\mathbf{H}}\xspace}
\nc{\bI}{\ensuremath{\mathbf{I}}\xspace}
\nc{\bJ}{\ensuremath{\mathbf{J}}\xspace}
\nc{\bK}{\ensuremath{\mathbf{K}}\xspace}
\nc{\bL}{\ensuremath{\mathbf{L}}\xspace}
\nc{\bM}{\ensuremath{\mathbf{M}}\xspace}
\nc{\bN}{\ensuremath{\mathbf{N}}\xspace}
\nc{\bO}{\ensuremath{\mathbf{O}}\xspace}
\nc{\bP}{\ensuremath{\mathbf{P}}\xspace}
\nc{\bQ}{\ensuremath{\mathbf{Q}}\xspace}
\nc{\bR}{\ensuremath{\mathbf{R}}\xspace}
\nc{\bS}{\ensuremath{\mathbf{S}}\xspace}
\nc{\bT}{\ensuremath{\mathbf{T}}\xspace}
\nc{\bU}{\ensuremath{\mathbf{U}}\xspace}
\nc{\bV}{\ensuremath{\mathbf{V}}\xspace}
\nc{\bW}{\ensuremath{\mathbf{W}}\xspace}
\nc{\bX}{\ensuremath{\mathbf{X}}\xspace}
\nc{\bY}{\ensuremath{\mathbf{Y}}\xspace}
\nc{\bZ}{\ensuremath{\mathbf{Z}}\xspace}
\nc{\bbA}{\ensuremath{\mathbb{A}}\xspace}
\nc{\bbB}{\ensuremath{\mathbb{B}}\xspace}
\nc{\bbC}{\ensuremath{\mathbb{C}}\xspace}
\nc{\bbD}{\ensuremath{\mathbb{D}}\xspace}
\nc{\bbE}{\ensuremath{\mathbb{E}}\xspace}
\nc{\bbF}{\ensuremath{\mathbb{F}}\xspace}
\nc{\bbG}{\ensuremath{\mathbb{G}}\xspace}
\nc{\bbH}{\ensuremath{\mathbb{H}}\xspace}
\nc{\bbI}{\ensuremath{\mathbb{I}}\xspace}
\nc{\bbJ}{\ensuremath{\mathbb{J}}\xspace}
\nc{\bbK}{\ensuremath{\mathbb{K}}\xspace}
\nc{\bbL}{\ensuremath{\mathbb{L}}\xspace}
\nc{\bbM}{\ensuremath{\mathbb{M}}\xspace}
\nc{\bbN}{\ensuremath{\mathbb{N}}\xspace}
\nc{\bbO}{\ensuremath{\mathbb{O}}\xspace}
\nc{\bbP}{\ensuremath{\mathbb{P}}\xspace}
\nc{\bbQ}{\ensuremath{\mathbb{Q}}\xspace}
\nc{\bbR}{\ensuremath{\mathbb{R}}\xspace}
\nc{\bbS}{\ensuremath{\mathbb{S}}\xspace}
\nc{\bbT}{\ensuremath{\mathbb{T}}\xspace}
\nc{\bbU}{\ensuremath{\mathbb{U}}\xspace}
\nc{\bbV}{\ensuremath{\mathbb{V}}\xspace}
\nc{\bbW}{\ensuremath{\mathbb{W}}\xspace}
\nc{\bbX}{\ensuremath{\mathbb{X}}\xspace}
\nc{\bbY}{\ensuremath{\mathbb{Y}}\xspace}
\nc{\bbZ}{\ensuremath{\mathbb{Z}}\xspace}
\nc{\mrm}[1]{\ensuremath{\mathrm{#1}}\xspace}
\nc{\mit}[1]{\ensuremath{\mathit{#1}}\xspace}
\nc{\mbf}[1]{\ensuremath{\mathbf{#1}}\xspace}
\nc{\mcal}[1]{\ensuremath{\mathcal{#1}}\xspace}
\nc{\msc}[1]{\ensuremath{\mathscr{#1}}\xspace}
\nc{\mfr}[1]{\ensuremath{\mathfrak{#1}}\xspace}
\nc{\sub}{\subseteq}
\nc{\too}{\longrightarrow}
\nc{\hook}{\hookrightarrow}
\nc{\hooklongrightarrow}{\lhook\joinrel\longrightarrow}
\nc{\hooklong}{\hooklongrightarrow}
\nc{\hooklongleftarrow}{\longleftarrow\joinrel\rhook}
\nc{\twoheadlongrightarrow}{\relbar\joinrel\twoheadrightarrow}
\nc{\longrightleftarrows}{\ \raisebox{0.3ex}{\(\mathrel{\substack{\xrightarrow{\rule{1em}{0em}} \\[-1ex] \xleftarrow{\rule{1em}{0em}}}}\)}\ }
\renc{\ge}{\geqslant}
\renc{\le}{\leqslant}
\nc{\id}{\mathrm{id}}
\DeclareMathOperator{\Hom}{\on{Hom}}
\nc{\uHom}{\underline{\smash{\Hom}}}
\DeclareMathOperator{\Maps}{\on{Maps}}
\DeclareMathOperator{\Aut}{\on{Aut}}
\DeclareMathOperator{\End}{\on{End}}
\DeclareMathOperator{\Sym}{\on{Sym}}
\nc{\uEnd}{\underline{\smash{\End}}}
\DeclareMathOperator{\codim}{\on{codim}}
\nc{\colim}{\varinjlim}
\renc{\lim}{\varprojlim}
\nc{\Cofib}{\on{Cofib}}
\nc{\Fib}{\on{Fib}}
\nc{\initial}{\varnothing}
\nc{\op}{\mathrm{op}}
\DeclareMathOperator*{\fibprod}{\times}
\renc{\setminus}{\smallsetminus}
\DeclarePairedDelimiter\abs{\lvert}{\rvert}%
\newcommand{\thmref}[1]{Theorem~\ref{#1}}
\newcommand{\secref}[1]{Sect.~\ref{#1}}
\newcommand{\lemref}[1]{Lemma~\ref{#1}}
\newcommand{\propref}[1]{Proposition~\ref{#1}}
\newcommand{\corref}[1]{Corollary~\ref{#1}}
\newcommand{\remref}[1]{Remark~\ref{#1}}
\newcommand{\defnref}[1]{Definition~\ref{#1}}
\renewcommand{\eqref}[1]{(\ref{#1})}
\newcommand{\constrref}[1]{Construction~\ref{#1}}
\newcommand{\exerref}[1]{Exercise~\ref{#1}}
\newcommand{\examref}[1]{Example~\ref{#1}}
\newcommand{\itemref}[1]{\ref{#1}}
\nc{\bDelta}{\mathbf{\Delta}}
\nc{\Set}{\mrm{Set}}
\nc{\SSet}{\mrm{SSet}}
\nc{\Fun}{\on{Fun}}
\nc{\Cat}{\mrm{Cat}}
\nc{\Catoo}{\mrm{Cat}_\infty}
\nc{\Grpdoo}{\mrm{Grpd}_\infty}
\nc{\Sing}{\on{Sing}}
\renc{\top}{\mrm{top}}
\nc{\h}{\on{h}}
\nc{\Kan}{\mrm{Kan}}
\nc{\Fin}{\mrm{Fin}}
\nc{\Ab}{\mrm{Ab}}
\nc{\Mat}{\on{Mat}}
\nc{\Mod}{\mrm{Mod}}
\nc{\pt}{\mrm{pt}}
\nc{\Anima}{\mrm{Ani}}
\nc{\Anim}{\on{Anim}}
\nc{\D}{\on{\mrm{D}}}
\nc{\Dqc}{\mrm{D}_{\mrm{qc}}}
\nc{\Dperf}{\mrm{D}_{\mrm{perf}}}
\nc{\Dcoh}{\mrm{D}_{\mrm{coh}}}
\nc{\Dpscoh}{\mrm{D}_{\mrm{pscoh}}}
\renc{\L}{\mrm{\bL}}
\nc{\R}{\mrm{\bR}}
\nc{\otimesL}{\otimes^\bL}
\nc{\Spt}{\mrm{Spt}}
\nc{\Einfty}{\sE_\infty}
\nc{\A}{\bA}
\renc{\P}{\bP}
\nc{\CRing}{\mathrm{CRing}}
\nc{\CAlg}{\mathrm{CAlg}}
\nc{\Spec}{\on{Spec}}
\nc{\pr}{\mrm{pr}}
\nc{\Grpd}{\mrm{Grpd}}
\nc{\QCoh}{\on{QCoh}}
\nc{\Coh}{\on{Coh}}
\nc{\Bun}{\on{Bun}}
\nc{\GL}{\on{GL}}
\nc{\BGL}{\mrm{BGL}}
\nc{\et}{\mathrm{\acute et}}
\nc{\Sch}{\mrm{Sch}}
\nc{\Aff}{\mrm{Aff}}
\nc{\dash}{{\textnormal{-}}}
\nc{\Shv}{\on{Shv}}
\nc{\QCohSch}{\QCoh_\Sch}
\nc{\Cart}{\on{Cart}}
\nc{\Tot}{\on{Tot}}
\nc{\Stk}{\mrm{Stk}}
\nc{\AlgStk}{\mrm{AlgStk}}
\nc{\free}{\mrm{free}}
\nc{\CAlgMod}{\mrm{CAlgMod}}
\nc{\ACRing}{\mrm{ACRing}}
\nc{\ACAlg}{\mrm{ACAlg}}
\nc{\AMod}{\mrm{AMod}}
\nc{\Poly}{\mathrm{Poly}}
\nc{\Z}{\bZ}
\nc{\Q}{\bQ}
\nc{\Der}{\on{Der}}
\nc{\LSym}{\on{LSym}}
\nc{\St}{\on{St}}
\nc{\DSch}{\mrm{DSch}}
\nc{\DStk}{\mrm{DStk}}
\nc{\cl}{\mrm{cl}}
\nc{\fibprodR}{\fibprod^\bR}
\nc{\Perf}{{\mrm{Perf}}}
\nc{\Vect}{{\mrm{Vect}}}
\nc{\MPerf}{\sM_{\mrm{Perf}}}
\nc{\univ}{{\mrm{univ}}}
\nc{\uMaps}{\underline{\smash{\on{Maps}}}}
\nc{\ev}{\mrm{ev}}
\nc{\vb}[1]{\langle{#1}\rangle}
\renc{\H}{\on{H}}
\nc{\BM}{\mrm{BM}}
\nc{\C}{\on{C}}
\nc{\Chom}{\mrm{C}_\bullet}
\nc{\Ccoh}{\mrm{C}^\bullet}
\nc{\Ccohc}{\mrm{C}_{\mrm{c}}^\bullet}
\nc{\CBM}{\mrm{C}^{\BM}_\bullet}
\renc{\sp}{\on{sp}}
\nc{\tr}{\on{tr}}
\nc{\V}{\on{\bV}}
\nc{\scr}{\term{derived commutative ring}}
\nc{\scrs}{\term{derived commutative rings}}
\nc{\inftyCat}{\term{$\infty$-category}}
\nc{\inftyCats}{\term{$\infty$-categories}}
\nc{\inftyGrpd}{\term{$\infty$-groupoid}}
\nc{\inftyGrpds}{\term{$\infty$-groupoids}}
\nc{\da}{\term{derived algebraic}}
\tikzset{
    marrow/.style={decoration={markings,mark=at position 0.6 with {\arrow{Latex}}}, postaction=decorate}
}
\title{Lectures on algebraic stacks\vspace{-2mm}}
\author{Adeel A. Khan\vspace{-1mm}}
\date{2023-10-19}
\def\l@subsection{\@tocline{2}{0pt}{4pc}{6pc}{}}
\begin{document}

% \begin{abstract}
%   These are some notes on
%   \vspace{-5mm}
% \end{abstract}

\maketitle

\renewcommand\contentsname{\vspace{-1cm}}
\tableofcontents

\setlength{\parindent}{0em}
\parskip 0.75em

\thispagestyle{empty}

%%%%%%%%%%%%%%%%%%%%%%%%%%%%%%%%%%%%%%%%%%%%%%%%%%%%%%%%%%%%%%%%%%%%%%%%%%%

\changelocaltocdepth{2}

%!TEX root = ../stacksncts.tex

\setcounter{section}{-1}
\section{Overview}
\label{sec:overview}

\begin{fakedefn}
  Given a class of geometric objects, say ``gadgets'', a \emph{moduli space} for gadgets is a space whose points correspond to gadgets, modulo some notion of equivalence between gadgets:
  \[
    \text{moduli space of gadgets}
    = \{ \text{gadgets} \}/\text{equivalence}.
  \]
\end{fakedefn}

The idea of moduli theory is to transform questions about gadgets into questions about the moduli space, which we may then try to tackle, via topological, geometric, or cohomological methods.
% (Note that every space is the moduli space of its own points, so this idea is only useful when we use a weaker notion of equivalence.)

\begin{exam}
  The complex projective $n$-space $\P^n(\bC)$ is the moduli space of lines in $\bC^{n+1}$ which pass through the origin.
  Algebraically, these are $1$-dimensional linear subspaces $L \sub \bC^{n+1}$.
  %, or equivalently $1$-dimensional linear quotients $\bC^{n+1} \twoheadrightarrow L$.
  % Alternatively, $\P^n(\bC)$ can be described as the moduli space of points of $\bC^{n+1}\setminus\{0\}$, where points are identified modulo scaling by $\lambda \in \bC^*$.
  % $1$-dimensional quotients of the $\bC$-vector space $\bC^{n+1}$.
\end{exam}

\begin{exam}
  The Grassmannian $\on{Gr}(k, V)$ is the moduli space of $k$-dimensional linear subspaces of a given vector space $V$, or equivalently, of $(k-1)$-dimensional linear subspaces of the projectivization $\P(V)$.
\end{exam}

One can then solve certain problems in enumerative geometry (e.g. ``how many lines in $\P^3(\bC)$ intersect four given general lines?'') by analyzing the cohomology of Grassmannians (see: Schubert calculus).

\begin{exam}
  Given an algebraic variety $X$, the moduli space of (algebraic) vector bundles on $X$ is the set of vector bundles on $X$ modulo isomorphism.
  More generally, given an algebraic group $G$, the moduli space of principal $G$-bundles is the set of principal $G$-bundles on $X$ modulo isomorphism.
\end{exam}

\begin{exam}
  Given an algebraic variety $X$, the moduli space of coherent sheaves on $X$ is the set of coherent sheaves on $X$ modulo isomorphism.
\end{exam}

In order to get a useful theory of moduli spaces, we will need to refine this naive picture in two ways.

\begin{thm}[Grothendieck]
  A scheme $X$ over a field $k$ is completely determined by its functor of points, i.e., the functor
  \[ X : \CAlg_k \to \Set \]
  sending a commutative $k$-algebra $A$ to the set of $A$-valued points $X(A)$.
  (Recall that an $A$-valued point of $X$ is a morphism of schemes $\Spec(A) \to X$.)
  % Similarly, a $k$-scheme (over a field $k$) is determined by the functor $\CAlg_k \to \Set$ it defines on commutative $k$-algebras.
\end{thm}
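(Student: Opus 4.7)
The plan is to prove the stronger categorical statement that the assignment $X \mapsto X(-)$ extends to a \emph{fully faithful} functor
\[ h \colon \Sch_k \to \Fun(\CAlg_k, \Set), \qquad X \mapsto \bigl(A \mapsto \Hom_{\Sch_k}(\Spec(A), X)\bigr); \]
the bare ``determined by'' claim then follows by taking isomorphism classes. By the ordinary Yoneda embedding, $\Hom_{\Sch_k}(X,Y)$ is already identified with the set of natural transformations $\Hom_{\Sch_k}(-,X) \to \Hom_{\Sch_k}(-,Y)$ of presheaves on all of $\Sch_k$. So the task reduces to showing that restriction along the equivalence $\CAlg_k \simeq \Aff_k^{\op}$ and the inclusion $\Aff_k \hook \Sch_k$ induces a bijection on such natural transformations, for arbitrary schemes $X$ and $Y$.

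To produce the inverse (gluing), I would take a natural transformation $\eta$ defined on $\CAlg_k$ and a scheme $Z$, fix an affine Zariski cover $\{U_\alpha = \Spec(A_\alpha)\}$ of $Z$, and further cover each overlap $U_\alpha \cap U_\beta$ by affines $\{\Spec(B_{\alpha\beta\gamma})\}$. Given a morphism $f \colon Z \to X$, the restrictions $f|_{U_\alpha} \in X(A_\alpha)$ are sent by $\eta_{A_\alpha}$ to morphisms $g_\alpha \colon U_\alpha \to Y$. Applying naturality of $\eta$ to the ring maps $A_\alpha \to B_{\alpha\beta\gamma}$ and $A_\beta \to B_{\alpha\beta\gamma}$ then shows that the $g_\alpha$ agree on each $\Spec(B_{\alpha\beta\gamma})$, hence on all of $U_\alpha \cap U_\beta$.

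The main obstacle is then the Zariski sheaf property: for any scheme $Y$, the presheaf $Z \mapsto \Hom_{\Sch_k}(Z,Y)$ is a sheaf in the Zariski topology, so the local pieces $g_\alpha$ glue to a unique morphism $\tilde\eta(f) \colon Z \to Y$. This is the substantive content, and rests on the classical gluing construction for morphisms of locally ringed spaces; once it is in hand, it also yields independence of the chosen cover (pass to a common refinement) and shows that restricting $\tilde\eta$ back to affine test objects recovers $\eta$ on the nose. Conversely, any morphism $Z \to Y$ of schemes is determined by its restrictions to an affine cover, again by the sheaf property. Thus restriction and gluing are mutually inverse, and $h$ is fully faithful.
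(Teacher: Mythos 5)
The paper does not prove this statement: it appears in the Overview as classical background. Morally it follows from the Yoneda lemma, the classical fact that every representable presheaf $\Hom_{\Sch}(-,Y)$ is a Zariski sheaf, and the restriction-to-a-basis equivalence of \thmref{thm:comp} (applied to the Zariski topology with basis $\Aff \sub \Sch$). Your argument is the direct, hands-on unwinding of this package and is correct in outline, with the gluing of morphisms along Zariski covers rightly identified as the substantive input.

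One verification is left implicit and should be written out for completeness: having produced $\tilde\eta(f) \colon Z \to Y$ by gluing, you still need $\tilde\eta$ to be natural against an arbitrary morphism $h \colon Z' \to Z$ of schemes, i.e.\ $\tilde\eta(f\circ h) = \tilde\eta(f)\circ h$. This is established by the same device as your compatibility check on overlaps: by the uniqueness half of the sheaf property it suffices to compare the two sides after restriction to the pieces of an affine cover of $Z'$ refining $h^{-1}$ of the chosen cover of $Z$, where the identity reduces to naturality of $\eta$ along the induced $k$-algebra maps. Independence of the chosen cover and the recovery of $\eta$ on affine test objects follow by the same pattern, as you indicate. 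With these routine checks spelled out, the argument is complete, and full faithfulness of $h$ does indeed give the stated ``determined up to isomorphism'' claim.
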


Thus we may regard a scheme as a family (or fibration) of sets $X(A)$ parametrized by commutative algebras $A$.
That is, a scheme is literally a ``scheme'' prescribing the $A$-valued points of some algebro-geometric space.

For example, in order to define complex projective $n$-space as a scheme, it is not sufficient to specify the set $\P^n(\bC)$ as above.
Instead, we must specify the sets
\[ \P^n(A) = \{ A\text{-linear surjections}~A^{n+1} \twoheadrightarrow L \mid L~\text{projective}~A\text{-module of rank }1 \}, \]
for all commutative $\bC$-algebras $A$, together with the natural maps $\P^n(A) \to \P^n(A')$ for ring homomorphisms $A \to A'$.

Secondly, we will need to be smarter about quotients.
Let $X$ be a set and $R \sub X \times X$ an equivalence relation on $X$.
We may depict this via the diagram
\[\begin{tikzcd}
  R \ar[shift left=1.75ex]{r}{\pr_1}\ar[swap,shift right=1.75ex]{r}{\pr_2}
  & X \ar[swap]{l}{s}
\end{tikzcd}\]
where $\pr_i$ are the projections and $s : X \to X\times X$ is the diagonal (which factors through $R$ since the relation is reflexive).
This diagram defines a \emph{groupoid} $[X/R]$:
\begin{itemize}
  \item The objects of $[X/R]$ are the elements of $X$.
  \item The morphisms of $[X/R]$ are the elements of $R$.
  \item The ``source'' and ``target'' maps $\on{Mor}[X/R] \to \on{Obj}[X/R]$ are given by the projections $\pr_1$ and $\pr_2$.
  \item The ``identity'' map $\on{Obj}[X/R] \to \on{Mor}[X/R]$ (sending an object to its identity morphism) is given by the diagonal $s$.
  \item Composition of morphisms is well-defined since the relation is transitive.
  \item All morphisms are invertible since the relation is symmetric.
\end{itemize}
Note that the set of connected components $\pi_0[X/R]$ (where objects of $[X/R]$ are identified if and only if they connected by some chain of morphisms) is canonically isomorphic to the usual set-theoretic quotient $X/R$.
Unlike $X/R$, the quotient groupoid $[X/R]$ remembers \emph{how} elements are identified.

Similarly, if we have a group $G$ acting on the set $X$, there is a quotient groupoid $[X/G]$ defined by the diagram
\[\begin{tikzcd}
  G\times X \ar[shift left=1.75ex]{r}{\pr_2}\ar[swap,shift right=1.75ex]{r}{\mrm{act}}
  & X \ar[swap]{l}{s}
\end{tikzcd}\]
where the ``source'' map is the projection $(g,x) \mapsto x$, the ``target'' map is the action map $(g,x) \mapsto g\cdot x$, and the ``identity'' map is $x \mapsto (e, x)$ where $e\in G$ is the neutral element.
Whereas the set-theoretic quotient $X/G$ remembers only the binary information of whether two elements $x,y \in X$ belong to the same equivalence class, the groupoid $[X/G]$ contains one isomorphism $x \simeq y$ for every $g \in G$ such that $g\cdot x = y$.

As we will see in this course, it is highly advantageous to allow moduli spaces to have \emph{groupoids} of points rather than sets.
Combining these two ideas leads one to the replace our naive definition of moduli space above by the following:

\begin{defn}
  A \emph{stack} (over a field $k$) is a functor
  \[ \sM : \CAlg_k \to \Grpd \]
  satisfying certain conditions.
\end{defn}

One of our goals will be to prove the following theorem:

\begin{thm}\label{thm:intro/Bun}
  Let $C$ be a smooth proper curve over $k$ and $G$ an algebraic group over $k$.
  Let
  $$\sM_{\Bun_G(C)} : A \mapsto \Bun_G(C_A)^\simeq$$
  be the stack defined by the functor sending a commutative algebra $A$ to the groupoid of principal $G$-bundles on the scheme $C_A := C \otimes_k A$.
  Then $\sM_{\Bun_G(C)}$ is a \emph{smooth} algebraic stack.
\end{thm}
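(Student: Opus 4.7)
The plan is to verify, in turn, that $\sM_{\Bun_G(C)}$ (i) satisfies descent (so is a stack), (ii) is algebraic (admits a smooth surjective atlas from a scheme, with representable and locally finite-type diagonal), and (iii) is smooth over $k$. Descent is the easiest step: principal $G$-bundles satisfy fpqc descent (which in turn reduces to descent for quasi-coherent sheaves carrying a $G$-action), and so $\sM_{\Bun_G(C)}$ is a stack on $\CAlg_k$.

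For algebraicity, I would first treat the diagonal. Given $G$-bundles $P, Q$ on $C_A$, the functor $A' \mapsto \mathrm{Isom}_{C_{A'}}(P|_{A'}, Q|_{A'})$ is represented by a scheme of finite type over $\Spec(A)$: this follows from representability of $\Hom$-schemes for proper flat morphisms, after carving out isomorphisms as a locally closed subscheme. To produce a smooth atlas, I would exhibit $\sM_{\Bun_G(C)}$ as a filtered union of quasi-compact open substacks of finite type, each presented as a quotient stack. When $G$ is reductive this is classical: fix a faithful embedding $G \hookrightarrow \GL_n$, reducing to $G = \GL_n$; then the substack of bundles of bounded Harder--Narasimhan polygon is presented by a Quot scheme parametrizing quotients $\sO_{C_A}^M \twoheadrightarrow \sE$ of rank $n$ and bounded degree (after twisting by a sufficiently positive line bundle), modulo the action of $\GL_M$. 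For a general algebraic group $G$ one either reduces via a Levi decomposition or invokes Artin's representability criterion directly.

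Smoothness is then a deformation-theoretic consequence. For $P \in \sM_{\Bun_G(C)}(A)$ and a square-zero extension $A' \twoheadrightarrow A$ with kernel $I$, the obstruction to lifting $P$ to a $G$-bundle on $C_{A'}$ lies in $\H^2(C_A, \mathrm{ad}(P) \otimes_A I)$, where $\mathrm{ad}(P)$ is the adjoint vector bundle; this group vanishes because $C_A$ has relative cohomological dimension $\le 1$ for quasi-coherent sheaves (as $C$ is a curve). Every infinitesimal deformation therefore exists, the atlas from (ii) is formally smooth over $k$, and combined with local finite presentation this yields smoothness in the sense of algebraic stacks.

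The main obstacle is the atlas construction in (ii): the naïve ``universal'' presentations of $\Bun_G$ are not of finite type, so nontrivial boundedness results are required (Harder--Narasimhan stratification, or uniform twisting by an ample line bundle on $C$ to force global generation) in order to realize $\sM_{\Bun_G(C)}$ as an exhaustive union of quotient stacks of finite type over $k$. Once algebraicity is in hand, smoothness is a clean corollary of the fact that $\dim C = 1$, and descent is essentially formal.
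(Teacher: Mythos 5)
Your proof is correct in outline, but it takes the \emph{classical} route to algebraicity, whereas the paper deliberately avoids that route. You build a smooth atlas via Quot schemes and bounded (Harder--Narasimhan) families after reducing to $G = \GL_n$, you represent the diagonal via $\on{Isom}$-schemes, and you prove smoothness by an obstruction computation showing that the $\H^2$ of the adjoint bundle vanishes on a curve. The paper instead defines $\sM_{\Bun_G(C)}$ as the derived mapping stack $\uMaps(C, BG)$, computes its cotangent complex in closed form as $\pi_\sharp\,\L\ev^*\L_{BG}$ (Theorem~\ref{thm:skainsmate}), and then invokes the Artin--Lurie representability criterion (Theorem~\ref{thm:torgoch}) to deduce algebraicity; smoothness falls out because, over a relative curve, the cotangent complex is perfect of Tor-amplitude $[-1,0]$ (Remark~\ref{rem:bridgepot} via Proposition~\ref{prop:caboceer}). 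The paper explicitly calls its argument a ``new proof'', and what it buys is uniformity: the same computation of $\L_{\sM_{\Bun_G(X)}}$ works for higher-dimensional $X$, where the classical truncation fails to be smooth but the derived stack remains ``homotopically smooth'' and, say, quasi-smooth when $X$ is a surface, thereby producing virtual fundamental classes. Your approach does not reach that generality.

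A few details in your argument deserve tightening. Reducing a general $G$ to $\GL_n$ requires more than choosing a faithful embedding: one must check that $BG \to \BGL_n$ is representable and quasi-affine, which rests on the quotient $\GL_n/G$ being quasi-affine (the ``Stiefel manifold'' type argument used in the paper's proof of Theorem~\ref{thm:cosmographical}). Your appeal to ``Levi decomposition'' for non-reductive $G$ is not needed if this point is handled correctly. Also, for $G$ affine the $\on{Isom}$-scheme is an \emph{open} (not merely locally closed) subscheme of the relevant Hom-scheme of affine bundles, and for the deformation-theoretic smoothness step you should note explicitly that the obstruction group $\H^2(C_A, \on{ad}(P)\otimes_A I)$ vanishes by base change and the fact that the fibers of $C_A \to \Spec(A)$ have dimension $1$; this is exactly the same vanishing that, in the paper's language, forces $\L_{\sM_{\Bun_G(C)}}$ to have Tor-amplitude $\leq 0$.
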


In particular, taking $G$ to be the general linear group $\GL_n$, we find the same holds for the moduli stack of rank $n$ vector bundles on $C$.
We also have a similar result for the moduli stack $\sM_{\Coh(C)}$ of coherent sheaves on $C$.

\begin{rem}
  It is important here to work with groupoids rather than sets; the functor sending $A$ to the set $\pi_0 \Bun_G(C)^\simeq$ of isomorphism classes of principal $G$-bundles on $C \otimes_k A$ is very poorly behaved.
\end{rem}

\begin{warn}
  Making the definition of ``stack'' above precise is more involved than for schemes, since $\Grpd$ is naturally a $2$-category (where the $2$-morphisms are natural transformations).
  In other words, in practice we only want to distinguish between groupoids up to \emph{equivalence} rather than isomorphism.
\end{warn}

One way to handle this subtlety is to use the language of $2$-categories and pseudofunctors.
In this course we will instead use the language of $\infty$-categories.
This language is much more general than that of $2$-categories, but we will see that it has some practical advantages even when all $\infty$-categories involved are ``$2$-truncated''.
Moreover, the extra generality of $\infty$-categories will also be useful to us later in the course when we study concepts like the derived category of (quasi-)coherent sheaves on a stack, and the cotangent complex of a stack.

There are several excellent textbook accounts such as \cite{HTT,Kerodon,CisinskiHCHA} focused on developing the extensive technical machinery necessary to justify the existence of the theory of $\infty$-categories.
Here we'll only give a brief and informal introduction, taking well-foundedness of the theory for granted.
We'll focus on understanding how this language is useful in the study of ``derived'' or ``homotopical'' objects that are most naturally regarded up to something weaker than isomorphism (such as quasi-isomorphism, equivalence, or weak equivalence).
Relevant examples for us will be:
\begin{itemize}
  \item the singular (co)chain complex of a topological space,
  \item the cotangent complex of a scheme or stack,
  \item the derived category of (quasi-)coherent sheaves on a scheme or stack,
  \item stacks and higher stacks.
  % \item derived schemes and derived stacks.
\end{itemize}

Indeed, it is only by working $\infty$-categorically that we are able to take advantage of the descent properties satisfied by these objects.
For example, we have:

\begin{thm}\label{thm:intro/descent}\leavevmode
  \begin{thmlist}
    \item
    The assignment $X \mapsto \Ccoh(X; \bZ)$, sending a topological space to its complex of singular cochains, is a sheaf with values in the derived \inftyCat of abelian groups.
    
    \item
    The assignment $X \mapsto \Dqc(X)$, resp. $X \mapsto \Dcoh(X)$, sending a scheme to its stable \inftyCat of quasi-coherent (resp. coherent) sheaves, is a sheaf of \inftyCats.
  \end{thmlist}
\end{thm}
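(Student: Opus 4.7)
The overall plan is to rephrase the sheaf condition as Čech descent: for a covering $\{U_i\} \to X$ with Čech nerve $U_\bullet$, our functor $F$ must send $X \simeq |U_\bullet|$ to the limit $\lim_{[n] \in \bDelta} F(U_n)$. For part (i) this colimit identification happens in the \inftyCat $\Anima$, and for part (ii) in the fpqc site of schemes. In both cases the proof then splits into two tasks: identify $X$ with the colimit of its Čech diagram in the appropriate \inftyCat, and check that the functor in question is suitably continuous.

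For part (i), I would first recall that $\Sing$ sends any open cover $\{U_i\}$ of a topological space $X$ to a colimit diagram, i.e., that the canonical map $\colim_{[n] \in \bDelta^{\op}} \Sing(U_n) \to \Sing(X)$ is an equivalence in $\Anima$. This is a strengthening of the classical nerve theorem valid for arbitrary (not just good) open covers, provable via iterated barycentric subdivision. Next, the singular chain functor $\C_\bullet(-;\bZ) \colon \Anima \to \D(\Ab)$ is left adjoint to the Eilenberg--MacLane functor and hence preserves all colimits. Applying $\R\Hom(-,\bZ) \colon \D(\Ab)^{\op} \to \D(\Ab)$, which converts colimits to limits, yields the desired Čech descent statement for $\Ccoh(-;\bZ)$.

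For part (ii), I would reduce to descent along a single faithfully flat map of affine schemes $\Spec(B) \to \Spec(A)$, since quasi-coherent and coherent sheaves on schemes already satisfy Zariski descent by construction. The task becomes showing that the canonical functor
\[ \Mod_A \too \lim_{[n] \in \bDelta} \Mod_{B^{\otimes_A (n+1)}} \]
is an equivalence of \inftyCats. I would prove this by applying the \inftyCat-al (co)monadicity theorem to the adjunction $f^* \dashv f_*$ with $f^* = B \otimes_A (-)$: faithful flatness gives conservativity of $f^*$, while the requisite cosimplicial hypothesis follows because $f_*$ preserves all limits. Comonadicity then identifies $\Mod_A$ with comodules over $f^* f_*$ in $\Mod_B$, which is precisely the limit of the Čech cosimplicial diagram. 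The $\Dcoh$ statement reduces to the $\Dqc$ case because coherence is a property local on the fpqc site.

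The hard part will be the $\infty$-categorical Barr--Beck machinery invoked in part (ii). In the classical 1-categorical Grothendieck descent one verifies the equivalence with descent data essentially by hand, and already this requires some finesse with exact sequences and the faithful flatness hypothesis; in the derived/\inftyCat-al setting one really needs the full force of Lurie's monadicity theorem (HA \S4.7) to conclude that conservativity of extension-of-scalars plus cocontinuity of restriction is enough. Once that theorem is taken for granted, both parts become short: part (i) is essentially formal given the Segal-type colimit description of $\Sing(X)$, and part (ii) is a direct invocation of comonadicity.
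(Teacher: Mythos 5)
The paper states this theorem in the overview (\secref{sec:overview}) as motivation and never returns to prove it directly, so there is no single proof to compare against; part (i) is not proved anywhere in the paper, and part (ii) is established in pieces later (Theorems \ref{thm:module descent}, \ref{thm:descent}, \ref{thm:jargonish}, then \defnref{defn:Dqc} via right Kan extension from affines).

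Your argument for part (i) --- that $\Sing$ sends $\coprod_i U_i \to X$ to a colimit diagram in anima (the ``non-good'' nerve theorem, via small chains and barycentric subdivision), that singular chains $\C_\bullet(-;\bZ)$ are a left adjoint and hence preserve colimits, and that $\R\Hom(-,\bZ)$ then converts colimits to limits --- is a standard, correct proof that the paper does not record. For part (ii) your route is the same as the paper's: reduce to a faithfully flat map of affines and invoke Lurie's Barr--Beck(--Lurie) theorem, conservativity of $f^* = B\otimes_A(-)$ being faithful flatness; the reduction of $\Dcoh$ to $\Dqc$ by fpqc-locality of coherence is also fine (in the Noetherian setting). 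The only real gap is in how you dispatch the remaining Barr--Beck hypotheses. The paper's \thmref{thm:descent criterion} requires two things beyond conservativity: a Beck--Chevalley-type commutation of the cosimplicial structure maps with the right adjoints $d^{0,R}$, which the paper verifies by the flat base change formula; and preservation by $f^*$ of totalizations of $f^*$-split cosimplicial diagrams, which for ordinary modules follows from exactness of $f^*$ together with the fact that totalizations in a $1$-category are equalizers (\propref{prop:siris}), and which needs a slightly different argument for $\D(R)$ (the paper's \thmref{thm:jargonish} says only ``similar''). Your justification ``the requisite cosimplicial hypothesis follows because $f_*$ preserves all limits'' is not the right reason for either of these: it is $f^*$, not $f_*$, that must preserve the relevant totalizations, and that does not follow from $f_*$ being a right adjoint.
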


This ability to speak about sheaves of derived objects is one of the fundamental features of $\infty$-categories and has many far-reaching consequences (of which we shall only see a small glimpse).
Note in contrast that $X \mapsto \Ccoh(X; \bZ)$ does not satisfy descent when regarded with values in the usual derived category of abelian groups, nor in the category of chain complexes.
Likewise, the descent condition fails also if we replace $X \mapsto \Dqc(X)$ by the assignment sending $X$ to the usual derived category (regarded as an ordinary category).

Another key aspect of $\infty$-category theory is a very flexible framework for deriving functors.
Recall that the usual framework of homological algebra allows us to derive functors between abelian categories.
Using the language of $\infty$-categories we may even derive constructions like the tangent bundle (or cotangent sheaf); the corresponding derived functor, the derived tangent bundle (or cotangent complex), is naturally defined on the \inftyCat of derived stacks, which is not a derived category in the traditional sense.
In fact, the $\infty$-categorical approach is useful even for deriving abelian categories like the category of quasi-coherent sheaves on a scheme or stack: by descent we may reduce to the case of affine schemes, thereby bypassing most of the technicalities in classical approaches.

We will benefit from this flexibility in Lectures 8--10 in our study of the cotangent complex.
We will see how to compute the cotangent complexes of the moduli stacks $\sM_{\Vect(X)}$, $\sM_{\Bun_G(X)}$, and $\sM_{\Coh(X)}$ when $X$ is a curve.
This will establish the smoothness asserted in \thmref{thm:intro/Bun} and will also be used in our proof of algebraicity.
In fact, these moduli stacks are algebraic even when $X$ is higher-dimensional.
We will give a new proof of this fact by showing that the \emph{derived} versions of these moduli stacks have nice cotangent complexes even over higher-dimensional $X$.
We then appeal to the Artin--Lurie representability criterion to deduce algebraicity.
Thus our main goal in this part will be the following fundamental result, refining \thmref{thm:intro/Bun} (which unfortunately does not seem to be covered in the standard textbooks \cite{SAG,HAG2,GaitsgoryRozenblyum}):

\begin{thm}
  Let $X$ be a smooth proper scheme over a field $k$.
  Let $\sM$ be the derived moduli stack $\sM_{\Vect(X)}$ of vector bundles on $X$, $\sM_{\Coh(X)}$ of coherent sheaves on $X$, or $\sM_{\Bun_G(X)}$ of principal $G$-bundles on $X$ (for an algebraic group $G$).
  Then $\sM$ is a derived algebraic stack which is ``homotopically smooth'' in the sense that its cotangent complex $\L_{\sM}$ is perfect.
  In addition, if $X$ is of dimension $\le d$, then $\L_{\sM}$ is of Tor-amplitude $\le d-1$.
  In particular, it is smooth if $X$ is a curve and \emph{quasi-smooth} if $X$ is a surface.
\end{thm}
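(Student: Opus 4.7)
The plan is to invoke the Artin--Lurie representability criterion for derived algebraic stacks. Concretely, for each of the functors $\sM \colon \CAlg_k \to \Anim$ in question I would verify: (a) $\sM$ is an fpqc sheaf; (b) $\sM$ is infinitesimally cohesive and nilcomplete; (c) the classical truncation $\sM^{\mrm{cl}}$ is locally of finite presentation; (d) $\sM$ admits a perfect global cotangent complex. Item (a) is inherited from the descent properties of $\QCoh$, of vector bundles, and of principal $G$-bundles along $X$. Algebraicity of $\sM^{\mrm{cl}}$ is \thmref{thm:intro/Bun} in the curve case, and for higher-dimensional $X$ it will be deduced as a \emph{consequence} of the representability criterion once (b) and (d) are in hand. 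The substantive work therefore lies in (b) and (d).

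For the cotangent complex, I would run the usual deformation-theoretic computation. Given a point $F \in \sM(A)$ and a connective $A$-module $M$, the fiber of $\sM(A \oplus M) \to \sM(A)$ over $F$ parametrizes square-zero extensions of $F$ by $F \otimes^{\bL}_A M$ on the derived base change $X_A$, which by standard manipulations is identified with $\Maps_{A\dash\Mod}\bigl(\R\Hom_{\sO_{X_A}}(F,F)[1],\, M\bigr)$. Hence
\[
\L_{\sM_{\Coh(X)}}|_F \;\simeq\; \R\Hom_{\sO_{X_A}}(F,F)^\vee[-1],
\]
with analogous formulas involving $\uEnd(E)$ for $\sM_{\Vect(X)}$ and the adjoint bundle $\on{ad}(P) = P \times^G \mathfrak{g}$ for $\sM_{\Bun_G(X)}$. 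The $G$-bundle case can be bootstrapped from the $\GL_n$ case via a faithful representation $G \hookrightarrow \GL_n$ together with smooth descent along $\GL_n / G$.

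Perfectness of the cotangent complex rests on two inputs: coherent sheaves on the smooth scheme $X$ are perfect complexes (smoothness implies locally finite Tor-dimension), and the derived pushforward along the proper morphism $X \to \Spec k$ preserves perfect complexes. The Tor-amplitude bound is then dimensional: on a smooth proper scheme of dimension $\le d$, the object $\R\Hom_{\sO_X}(F,F) \simeq \R\Gamma\bigl(X,\uEnd(F)\bigr)$ lives in cohomological degrees $[0,d]$, so after dualizing and shifting by $-1$ the cotangent complex sits in degrees $[1-d,\, 1]$, i.e., it is of Tor-amplitude $\le d-1$ in the convention used for stacks (where $\L_{BG}$ sits in degree $1$, smooth means $\L \in [0,1]$, and quasi-smooth means $\L \in [-1,1]$).

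The main obstacle will be verifying infinitesimal cohesiveness for $\sM_{\Coh(X)}$ when $F$ fails to be flat over $A$: this is precisely where the derived enhancement is essential, since working with pseudo-coherent complexes on the \emph{derived} fiber product $X_A = X \otimes^{\bL}_k A$, rather than with classical coherent sheaves on $X \times_{\Spec k} \Spec A$, makes base change and square-zero deformations behave uniformly and functorially. Once cohesiveness is checked by a direct spectral-sequence argument, the Artin--Lurie criterion produces $\sM$ as a derived algebraic stack and automatically yields algebraicity of the classical truncation for higher-dimensional $X$ as a corollary.
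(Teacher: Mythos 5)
Your high-level strategy is exactly that of the paper: verify the Artin--Lurie criterion, with the substantive input being a description of the cotangent complex, and then read off (homotopical) smoothness and the Tor-amplitude bound from the formula. Your final formula $\L_{\sM}|_F \simeq \R\Hom_{\sO_{X_A}}(F,F)^\vee[-1]$ is the correct one and coincides, after unwinding $f_\sharp(\sF) = (\R f_* \sF^\vee)^\vee$, with the paper's global expression $\pi_\sharp \L\ev^*(\L_{\MPerf})$.

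The organizational difference is worth noting. The paper factors the computation through a general theorem on cotangent complexes of derived mapping stacks (\thmref{thm:skainsmate}): for $X$ proper, representable, and of finite Tor-amplitude over $k$, the functor $\L\pr_2^*$ has a left adjoint $\pr_{2,\sharp}$, and then $\L_{\uMaps(X,Y)} \simeq \pr_{2,\sharp}\L\ev^*(\L_Y)$. This single statement treats $Y = \MPerf$ (with $\L_{\MPerf} = \sE^\univ \otimes \sE^{\univ,\vee}[-1]$, \thmref{thm:repope}) and $Y = BG$ (with $\L_{BG} \simeq \mfr{g}^\vee[-1]$, \examref{exam:unexpanding}) on an equal footing, so the $G$-bundle case requires no further work. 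You instead propose to compute the deformation theory pointwise on $X_A$ and then bootstrap $\sM_{\Bun_G(X)}$ from $\sM_{\Vect(X)}$ via a faithful representation $G \hookrightarrow \GL_n$ and descent along $\GL_n/G$; that works, but is genuinely extra effort compared to plugging $\L_{BG}$ into the mapping-stack formula, and it must also be checked that the pointwise expressions assemble into a perfect complex on the whole stack (cf.\ condition (ii) of \propref{prop:inexpungible}), which the global formula gives for free.

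Two small points. First, there is a slip in your intermediate identification: the fiber of $\sM(A\oplus M) \to \sM(A)$ over $F$ is $\Maps_{\D(A)}\bigl(\R\Hom_{\sO_{X_A}}(F,F)^\vee[-1],\, M\bigr)$, not $\Maps\bigl(\R\Hom_{\sO_{X_A}}(F,F)[1],\, M\bigr)$; the complex $\R\Hom(F,F)$ is not self-dual in general, so these differ, though you do write the correct conclusion for $\L_{\sM}|_F$ in the next line. Second, the claim that $\R\Hom_{\sO_X}(F,F)$ lives in cohomological degrees $[0,d]$ for a coherent sheaf $F$ on a smooth proper $X$ of dimension $d$ is correct but not immediate from the relative dimension of $\R\Gamma$ alone (the derived sheaf $\R\uHom(F,F)$ is itself spread over degrees $[0,d]$); it is cleanest to appeal either to Serre duality or to the vanishing of $\Ext^{>d}$ via finite projective dimension over regular local rings, and you should flag which you are using.
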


Let $\sM_\cl$ denote the classical moduli stack of vector bundles (resp. coherent sheaves, principal $G$-bundles) on $X$.
There is a surjective closed immersion $\sM_\cl \hook \sM$, which is an isomorphism if and only if $X$ is a curve, hence if and only if $\sM$ is smooth.
As soon as $X$ is of dimension $2$ or greater, $\sM_\cl$ is singular with unbounded cotangent complex while $\sM$ is still homotopically smooth.
In other words, the homotopical smoothness of $\sM$ is a property that can be witnessed only through the lens of derived algebraic geometry.
This phenomenon, labelled ``hidden smoothness'' by Kontsevich \cite{Kontsevich}, is the source of ``virtual'' phenomena on $\sM$.
% Note that, since algebraicity of a derived stack also implies algebraicity of its classical truncation, this efficient argument settles in one shot both the algebraicity of $\sM_\cl$ and the construction of its natural obstruction theory in the sense of Behrend--Fantechi.

We will conclude the notes with a brief introduction to the cohomological approach to (virtual) intersection theory on stacks following \cite{virtual}.
We will see how $\infty$-categorical descent allows us to work effectively with cohomology and Borel--Moore homology of stacks.
In particular, this gives a conceptual approach to Kontsevich's virtual fundamental class and its importnat properties, such as the virtual torus localization formula (with no need for the usual auxiliary technical hypotheses on global smooth embeddings or global resolutions and such).

These notes originate from a course taught at the NCTS, Taipei, in Fall 2022.
Thanks to Chun-Chung Hsieh, Hsueh-Yung Lin, Wille Liu, Justin Wu, Nawaz Sultani, and the other participants, as well as Emile Bouaziz, Peng Du, Panagiotis Jones, Antoine Labelle, Charanya Ravi, Gabriel Ribeiro, and Drimik Roy Chowdhury, for comments, questions, and corrections.
I am very grateful to the NCTS for the opportunity to give the lecture series, and for the 2021 NCTS Young Theoretical Scientist Award through which I was partially supported while giving the course.
I would also like to thank the National University of Singapore for hosting me during part of the course.
Additionally, I acknowledge support from the NSTC Grant 110-2115-M-001-016-MY3.

%!TEX root = ../stacksncts.tex

\section{\texorpdfstring{$\infty$}{Infinity}-Categories}
\label{sec:oocats}

\ssec{Simplicial sets}

  % \begin{defn}\leavevmode
  %   \begin{defnlist}
  %     \item
  %     For every integer $n\ge 0$, let $[n]$ denote the finite set $\{0,1,\ldots,n\}$.
  %     Let $\bDelta$ denote the category whose objects are the finite sets $[n]$, for all $n\ge 0$, and whose morphisms are order-preserving maps.
      
  %     \item
  %     A \emph{simplicial set} is a functor $X : \bDelta^\op \to \Set$, i.e., a contravariant functor on $\bDelta$ with values in the category of sets.
  %     In other words, a simplicial set $X$ is a sequence of sets $X_n := X([n])$ together with a collection of maps $\alpha^* : X_n \to X_m$ for all order-preserving maps $\alpha : [m] \to [n]$, subject to the identities $\id^* = \id$ and $(\beta\circ\alpha)^* = \alpha^* \circ \beta^*$ whenever $\alpha$ and $\beta$ are composable.
  %     Elements of the set $X_n$ are called \emph{$n$-simplices} of $X$.
      
  %     \item
  %     A morphism of simplicial sets is a natural transformation of the corresponding functors.
  %     The category of simplicial sets is the functor category $\SSet = \Fun(\bDelta^\op, \Set)$.
  %   \end{defnlist}
  % \end{defn}

  For every integer $n\ge 0$, let $[n]$ denote the finite set $\{0,1,\ldots,n\}$.
  Let $\bDelta$ denote the category whose objects are the finite sets $[n]$, for all $n\ge 0$, and whose morphisms are order-preserving maps.
  \begin{defn}
    A \emph{simplicial set} is a functor $X : \bDelta^\op \to \Set$, i.e., a contravariant functor on $\bDelta$ with values in the category of sets.
    A morphism of simplicial sets is a natural transformation of the corresponding functors.
    The category of simplicial sets is the functor category $\SSet = \Fun(\bDelta^\op, \Set)$.
  \end{defn}

  In other words, a simplicial set $X$ is a sequence of sets $X_n := X([n])$ together with a collection of maps $\alpha^* : X_n \to X_m$ for all order-preserving maps $\alpha : [m] \to [n]$, subject to the identities $\id^* = \id$ and $(\beta\circ\alpha)^* = \alpha^* \circ \beta^*$ whenever $\alpha$ and $\beta$ are composable.
  Elements of the set $X_n$ are called \emph{$n$-simplices} of $X$.

  \begin{exam}
    Given a set $X$, we let $c(X)$ denote the \emph{constant} simplicial set on $X$.
    We have $c(X)_n = X$ for all $n$, and every map $\alpha : [m] \to [n]$ in $\bDelta$ induces the identity map $\id : X \to X$.
    The assignment $X\mapsto c(X)$ defines a canonical functor
    \[ c : \Set \to \SSet \]
    which is fully faithful.
  \end{exam}

  \begin{notat}
    Given integers $n\ge 0$ and $0\le i\le n$, we denote by
    \[ \delta_n^i : [n-1] \to [n] \]
    the injective map that ``skips'' $i$, and by
    \[ \sigma_n^i : [n+1] \to [n] \]
    the surjective map that ``doubles'' $i$.
    Given a simplicial set $X$, the induced maps
    \[ d_n^i := X(\delta_n^i) : X_n \to X_{n-1} \]
    are called \emph{face maps} and the induced maps
    \[ s_n^i := X(\sigma_n^i) : X_n \to X_{n+1} \]
    are called \emph{degeneracy maps}.
  \end{notat}

  \begin{rem}
    To specify a simplicial set $X$, it is enough to specify the sets $X_n$ along with face and degeneracy maps satisfying certain relations (which one can read off the simplex category $\bDelta$).
    We will often depict $X$ by the diagram
    \[
      \cdots
      \rightrightrightrightarrows X_2
      \rightrightrightarrows X_1
      \rightrightarrows X_0
    \]
    where for simplicity we only draw the face maps.
  \end{rem}

  \begin{exam}
    For every $n\ge 0$, the \emph{standard $n$-simplex} is a simplicial set $\Delta^n$ whose set of $k$-simplices ($k\ge 0$) is
    \[  \Delta^n_k = \Hom_\bDelta([k], [n]). \]
    That is, an $k$-simplex of $\Delta^n$ is an increasing sequence of integers $(a_0,\ldots,a_k)$ with $0\le a_i \le a_j \le n$ for all $i\le j$.
    Given a morphism $\alpha : [j] \to [k]$, the induced map
    \[ \alpha^* : \Delta^n_k \to \Delta^n_j \]
    sends $([k] \to [n])$ to the composite $([j] \to [k] \to [n])$.
  \end{exam}

  \begin{rem}
    Let $X$ be a simplicial set.
    By the Yoneda lemma, the datum of an $n$-simplex $x \in X_n$ is the same as that of a morphism $x : \Delta^n \to X$.
  \end{rem}

  \begin{exam}
    For every $n\ge 0$ and $0\le k\le n$, let $\Delta^{n-1} \to \Delta^n$ denote the map of standard simplices induced by $\delta_n^k : [n-1] \to [n]$; on $i$-simplices it sends $([i] \to [n-1])$ to $([i] \to [n-1] \to [n])$.
    Its image is a simplicial subset
    \[ \partial^k\Delta^n \sub \Delta^n \]
    called the \emph{$k$th face} of the standard $n$-simplex.
    The union of $\partial^k\Delta^n$ over $k$ is a simplicial subset
    \[ \partial\Delta^n \sub \Delta^n \]
    called the \emph{boundary} of the standard $n$-simplex.
  \end{exam}

  \begin{exam}
    For every $n\ge 0$ and $0\le k\le n$, the union of the faces $\partial^j\Delta^n$ over $j\ne k$ is a simplicial subset
    \[ \Lambda^n_k \sub \Delta^n \]
    called the \emph{$k$th horn} of the standard $n$-simplex.
    In other words, $\Lambda^n_k$ is the boundary $\partial\Delta^n$ minus the $k$th face $\partial^k\Delta^n$.
  \end{exam}

\ssec{Categories as simplicial sets}

  \begin{constr}
    Let $\sC$ be a category.
    The \emph{nerve of $\sC$} is a simplicial set $N(\sC)$ defined as follows.
    For every $n\ge 0$, we set
    \[ N(\sC)_n := \Fun([n], \sC). \]
    For every $\alpha : [m] \to [n]$ in $\bDelta$, $\alpha^* : N(\sC)_n \to N(\sC)_m$ is given by composition: $([n] \to \sC) \mapsto ([m] \to [n] \to \sC)$.
  \end{constr}

  \begin{rem}
    In other words, $n$-simplices of $N(\sC)$ are strings
    \[ c_0 \to c_1 \to \cdots \to c_n \]
    of morphisms in $\sC$ (where $c_i$ are objects of $\sC$).
    For example, $0$-simplices are objects of $\sC$, $1$-simplices are morphisms of $\sC$, $2$-simplices are diagrams $c_0 \to c_1 \to c_2$ in $\sC$, and so on.
    Informally speaking, the simplicial set $N(\sC)$ contains all the information about the category $\sC$.
  \end{rem}

  \begin{exer}\label{exer:nerve}
    The assignment $\sC \mapsto N(\sC)$ determines a fully faithful functor $N : \Cat \to \SSet$ from the category of categories to the category of simplicial sets.
    Moreover, it admits a left adjoint $\tau : \SSet \to \Cat$ (hint: left Kan extension along $\Delta \to \Cat$).
  \end{exer}

  \exerref{exer:nerve} means that we can think of categories as ``special'' simplicial sets, or conversely of simplicial sets as ``generalized'' categories.
  The second point of view leads to the question: to what extent do simplicial sets admit a category theory?

  \begin{defn}
    Let $X$ be a simplicial set.
    \begin{defnlist}
      \item
      An \emph{object} $x$ of $X$ is a $0$-simplex $x \in X_0$, or by Yoneda, a morphism $x : \Delta^0 \to X$.

      \item
      A \emph{morphism} $f$ in $X$ is a $1$-simplex $f \in X_1$, or by Yoneda, a morphism $f : \Delta^1 \to X$.

      \item
      The \emph{source} (resp. \emph{target}) of a morphism $f$ in $X$ is the image of the corresponding $1$-simplex $f \in X_1$ along the face map $d_1^1 : X_1 \to X_0$ (resp. $d_1^0 : X_1 \to X_0$).
      Equivalently, in terms of the corresponding morphism $f : \Delta^1 \to X$, these are the composites
      \[ \Delta^0 \to \Delta^1 \xrightarrow{f} X \]
      with the inclusions of the two faces of the standard $1$-simplex.

      \item
      For an object $x$ of $X$, the \emph{identity morphism} $\id_x \in X_1$ is the image of the $0$-simplex $x \in X_0$ by the degeneracy map $s_0^0 : X_0 \to X_1$.
      Equivalently, in terms of the corresponding morphism $x : X^0 \to X$, it is the composite
      \[ \Delta^1 \to \Delta^0 \xrightarrow{x} X \]
      with the (unique) morphism $\Delta^1 \to \Delta^0$.
    \end{defnlist}
  \end{defn}

  \begin{notat}
    We write $s := d_1^1 : X_1 \to X_0$ and $t := d_1^0 : X_1 \to X_0$ for the source and target maps, respectively.
    Given two objects $x,y \in X_0$, we use the shorthand $f : x \to y$ to indicate that $f \in X_1$ is a $1$-simplex with source $x = s(f)$ and target $y = t(f)$.
  \end{notat}

  A defining feature of morphisms in category theory is that they are \emph{composable}.
  So to justify the above definitions we need to understand how composition should work in this context.

  \begin{rem}
    Consider the horn $\Lambda^2_1$, which we may depict as ``two $\Delta^1$'s attached at a $\Delta^0$'':
    \[ \begin{tikzcd}
      & 1 \ar[dash, marrow]{rd} &
      \\
      0 \ar[dash, marrow]{ru}
      &
      & 2
    \end{tikzcd} \]
    A more precise way of putting this is that there is a cartesian and cocartesian square of simplicial sets
    \[ \begin{tikzcd}
      \Delta^0 \ar[hookrightarrow]{r}\ar[hookrightarrow]{d}
      & \Delta^1 \ar[hookrightarrow]{d}
      \\
      \Delta^1 \ar[hookrightarrow]{r}
      & \Lambda^2_1.
    \end{tikzcd} \]
  \end{rem}

  \begin{defn}
    Let $X$ be a simplicial set.
    \begin{defnlist}
      \item
      A \emph{composable pair of morphisms} in $X$ is a morphism $\Lambda^2_1 \to X$.
      This is the same data as that of two morphisms $f$ and $g$ in $X$ such that $t(f) = s(g)$.

      \item
      A \emph{composition} of a composable pair $\sigma : \Lambda^2_1 \to X$ is a $2$-simplex $\widetilde{\sigma}$ extending $\sigma$.
      That is, it is a morphism $\widetilde{\sigma} : \Delta^2 \to X$ such that $\widetilde{\sigma}|_{\Lambda^2_1} = \sigma$.

      \item
      More generally, given a morphism $\sigma : \Lambda^n_k \to X$, where $n\ge 2$ and $0<k<n$, a \emph{composition} of $\sigma$ is an extension $\widetilde{\sigma} : \Delta^n \to X$.
    \end{defnlist}
  \end{defn}

  The question now becomes: in which simplicial sets do compositions exist (uniquely)?
  If we include the uniqueness requirement, it turns out that this exactly characterizes nerves of categories.

  \begin{prop}[Grothendieck--Segal]\label{prop:nerve}
    Let $X$ be a simplicial set.
    Then the following two conditions are equivalent:
    \begin{thmlist}
      \item
      $X$ belongs to the essential image of the fully faithful functor $N : \Cat \to \SSet$ (see \exerref{exer:nerve}).

      \item
      For every $n\ge 2$ and every $0<k<n$, the map
      \[ \Hom(\Delta^n, X) \to \Hom(\Lambda^n_k, X) \]
      given by restriction along the inclusion $\Lambda^n_k \sub \Delta^n$, is bijective.
    \end{thmlist}
  \end{prop}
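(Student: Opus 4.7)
The plan is to prove the equivalence $(i) \Leftrightarrow (ii)$ directly, exploiting the description of an $n$-simplex of $N(\sC)$ as a string $c_0 \to c_1 \to \cdots \to c_n$ of $n$ composable morphisms. For $(i) \Rightarrow (ii)$, suppose $X \cong N(\sC)$. The key combinatorial observation is that for $n \ge 2$ and $0 < k < n$, every ``spinal'' edge $\{i, i+1\} \sub \Delta^n$ already lies in $\Lambda^n_k$: such an edge is contained in any face $\partial^l\Delta^n$ with $l \notin \{i, i+1\}$, and for $n \ge 2$ and $0 < k < n$ such an $l$ distinct from $k$ always exists. Consequently a morphism $\Lambda^n_k \to N(\sC)$ already encodes a unique string of $n$ composable morphisms in $\sC$, which extends uniquely to a full $n$-simplex of $N(\sC)$.

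For $(ii) \Rightarrow (i)$, I would build a category $\sC$ directly from $X$. Take $\mathrm{Ob}(\sC) := X_0$ and $\Hom_\sC(x, y) := \{f \in X_1 \mid s(f) = x,\ t(f) = y\}$, with identities $\id_x := s_0^0(x)$. For composition, a composable pair $(f, g)$ amounts to a horn $\Lambda^2_1 \to X$, which by hypothesis extends uniquely to a $2$-simplex $\sigma \in X_2$; set $g \circ f := d_2^1(\sigma)$. The identity laws follow from inspecting the three faces of the degenerate $2$-simplices $s_1^0(f)$ and $s_1^1(f)$, which force $f \circ \id_{s(f)} = f$ and $\id_{t(f)} \circ f = f$ respectively. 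Associativity for a triple $(f, g, h)$ of composable morphisms is proved by assembling the three $2$-simplices computing $g \circ f$, $h \circ g$, and $(h \circ g) \circ f$ into an inner horn $\Lambda^3_1 \to X$; the unique filler is a $3$-simplex whose remaining face is a $2$-simplex witnessing $h \circ (g \circ f) = (h \circ g) \circ f$.

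To conclude I would exhibit an isomorphism $X \xrightarrow{\sim} N(\sC)$ via the ``spine'' map sending an $n$-simplex to its sequence of consecutive edges. I expect the main obstacle to be establishing bijectivity of this map in all dimensions $n \ge 2$, rather than the construction of $\sC$ itself (which is essentially forced by the unique-filler hypothesis). The cleanest way to organize this is via the adjunction $\tau \dashv N$ of \exerref{exer:nerve}: since $N$ is fully faithful, the counit $\tau N \to \id$ is an isomorphism, and it suffices to verify that under hypothesis (ii) the unit $X \to N(\tau X)$ is an isomorphism. This reduces to an induction on simplicial degree, where at each stage the unique-filler property is used to reconstruct an $n$-simplex of $X$ from its spine and to check that it matches the corresponding $n$-simplex of $N(\tau X)$; the uniqueness half of (ii) is precisely what makes the induction close.
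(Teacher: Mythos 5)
The paper states this proposition without a proof, so there is nothing of the paper's to compare against; your sketch follows the standard argument and its outline is sound, but there is one real gap in the forward direction.

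In (i) $\Rightarrow$ (ii), the observation that the spine of $\Delta^n$ lies in $\Lambda^n_k$ for $0 < k < n$ immediately gives \emph{injectivity} of the restriction map (two $n$-simplices of $N(\sC)$ agreeing on $\Lambda^n_k$ agree on the spine, hence coincide), but it does not by itself give \emph{surjectivity}. Given a horn $u : \Lambda^n_k \to N(\sC)$, you extract a string from the spine and thus an $n$-simplex $\sigma$, but you still have to show $\sigma|_{\Lambda^n_k} = u$. The obstruction is on the inner faces: for $0 < j < n$ with $j \neq k$, the spine of $\partial^j\Delta^n$ contains the ``diagonal'' edge $\{j-1, j+1\}$, which is not an edge of the spine of $\Delta^n$, so you cannot yet conclude that $u$ and $\sigma$ agree on that face. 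The fix is a \emph{second} combinatorial observation parallel to yours: the $2$-simplex $\{j-1, j, j+1\}$ is also contained in $\Lambda^n_k$ (some $l \notin \{j-1, j, j+1, k\}$ exists because $0 < j, k < n$ and $j \neq k$), so $u$ restricted to it is a commutative triangle in $\sC$, forcing $u$ on $\{j-1, j+1\}$ to be the composite --- which is exactly what $\sigma$ gives. As written, ``extends uniquely to a full $n$-simplex'' asserts this rather than proving it, and conflates uniqueness (which your spine lemma does give) with existence (which it does not).

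For (ii) $\Rightarrow$ (i), the construction of $\sC$, the unit laws via $s_1^0(f)$ and $s_1^1(f)$, and the associativity check via a $\Lambda^3_1$-filler are all correct; in particular the three prescribed faces really do glue to a well-defined horn, since the shared edges $\{0,1\}$, $\{1,2\}$, $\{1,3\}$ agree by inspection. You have correctly flagged that the remaining work is showing $X \to N(\sC)$ is bijective in degrees $\geq 2$, and your proposed reduction to the unit of $\tau \dashv N$ being an isomorphism, followed by induction on simplicial degree using inner-horn restrictions on both sides (the left by hypothesis (ii), the right by the already-established (i) $\Rightarrow$ (ii)), is the right strategy; a finished proof would need to actually carry out that induction, but this is acknowledged incompleteness rather than an error.
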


\ssec{Groupoids and Kan complexes}

  In the previous subsection we attempted to set up a ``category theory'' for simplicial sets, but just ended up recovering usual category theory.
  Things start to get more interesting if we relax the \emph{uniqueness} condition on composition.
  Let's first play with this idea in the context of groupoids (categories in which all morphisms are invertible).

  \begin{rem}
    % A category $\sC$ is called a \emph{groupoid} if every morphism in $\sC$ is invertible (an isomorphism).
    There is a variant of \propref{prop:nerve} which characterizes \emph{groupoids} $\sC$ by the bijectivity of the restriction map
    \[ \Hom(\Delta^n, X) \to \Hom(\Lambda^n_k, X) \]
    for all $n\ge 2$ and $0\le k\le n$.
    The edge cases $k=0$ and $k=n$ correspond to invertibility of morphisms (rather than composition).
  \end{rem}

  % Another source of examples of simplicial sets comes from topological spaces: the \emph{singular simplicial set} of a topological space $X$ is a Kan complex.

  \begin{defn}
    A simplicial set X is called a \emph{Kan complex} if
    \[ \Hom(\Delta^n, X) \to \Hom(\Lambda^n_k, X) \]
    is \emph{surjective} for all $n\ne 0$ and $0\le k\le n$.
    For example, for a category $\sC$, $N(\sC)$ is a Kan complex if and only if $\sC$ is a groupoid.
  \end{defn}

  Kan complexes (named after Daniel Kan) are like generalized groupoids where compositions (and inverses) exist, but not uniquely.
  This weak composition still turns out to yield surprisingly good behaviour, at least up to homotopy:

  \begin{thm}[Milnor]\label{thm:Milnor}
    There is an equivalence between the homotopy category\footnote{%
      The \emph{homotopy category} of CW complexes is the categorical localization (in the sense of Gabriel--Zisman, see \cite[Chap.~I]{GabrielZisman}) with respect to weak homotopy equivalences.
      This is analogous to the derived category (which is a categorical localization with respect to quasi-isomorphisms).
      The homotopy category of Kan complexes is defined similarly; for the definition of homotopy groups and weak homotopy equivalences of Kan complexes, see \cite[Chap.~VI, \S 3]{GabrielZisman}.
      See \cite[Chap.~VII, \S 3]{GabrielZisman} for a proof of \thmref{thm:Milnor}.
    } of CW complexes and that of Kan complexes, given by the construction $X \mapsto \Sing(X)_\bullet$ sending a CW complex to its singular simplicial set, whose $n$-simplices are continuous maps $\Delta^n_\top \to X$ (with $\Delta^n_\top$ the \emph{topological} standard $n$-simplex).
  \end{thm}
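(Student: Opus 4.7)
The plan is to construct a left adjoint $\lvert\cdot\rvert \colon \SSet \to \mathrm{Top}$ to $\Sing$, namely the geometric realization, and then show that both the unit and the counit of this adjunction become isomorphisms after inverting weak homotopy equivalences on each side. I would define $\lvert K\rvert$ as the coend of $\Delta^n_\top$ against $K_n$: concretely, the CW complex with one $n$-cell per nondegenerate $n$-simplex of $K$, glued along face maps and identified along degeneracies. Using the identifications $K_n \simeq \Hom_\SSet(\Delta^n,K)$ and $\Sing(X)_n = \Hom_{\mathrm{Top}}(\Delta^n_\top,X)$, a routine coend manipulation produces the adjunction $\lvert\cdot\rvert \dashv \Sing$.

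Next, one checks that $\Sing(X)$ is automatically a Kan complex for every space $X$: by adjunction, a horn $\Lambda^n_k \to \Sing(X)$ is the same data as a continuous map $\lvert\Lambda^n_k\rvert \to X$, and an elementary projection from a point of $\Delta^n_\top$ on the ray opposite the $k$th face exhibits $\lvert\Lambda^n_k\rvert$ as a strong deformation retract of $\Delta^n_\top$, so any such continuous map extends. Thus $\Sing$ does factor through $\Kan$, and the adjunction above restricts appropriately.

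The core of the argument is then a pair of weak-equivalence claims. First, the counit $\lvert\Sing(X)\rvert \to X$ is a weak homotopy equivalence for every CW complex $X$; I would prove this by induction on cells, using that both $\lvert\Sing(-)\rvert$ and the identity take cell attachments to pushouts along (acyclic) cofibrations and that both sides agree up to homotopy on the building blocks $\Delta^n_\top$, which are contractible. Second, the unit $K \to \Sing(\lvert K\rvert)$ is a weak equivalence for every Kan complex $K$; I would deduce this by matching the combinatorial simplicial homotopy groups of $K$ (definable because of the Kan extension property, as $n$-simplices with degenerate boundary at a basepoint, modulo simplicial homotopy) with the topological homotopy groups of $\lvert K\rvert$.

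The hardest step is this last matching of homotopy groups, because one must translate an arbitrary continuous map $S^n \to \lvert K\rvert$ into combinatorial data in $K$. The classical approach uses simplicial approximation with respect to the canonical CW structure on $\lvert K\rvert$: after sufficiently many barycentric subdivisions, the continuous map is homotopic to a simplicial map out of a triangulation of $S^n$, and then the Kan extension property is invoked to rectify this datum into a genuine simplicial-set morphism out of $\partial\Delta^{n+1}$ representing a class in the combinatorial $\pi_n$. Once both the unit on Kan complexes and the counit on CW complexes are known to be weak equivalences, passing to the localizations of $\Kan$ and of CW complexes at their respective weak equivalences yields mutually inverse functors $\Sing$ and $\lvert\cdot\rvert$, which is the asserted equivalence of homotopy categories.
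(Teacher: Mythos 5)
The paper does not prove this theorem; it simply refers to Gabriel--Zisman \cite[Ch.~VII, \S 3]{GabrielZisman}, where the argument is organized around the machinery of anodyne extensions and Kan fibrations (a proto--model-categorical formalism) rather than a direct verification of unit and counit. Your plan to run the $|\cdot| \dashv \Sing$ adjunction directly and check that the unit and counit become weak equivalences is a perfectly legitimate alternative and is what one finds, for instance, in May's treatment.

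There is, however, a genuine gap in your third step. You propose to show $|\Sing(X)| \to X$ is a weak equivalence by induction on cells, ``using that both $|\Sing(-)|$ and the identity take cell attachments to pushouts.'' But $\Sing$ is a \emph{right} adjoint and does not preserve pushouts: for a cell attachment $X' \cup_{S^{n-1}} D^n$, the canonical map $\Sing(X') \cup_{\Sing(S^{n-1})} \Sing(D^n) \to \Sing(X' \cup_{S^{n-1}} D^n)$ is not an isomorphism, and showing that it is at least a weak equivalence requires an excision-type argument that is itself a substantial portion of what you are trying to prove. The standard fixes are either (i) to prove directly that $\pi_n^{\mathrm{simp}}(\Sing X, x) \to \pi_n(X, x)$ is an isomorphism for \emph{every} topological space $X$ (an $n$-simplex of $\Sing X$ with degenerate boundary at $x$ is literally a based map $S^n \to X$, and likewise for homotopies), bypassing cell induction entirely; or (ii) to prove the unit statement first and then deduce the counit from the triangle identity together with the fact that $\Sing$ reflects weak equivalences, which follows from (i). Your remaining steps --- the adjunction, the retraction showing $\Sing(X)$ is Kan, and the simplicial-approximation argument for the unit on Kan complexes --- are sound (with the usual care that classical simplicial approximation is stated for simplicial complexes, so one must pass through compactness and barycentric subdivision of the \emph{domain}).
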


  In view of \thmref{thm:Milnor} we can think of objects ($0$-simplices) of a Kan complex as points in a space, and of morphisms as paths between points.
  Through this equivalence, we see that up to homotopy, composition in a Kan complex is not that bad: for example, composites not only exist but are unique at least \emph{up to homotopy}.
  This is encouraging.

\ssec{\texorpdfstring{$\infty$}{Infinity}-Categories as weak Kan complexes}

  Building on what we have seen so far, our next hope to isolate a class of simplicial sets where composition is well-behaved up to homotopy, but where not all morphisms are required to be invertible.
  \[\begin{tikzcd}
    \text{groupoid} \ar{r}\ar{d}
    & \text{category} \ar{d}
    \\
    \text{Kan complex} \ar{r}
    & ?
  \end{tikzcd}\]

  \begin{defn}[Boardman--Vogt]
    A simplicial set $X$ is called a \emph{weak Kan complex} (a.k.a. \emph{quasi-category}) if the restriction map
    \[ \Hom(\Delta^n, X) \to \Hom(\Lambda^n_k, X) \]
    is \emph{surjective} for all $n\ge 2$ and $0<k<n$.
  \end{defn}

  \begin{constr}
    Given a weak Kan complex $X$ and two morphisms $f,g : x \to y$ in $X$, a \emph{homotopy} $f \sim g$ is a $2$-simplex $\sigma : \Delta^2 \to X$ of the form
    \[ \begin{tikzcd}
      & y \ar{rd}{\id} & \\
      x \ar{ru}{f}\ar{rr}{g}
      & & y.
    \end{tikzcd} \]
    This defines an equivalence relation on the set of morphisms $x \to y$.
    The \emph{homotopy category} $\h(X)$ is the category whose set of objects is $X_0$ and, for $x,y \in X_0$, the set $\Hom_{\h(X)}(x,y)$ is the set of equivalence classes of morphisms $x \to y$.
    Since $X$ is a weak Kan complex, we can compose such equivalence classes and check that this gives a well-defined category $\h(X)$.
    Moreover, one can prove that $\h(X) \simeq \tau(X)$ (where $\tau$ is as in \exerref{exer:nerve}).
  \end{constr}

  \begin{defn}
    Let $X$ be a weak Kan complex.
    A morphism $f : x \to y$ is an \emph{isomorphism} if it is invertible, i.e., if there exists a morphism $g : y \to x$ and homotopies $f\circ g \sim \id_y$ and $g\circ f\sim \id_x$.
    One can prove that a morphism is an isomorphism if and only if it induces an isomorphism in $\h(X)$.
    We say that $X$ is an \emph{$\infty$-groupoid} if every morphism in $X$ is an isomorphism.
  \end{defn}

  \begin{thm}[Joyal]
    Let $X$ be a weak Kan complex.
    Then $X$ is an $\infty$-groupoid if and only if $X$ is a Kan complex.
  \end{thm}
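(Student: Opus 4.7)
Both implications reduce to filling outer horns $\Lambda^n_0$ and $\Lambda^n_n$ for $n \ge 1$, since inner horns fill in either case by the weak Kan hypothesis. For the direction Kan $\Rightarrow$ $\infty$-groupoid, given a morphism $f : x \to y$ I form the horn $\Lambda^2_0 \to X$ whose edge $[0,1]$ is $f$ and whose edge $[0,2]$ is $\id_x$. Filling it by the Kan property produces a $2$-simplex whose remaining edge $g : y \to x$ satisfies $g \circ f \sim \id_x$ in $\h(X)$. A symmetric filler of an appropriate $\Lambda^2_2$-horn provides a right inverse of $f$ in $\h(X)$, so $[f]$ is invertible in $\h(X)$; hence $f$ is an isomorphism in $X$ by the characterization stated immediately before the theorem.

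For the converse, assume $X$ is an $\infty$-groupoid. The case $n = 1$ is immediate: a horn $\Lambda^1_0 \to X$ is just a vertex $x$, and the degenerate $1$-simplex $\id_x$ extends it (dually for $\Lambda^1_1$). The cases $n \ge 2$ all reduce to the following \emph{special outer horn lemma} of Joyal: in any weak Kan complex, a horn $\sigma : \Lambda^n_0 \to X$ extends to $\Delta^n$ provided its initial edge $\sigma|_{\Delta^{\{0,1\}}}$ is an isomorphism in $X$, and dually for $\Lambda^n_n$ and the terminal edge. Granting this, every such edge is an isomorphism by assumption, so the hypothesis is automatic and the Kan condition follows.

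The main obstacle is proving the special outer horn lemma, whose strategy is a dimension-by-dimension construction using only inner horn fillers together with a chosen inverse $f^{-1}$ to the initial edge $f$. For $n = 2$, with outer horn given by $f : x \to y$ and $h : x \to z$, I pick $f^{-1} : y \to x$ together with a $2$-simplex $\tau$ witnessing $f^{-1} \circ f \sim \id_x$; fill an inner $\Lambda^2_1$-horn on vertices $y, x, z$ with edges $f^{-1}, h$ to obtain a candidate composite $g := h \circ f^{-1} : y \to z$ along with its composition $2$-simplex $\tau'$; then assemble $\tau$, $\tau'$, and a suitable degeneracy into an inner $\Lambda^3_2$-horn on the vertex sequence $x, y, x, z$, whose filler has $\partial^2$-face equal to the desired outer filler with edges $f, h, g$. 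For general $n \ge 3$ one proceeds by induction, building a higher-dimensional auxiliary simplex one inner-horn filler at a time and extracting the required $n$-simplex as a single face; every step uses only inner horn filling plus the chosen inverse $f^{-1}$, so the argument goes through in any weak Kan complex in which the initial edge is invertible.
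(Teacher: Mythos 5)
The paper states this theorem with attribution to Joyal but gives no proof, so I can only evaluate your argument on its own. Your reduction is right: the content is exactly the filling of outer horns, the easy direction (Kan $\Rightarrow$ $\infty$-groupoid) is as you say, and the converse is precisely Joyal's special outer horn lemma. Your $n = 2$ verification is fully correct and carefully checked: the three faces $d^0 = \tau'$ (on $(y,x,z)$), $d^1 = s_0(h)$ (on $(x,x,z)$), $d^3 = \tau$ (on $(x,y,x)$) of the $\Lambda^3_2$-horn on $(x,y,x,z)$ agree pairwise on overlapping edges, the horn is inner, and the $d^2$-face of any filler does extend the original $\Lambda^2_0$-horn.

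The gap is the case $n \ge 3$, which is where essentially all the content of the theorem lives. The sentence "one proceeds by induction, building a higher-dimensional auxiliary simplex one inner-horn filler at a time and extracting the required $n$-simplex as a single face" is a fair description of the shape of Joyal's argument, but it is not a proof. You would need to exhibit a specific auxiliary simplicial set together with a filtration of it in which each stage is obtained from the previous one by attaching a simplex along an inner horn, check that each attaching horn is consistently determined by the data built so far (starting from the given $\Lambda^n_0 \to X$ plus a chosen inverse and witnessing $2$-simplex for the initial edge), and finally verify that a designated face of the result recovers the desired extension. This bookkeeping is genuinely delicate, does not follow from the $n=2$ case by any naive induction, and must be written out; a complete treatment appears in \cite{CisinskiHCHA}. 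Until that is supplied, the hard direction of the theorem remains unestablished.
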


  \begin{rem}\label{rem:adf-8h}
    We can think of weak Kan complexes as those simplicial sets which admit compositions up to coherent homotopy.
    Moreover, after the extensive work of André Joyal and Jacob Lurie, weak Kan complexes do admit a full ``category theory'':
    \begin{defnlist}
      \item
      Given weak Kan complexes $X$ and $Y$, the internal hom $\uHom(X, Y)$ behaves like a functor category $\Fun(X, Y)$.
      Recall that the $n$-simplices of $\uHom(X, Y)$ are maps $\Delta^n\times X\to Y$.

      \item
      Given a weak Kan complex $X$ and objects $x,y$ of $X$, there is a Kan complex $\Maps_X(x,y)$ of maps $x \to y$, defined by the cartesian square
      \[\begin{tikzcd}
        \Maps_X(x,y) \ar{r}\ar{d}
        & \uHom(\Delta^1,X) \ar{d}{(s,t)}
        \\
        \Delta^0 \ar{r}{(x,y)}
        & X \times X.
      \end{tikzcd}\]
      This is a replacement for the Hom-set $\Hom(x,y)$.
    \end{defnlist}
  \end{rem}

  This justifies the following definition:

  \begin{defn}
    An \emph{\inftyCat} is a weak Kan complex.
  \end{defn}

  \begin{rem}
    The only difference between the two terms is that \emph{weak Kan complex} refers to a specific model (``shadow'') of the platonic notion of \emph{\inftyCat}; similarly for Kan complexes vs. $\infty$-groupoids.
    In accordance with this point of view, we will simply write $\sC$ instead of $N(\sC)$ when we want to think of an ordinary category $\sC$ as an \inftyCat (as opposed to a weak Kan complex).
    Similarly, we will use letters like $\sC$ and $\sD$ instead of $X$ and $Y$ when we want to think of them as \inftyCats, and we will write $\Fun(\sC, \sD)$ instead of $\uHom(X,Y)$.
  \end{rem}

\ssec{The \texorpdfstring{$\infty$}{infinity}-category of (\texorpdfstring{$\infty$}{infinity}-)groupoids}

  Recall that for a category $\sC$ we have the weak Kan complex $N(\sC)$ in which an $n$-simplex is determined by the following data:
  \begin{itemize}
    \item objects $C_i \in \sC$ for $0\le i\le n$;
    \item morphisms $f_{i,j} : C_i \to C_j$ for $0\le i< j\le n$;
  \end{itemize}
  satisfying the relations $f_{j,k} \circ f_{i,j} = f_{i,k}$ for all $0\le i< j< k\le n$.

  \begin{constr}
    Let $\sC$ be a $2$-category.
    The \emph{nerve} (or \emph{Duskin nerve}) of $\sC$ is a simplicial set $N^D(\sC)$.
    An $n$-simplex of $N^D(\sC)$ is determined by the following data:
    \begin{itemize}
      \item objects $C_i \in \sC$ for $0\le i\le n$;
      \item morphisms $f_{i,j} : C_i \to C_j$ for $0\le i< j\le n$;
      \item $2$-morphisms $f_{j,k} \circ f_{i,j} \Rightarrow f_{i,k}$ for all $0\le i< j< k\le n$;
    \end{itemize}
    This data is required to satisfy certain compatibility relations involving $\mu_{i,j,k}$, $\mu_{i,j,l}$, $\mu_{i,k,l}$, and $\mu_{j,k,l}$ (for $0\le i<j<k<l \le n$).
  \end{constr}

  Informally speaking, the difference between $N^D(\sC)$ and the nerve of the underlying $1$-category (where we discard the $2$-morphisms) is that the diagrams
  \[\begin{tikzcd}
    C_i \ar{r}{f_{i,j}}\ar[bend right=50,swap]{rr}{f_{i,k}}
    & C_j \ar{r}{f_{j,k}}
    & C_k
  \end{tikzcd}\]
  only commute up to the specified natural transformation $\mu_{i,j,k}$ (which need not be invertible).

  \begin{thm}[Duskin]
    Let $\sC$ be a $2$-category.
    The following conditions are equivalent:
    \begin{thmlist}
      \item
      $\sC$ is a $(2,1)$-category; i.e., the $2$-morphisms of $\sC$ are all invertible.

      \item
      $N^D(\sC)$ is a weak Kan complex.
    \end{thmlist}
  \end{thm}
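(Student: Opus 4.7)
The plan is to prove the equivalence by unwinding the inner horn extension property dimension by dimension. Recall that an $n$-simplex of $N^D(\sC)$ is specified by objects $C_i$, 1-morphisms $f_{i,j}$ for $i<j$, and 2-morphisms $\mu_{i,j,k}: f_{j,k} \circ f_{i,j} \Rightarrow f_{i,k}$ for $i<j<k$, subject to one tetrahedral identity per 4-tuple $i<j<k<l$; in the case of the 4-tuple $\{0<1<2<3\}$ this reads
\[ \mu_{0,1,3} \circ (\mu_{1,2,3} \cdot f_{0,1}) \;=\; \mu_{0,2,3} \circ (f_{2,3} \cdot \mu_{0,1,2}), \]
where $\cdot$ denotes horizontal whiskering.

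For the direction \emph{(i)} $\Rightarrow$ \emph{(ii)}, I would dispatch $\Lambda^2_1$ trivially by taking $f_{0,2} := f_{1,2} \circ f_{0,1}$ and $\mu_{0,1,2} := \id$. The essential case is $n = 3$: given an inner horn $\Lambda^3_k \to N^D(\sC)$ with $k \in \{1,2\}$, the three given faces supply three of the four $\mu$'s and all six 1-morphisms, and the displayed tetrahedral identity becomes a single equation for the missing $\mu$. Its ``coefficient'' (either $f_{2,3} \cdot \mu_{0,1,2}$ or $\mu_{1,2,3} \cdot f_{0,1}$, depending on $k$) is a horizontal whiskering of an invertible 2-morphism, hence itself invertible in the $(2,1)$-setting, so composing with its inverse uniquely determines the missing $\mu$. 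For $n \ge 4$ every triple is contained in at least $n - 2 \ge 2$ of the given faces, so all $\mu_{i,j,k}$ are determined, and every tetrahedral identity is witnessed on at least one given face, with the unique possible exception when $n = 4$ of the identity indexed by the 4-tuple $\{0,\ldots,\hat{k},\ldots,4\}$; this exceptional identity follows from the other four via a pentagon-style diagram chase in which invertibility of the remaining $\mu$'s is again used.

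For the direction \emph{(ii)} $\Rightarrow$ \emph{(i)}, it suffices to produce a left inverse of every 2-morphism, since then a standard argument (if $\beta \circ \alpha = \id_f$ and $\gamma \circ \beta = \id_g$, then $\alpha = (\gamma \circ \beta) \circ \alpha = \gamma \circ (\beta \circ \alpha) = \gamma$, hence $\alpha \circ \beta = \gamma \circ \beta = \id_g$) upgrades one-sided to two-sided inverses. Given $\alpha: f \Rightarrow g$ with $f, g : A \to B$, I would form the inner horn $\sigma: \Lambda^3_1 \to N^D(\sC)$ with $C_0 = A$, $C_1 = C_2 = C_3 = B$, 1-morphisms
\[ f_{0,1} = f_{0,3} = f, \qquad f_{0,2} = g, \qquad f_{1,2} = f_{1,3} = f_{2,3} = \id_B, \]
and 2-morphisms $\mu_{0,1,2} = \alpha$, $\mu_{0,1,3} = \id_f$, $\mu_{1,2,3} = \id$ (all read off via the left unitor $\id_B \circ h \simeq h$). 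After checking that these agree on the shared 1-edges of the three faces of $\Lambda^3_1$, the weak Kan condition produces a filler, and in particular a 2-morphism $\mu_{0,2,3}: g \Rightarrow f$; the displayed tetrahedral identity then collapses, after threading the unitors, to $\mu_{0,2,3} \circ \alpha = \id_f$, exhibiting $\mu_{0,2,3}$ as a left inverse of $\alpha$.

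The main obstacle I anticipate is the coherence bookkeeping in $\sC$: the tetrahedral equation lives in a definite Hom-category, and several implicit unit identifications must be inserted correctly to reduce it to the on-the-nose relation $\mu_{0,2,3} \circ \alpha = \id_f$. The $n \ge 4$ case of the forward direction is combinatorially heavier but conceptually routine; the bulk of the remaining work is in organizing the five tetrahedral identities associated to a 5-tuple of vertices and verifying that invertibility of the 2-morphisms involved makes any one of them follow from the other four.
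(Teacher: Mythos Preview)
The paper does not prove this theorem; it is stated and attributed to Duskin without argument. So there is no ``paper's own proof'' to compare against, and I will assess your plan on its merits.

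Your approach is the standard one and is essentially correct. A small slip: your counting ``every triple is contained in at least $n-2 \ge 2$ of the given faces'' is off. A triple $\{i,j,l\} \subseteq \{0,\ldots,n\}$ lies in exactly $n-2$ codimension-one faces (those $\partial^m$ with $m \notin \{i,j,l\}$), and if $k \notin \{i,j,l\}$ one of these is the missing face, leaving only $n-3$. For $n \ge 4$ this is still $\ge 1$, so your conclusion that all $\mu$'s are determined survives. Similarly, for $n \ge 5$ every $4$-tuple sits in at least $n-4 \ge 1$ given faces, so all tetrahedral identities are already witnessed and the filler is unique and automatic; only the case $n=4$ genuinely needs the pentagon argument you flag.

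That $n=4$ pentagon computation is the one real piece of content in the forward direction, and it does require the invertibility hypothesis (one must cancel a whiskered $\mu$ to deduce the missing identity from the four given ones). It is routine but worth writing out carefully, since it is the step where (i) is actually used beyond dimension $3$. Your treatment of the converse via the $\Lambda^3_1$ horn is clean and correct, including the reduction from left inverses to two-sided inverses.
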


  \begin{notat}
    Let $\sC$ be a category, resp. $(2,1)$-category.
    We will write simply $\sC$ for the \inftyCat whose underlying weak Kan complex is $N(\sC)$, resp. $N^D(\sC)$.
  \end{notat}

  \begin{exam}
    Groupoids naturally form a $2$-category whose $2$-morphisms are natural transformations.
    We denote by $\Grpd$ the $(2,1)$-category where we discard the non-invertible $2$-morphisms.
    We also use the same notation for the associated \inftyCat (whose underlying weak Kan complex is $N^D(\Grpd)$).
  \end{exam}

  \begin{constr}\label{constr:Kan}
    Consider the (large) simplicial set $\Kan_\bullet$ in which an $n$-simplex is given by the following data:
    \begin{itemize}
      \item Kan complexes $K_i$ for $0\le i\le n$.
      \item Maps of Kan complexes $f_{i,j} : K_i \to K_j$ for $0\le i\le j\le n$.
      \item A system of ``coherent homotopies'' up to which $f_{i,j}$ are compatible under composition.
    \end{itemize}
    For example, a $2$-simplex is a tuple $(X,Y,Z,f,g,h,\sigma)$, where $X$, $Y$ and $Z$ are Kan complexes, $f : X \to Y$, $g : Y \to Z$, $h : X \to Z$ are maps, and $\sigma \in \uHom(X, Z)_1$ with $d_1^0(\sigma) = g\circ f$ and $d_1^1(\sigma) = h$.
    Here $\sigma$ is a ``homotopy'' $g\circ f \simeq h$.
    The term ``coherent'' is a shorthand which indicates that not only do we have such homotopies, we are also given higher homotopies between these homotopies (starting from $n\ge 3$), even higher homotopies between those homotopies, and so on.
  \end{constr}

  \begin{rem}
    The simplicial set $\Kan_\bullet$ is a weak Kan complex.
    In fact, it is an instance of a general construction called the \emph{homotopy coherent nerve} which takes a simplicially enriched category (in this case, that of Kan complexes) as input and yields a weak Kan complex as output.
    See \kerocite{00KS}.
  \end{rem}

  \begin{rem}\label{rem:07pgsu}
    There is a map of weak Kan complexes $N(\Set) \to \Kan_\bullet$ that sends $X \mapsto c(X)$ on $0$-simplices.
    As a functor of \inftyCats, it is fully faithful with essential image spanned by Kan complexes $X$ that are homotopy equivalent to a constant simplicial set, or equivalently, which satisfy $\pi_i(X) = 0$ for all $i>0$.
    % We refer to these as \emph{discrete}.
  \end{rem}

  % The weak Kan complex $\Kan_\bullet$ plays a pivotal role in $\infty$-category theory.
  Later on, we will see how the \inftyCat corresponding to the weak Kan complex $\Kan_\bullet$ is a very fundamental object called the \inftyCat of \emph{anima}.
  In practice, we will work with this \inftyCat by manipulating its universal properties and deliberately avoid any considerations involving the simplices of $\Kan_\bullet$.

  \begin{notat}
    We write $\Grpdoo$ for the \inftyCat corresponding to the weak Kan complex $\Kan_\bullet$ (cf.\ \remref{rem:adf-8h}).
    By \thmref{thm:Milnor}, $\Grpdoo$ may be regarded as the \inftyCat of \inftyGrpds.
  \end{notat}

%!TEX root = ../stacksncts.tex

\section{Sheaves and stacks}
\label{sec:sheaves}

\ssec{Sheaves}

  Let $\sC$ be a site, i.e., a category equipped with a Grothendieck topology $\tau$.
  Roughly speaking, $\tau$ amounts to a notion of \emph{covering sieves} for every object in the category $\sC$.
  For simplicity, we will assume that $\tau$ arises from the following construction.

  \begin{constr}\label{constr:topology}
    Assume that $\sC$ admits fibred products and finite coproducts, and satisfies the following conditions:
    \begin{defnlist}
      \item
      For any finite collection of objects $X_i \in \sC$, any morphism $f : \coprod_i X_i \to Y$, and any morphism $Y' \to Y$, the canonical morphism $\coprod_i X_i \fibprod_Y Y' \to (\coprod_i X_i)\fibprod_Y Y'$ is invertible.

      \item
      Coproducts are disjoint: for any pair of objects $X$ and $Y$ in $\sC$, the fibred product $X \fibprod_{X\coprod Y} Y$ is an initial object in $\sC$.
    \end{defnlist}
    Let $S$ be a collection of morphisms in $\sC$, which contains all isomorphisms and is stable under composition, and satisfies the following conditions:
    \begin{defnlist}
      \item
      For every morphism $f : X \to Y$ in $S$, the base change $f' : X \fibprod_Y Y' \to Y'$ along any morphism $Y' \to Y$ in $\sC$ belongs to $S$.

      \item
      For every finite collection of morphisms $(f_i : X_i \to Y_i)_i$ in $S$, the induced morphism $\coprod_i X_i \to \coprod_i Y_i$ belongs to $S$.
    \end{defnlist}
    Then there is a Grothendieck topology $\tau$ on $\sC$ where a sieve on an object $X \in \sC$ is covering if and only if it contains a finite collection of morphisms $\{X_i \to X\}_i$ such that the induced morphism $\coprod_i X_i \to X$ belongs to $S$.
    We refer to this as the Grothendieck topology \emph{generated by} the collection $S$.
    See \cite[Prop.~A.3.2.1]{SAG}.
  \end{constr}

  \begin{exam}
    Let $X$ be a topological space.
    Then the category $\sU(X)$ of opens $U \sub X$ (where there is a morphism $U \to V$ if and only if $U \sub V$) admits a Grothendieck topology generated by surjections.
    In particular, a sieve on $U \sub X$ is covering if and only if it contains a finite collection of morphisms $(U_i \hook U)_i$ such that $U = \bigcup_i U_i$.
  \end{exam}

  \begin{exam}
    Let $X$ be a scheme.
    The \emph{small étale site} $X_\et$ is the category of étale morphisms $U \to X$ (where $U$ is a scheme), with the Grothendieck topology generated by surjections (equivalently, faithfully flat morphisms).
  \end{exam}

  \begin{exam}
    Let $X$ be a scheme.
    The \emph{big étale site} is the category $\Sch_{/X}$ of (arbitrary) morphisms $Y \to X$ (where $Y$ is a scheme), with the Grothendieck topology generated by étale surjections (equivalently, faithfully flat and étale morphisms).
    (We also have the étale topology on the category $\Sch$, which is the special case where $X=\Spec(\bZ)$.)
  \end{exam}

  \begin{defn}
    Let $\sV$ be an \inftyCat.
    Let $F$ be a diagram in $\sV$ indexed by an \inftyCat $I$, i.e., a functor of \inftyCats $F : I \to \sC$.
    Suppose given an object $V \in \sV$ and a natural transformation $\alpha : V_\mrm{cst} \to F$ where $V_\mrm{cst}$ denotes the constant diagram $(i \in I) \mapsto (V \in \sV)$.
    We say that the pair $(V, \alpha)$ in $\sV$ \emph{exhibits $V$ as the limit of $F$} if for every object $V' \in \sV$ the induced functor of mapping \inftyGrpds
    \[ \Maps_\sV(V', V) \to \Maps_{\Fun(I, \sV)}(V'_\mrm{cst}, F), \]
    sending $(V' \to V) \mapsto (V'_\mrm{cst} \to V_\mrm{cst} \to F)$, is invertible.
    In this case, we will write
    \[ V \simeq \lim_{i\in I} F_i := \lim(F). \]
  \end{defn}

  \begin{defn}
    Let $\sV$ be an \inftyCat.
    A \emph{presheaf} on $\sC$ with values in $\sV$ is a functor $F : \sC^\op \to \sV$.
    A presheaf $F$ is a \emph{sheaf} if it satisfies the following conditions:
    \begin{defnlist}
      \item $F$ sends finite coproducts in $\sC$ to products in $\sV$.
      In other words, for every finite collection $(X_i)_i$ of objects of $\sC$, the canonical morphism
      \[ F(\coprod_i X_i) \to \prod_i F(X_i) \]
      is invertible.

      \item For every morphism $f : U \to X$ in $S$, let $U_\bullet$ denote the \v{C}ech nerve of $f$, i.e., the simplicial object
      \[
        \cdots
        \rightrightrightrightarrows U \fibprod_U U \fibprod_X U
        \rightrightrightarrows U \fibprod_X U
        \rightrightarrows U
      \]
      whose $n$th term is the $(n+1)$-fold fibre power $U \fibprod_X \cdots \fibprod_X U$.
      Then the canonical map
      \[ F(X) \to \lim_{[n]\in\bDelta} F(U_n) \]
      is invertible.
      In other words, the diagram
      \[
        F(X)
        \to F(U)
        \rightrightarrows F(U \fibprod_X U)
        \rightrightrightarrows F(U \fibprod_X U \fibprod_X U)
        \rightrightrightrightarrows \cdots
      \]
      exhibits $F(X)$ as the limit of $F(U_\bullet)$.
    \end{defnlist}
  \end{defn}

  \begin{notat}
    We denote by $\Shv(\sC; \sV)$ the full subcategory of $\Fun(\sC^\op, \sV)$ spanned by sheaves.
  \end{notat}

  When $\sV$ is a $1$-category (resp. $(2,1)$-category), this is equivalent to the usual sheaf condition:

  \begin{prop}\label{prop:siris}
    Let $V^\bullet : \bDelta \to \sV$ be a cosimplicial diagram in an \inftyCat $\sV$.
    If $\sV$ is equivalent to a $1$-category, then the limit of $V^\bullet$ is identified with the equalizer of $V^0\rightrightarrows V^1$:
    \[ \lim_{[n]\in\bDelta} V^n \simeq \lim(V^0 \rightrightarrows V^1). \]
    Similarly, if $\sV$ is equivalent to a $(2,1)$-category, then it is identified with the $2$-limit:
    \[ \lim_{[n]\in\bDelta} V^n \simeq \on{2\dash\lim}(V^0 \rightrightarrows V^1 \rightrightrightarrows V^2). \]
  \end{prop}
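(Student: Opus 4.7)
The plan is to verify the universal property of the limit directly in each case, by observing that a cone over $V^\bullet$ is determined, at the appropriate categorical level, by its restriction to the low-dimensional part of $\bDelta$.

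For the \emph{1-categorical case}, a cone with apex $V' \in \sV$ is a family of maps $\phi_n : V' \to V^n$ satisfying $V^\alpha \circ \phi_n = \phi_m$ for every $\alpha : [n] \to [m]$ in $\bDelta$. Applied to $\delta^0, \delta^1 : [0] \to [1]$ this yields $d^0\phi_0 = \phi_1 = d^1\phi_0$, so $\phi_0$ lies in the equalizer and determines $\phi_1$. Conversely, given $\phi : V' \to V^0$ with $d^0\phi = d^1\phi$, I would define $\phi_n := V^\iota \circ \phi$ for an arbitrary $\iota : [0] \to [n]$. The key observation is that any two maps $\beta, \beta' : [0] \to [m]$ fit into a commuting triangle with a single $\gamma : [1] \to [m]$ via $\beta = \gamma\delta^1$ and $\beta' = \gamma\delta^0$ (swapping if needed), so the equalizer hypothesis forces $V^\beta\phi = V^{\beta'}\phi$. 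This makes $\phi_n$ independent of $\iota$, and the same argument applied to $\alpha\iota : [0] \to [m]$ versus $\iota' : [0] \to [m]$ yields the full cone identity $V^\alpha\phi_n = \phi_m$.

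For the \emph{$(2,1)$-categorical case}, a cone additionally carries compatibility 2-cells $\gamma_\alpha : V^\alpha \circ \phi_n \Rightarrow \phi_m$ for each $\alpha : [n] \to [m]$, subject to normalization $\gamma_{\mathrm{id}} = \mathrm{id}$ and the pasting rule $\gamma_{\beta\alpha} = \gamma_\beta \cdot (V^\beta \ast \gamma_\alpha)$. I would show that the restriction functor from $\bDelta$-cones to cones on $V^0 \rightrightarrows V^1 \rightrightrightarrows V^2$ is an equivalence of $(2,1)$-groupoids. Given the restricted data at $[0], [1], [2]$ together with the low-dimensional 2-cells, I would reconstruct $\phi_n$ and $\gamma_\alpha$ for $n \ge 3$ by choosing factorizations of each $[k] \to [n]$ through $\bDelta_{\le 2}$ and pasting the supplied 2-cells. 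Independence of the chosen factorization is the content of the $[2]$-piece of the 2-limit condition.

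The main obstacle is the higher coherence in the $(2,1)$-case: one must verify that any two factorizations of a given $\alpha : [n] \to [m]$ through $\bDelta_{\le 2}$ produce equal 2-cells. This reduces to a pentagon-style argument on $3$-simplices of $\bDelta$, where two parallel chains $[0] \to [i_1] \to [i_2] \to [m]$ with $i_1, i_2 \in \{1, 2\}$ must yield the same pasted 2-cell. This compatibility is precisely the 2-cocycle condition encoded in the 2-limit of the $\bDelta_{\le 2}$-truncation, so no constraint beyond the stated one arises, and the desired equivalence follows.
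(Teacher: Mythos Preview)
Your approach is correct but genuinely different from the paper's. The paper does not give a hands-on cone-reconstruction argument; instead it observes (in the remark following the proposition) that the inclusion $\bDelta_{\le n} \hook \bDelta$ is \emph{$n$-final}: for every $[m]\in\bDelta$ the comma category $\bDelta_{\le n} \fibprod_{\bDelta} \bDelta_{/[m]}$ has $n$-connected nerve, and a variant of Quillen's Theorem A then guarantees that limits taken in an $(n,1)$-category are unaffected by restricting along this inclusion. A further, easy finality argument disposes of the degeneracy $[1]\to[0]$ to reach the equalizer in the $n=1$ case.

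Your direct argument has the virtue of being self-contained and elementary in the $1$-categorical case, and it makes the mechanism transparent: your observation that any two vertices of $[m]$ are joined by an edge is exactly the statement that the relevant comma category is connected. In the $(2,1)$-case, however, your sketch of the coherence verification (``pentagon-style argument on $3$-simplices'') is precisely where the paper's finality language earns its keep: rather than chase pastings by hand, one checks once that the comma categories are simply connected, and the coherence follows formally. The paper's approach also scales uniformly to $(n,1)$-categories for all $n$, whereas yours would require an increasingly delicate inductive coherence argument at each level.
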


  \begin{rem}
    More generally, say $\sV$ is an \emph{$(n,1)$-category} if all mapping \inftyGrpds $\Maps_\sV(V, V')$ are $(n-1)$-truncated (have trivial higher homotopy groups $\pi_i\Maps_\sV(V,V') = 0$ for $i\ge n$).
    In this case the limit of $V^\bullet$ is isomorphic to the limit of the restriction $V|_{\bDelta_{\le n}}$ to the full subcategory of $\bDelta$ spanned by the objects $[0], [1], \ldots, [n]$.
    (This follows from a variant of Quillen's Theorem A, because the inclusion $\bDelta_{\le n} \hook \bDelta$ is an \emph{$n$-final} functor, i.e. the category $\bDelta_{\le n} \fibprod_{\bDelta} \bDelta_{/[m]}$ has $n$-connected nerve for every $[m] \in \bDelta$.)
    Note also that a limit over $\bDelta_{\le 1}$ is (by an easy finality argument) isomorphic to the limit over the subcategory where the morphism $[1] \to [0]$ is discarded; i.e., it is the equalizer of the two parallel arrows $V^0 \twoheadrightarrow V^1$.
  \end{rem}

  \begin{defn}
    A \emph{stack} is a sheaf of groupoids on the category of schemes (with the étale topology).
  \end{defn}

  In other words, a stack $\sX$ is a functor
  \[ \sX : \Sch^\op \to \Grpd \]
  to the $(2,1)$-category of groupoids such that for every finite collection of étale morphisms $(U_i \to U)_i$ which is jointly surjective, the diagram
  \[
    \sX(U)
    \to \prod_i \sX(U_i)
    \rightrightarrows \prod_{i,j} \sX(U_i\fibprod_U U_j)
    \rightrightrightarrows \prod_{i,j,k} \sX(U_i\fibprod_U U_j\fibprod_U U_k)
  \]
  is a limit diagram in the $(2,1)$-category of groupoids.

\ssec{Bases of topologies}

  Denote by $\Aff \sub \Sch$ the full subcategory spanned by affine schemes.
  Note that the étale topology on $\Sch$ restricts to $\Aff$, and is still generated in the sense of \constrref{constr:topology} by étale surjections.

  \begin{thm}\label{thm:commissionship}
    Let $\sV$ be an \inftyCat admitting limits.
    The canonical functor
    \[ \Fun(\Sch^\op, \sV) \to \Fun(\Aff^\op, \sV), \]
    given by restriction along the inclusion $\Aff \sub \Sch$, restricts to an equivalence of \inftyCats
    \[ \Shv(\Sch; \sV) \to \Shv(\Aff; \sV). \]
  \end{thm}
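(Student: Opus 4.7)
The plan is to construct an explicit inverse to the restriction functor via right Kan extension, and then verify the two compatibilities. Denote by $\iota : \Aff \hook \Sch$ the inclusion, and let $\iota_* : \Fun(\Aff^\op, \sV) \to \Fun(\Sch^\op, \sV)$ be the right Kan extension along $\iota^\op$. Concretely, for $F : \Aff^\op \to \sV$ and $X \in \Sch$,
\[ (\iota_* F)(X) \simeq \lim_{(\Spec R \to X) \in (\Aff_{/X})^\op} F(\Spec R). \]
Since $\Aff_{/X}$ has the terminal object $\id_X$ when $X$ is affine, we automatically get $(\iota_* F)|_\Aff \simeq F$, so the restriction functor has $\iota_*$ as a candidate section.

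To prove that restriction is \emph{fully faithful} on sheaves, I would show that any sheaf $G \in \Shv(\Sch; \sV)$ satisfies $G(X) \simeq \lim_{\Spec R \to X} G(\Spec R)$ for all $X \in \Sch$. For this, pick a Zariski cover $\{\Spec A_i \to X\}$ by affines, let $U = \coprod_i \Spec A_i$, and observe that the \v{C}ech nerve $U_\bullet$ of $U \to X$ has all terms lying in $\Aff$ (since $\Aff$ is closed under fibred products and finite coproducts, the latter because $\Spec A \sqcup \Spec B = \Spec(A \times B)$). Descent of $G$ along the étale cover $U \to X$ therefore expresses $G(X)$ as a limit of values of $G$ on affines, and a standard cofinality argument (the category of affines equipped with a map to $X$ factoring through some $U_n$ is cofinal in $\Aff_{/X}$) identifies this with the Kan extension formula. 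Consequently the unit $G \to \iota_*(G|_\Aff)$ is an equivalence.

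To prove that $\iota_* F$ is a sheaf whenever $F$ is, I would verify the two sheaf axioms separately. For finite coproducts, the disjointness of coproducts in $\Sch$ gives $\Aff_{/X \sqcup Y} \simeq \Aff_{/X} \times \Aff_{/Y}$, and the limit formula immediately turns into a product. For \v{C}ech descent along an étale surjection $U \to X$ in $\Sch$, first refine $X$ by an affine Zariski cover $V \to X$, and then refine $U \times_X V$ by an affine Zariski cover. This reduces the descent statement for $\iota_* F$ along $U \to X$ to a combination of Zariski descent (already handled by the coproduct/Zariski pieces of the sheaf axiom on $\Aff$) and the given étale descent of $F$ on affines. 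Writing the bi-simplicial \v{C}ech object and using Fubini for limits, the two refinements collapse to the claim that $F$ satisfies descent along the affine étale surjection obtained as the composite, which is part of the hypothesis.

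The main obstacle is the last step: carefully verifying that the right Kan extension $\iota_* F$ inherits étale descent from $F$ when the cover $U \to X$ involves non-affine schemes. The technical heart is the double-refinement argument, which requires the interchange of two \v{C}ech-type limits and a cofinality statement identifying the resulting bi-simplicial object with the \v{C}ech nerve of an affine cover of $X$. Once this is in hand, combining it with full faithfulness of restriction and the identity $(\iota_* F)|_\Aff \simeq F$ yields the desired equivalence $\Shv(\Sch; \sV) \simeq \Shv(\Aff; \sV)$.
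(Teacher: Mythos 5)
Your overall strategy---exhibit an inverse to restriction via right Kan extension $\iota_*$, then verify both that restriction is fully faithful on sheaves and that $\iota_*$ preserves the sheaf condition---is the same route the paper takes (the paper reduces \thmref{thm:commissionship} to \thmref{thm:RKE}, for which it cites Aoki). So your approach is not a different route, but there is a genuine gap in your treatment of full faithfulness.

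You assert that the \v{C}ech nerve $U_\bullet$ of $U = \coprod_i \Spec A_i \to X$ has all its terms in $\Aff$, justified by ``$\Aff$ is closed under fibred products.'' That justification does not apply here: $\Aff$ is closed under fibre products taken over an \emph{affine} base (since $\Spec A \times_{\Spec C} \Spec B = \Spec(A \otimes_C B)$), not under fibre products formed in $\Sch$ over an arbitrary scheme $X$. Concretely, $\Spec A_i \times_X \Spec A_j = U_i \cap U_j$ is only quasi-affine in general; it is affine when $X$ is separated, but for instance the two canonical affine charts of $\bA^2$ with doubled origin intersect in $\bA^2 \setminus \{0\}$, which is not affine. (There is a secondary issue when $X$ is not quasi-compact: then $U$ itself is an infinite coproduct of affines, which is not affine, and your remark that $\Spec A \sqcup \Spec B = \Spec(A \times B)$ only handles finite coproducts.) So the step ``descent along $U \to X$ expresses $G(X)$ as a limit of values of $G$ on affines'' fails at face value, and the subsequent cofinality argument has nothing to compare against.

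The fix is precisely the bisimplicial, double-refinement argument that you invoke in the second half of the proof---but you should recognize that it is needed in the first half as well. For $X$ quasi-compact quasi-separated, each term $U_n$ of the \v{C}ech nerve is a finite disjoint union of quasi-affine (hence separated) schemes; choose compatible affine Zariski covers $V_n \twoheadrightarrow U_n$; the \v{C}ech nerves $V_{n,\bullet}$ of those covers \emph{do} lie in $\Aff$ because now the base $U_n$ is separated; and $G(X) \simeq \lim_n G(U_n) \simeq \lim_n \lim_m G(V_{n,m})$. Only after this does the cofinality comparison with the limit over $\Aff_{/X}^\op$ make sense, and it is in fact the same cofinality-plus-Fubini interchange you flag as ``the technical heart'' of the second half. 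In short: the bootstrap through the separated case is unavoidable in \emph{both} directions, and as written the first half of your argument silently assumes it away.
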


  \begin{exam}
    Stacks can be defined as sheaves of groupoids on $\Aff$.
  \end{exam}

  \thmref{thm:commissionship} is a special case of a following more general result.

  \begin{defn}\label{defn:basis}
    Let $\sC$ be a site with Grothendieck topology generated by a class of morphisms $S$ as in \constrref{constr:topology}.
    Let $\sC_0 \sub \sC$ be a full subcategory which is closed under fibred products and finite coproducts, and regard it with the Grothendieck topology generated by $S \cap \sC_0$ (the subclass of morphisms in $S$ whose source and target belong to $\sC_0$).
    We say that $\sC_0$ is a \emph{basis} for $\sC$ if for every object $X\in\sC$ there exists a collection of morphisms $(Y_i \to X)_i$ such that $Y_i \in \sC_0$, the coproduct $\coprod_i Y_i$ exists in $\sC$, and $\coprod_i Y_i \to X$ belongs to $S$.
  \end{defn}

  \begin{thm}\label{thm:comp}
    In the situation of \defnref{defn:basis}, the functor
    \[ \Fun(\sC^\op, \sV) \to \Fun(\sC_0^\op, \sV) \]
    restricts to an equivalence
    \[ \Shv(\sC; \sV) \to \Shv(\sC_0; \sV) \]
    for all \inftyCats $\sV$ admitting limits.
  \end{thm}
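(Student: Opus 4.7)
The plan is to view \thmref{thm:comp} as an \inftyCat incarnation of the classical Comparison Lemma for sites: restriction along the inclusion $\iota : \sC_0 \hook \sC$ is an equivalence of sheaf \inftyCats, with quasi-inverse given by Čech-style extension along covers by objects of $\sC_0$.

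First, I would verify that the restriction functor $\iota^*$ carries sheaves to sheaves. The condition of sending finite coproducts to products transports immediately, since $\sC_0$ is closed under finite coproducts in $\sC$. For a covering morphism $f : U \to Y$ in $S \cap \sC_0$, the Čech nerve $U_\bullet$ computed in $\sC$ has every term $U \fibprod_Y \cdots \fibprod_Y U$ again in $\sC_0$, by closure of $\sC_0$ under fibred products; hence Čech descent for $F|_{\sC_0}$ along $f$ is just a special case of Čech descent for $F$ along $f$ in $\sC$.

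Next, I would construct a quasi-inverse $\iota^\dagger : \Shv(\sC_0; \sV) \to \Shv(\sC; \sV)$. For each $X \in \sC$, choose a finite collection $(Y_i \to X)_i$ with $Y_i \in \sC_0$ such that $U := \coprod_i Y_i \to X$ lies in $S$; by the closure hypotheses, both $U$ and every term $U_n = U \fibprod_X \cdots \fibprod_X U$ of its Čech nerve again lie in $\sC_0$. Given $F_0 \in \Shv(\sC_0; \sV)$, set
\[ \iota^\dagger(F_0)(X) := \lim_{[n]\in \bDelta} F_0(U_n). \]
The four points to check are: (a) independence of the chosen cover; (b) functoriality in $X$, which follows by lifting morphisms through covers and using (a); (c) the sheaf condition on $\sC$, which reduces to descent along refinements of an arbitrary $\sC$-cover by a $\sC_0$-cover; and (d) the unit $F_0 \to \iota^*\iota^\dagger F_0$ is an equivalence, which is immediate via the tautological cover $Y = Y$ for $Y \in \sC_0$. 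For the counit, given $F \in \Shv(\sC; \sV)$ and any cover $U \to X$ with $U \in \sC_0$, Čech descent for $F$ yields $F(X) \simeq \lim_{[n]} F(U_n) \simeq \iota^\dagger(\iota^* F)(X)$.

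The main obstacle is (a), independence of the cover. The standard move is to pass to a common refinement $V := U \fibprod_X U' \in \sC_0$ of two chosen covers $U, U' \to X$, and compare the three cosimplicial limits over $U_\bullet$, $U'_\bullet$, and $V_\bullet$ via a bicosimplicial argument. In the $1$-categorical setting this is routine, but $\infty$-categorically one must carefully verify cofinality of the relevant diagrams in $\bDelta \times \bDelta$ and check that the bicosimplicial comparison maps are genuine equivalences, which ultimately relies on applying the sheaf hypothesis for $F_0$ levelwise to the Čech nerve of $V \to U$ (and $V \to U'$) at each simplicial degree.
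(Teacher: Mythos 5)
Your approach is genuinely different from the paper's, and it is worth comparing. The paper does not construct a quasi-inverse by hand at all: immediately after stating the theorem it formulates the sharper result (\thmref{thm:RKE}) that a presheaf $F$ on $\sC$ is a sheaf if and only if $F|_{\sC_0}$ is a sheaf on $\sC_0$ \emph{and} $F$ is the right Kan extension $i_*(F|_{\sC_0})$ along $i : \sC_0 \hook \sC$, and then cites \cite[Cor.~A.8]{Aoki}. From that characterization, \thmref{thm:comp} falls out formally: $i^*$ lands in $\Shv(\sC_0; \sV)$, the right Kan extension $i_*$ lands in $\Shv(\sC; \sV)$, $i^*i_* \simeq \id$ because $i$ is fully faithful, and $i_*i^* \simeq \id$ on sheaves is exactly condition (ii) of \thmref{thm:RKE}. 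The inverse is thus the right Kan extension, described not by a Čech limit over one chosen cover but by a limit over the entire comma category of maps from $\sC_0$-objects into $X$. What this buys is that functoriality in $X$ and ``independence of the chosen cover'' are built in from the start: there is no choice to make, so nothing to check.

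Your route is to realize $i_*$ concretely on each object as a cosimplicial limit over a \emph{chosen} Čech nerve. That is correct in spirit, and for a $1$-categorical target it is a routine rewriting of the classical comparison lemma. But the two points you label (a) and (b) are precisely where the $\infty$-categorical difficulty lives, and the proposal underestimates them. For (b), you cannot simply ``lift morphisms through covers'': given $X' \to X$ with chosen covers $U' \to X'$, $U \to X$, the naive refinement $U \fibprod_X X'$ need not lie in $\sC_0$, so you must instead pass to $U'' := U' \fibprod_X U$ and invoke (a); but then a composable pair $X'' \to X' \to X$ forces you to exhibit a coherent homotopy between two different ways of choosing refinements, a composable triple forces a higher coherence, and so on. In other words, to turn your pointwise formula into an actual \emph{functor} $\Shv(\sC_0;\sV) \to \Shv(\sC;\sV)$ you need to specify an infinite tower of coherences, which is tantamount to constructing the right Kan extension. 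Likewise the bicosimplicial argument for (a) is not merely a cofinality check in $\bDelta \times \bDelta$; you also need each comparison map to be natural in a way compatible with all of (b). None of this is unfixable, but doing it by hand is substantially harder than the paper's route, and if done fully rigorously would essentially reproduce the proof that $i_*$ exists and that the Čech nerve is cofinal in the relevant comma category when $F_0$ is a sheaf. If you want a self-contained argument in this style, it would be cleaner to \emph{define} the inverse as $i_*$ and then \emph{identify} it, for each $X$ and each chosen cover, with your Čech limit, rather than trying to assemble a functor out of object-by-object formulas.
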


  We can moreover give a more precise version of \thmref{thm:comp}.

  \begin{defn}
    Let $i : \sC_0 \hook \sC$ be a fully faithful functor of categories.
    Let $F_0 : \sC_0^\op \to \sV$ be a presheaf with values in an \inftyCat $\sV$ admitting limits.
    The \emph{right Kan extension} of $F_0$, denoted\footnote{%
      The explanation for the notation $i_*(F_0)$ is that the assignment $F_0 \mapsto F$ actually determines a right adjoint $i_*$ to the restriction functor $i^* : \Fun(\sC^\op, \sV) \to \Fun(\sC_0^\op, \sV)$.
    }
    \[ F := \on{RKE}_{\sC_0 \hook \sC}(F_0) := i_*(F_0) \]
    is the unique limit-preserving functor $F : \sC^\op \to \sV$ which restricts to $F_0$.
    Explicitly, it is given by the formula
    \[ F(X) \simeq \lim_{(Y, f)} F(Y) \]
    where the limit is taken over the category of pairs $(Y, f)$ where $Y \in \sC_0$ and $f : i(Y) \to X$ is a morphism in $\sC$ (and morphisms $(Y',f') \to (Y,f)$ are morphisms $Y' \to Y$ in $\sC_0$ which are compatible with $f$ and $f'$).
  \end{defn}

  \begin{thm}\label{thm:RKE}
    In the situation of \defnref{defn:basis}, let $F : \sC^\op \to \sV$ be a $\sV$-valued presheaf on $\sC$ where $\sV$ is an \inftyCat with limits.
    Then $F$ is a sheaf if and only if the following conditions hold:
    \begin{thmlist}
      \item $F_0 := F|_{\sC_0}$ is a sheaf on $\sC_0$.
      \item $F$ is the right Kan extension of $F_0$ along $\sC_0 \to \sC$.
    \end{thmlist}
  \end{thm}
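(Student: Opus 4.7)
The plan is to prove both directions of the biconditional, with the overall goal of identifying the inverse of the equivalence $i^* : \Shv(\sC;\sV) \xrightarrow{\sim} \Shv(\sC_0;\sV)$ from \thmref{thm:comp} with the right Kan extension functor $i_*$.

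For the forward direction (sheaf implies (i) and (ii)), I first note that a covering sieve in $\sC_0$ generated by $S \cap \sC_0$ is also a covering sieve in $\sC$; and since $\sC_0$ is closed in $\sC$ under fibred products and finite coproducts, the associated Čech nerves agree. Hence $F_0 := F|_{\sC_0}$ inherits the sheaf condition, giving (i). For (ii), I examine the unit map $F \to i_*F_0$: for $X \in \sC_0$ it is an equivalence because $(X,\id)$ is terminal in the comma category indexing the right Kan extension, while for general $X$ I invoke the basis property to pick a cover $p : \coprod_j Y_j \to X$ with each $Y_j \in \sC_0$. By the distributivity of coproducts over fibred products and closure of $\sC_0$ under fibred products and finite coproducts, each term of the Čech nerve of $p$ decomposes as a coproduct of objects of $\sC_0$. Čech descent for $F$ then writes $F(X)$ as a limit of values of $F_0$, and one identifies this with the defining limit of $(i_*F_0)(X)$ via a cofinality argument.

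For the reverse direction, I set $F := i_*F_0$ and verify Čech descent on $\sC$. The right Kan extension formula along a fully faithful embedding gives $F|_{\sC_0} \simeq F_0$, so descent is automatic along any cover whose Čech nerve lies (after decomposition into coproduct summands) in $\sC_0$. For a general cover $\coprod U_\alpha \to X$ in $S$, I refine by a basis cover $\coprod Y_j \to X$ and use stability of $S$ under base change and coproducts to build a bisimplicial object whose rows or columns reduce descent for $F$ to descent for $F_0$ on the refined pieces. The main obstacle is the cofinality comparison invoked in the forward direction: one must show that the decomposed Čech nerve of a basis cover defines a final functor into the opposite of the comma category of pairs $(Y, f : Y \to X)$ with $Y \in \sC_0$. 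Via Quillen's Theorem~A in its $\infty$-categorical form, this reduces to verifying that for every such $(Y,f)$ the fibre of the indexing category is contractible, which can be checked by base-changing the cover $p$ along $f$ and invoking the closure properties of $\sC_0$.
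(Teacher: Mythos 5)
The paper gives no internal proof of \thmref{thm:RKE} (it simply cites Aoki), so I can only assess your proposal on its own terms. The two-direction structure and the easy implication in~(i) are fine, but the step you yourself flag as the main obstacle --- the cofinality comparison --- contains a genuine gap. You propose to show that the indexing diagram of the decomposed Čech nerve of a basis cover $p : \coprod_j Y_j \to X$ maps cofinally (for limits) into the comma category of pairs $(Y, f : Y \to X)$ with $Y \in \sC_0$, by Quillen's Theorem~A. A purely combinatorial cofinality statement of this kind would identify $\lim_{(Y,f)} F_0(Y)$ with the Čech totalization for \emph{every} presheaf $F_0$, which is false. Take $\sC$ the poset of opens of $\bR$, $\sC_0$ the basis of bounded open intervals, $X=(0,3)$ covered by $V_1 = (0,2)$ and $V_2 = (1,3)$, and the object $W = (1/2, 5/2) \hookrightarrow X$ of the comma category. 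The only iterated fibre products over $X$ appearing in the decomposed Čech nerve are $V_1$, $V_2$, and $V_1 \fibprod_X V_2 = (1,2)$, none of which contains $W$; hence the category whose weak contractibility Quillen's Theorem~A demands is empty. The same failure occurs algebro-geometrically: $X = \bP^1$ covered by the two standard affine opens, with $W = \bP^1 \setminus \{1\}$. So the cofinality you invoke simply does not hold.

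What is true --- and what your argument requires in both directions, since your reverse direction also reduces to descent of $i_*F_0$ along a basis cover --- is that the two limits agree only when $F_0$ is a \emph{sheaf}, and that hypothesis must enter the comparison. A correct route replaces combinatorial cofinality by local-equivalence invariance: by adjunction, $(i_*F_0)(X)$ is the mapping object $\Maps_{\Fun(\sC_0^\op,\sV)}\bigl(h_X|_{\sC_0},\,F_0\bigr)$, where $h_X|_{\sC_0}$ is the restricted Yoneda presheaf $Y\mapsto\Hom_\sC(Y,X)$; the basis condition guarantees that the canonical map from the geometric realization of the decomposed Čech nerve of $p$ to $h_X|_{\sC_0}$ becomes an equivalence after sheafification on $\sC_0$; and because $F_0$ is a sheaf, $\Maps(-,F_0)$ is invariant under such local equivalences, yielding $(i_*F_0)(X) \simeq \Tot F_0(\text{Čech nerve})$. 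This is where the sheaf hypothesis on $F_0$ is actually used. Once this is in place, your bisimplicial refinement step is reasonable in outline, though it too deserves to be written out.
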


  See \cite[Cor.~A.8]{Aoki} for a proof.

%!TEX root = ../stacksncts.tex

\section{The stack of quasi-coherent sheaves}
\label{sec:qcoh}

\ssec{Cartesian fibrations}
  
  Given a scheme $X$, quasi-coherent sheaves on $X$ form a category $\QCoh(X)$.
  For any morphism $f : X \to Y$, we have the adjoint pair of functors
  \[
    f^* : \QCoh(Y) \rightleftarrows \QCoh(X) : f_*
  \]
  where the left adjoint $f^*$ is inverse image and the right adjoint $f_*$ is direct image.
  Note that for a pair of composable morphisms $f : X \to Y$ and $g : Y \to Z$, the diagram
  \[ \begin{tikzcd}
    \QCoh(Z) \ar{rr}{g^*}\ar[swap]{rd}{(g\circ f)^*}
    & & \QCoh(Y) \ar{ld}{f^*}
    \\
    & \QCoh(X)
  \end{tikzcd} \]
  does not commute in the $1$-category of categories.
  Instead, there is an invertible natural transformation
  \[ (g \circ f)^* \to f^* \circ g^* \]
  up to which it commutes.
  Together with this extra piece of data, the diagram above does determine a commutative diagram in the \inftyCat $\Grpd$.

  In particular, the assignment $X \mapsto \QCoh(X)$ cannot be assembled into a functor $\Sch^\op \to \Cat'$ into the $1$-category $\Cat'$ of categories, but only into a functor
  \[ \QCoh : \Sch^\op \to \Cat \]
  into the $(2,1)$-category\footnote{%
    Recall that by convention, we identify $(2,1)$-categories $\sC$ with the corresponding \inftyCat (whose underlying weak Kan complex is the Duskin nerve $N^D(\sC)$).
  } (or equivalently, \inftyCat) of categories.
  Still, a precise construction of this functor requires more than just the data of the invertible natural transformation above for all pairs of morphisms $f$ and $g$; for example, we need to require compatibilities between this data whenever we have three composable morhpisms.
  This is somewhat messy, so we will prefer to take the following alternative perspective.

  \begin{defn}
    Let $\pi : \sE \to \sC$ be a functor of categories.
    Let $f : C \to D$ be a morphism in $\sC$ and $\widetilde{D} \in \sE$ a lift of $D$ (so that $\pi(\widetilde{D}) = D$).
    Let $\widetilde{f} : \widetilde{C} \to \widetilde{D}$ be a lift of $f$, i.e. $\pi(\widetilde{C}) = C$ and $\pi(\widetilde{f}) = f$.
    We say that $\widetilde{f}$ is \emph{$\pi$-cartesian} if for every $E \in \sE$ we require that the canonical map
    \[
      \Hom_{\sE}(E, \widetilde{C})
      \to \Hom_{\sE}(E, \widetilde{D}) \fibprod_{\Hom_\sC(\pi(E), D)} \Hom_{\sC}(\pi(E), \pi(\widetilde{C}))
    \]
    is bijective.
    Informally speaking, $\widetilde{f}$ is terminal among all lifts of $f$ with target $\widetilde{D}$.
  \end{defn}

  \begin{defn}
    Let $\pi : \sE \to \sC$ be a functor of categories.
    We say that $\pi$ is a \emph{cartesian fibration} if for every $E \in \sE$, every $C\in\sC$, and every morphism $f : C \to \pi(E)$ in $\sC$, there exists a lift $\widetilde{C} \in \sE$ of $C$ and a $\pi$-cartesian morphism $\widetilde{f} : \widetilde{C} \to E$ lifting $f$.
  \end{defn}

  \begin{exam}
    Let $\QCohSch$ denote the category of pairs $(X,\sF)$ where $X\in\Sch$ and $\sF \in \QCoh(X)$.
    A morphism $(X',\sF') \to (X,\sF)$ is a morphism $f : X' \to X$ together with a morphism $\phi : f^*\sF \to \sF'$ in $\QCoh(X')$.
    Given morphisms $(f,\phi) : (X',\sF') \to (X,\sF)$ and $(g,\psi) : (X'',\sF'') \to (X',\sF')$, the composite is defined by
    \[ f \circ g : X'' \to X' \to X \]
    and
    \[ g^*f^*\sF \xrightarrow{g^*\phi} g^*\sF' \xrightarrow{\psi} \sF''. \]
    Then the projection $(X,\sF) \mapsto X$ determines a functor
    \[ \QCoh_\Sch \to \Sch \]
    which is a cartesian fibration.
  \end{exam}

  \begin{constr}
    Let $\sC$ be a fixed category.
    We denote by $\Cart(\sC)$ the $(2,1)$-category whose objects are cartesian fibrations $\pi : \sE \to \sC$, whose $1$-morphisms $(\pi' : \sE' \to \sC) \to (\pi : \sE \to \sC)$ are morphisms $f : \sE' \to \sE$ that are compatible with $\pi$ and $\pi'$, and whose $2$-morphisms $f \Rightarrow g$ (where $f,g$ are morphisms $\pi' \to \pi$) are invertible natural transformations $\theta : f \Rightarrow g$ such that for all $E' \in \sE'$, $\pi : \sE \to \sC$ sends
    \[ \theta_{E'} : f(E') \to g(E') \]
    to the identity of $\pi(f(E')) = \pi(g(E'))$.
  \end{constr}

  \begin{thm}[Grothendieck]\label{thm:cartesian}
    For every category $\sC$, there is an equivalence of \inftyCats
    \[ \Fun(\sC^\op, \Cat) \to \Cart(C). \]
  \end{thm}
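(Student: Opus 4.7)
The plan is to carry out Grothendieck's classical construction in both directions, producing mutually quasi-inverse functors of $(2,1)$-categories. For a functor $F : \sC^\op \to \Cat$, I would build a category $\sE_F$ whose objects are pairs $(C, x)$ with $C \in \sC$ and $x \in F(C)$, and whose morphisms $(C, x) \to (D, y)$ are pairs $(f, \phi)$ consisting of $f : C \to D$ in $\sC$ and $\phi : x \to F(f)(y)$ in $F(C)$; composition is defined using the coherence isomorphisms $F(g \circ f) \simeq F(f) \circ F(g)$ encoded in the data of $F$ as a functor into the $(2,1)$-category $\Cat$. The projection $\pi_F : \sE_F \to \sC$, $(C, x) \mapsto C$, is then a cartesian fibration whose cartesian morphisms at $(D, y)$ over $f : C \to D$ are precisely those of the form $(f, \id_{F(f)(y)}) : (C, F(f)(y)) \to (D, y)$; the defining universal property is a direct translation of the definition of morphisms in $\sE_F$. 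This construction extends to natural transformations of $F$ in the evident way.

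In the other direction, given a cartesian fibration $\pi : \sE \to \sC$, I define $F_\pi(C)$ to be the fiber $\sE_C := \sE \fibprod_\sC \{C\}$. For each morphism $f : C \to D$ in $\sC$, I choose, for every object $E \in \sE_D$, a $\pi$-cartesian lift $\widetilde{f}_E : f^*E \to E$; the defining universal property of cartesian morphisms then extends $E \mapsto f^*E$ to a functor $f^* : \sE_D \to \sE_C$, uniquely up to canonical natural isomorphism. For composable $f : C \to D$ and $g : D \to E$, the composite $\widetilde{g}_{E'} \circ \widetilde{f}_{g^*E'}$ is again $\pi$-cartesian over $g \circ f$, giving a canonical natural isomorphism $(g \circ f)^* \simeq f^* \circ g^*$; the associativity pentagon and further coherences follow from the uniqueness clause in the universal property. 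This produces $F_\pi$ as a pseudofunctor, i.e., a functor of $(2,1)$-categories $\sC^\op \to \Cat$.

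The main obstacle is to package these two constructions into genuine functors of $(2,1)$-categories and verify the equivalence at the level of $1$- and $2$-morphisms. A morphism of cartesian fibrations $\sE_F \to \sE_G$ over $\sC$ is required to preserve cartesian morphisms, and one checks that this data is equivalent to a pseudonatural transformation $F \Rightarrow G$; the invertible $2$-morphisms appearing in $\Cart(\sC)$ (those projecting to identities in $\sC$) correspond to modifications. Once the bookkeeping is in place, the two composites are connected to the identity by canonical equivalences: the fiber of $\pi_F$ over $C$ is literally $F(C)$ so $F_{\pi_F} \simeq F$ on the nose (up to the canonical isomorphisms induced by the chosen cartesian lifts, which we can take to be $(f, \id)$), and the assignment $(C, E) \mapsto E$ defines a functor $\sE_{F_\pi} \to \sE$ over $\sC$ which is essentially surjective and fully faithful by the cartesian lifting property. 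I expect the coherence bookkeeping in packaging everything as a map of $(2,1)$-categories to be the most delicate step, but the heart of each verification is a single application of the universal property defining cartesian morphisms.
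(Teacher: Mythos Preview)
The paper does not prove this theorem; it is stated as a classical result attributed to Grothendieck and used as a black box, with a remark that Lurie extended it to \inftyCats. Your proposal sketches exactly the standard Grothendieck construction and its quasi-inverse, which is the classical argument, and the outline is correct.

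One small discrepancy worth flagging: you assume that $1$-morphisms in $\Cart(\sC)$ preserve cartesian morphisms, which is indeed what is needed for the equivalence with pseudonatural transformations on the $\Fun(\sC^\op,\Cat)$ side. The paper's \constrref{constr:topology}-style definition of $\Cart(\sC)$ (just before the theorem) does not explicitly impose this condition on $1$-morphisms. This is almost certainly an omission in the paper's stated construction rather than an error in your argument, but since you are working from the paper's definitions you should note the gap: without the cartesian-preservation condition, functors over $\sC$ correspond to \emph{lax} natural transformations, not pseudonatural ones, and the stated equivalence would fail.
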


  \begin{defn}
    Given $F : \sC^\op \to \Cat$, the corresponding cartesian fibration is called the \emph{unstraightening} of $F$.
    Given a cartesian fibration $\pi : \sE \to \sC$, the \emph{straightening} of $\pi$ is the (essentially unique) presheaf of categories whose unstraightening is $\pi$.
  \end{defn}

  \begin{rem}
    Lurie proved a generalization of \thmref{thm:cartesian}, where $\sC$ is allowed to be an \inftyCat and $\Cat$ is replaced by the \inftyCat of \inftyCats.
  \end{rem}

  \begin{defn}
    We let $\QCoh : \Sch^\op \to \Cat$ denote the unstraightening of the cartesian fibration $\QCohSch \to \Sch$.
    We let $\QCoh^\simeq : \Sch^\op \to \Grpd$ denote the presheaf of groupoids given by sending
    \[ X \mapsto \QCoh(X)^\simeq \]
    where $(-)^\simeq$ indicates that we discard all non-invertible morphisms.
  \end{defn}

\ssec{Descent for quasi-coherent sheaves}

  \begin{thm}\label{thm:descent}
    The presheaf of categories $\QCoh : \Sch^\op \to \Cat$ satisfies étale descent.
  \end{thm}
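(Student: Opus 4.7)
The plan is to reduce étale descent for $\QCoh$ on $\Sch$ to Grothendieck's classical faithfully flat descent theorem for modules, in three steps.

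First, I would apply the basis theorem \thmref{thm:RKE} with $\sC = \Sch$ and $\sC_0 = \Aff$: it suffices to prove that $\QCoh|_\Aff$ is an étale sheaf on $\Aff$, and that $\QCoh$ on $\Sch$ is the right Kan extension of its restriction along $\Aff \hook \Sch$. The latter condition is a repackaging of classical Zariski descent for $\QCoh$ on schemes, expressed $(2,1)$-categorically via \propref{prop:siris}.

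Second, on the affine site any finite étale cover $(\Spec(B_i) \to \Spec(A))_i$ is refined by the single faithfully flat étale map $\Spec(\prod_i B_i) \to \Spec(A)$, since $\Aff$ is closed under finite coproducts; moreover, $\QCoh$ sends such finite coproducts to products of module categories tautologically. One is thus reduced to showing that for a faithfully flat étale ring map $A \to B$, the canonical functor
\[
  \Mod_A \too \lim_{[n] \in \bDelta} \Mod_{B^{\otimes_A (n+1)}}
\]
is an equivalence.

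Third, because $\Cat$ is a $(2,1)$-category, by the remark after \propref{prop:siris} the cosimplicial limit reduces to the $2$-limit over $\bDelta_{\le 2}$. Unwinding, this $2$-limit is the category of \emph{descent data} for $A \to B$: pairs $(M,\varphi)$ consisting of a $B$-module $M$ together with a $B \otimes_A B$-linear isomorphism $\varphi : B \otimes_A M \xrightarrow{\sim} M \otimes_A B$ satisfying the usual cocycle condition on $B \otimes_A B \otimes_A B$, with $B$-linear morphisms intertwining $\varphi$. The required equivalence is then precisely Grothendieck's classical faithfully flat descent theorem for modules, whose inverse sends $(M,\varphi)$ to the equaliser $\{m \in M : \varphi(1 \otimes m) = m \otimes 1\}$.

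The main obstacle is this final classical input. Its proof rests on the exactness of the Amitsur complex $A \to B \rightrightarrows B \otimes_A B \rightrightrightarrows \cdots$ for faithfully flat $A \to B$, which follows formally from the fact that it becomes split exact after base change along $A \to B$ (with multiplication on a distinguished $B$-factor providing the contracting homotopy) combined with faithful flatness. It is worth noting that étaleness plays no role in this step: the argument above in fact establishes fpqc descent for $\QCoh$, not merely étale descent.
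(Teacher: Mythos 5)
Your proposal is correct, but it takes a genuinely different route from the paper. The paper does not first pass through $\Aff$: instead it works directly with a faithfully flat quasi-compact morphism $f : X \to Y$ of schemes and runs the same argument as for $R \mapsto \Mod_R$ (\thmref{thm:module descent}), namely the Barr--Beck--Lurie monadicity/descent criterion (\thmref{thm:descent criterion}). The only new ingredient over the affine case is \lemref{lem:fpqc conservativity}, which establishes that $f^*$ is conservative for faithfully flat quasi-compact $f$ and thereby verifies condition~(i) of the criterion; conditions (ii) and (iii) follow from flat base change and exactness of $f^*$ exactly as in the module case. Your argument instead applies the basis theorem \thmref{thm:RKE} to reduce to the affine site, identifies the cosimplicial limit with the classical category of descent data via the $(2,1)$-truncation of \propref{prop:siris}, and then black-boxes Grothendieck's classical faithfully flat descent for modules (whose proof via the split Amitsur complex is morally the same input that verifies conditions (i) and (iii) of the monadicity criterion). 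Both approaches buy something: yours is more elementary and makes the connection to the classical theorem transparent, and makes clear why $(2,1)$-truncatedness suffices; the paper's avoids the explicit descent-data unwinding and immediately generalizes --- the same Barr--Beck argument proves flat descent for the \emph{stable} functor $R \mapsto \D(R)$ in \thmref{thm:jargonish}, where the cosimplicial limit no longer truncates and classical descent data would not suffice. Note that you correctly observe that your argument yields fpqc (not just étale) descent; the paper makes the same remark immediately after the theorem statement.
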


  In fact, we will see that it even satisfies descent for the \emph{fpqc}\footnote{%
    This stands for ``fid\`element plat et quasi-compact'' in French.
  } topology on $\Sch$, which is the Grothendieck topology generated by faithfully flat quasi-compact morphisms.

  \begin{cor}
    The presheaf of groupoids $\QCoh^\simeq : \Sch^\op \to \Grpd$ satisfies descent.
    In particular, it is a stack.
  \end{cor}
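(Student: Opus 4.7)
The plan is to deduce this directly from \thmref{thm:descent} by observing that $\QCoh^\simeq$ is obtained from $\QCoh$ by postcomposition with the ``core'' functor $(-)^\simeq : \Cat \to \Grpd$ sending a category to its maximal subgroupoid (the full subcategory on all objects, keeping only invertible morphisms).

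The key structural input is that $(-)^\simeq$ is right adjoint to the fully faithful inclusion $\Grpd \hook \Cat$: any functor from a groupoid $\sG$ to a category $\sC$ sends isomorphisms to isomorphisms and hence factors uniquely through $\sC^\simeq$, yielding a natural bijection $\Hom_\Cat(\sG, \sC) \simeq \Hom_\Grpd(\sG, \sC^\simeq)$. As a right adjoint, $(-)^\simeq$ preserves arbitrary limits. (One should verify this at the $(2,1)$-categorical level that is relevant for descent, but the adjunction upgrades in the expected way.)

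Granting this, I would verify the two sheaf conditions in parallel. For the finite coproduct condition, \thmref{thm:descent} already implies $\QCoh(\coprod_i X_i) \simeq \prod_i \QCoh(X_i)$ in $\Cat$, and applying $(-)^\simeq$ (which preserves products) gives $\QCoh^\simeq(\coprod_i X_i) \simeq \prod_i \QCoh^\simeq(X_i)$. For \v{C}ech descent, given an étale cover $U \to X$ with \v{C}ech nerve $U_\bullet$, \thmref{thm:descent} yields an equivalence
\[
  \QCoh(X) \simeq \lim_{[n]\in\bDelta} \QCoh(U_n)
\]
in $\Cat$; applying the limit-preserving functor $(-)^\simeq$ produces the desired equivalence
\[
  \QCoh^\simeq(X) \simeq \lim_{[n]\in\bDelta} \QCoh^\simeq(U_n)
\]
in $\Grpd$. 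This is precisely the descent condition for $\QCoh^\simeq$. The final assertion that $\QCoh^\simeq$ is a stack is then immediate from the definition, since a stack is by definition a sheaf of groupoids on $\Sch$ with the étale topology.

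There is no real obstacle here: the entire argument is a formal consequence of \thmref{thm:descent}, with all the genuine work concentrated in that theorem. The only point requiring care is the bookkeeping with the right adjoint at the $(2,1)$-categorical level, so that limits genuinely commute with $(-)^\simeq$; however, the adjunction $\Grpd \rightleftarrows \Cat$ upgrades to a $2$-adjunction, so this is automatic.
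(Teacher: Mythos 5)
Your proposal is correct and is essentially identical to the paper's proof: both observe that $(-)^\simeq : \Cat \to \Grpd$ is right adjoint to the inclusion $\Grpd \hook \Cat$, hence preserves limits, so the sheaf condition for $\QCoh^\simeq$ follows directly from that of $\QCoh$. The extra bookkeeping you spell out (checking the two sheaf conditions separately, and the remark about the $2$-adjunction) is sound but not substantively different from the paper's one-line argument.
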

  \begin{proof}
    The functor $\Cat \to \Grpd$ sending $\sC \mapsto \sC^\simeq$ preserves limits.
    In fact, it is right adjoint to the inclusion $\Grpd \hook \Cat$.
    Thus the sheaf condition for $\QCoh^\simeq$ follows from that of $\QCoh$.
  \end{proof}

  To prove \thmref{thm:descent} we will apply a general descent criterion.

  \begin{defn}
    Given a cosimplicial diagram $X^\bullet : \bDelta \to \sC$ in an \inftyCat $\sC$, we refer to its limit as the \emph{totalization} of $X^\bullet$, and write
    \[ \Tot(X^\bullet) := \lim_{\bDelta} X^\bullet. \]
  \end{defn}

  \begin{defn}
    Let $\bDelta_+$ denote the category whose objects are the finite sets $[n] = \{0,1,\ldots,n\}$ for all $n\ge -1$, where $[-1] = \initial$ by convention, and whose morphisms are order-preserving maps.
    An \emph{augmented cosimplicial diagram} in any \inftyCat $\sC$ is a functor $X^\bullet : \bDelta_+ \to \sC$.
    We will depict $X^\bullet$ by the diagram
    \[
      X^{-1}
      \to X^0
      \rightrightarrows X^1
      \rightrightrightarrows X^2
      \rightrightrightrightarrows \cdots.
    \]
    We will say this is a \emph{limit diagram} if the induced morphism of cosimplicial diagrams
    \[ X^{-1}_\mrm{cst} \to X^\bullet|_{\bDelta} \]
    exhibits $X^{-1}$ as the limit (totalization) of $X^\bullet|_{\bDelta}$.
    % Similarly, we define \emph{augmented simplicial diagrams} as functors $X_\bullet : \bDelta_+^\op \to \sC$.
  \end{defn}

  \begin{defn}
    Let $\bDelta_{-\infty}$ denote the category whose objects are the finite sets $[n]$ for all $n\ge -1$, and whose morphisms $[m] \to [n]$ are order-preserving maps $[m] \cup \{-\infty\} \to [n] \cup \{-\infty\}$ which preserve $-\infty$.
    A \emph{splitting} of a cosimplicial diagram $X^\bullet : \bDelta \to \sC$ is an extension to a functor $X^\bullet : \bDelta_{-\infty} \to \sC$.
    A simplicial diagram $X^\bullet$ is \emph{split} if it admits a splitting.
    In this case, $X^\bullet|_{\bDelta_+}$ is a limit diagram, i.e. we have $\Tot(X^\bullet|_{\bDelta}) \simeq X^{-1}$.
  \end{defn}

  \begin{thm}[Descent criterion]\label{thm:descent criterion}
    Let $\sC^\bullet : \bDelta_+ \to \Catoo$ \footnote{%
      Here the \inftyCat $\Catoo$ of \inftyCats may be defined just as we defined the \inftyCat of \inftyGrpds, replacing Kan complexes with weak Kan complexes.
    } be an augmented cosimplicial diagram of \inftyCats, which we depict as follows:
    \begin{equation}\label{eq:preteressential}
      \sC^{-1}
      \xrightarrow{F} \sC^0
      \rightrightarrows \sC^1
      \rightrightrightarrows \sC^2
      \rightrightrightrightarrows \cdots.
    \end{equation}
    Suppose the following conditions hold:
    \begin{thmlist}
      \item
      The functor $F : \sC^{-1} \to \sC^0$ is conservative.

      \item
      For every morphism $\alpha : [m] \to [n]$ in $\bDelta_+$, let $\beta : [m+1] \to [n+1]$ denote the unique morphism which commutes with $\delta^0 : [m] \to [m+1]$ and $\delta^0 : [n] \to [n+1]$, and consider the commutative square
      \[\begin{tikzcd}
        \sC^m \ar{r}{d^0}\ar{d}{\alpha}
        & \sC^{m+1} \ar{d}{\beta}
        \\
        \sC^n \ar{r}{d^0}
        & \sC^{n+1}.
      \end{tikzcd}\]
      Then the horizontal arrows admit right adjoints $d^{0,R}$ which also commute with the vertical arrows; more precisely, the natural transformation
      \[
        \alpha \circ d^{0,R}
        \xrightarrow{\mrm{unit}} d^{0,R} \circ d^0 \circ \alpha \circ d^{0,R}
        \simeq d^{0,R} \circ \beta \circ d^0 \circ d^{0,R}
        \xrightarrow{\mrm{counit}} d^{0,R} \circ \beta
      \]
      is invertible.

      \item
      The functor $F : \sC^{-1} \to \sC^0$ preserves totalizations of $F$-split simplicial diagrams in $\sC^{-1}$.
      That is, for every cosimplicial diagram $X^\bullet$ in $\sC^{-1}$ whose image $F(X^\bullet)$ is split, the canonical map $F(\Tot(X^\bullet)) \to \Tot (F(X^\bullet))$ is invertible.
    \end{thmlist}
    Then the induced functor
    \[ \sC^{-1} \to \Tot(\sC^\bullet|_{\bDelta}) \]
    is an equivalence of \inftyCats.
    That is, \eqref{eq:preteressential} is a limit diagram.
  \end{thm}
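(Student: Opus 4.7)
The result is a form of the Barr--Beck--Lurie comonadicity theorem adapted to cosimplicial descent, and the plan is to reduce it to that theorem. First, I would apply condition (ii) to the augmentation $[-1] \to [0]$ in $\bDelta_+$ to extract a right adjoint $G := d^{0,R}$ to $F = d^0 : \sC^{-1} \to \sC^0$; the adjunction $F \dashv G$ determines in the usual way a comonad $C := FG$ on $\sC^0$, whose comultiplication and counit come from the unit and counit of the adjunction.

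Next, conditions (i) and (iii) say precisely that $F$ is conservative and preserves totalizations of $F$-split cosimplicial diagrams, which are the hypotheses of the comonadic Barr--Beck--Lurie theorem (the dual of Lurie's monadicity theorem). Applying it, $F$ is comonadic and yields an equivalence
\[
  \sC^{-1} \xrightarrow{\ \sim\ } \on{Comod}_C(\sC^0).
\]

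It then remains to identify $\on{Comod}_C(\sC^0)$ with $\Tot(\sC^\bullet|_{\bDelta})$, and this is the main step. The plan is to show that $\sC^\bullet|_{\bDelta}$, viewed as a cosimplicial object of $\Catoo$, is equivalent to the cobar construction of the comonad $C$, whose totalization is by definition $\on{Comod}_C(\sC^0)$. Iterating condition (ii) supplies, for every $n \ge 0$, right adjoints to all face maps $d^0 : \sC^n \to \sC^{n+1}$ satisfying the Beck--Chevalley compatibilities needed to recognise the iterated composites as powers of $C$. The main technical obstacle lies in producing an equivalence of \emph{cosimplicial} objects rather than a comparison of individual terms; this amounts to packaging the coherence data in $\sC^\bullet$ into the comonad structure on $C$, and requires using the Beck--Chevalley compatibilities supplied by (ii) for \emph{all} morphisms $\alpha$ in $\bDelta_+$, not only for the $d^0$'s.
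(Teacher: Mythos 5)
Your proposal is correct and is essentially the same approach as the paper: the paper's proof consists of a citation to Lurie's monadicity machinery (HA/SAG Cor.~4.7.5.3), and your sketch is precisely the argument underlying that result — extract $G := d^{0,R}$ from (ii) applied to $[-1]\to[0]$, invoke the comonadic Barr--Beck--Lurie theorem via (i) and (iii), and then use the full Beck--Chevalley data from (ii) to identify $\Tot(\sC^\bullet|_{\bDelta})$ with $\on{Comod}_{FG}(\sC^0)$ via the cobar construction. You correctly flag the last identification as the nontrivial coherence step; that is exactly the content of the cited Lurie lemma.
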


  This result is a corollary of the monadicity theorem of Barr--Beck, generalized to \inftyCats by Lurie.
  See \cite[Cor.~4.7.5.3]{SAG}.
  % The first condition guarantees that the limit of $\sC^\bullet|_{\bDelta}$ can be identified with the \inftyCat of coalgebras in $\sC^0$ over the comonad $F^R \circ F$

  Consider the functor
  \begin{equation*}
    \CRing \to \Cat, \quad R \mapsto \Mod_R
  \end{equation*}
  sending a commutative ring $R$ to the category $\Mod_R$ of $R$-modules, and a ring homomorphism $\phi : R \to S$ to the extension of scalars functor
  \[\phi^* := (-) \otimes_R S : \Mod_R \to \Mod_S.\]
  It can be defined using \thmref{thm:cartesian}, or alternatively is the restriction of $\QCoh : \Sch^\op \to \Cat$ to affine schemes (under the equivalence $\Aff^\op \simeq \CRing$).

  \begin{thm}\label{thm:module descent}
    The functor $R \mapsto \Mod_R$ satisfies descent for the flat topology, i.e. the Grothendieck topology on $\CRing$ generated by faithfully flat ring homomorphisms.
  \end{thm}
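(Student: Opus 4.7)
The plan is to apply the descent criterion (\thmref{thm:descent criterion}) to the \v{C}ech nerve of a faithfully flat ring map. The sheaf condition has two parts: preservation of finite products in $\CRing$, and the \v{C}ech totalization condition for each morphism in the generating class. The first reduces to the idempotent decomposition $\Mod_{R_1\times R_2} \simeq \Mod_{R_1} \times \Mod_{R_2}$, so the substance is to show that for every faithfully flat $R \to S$, the canonical functor
\[ \Mod_R \to \Tot\bigl(\Mod_{S} \rightrightarrows \Mod_{S \otimes_R S} \rightrightrightarrows \cdots \bigr) \]
is an equivalence of \inftyCats, where the coface maps are extensions of scalars along the coface maps of the cosimplicial ring $S^{\otimes_R \bullet+1}$.

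For this I would apply \thmref{thm:descent criterion} with $F = (-) \otimes_R S : \Mod_R \to \Mod_S$ and verify its three hypotheses. Condition~(i): $F$ is conservative because by flatness it commutes with kernels and cokernels, and by faithfulness an $R$-module vanishes iff its base change does, so a morphism is an isomorphism iff its base change is. Condition~(ii): every square appearing in the cosimplicial grid of $S^{\otimes_R \bullet+1}$ is a pushout of commutative rings, and the required base-change compatibility reduces to the standard identification $N \otimes_B (B \otimes_A A') \simeq N \otimes_A A'$ for such a pushout $A \to B$, $A \to A'$, with $B' \simeq A' \otimes_A B$; this expresses that restriction of scalars commutes with extension along a pushout. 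Condition~(iii): since $\Mod_R$ is a $1$-category, \propref{prop:siris} identifies totalizations with equalizers, and $F$ preserves finite limits by flatness of $S$ over $R$.

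The main obstacle in principle would be the usual delicate Barr--Beck manipulation of descent data, but this is precisely what \thmref{thm:descent criterion} encapsulates, so the argument collapses to the three classical ingredients above: faithful flatness (for conservativity), the base-change isomorphism for pushouts of rings (for condition~(ii)), and flatness (for limit preservation). No explicit cocycle manipulation or gluing of modules is required, as the $\infty$-categorical machinery handles all coherences automatically.
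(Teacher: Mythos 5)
Your proof is correct and follows essentially the same route as the paper's: both apply \thmref{thm:descent criterion} to $\phi^* = (-) \otimes_R S$ and verify its three hypotheses, plus preservation of finite products, in the same way. The only real difference is in condition~(ii): the paper observes that the vertical arrows in the cosimplicial grid are flat (base changes of flat maps) and cites the flat base change formula, whereas you observe that each square of rings in the grid is a pushout $B' \simeq B \otimes_A A'$ and use the formal tensor identity $N \otimes_A A' \simeq N \otimes_B (B \otimes_A A')$. Your version is actually slightly sharper: at the level of $1$-categories of modules, the base-change compatibility for a pushout of rings is associativity of the tensor product and needs no flatness; the paper's appeal to flatness is sufficient but not necessary at this point.
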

  \begin{proof}
    Let $\phi : R \to S$ be a faithfully flat ring homomorphism.
    We need to show that the augmented cosimplicial diagram
    \[
      \Mod_R
      \xrightarrow{\phi^*} \Mod_S
      \rightrightarrows \Mod_{S \otimes_R S}
      \rightrightrightarrows \cdots
    \]
    is a limit diagram, to which end we apply the criterion of \thmref{thm:descent criterion}:

    \begin{defnlist}
      \item
      Conservativity of the functor $\phi^*$ is a consequence of the assumption that $\phi$ is faithfully flat.
      
      \item
      For every morphism $\alpha : [m] \to [n]$ in $\bDelta_+$, consider the commutative square
      \[\begin{tikzcd}
        \Mod_{S^{\otimes m+1}} \ar{r}{d^{0,*}}\ar{d}{\alpha}
        & \Mod_{S^{\otimes m+2}} \ar{d}{\beta}
        \\
        \Mod_{S^{\otimes n+1}} \ar{r}{d^{0,*}}
        & \Mod_{S^{\otimes n+2}},
      \end{tikzcd}\]
      where the tensor powers are taken over $R$ and the horizontal arrows are extension of scalars along $d^0 : S^{\otimes m+1} \to S^{\otimes m+2}$.
      These functors are left adjoint to the restriction of scalars functors $d^0_*$.
      The vertical arrows are flat (as base changes of flat homomorphisms), and hence commute with $d^0_*$ by the flat base change formula.

      \item
      Let $M^\bullet$ be a cosimplicial diagram in $\Mod_R$ such that the cosimplicial diagram $\phi^*(M^\bullet)$ in $\Mod_S$ is split.
      The claim is that the $S$-module homomorphism
      \[ \phi^*(\Tot(M^\bullet)) \to \Tot(\phi^*(M^\bullet)) \]
      is invertible.
      Since $\Mod_R$ and $\Mod_S$ are $1$-categories, these totalizations can be computed as equalizers (\propref{prop:siris}).
      Since $\phi^*$ is an exact functor, it preserves equalizers.
    \end{defnlist}
    We also need to show that the functor $R \mapsto \Mod_R$ preserves finite products:
    \begin{itemize}
      \item The category of modules over the zero ring is equivalent to the trivial category.

      \item 
      For any pair of commutative rings $R_1$ and $R_2$, the category of modules over $R_1 \times R_2$ is equivalent to the product of categories $\Mod_{R_1} \times \Mod_{R_2}$.
    \end{itemize}
    It follows that $R \mapsto \Mod_R$ is a sheaf for the flat topology.
  \end{proof}

  The proof of \thmref{thm:descent} is almost the same, using the following lemma:

  \begin{lem}\label{lem:fpqc conservativity}
    Let $f : X \to Y$ be a morphism of schemes.
    If $f$ is faithfully flat and quasi-compact, then the inverse image functor $f^* : \QCoh(Y) \to \QCoh(X)$ is conservative.
  \end{lem}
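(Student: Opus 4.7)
The plan is to reduce the geometric statement to the corresponding algebraic fact, namely that extension of scalars along a faithfully flat ring homomorphism $R \to S$ is a conservative functor $\Mod_R \to \Mod_S$. This algebraic fact is standard: if $M \otimes_R S = 0$ then for any $m \in M$ the annihilator of $m \otimes 1$ is the unit ideal, hence $m \otimes 1 = 0$ in $M \otimes_R S$, and by faithful flatness $M = 0$; equivalently, a morphism of $R$-modules whose base change to $S$ is an isomorphism is itself an isomorphism (apply this to the kernel and cokernel).

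To pass from this to the lemma, first I would observe that conservativity is local on the target: a morphism $\phi : \sF \to \sG$ in $\QCoh(Y)$ is an isomorphism iff its restriction to every open of a Zariski cover of $Y$ is, and $f^*$ is compatible with such restrictions (the base change of $f$ along an open immersion remains faithfully flat and quasi-compact). Hence I may assume $Y = \Spec(R)$ is affine.

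Next, because $f$ is quasi-compact and $Y$ is affine, $X$ is quasi-compact, so I can choose a \emph{finite} affine open cover $X = \bigcup_{i=1}^n \Spec(S_i)$. Setting $S := \prod_i S_i$, the induced map $\Spec(S) \to X \to Y$ is faithfully flat (it is flat as a composition of flat maps, and it is surjective since $\coprod_i \Spec(S_i) \twoheadrightarrow X \twoheadrightarrow Y$). Thus $R \to S$ is a faithfully flat ring homomorphism.

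Finally, suppose $\phi : \sF \to \sG$ is a morphism in $\QCoh(Y)$ with $f^*\phi$ an isomorphism. Restricting to each $\Spec(S_i) \sub X$ shows $\phi \otimes_R S_i$ is an isomorphism, hence $\phi \otimes_R S = \prod_i (\phi \otimes_R S_i)$ is an isomorphism. By the algebraic statement applied to $R \to S$, $\phi$ itself is an isomorphism. The only nontrivial point in the whole argument is the quasi-compactness reduction that produces the single faithfully flat $R$-algebra $S$; everything else is mechanical.
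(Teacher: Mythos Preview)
Your proof is correct and follows essentially the same approach as the paper's: both reduce to the algebraic fact that extension of scalars along a faithfully flat ring map is conservative, using quasi-compactness of $f$ to produce a \emph{finite} affine cover of the preimage of each affine open in $Y$. The only organizational difference is that you first reduce to $Y$ affine and then cover $X$, whereas the paper covers $Y$ by affines $V_j$ and runs the same argument over each $V_j$; the content is identical.
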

  \begin{proof}
    Let $(V_j \to X)_j$ be a (possibly infinite) family of affine opens $V_j \sub Y$ covering $Y$.
    % Let $V = \coprod_j V_j$, $q : V \twoheadrightarrow Y$ the induced morphism, and $V' = V \fibprod_Y X$.
    For each $j$, let $V'_j := V_j \fibprod_Y X = f^{-1}(V_j) \sub X$.
    Since $f$ is quasi-compact, $V'_j$ is quasi-compact.
    Thus there exists a finite family $(U_{i,j} \to V'_j)_i$ where $U_{i,j} \sub V'_j$ are affine opens covering $V'_j$.
    We have the commutative square
    \begin{equation*}
      \begin{tikzcd}
        \coprod_{i} U_{i,j} \ar{r}{f_j}\ar{d}
        & V_j \ar{d}
        \\
        X \ar{r}{f}
        & Y
      \end{tikzcd}
    \end{equation*}
    where $f_{j} : \coprod_{i} U_{i,j} \to V'_j \to V_j$ is a faithfully flat of affine schemes, since faithfully flat morphisms are stable under composition and base change.
    
    Now let $\phi : \sF \to \sG$ be a morphism of quasi-coherent sheaves on $Y$, whose inverse image $f^*(\phi) : f^*(\sF) \to f^*(\sG)$ is invertible.
    % Then its further inverse image along $\coprod_{i,j} U_{i,j} \to \coprod_j V'_j \to X$ is also invertible.
    We need to show that $\phi$ is itself invertible.
    It will clearly suffice to show that each restriction $\phi|_{V_j}$ is invertible.
    Since $f_j$ is a faithfully flat morphism between affines, we know (by definition) that $f_j^*$ is conservative (since it corresponds to extension of scalars of a module along a faithfully flat ring homomorphism), so it will moreover suffice to show that $\phi$ is invertible after inverse image along $\coprod_i U_{i,j} \to V_j \sub Y$.
    But the latter factors through $f$, hence is invertible by assumption.
  \end{proof}

  \begin{proof}[Proof of \thmref{thm:descent}]
    Same as the proof of \thmref{thm:module descent}, except \lemref{lem:fpqc conservativity} is used to check the first condition of \thmref{thm:descent criterion}.
  \end{proof}

\ssec{Quasi-coherent sheaves on stacks}

  Since the full subcategory $\Aff \sub \Sch$ is a basis with respect to the Zariski topology, we have by \thmref{thm:RKE} the following further consequence of \thmref{thm:descent}:

  \begin{cor}
    The presheaf of categories $\QCoh : \Sch^\op \to \Cat$ is right Kan extended from its restriction to $\Aff$.
  \end{cor}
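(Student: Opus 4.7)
The plan is to apply \thmref{thm:RKE} directly to the inclusion $\Aff \hook \Sch$ equipped with the Zariski topology, with target \inftyCat $\sV = \Cat$ and presheaf $F = \QCoh$.

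First I would verify that $\Aff$ satisfies the conditions of \defnref{defn:basis}. It is closed under fibred products, since $\Spec(A)\fibprod_{\Spec(C)} \Spec(B) \simeq \Spec(A\otimes_C B)$, and under finite coproducts, since $\Spec(A)\coprod\Spec(B)\simeq\Spec(A\times B)$ (with the empty coproduct being $\Spec(0)$). Moreover, every scheme admits a Zariski cover by affine opens $(U_i \hook X)_i$, and $\coprod_i U_i \to X$ belongs to the class generating the Zariski topology. This exhibits $\Aff$ as a basis for $\Sch$ equipped with the Zariski topology.

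Next I would invoke \thmref{thm:descent} to conclude that $\QCoh$ is an étale sheaf, hence in particular a Zariski sheaf (any Zariski cover by open immersions is also an étale cover, so the sheaf condition for Zariski is a special case of that for étale). Since $\Cat$ is an \inftyCat admitting limits, the hypotheses of \thmref{thm:RKE} are all in place.

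Finally, \thmref{thm:RKE} applied to $F = \QCoh$ yields both that $\QCoh|_{\Aff}$ is a sheaf on $\Aff$ and that $\QCoh$ is the right Kan extension of $\QCoh|_{\Aff}$ along $\Aff \hook \Sch$, which is exactly what was to be shown. There is no real obstacle: the substantive geometric content is entirely packaged in the descent theorem \thmref{thm:descent} already established, and the basis formalism converts descent into the right Kan extension property automatically.
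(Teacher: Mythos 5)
Your proof is correct and follows the same route the paper intends: the sentence immediately preceding the corollary says precisely that $\Aff$ is a basis for the Zariski topology and that the result follows from \thmref{thm:RKE} together with \thmref{thm:descent}. You have simply filled in the routine verifications (closure of $\Aff$ under fibred products and finite coproducts, the covering condition, and that an étale sheaf is in particular a Zariski sheaf), all of which are accurate.
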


  Under the equivalence $\Aff^\op \simeq \CRing$, $\QCoh|_{\Aff}$ is identified with the functor $\CRing \to \Cat$ sending $R \mapsto \Mod_R$.
  Thus we have:

  \begin{cor}\label{cor:inweave}
    For every scheme $X$, there is a canonical equivalence of categories
    \[ \QCoh(X) \simeq \lim_{(R, x)} \Mod_R \]
    where the limit is taken over the category of pairs $(R, x)$ where $R\in\CRing$ and $x \in X(R)$ is an $R$-point, where morphisms $(R, x) \to (R', x')$ are ring homomorphisms $R \to R'$ such that $X(R) \to X(R')$ sends $x$ to $x'$.
  \end{cor}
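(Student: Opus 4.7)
The plan is to combine the preceding corollary, which identifies $\QCoh$ as the right Kan extension along $\Aff \hookrightarrow \Sch$ of its restriction to $\Aff$, with the explicit formula for right Kan extensions given in the definition preceding \thmref{thm:RKE}. The work is essentially bookkeeping: translating a limit indexed by the comma category $\Aff_{/X}$ into a limit indexed by the category of $R$-points.

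First, I would invoke the previous corollary to write
\[
  \QCoh(X) \simeq \on{RKE}_{\Aff \hookrightarrow \Sch}(\QCoh|_{\Aff})(X)
  \simeq \lim_{(Y, f)} \QCoh(Y),
\]
where the indexing $\infty$-category has as objects pairs $(Y, f)$ with $Y \in \Aff$ and $f \colon Y \to X$ a morphism of schemes, and as morphisms $(Y', f') \to (Y, f)$ the morphisms $Y' \to Y$ in $\Aff$ compatible with $f$ and $f'$.

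Next, I would apply the standard equivalence $\Aff^\op \simeq \CRing$, sending $Y = \Spec(R)$ to $R$. Under this equivalence, a morphism $f \colon \Spec(R) \to X$ of schemes is precisely an $R$-point $x \in X(R)$, and a morphism $Y' \to Y$ in $\Aff$ corresponds to a ring homomorphism $R \to R'$. Compatibility of $f$ and $f'$ translates into the condition that $X(R) \to X(R')$ sends $x$ to $x'$. Therefore the indexing category is equivalent to the category of pairs $(R, x)$ described in the statement. Finally, since $\QCoh(\Spec R) \simeq \Mod_R$ naturally in $R$ (this is the identification of $\QCoh|_{\Aff}$ with $R \mapsto \Mod_R$ used in \thmref{thm:module descent}), the limit rewrites as $\lim_{(R,x)} \Mod_R$, yielding the claimed equivalence.

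There is no real obstacle here beyond ensuring that the equivalence $\Aff^\op \simeq \CRing$ identifies $\Aff_{/X}$ with the category of pairs $(R, x)$ with the correct morphisms; once that bookkeeping is done, the statement is immediate from the right Kan extension formula. The only subtlety worth flagging is that the opposite-category convention matters: one must check that reversing the direction of arrows under $\Aff^\op \simeq \CRing$ gives exactly the morphisms $R \to R'$ compatible with the $R$-points as specified, which is straightforward.
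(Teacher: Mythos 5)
Your argument is exactly the one the paper uses: the preceding corollary identifies $\QCoh$ as the right Kan extension of its restriction to $\Aff$, and the stated limit is then just the pointwise formula for right Kan extensions transported across $\Aff^\op \simeq \CRing$. The bookkeeping about directions of morphisms that you flag is the only thing to verify, and it comes out correctly.
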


  \begin{defn}
    Let $\sX$ be a stack.
    We define the category $\QCoh(\sX)$ of quasi-coherent sheaves on $\sX$ as the limit
    \[ \QCoh(\sX) := \lim_{(R,x)} \Mod_R \]
    over the category of pairs $(R, x)$ where $R\in\CRing$ and $x \in \sX(R)$ is an $R$-point.
    More precisely, we define 
    $$\QCoh : \Stk^\op \to \Cat$$
    as the right Kan extension of the presheaf $\Spec(R) \mapsto \Mod_R$ along the inclusion $\Aff \hook \Stk$, where $\Stk$ is the \inftyCat of stacks.
  \end{defn}

  \begin{rem}
    By definition, a quasi-coherent sheaf $\sF$ on a stack $\sX$ amounts to the following data:
    \begin{defnlist}
      \item
      For every commutative ring $R$ and every $R$-point $x \in \sX(R)$, an $R$-module $\sF(x)$.
      \item
      For every ring homomorphism $R \to R'$, $R$-point $x \in \sX(R)$, and $R'$-point $x' \in \sX(R')$ such that $\sX(R) \to \sX(R')$ sends $x \mapsto x'$, an $R'$-module isomorphism
      \[\alpha_{x,x'} : \sF(x) \otimes_R R' \simeq \sF(x').\]
    \end{defnlist}
    This data is subject to the following cocycle condition: for every pair of ring homomorphisms $R \to R'$ and $R' \to R''$, $R$-point $x \in \sX(R)$, $R'$-point $x' \in \sX(R')$, and $R''$-point $x'' \in \sX(R'')$ such that $\sX(R) \to \sX(R')$ sends $x \mapsto x'$ and $\sX(R') \to \sX(R'')$ sends $x' \mapsto x''$, there is a commutative diagram
    \[\begin{tikzcd}
      (\sF(x) \otimes_R R') \otimes_{R'} R'' \ar[equals]{d}\ar{r}{\alpha_{x,x'}}
      & \sF(x') \otimes_{R'} R''\ar{r}{\alpha_{x',x''}}
      & \sF(x'') \ar[equals]{d}
      \\
      \sF(x) \otimes_R R'' \ar{rr}{\alpha_{x,x''}}
      & 
      & \sF(x'').
    \end{tikzcd}\]
  \end{rem}

%!TEX root = ../stacksncts.tex

\section{Quotient stacks}
\label{sec:torsors}

  Let $G$ be a group.
  As we discussed in \secref{sec:overview}, a $G$-action on a set $X$ may be encoded by the diagram
  \[\begin{tikzcd}
    G\times X \ar[shift left=1ex]{r}\ar[swap,shift right=1ex]{r}
    & X \ar{l}
  \end{tikzcd}\]
  where the two rightwards arrows are the action map $(g,x)\mapsto g\cdot x$ and the projection $(g,x) \mapsto x$, and the leftward arrow is the section $x \mapsto (e, x)$ where $e\in G$ is the neutral element.
  The colimit of this diagram is the (set-theoretic) quotient $X/G$.
  If we instead take the colimit in the \inftyCat $\Grpd$, call it the \emph{quotient groupoid} $[X/G]$, then we may recover the above diagram by taking the fibred product of the canonical functor $X \to [X/G]$ along itself: there is a canonical isomorphism in $\Grpd$
  \[ X \fibprod_{[X/G]} X \simeq G \times X \]
  under which the two projections of $X \fibprod_{[X/G]} X$ are identified with the action and projection maps $G\times X \to X$, and the diagonal $X \to X \fibprod_{[X/G]} X$ is identified with the section $(e,\id) : X \to G\times X$.
  Informally speaking, the quotient groupoid $[X/G]$ remembers everything about the $G$-action on $X$.

  This story may be generalized to the case where $X$ is a groupoid or even \inftyGrpd.
  In that setting, the diagram above must be replaced by a simplicial diagram
  \[
    \cdots
    \rightrightrightrightarrows G \times G \times X
    \rightrightrightarrows G \times X
    \rightrightarrows X.
  \]
  We let $\sX = [X/G]$ denote the geometric realization, i.e. the colimit taken in \inftyGrpds.
  Then we can recover the above diagram as the \v{C}ech nerve of the quotient map $X \twoheadrightarrow \sX$.
  This leads to a bijective correspondence between \emph{groupoid objects} acting on $X$ (certain simplicial diagrams) and maps $X \twoheadrightarrow \sX$ which are surjective on $\pi_0$, where one direction is formation of the geometric realization, and the other is formation of the \v{C}ech nerve.

  Note that it is essential here to work with \inftyGrpds rather than ordinary groupoids: in the $(2,1)$-category $\Grpd$, not every groupoid object is effective, so the above correspondence breaks down.
  This is one way in which $\infty$-toposes, such as $\Grpdoo$, are better behaved than $1$-toposes and $2$-toposes like $\Set$ and $\Grpd$, respectively.

  For the same reason, when we sheafify this story, we will work with the \inftyCat $\Stk_\infty$ of \emph{$\infty$-stacks} rather than the $(2,1)$-category $\Stk$ of stacks.
  Since $\Stk_\infty$ is again an $\infty$-topos \cite[Def.~6.1.0.4]{HTT}, we obtain an equivalence between groupoid objects and effective epimorphisms in $\Stk_\infty$ (\thmref{thm:effepi}).

  A secondary reason to work with $\infty$-stacks is that quotient stacks force us out of the world of $1$-stacks in any case: the quotient $[U/G]$ of a stack $U$ by a group stack $G$ is in general a \emph{$2$-stack}, not a $1$-stack.
  If $G$ is a group $n$-stack, then $BG = [\pt/G]$ is an $(n+1)$-stack (\propref{prop:quotient truncation}), so at every stage we are pushed into higher stacks.
  See \examref{exam:B2Gm} for a concrete instance of this phenomenon.

  % More generally, we may consider simplicial diagrams of the form
  % \[
  %   \cdots
  %   \rightrightrightrightarrows U \fibprod_X U \fibprod_X U
  %   \rightrightrightarrows U \fibprod_X U
  %   \rightrightarrows U
  % \]
  % where $U \to X$ is a morphism

  % (Via the colimit-preserving functor $\pi_0 : \Grpd \to \Set$ sending a groupoid to its set of connected components we recover the colimit taken in $\Set$, i.e.
  % the set-theoretic quotient: $\pi_0[X/G] \simeq X/G$.)

  \ssec{$\infty$-Stacks}\label{ssec:oostacks}

    \begin{defn}\label{defn:ngrpd}
      An \inftyGrpd $X$ is \emph{$n$-truncated} (for $n \ge -1$) if $\pi_i(X, x) = 0$ for all $i > n$ and all base points $x$.
      An \emph{$n$-groupoid} is an $n$-truncated \inftyGrpd.
    \end{defn}

    \begin{rem}\label{rem:ngrpd-examples}
      A $0$-groupoid is a set, and a $1$-groupoid is an ordinary groupoid.
      The fully faithful functors $\Set \hook \Grpd \hook \Grpdoo$ of \secref{sec:oocats} identify sets with $0$-truncated \inftyGrpds and ordinary groupoids with $1$-truncated \inftyGrpds.
      More generally, for each $n \ge 0$ there is a fully faithful inclusion of the \inftyCat of $n$-groupoids into $\Grpdoo$, which admits a left adjoint $\tau_{\le n} : \Grpdoo \to \Grpdoo$ called the \emph{$n$-truncation} functor.
    \end{rem}

    \begin{defn}\label{defn:Stkoo}
      Let $\Stk_\infty$ denote the \inftyCat of \emph{$\infty$-stacks}, i.e., étale sheaves of \inftyGrpds $\sX : \Sch^\op \to \Grpdoo$.
    \end{defn}

    \begin{defn}\label{defn:nstack}
      An $\infty$-stack $\sX$ is \emph{$n$-truncated} if the \inftyGrpd $\sX(T)$ is $n$-truncated for all schemes $T$.
      An \emph{$n$-stack} is an $n$-truncated $\infty$-stack.
    \end{defn}

    \begin{rem}\label{rem:stacks-as-1stacks}
      A $0$-stack is an étale sheaf of sets, and a $1$-stack is a stack in the sense of \secref{sec:sheaves}.
      The fully faithful functor $\Grpd \hook \Grpdoo$ induces a fully faithful functor
      \[ \Stk \hook \Stk_\infty \]
      which identifies stacks with $1$-truncated $\infty$-stacks.
      Any scheme (regarded as a representable sheaf of sets) is $0$-truncated.
    \end{rem}

    \begin{exam}\label{exam:B2Gm}
      Let $H$ be an abelian group scheme over $S$.
      Since $H$ is commutative, the classifying stack $BH$ admits the structure of a group stack: the multiplication $BH \times BH \to BH$ sends a pair of $H$-torsors to their \emph{Baer sum} (contracted product over $H$).
      The classifying stack of the group stack $BH$ is a $2$-stack
      \[ B(BH) \simeq B^2 H \]
      which is not a $1$-stack unless $H$ is trivial.
      For $H = \bG_m$, the $2$-stack $B^2\bG_m$ parametrizes $\bG_m$-gerbes: a $T$-point of $B^2\bG_m$ is a $\bG_m$-gerbe over $T$.
      Its set of isomorphism classes $\pi_0(B^2\bG_m(T))$ is the cohomological Brauer group $\H^2_\et(T, \bG_m)$.
      More generally, $B^n H$ is an $n$-stack for all $n \ge 0$.
    \end{exam}

  \ssec{Groupoid objects and effective epimorphisms}

    \begin{defn}
      A \emph{groupoid object} in an \inftyCat $\sC$ is a simplicial diagram $U_\bullet : \bDelta^\op \to \sC$ such that for all $[n]\in\bDelta$ and all partitions $[n]= S \cup S'$ into subsets $S$ and $S'$ such that $S \cap S'$ consists of a single element $m \in [n]$, the square
      \[\begin{tikzcd}
        U_n = U_\bullet([n]) \ar{r}\ar{d}
        & U_\bullet(S) \ar{d}
        \\
        U_\bullet(S') \ar{r}
        & U_\bullet(\{m\}) = U_0
      \end{tikzcd}\]
      is cartesian.
      A \emph{morphism} of groupoid objects is a morphism of simplicial diagrams (i.e., a natural transformation).
    \end{defn}

    \begin{rem}
      Given a groupoid object $U_\bullet$, we think of $U_0 \in \sC$ as the object of ``objects'' of $U_\bullet$ and $U_1 \in \sC$ as the object of ``morphisms'' of $U_\bullet$.
      The above condition for ``ordered''\footnote{%
        i.e., partitions $[n]=S\cup S'$ such that $s \le s'$ for all $s\in S$ and $s' \in S'$
      } partitions such as $[2] = \{0,1\} \cup \{1,2\}$ means, informally speaking, that we may fill all inner horns: for example, the morphism $(d_2^0, d_2^2) : U_2 \to U_1 \fibprod_{U_0} U_1$ is invertible, where the two morphisms $U_1 \to U_0$ are ``source'' and ``target'', so that there is a ``composition'' morphism
      \[ U_1 \fibprod_{U_0} U_1 \simeq U_2 \xrightarrow{d_2^1} U_1. \]
      Similarly, for ``unordered'' partitions such as $[2] = \{0,1\} \cup \{0,2\}$ or $[2] = \{0,2\} \cup \{1,2\}$, the condition allows us to fill outer horns (and thus choose inverses to ``morphisms'' in $U_\bullet$).
    \end{rem}

    \begin{exam}
      Given a $1$-groupoid $\sG$, its nerve $\sG_\bullet := N(\sG) \in \SSet$ defines a groupoid object in the $1$-category $\Set$.
      In fact, the assignment $\sG \mapsto \sG_\bullet$ determines an equivalence from the $1$-category of $1$-groupoids to the \inftyCat of groupoid objects in $\Set$.
      We may also regard $\sG_\bullet$ as a groupoid object in $\Grpdoo$ (via the fully faithful functor $\Fun(\bDelta^\op, \Set) \hook \Fun(\bDelta^\op, \Grpdoo)$ induced by the embedding $\Set \hook \Grpdoo$ of sets as discrete \inftyGrpds).
    \end{exam}

    \begin{exam}\label{exam:group}
      Given a group $G$, we may consider the $1$-groupoid with a single object $\ast$ and endomorphism group $\End(\ast) = G$, with composition law defined by the multiplication law of $G$.
      The corresponding groupoid object $G_\bullet$ is a simplicial diagram of the form
      \[
        \cdots
        \rightrightrightrightarrows G \times G
        \rightrightrightarrows G
        \rightrightarrows \pt,
      \]
      where the face maps are induced by the group multiplication $m : G \times G \to G$ and the degeneracy maps are induced by the neutral element $e : \pt \to G$.
      % In particular, $\sG_\bullet$ is a group object of $\Set$ and hence of $\Grpdoo$.
      % Regarded as a simplicial diagram $\bDelta^\op \to \Grpdoo$ (by regarding sets as discrete \inftyGrpds), this defines a group object of $\Grpdoo$.
    \end{exam}

    \begin{exam}
      Let $f : U \to X$ be a morphism in $\sC$ such that the iterated fibred products $U\fibprod_X \cdots \fibprod_X U$ all exist in $\sC$.
      The \emph{\v{C}ech nerve} $U_\bullet$ of $f$ is the simplicial diagram
      \[
        \cdots
        \rightrightrightrightarrows U \fibprod_X U \fibprod_X U
        \rightrightrightarrows U \fibprod_X U
        \rightrightarrows U.
      \]
      More precisely, let $\bDelta_{\le 0,+} \sub \bDelta_+$ denote the full subcategory spanned by the objects $[0]$ and $[-1]$.
      Then the morphism $f$ determines a diagram $\bDelta_{\le 0,+}^\op \to \sC$, whose right Kan extension is an augmented simplicial diagram $U_\bullet^+ : \bDelta_+^\op \to \sC$.
      The \emph{\v{C}ech nerve} of $f$ is its restriction $U_\bullet := U_\bullet^+|_{\bDelta}$.
      This is always a groupoid object in $\sC$ (see \cite[Prop.~6.1.2.11]{HTT}).
    \end{exam}

    \begin{notat}
      Given a simplicial diagram $U_\bullet : \bDelta^\op \to \sC$, we will also refer to its colimit as its \emph{geometric realization}, and denote it by
      \[ \abs{U_\bullet} := \colim_{[n]\in\bDelta} U_n \in \sC \]
      when it exists.
    \end{notat}

    \begin{defn}
      A morphism $f : U \to X$ in $\sC$ is an \emph{effective epimorphism} if the iterated fibred products $U\fibprod_X \cdots \fibprod_X U$ all exist in $\sC$, and the augmented simplicial diagram
      \[
        \cdots
        \rightrightrightrightarrows U \fibprod_X U \fibprod_X U
        \rightrightrightarrows U \fibprod_X U
        \rightrightarrows U
        \to X
      \]
      is a colimit diagram, i.e., exhibits $X$ as the geometric realization of the \v{C}ech nerve $U_\bullet$.
    \end{defn}

    We now specialize to the \inftyCat $\Stk_\infty$ of $\infty$-stacks (Definition~\ref{defn:Stkoo}).
    The following two results hold more generally for sheaves of \inftyGrpds on any site (see \cite[Thm.~6.1.0.6, Cor.~6.2.3.5]{HTT}):

    \begin{thm}[Lurie]\label{thm:effepi}\leavevmode
      \begin{thmlist}
        \item
        Let $U_\bullet$ be a groupoid object in $\Stk_\infty$.
        Then the canonical morphism $U_0 \to \abs{U_\bullet}$ is an effective epimorphism in $\Stk_\infty$.

        \item
        The assignment sending $U_\bullet$ to the morphism $U_0 \twoheadrightarrow \abs{U_\bullet}$ determines an equivalence from the \inftyCat of groupoid objects in $\Stk_\infty$ to the \inftyCat of effective epimorphisms in $\Stk_\infty$.
        Its inverse is the functor forming the \v{C}ech nerve.
      \end{thmlist}
    \end{thm}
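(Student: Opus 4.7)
The plan is to reduce both statements to the following key assertion: for any groupoid object $U_\bullet$ in $\Stk_\infty$, the \v{C}ech nerve of the canonical map $U_0 \to \abs{U_\bullet}$ is canonically equivalent to $U_\bullet$. Given this, part (i) is immediate from the definition of effective epimorphism. Part (ii) then follows formally: starting with an effective epimorphism $f : U \twoheadrightarrow X$, its \v{C}ech nerve is already known (as mentioned in the excerpt) to be a groupoid object, and its geometric realization recovers $X$ by definition; the key assertion provides the opposite composition, and functoriality of the two constructions upgrades this to an equivalence of \inftyCats.

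To establish the key assertion, I would reduce from $\Stk_\infty$ to presheaves of \inftyGrpds on $\Sch$ via the sheafification functor $L : \Fun(\Sch^\op, \Grpdoo) \to \Stk_\infty$, which is a left exact localization: it preserves finite limits (hence \v{C}ech nerves) and, being a left adjoint, preserves colimits (hence geometric realizations). Since limits and colimits of presheaves of \inftyGrpds are computed objectwise, the presheaf-level statement reduces in turn to the corresponding statement in $\Grpdoo$ at each affine scheme.

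The main obstacle is therefore the assertion in $\Grpdoo$: that every groupoid object is effective. The standard approach combines the universality of colimits in $\Grpdoo$ (pullbacks commute with arbitrary colimits) with the Segal-type cartesian squares defining a groupoid object. Writing $X = \abs{U_\bullet}$, universality gives $U_0 \fibprod_X U_0 \simeq \colim_{[n]} (U_0 \fibprod_X U_n)$, and a careful analysis of this simplicial homotopy colimit, iterated on $n$, allows one to identify the $n$th term of the \v{C}ech nerve with $U_n$. This combinatorial heart of the Giraud--Rezk--Lurie characterization of $\infty$-topoi is carried out in full detail in \cite[\S 6.1]{HTT}; in practice one simply invokes \cite[Thm.~6.1.0.6, Cor.~6.2.3.5]{HTT}.
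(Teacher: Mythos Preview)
The paper does not give its own proof of this theorem: it simply states the result and cites \cite[Thm.~6.1.0.6, Cor.~6.2.3.5]{HTT} in the sentence preceding it. Your proposal is consistent with that citation and in fact supplies more detail than the paper does, namely the reduction from $\Stk_\infty$ to $\Grpdoo$ via the left-exact localization $L$, before invoking the same references for the combinatorial core. Your reduction is correct (sheafification preserves both the \v{C}ech nerve, as a finite limit, and the geometric realization, as a colimit), so nothing is lost; but be aware that in the context of these notes the expected ``proof'' is just the citation.
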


    It will also be useful to have the following more explicit description of effective epimorphisms:

    \begin{prop}\label{prop:local surj}
      A morphism $f : U \to X$ is an effective epimorphism in $\Stk_\infty$ if and only if it is \emph{étale-locally surjective}, i.e., for every commutative ring $R$ and every $R$-point $x \in X(R)$, there exists a faithfully flat étale homomorphism $R \to R'$ such that the image of $x$ in $X(R')$ belongs to the essential image of the functor $f(R') : U(R') \to X(R')$.
    \end{prop}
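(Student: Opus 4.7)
The plan is to reduce to the classical fact that, in an ordinary topos of sheaves of sets on a site, a morphism is an epimorphism if and only if it is locally surjective with respect to the covering sieves. The bridge is the $0$-truncation functor $\tau_{\le 0} : \Stk_\infty \to \Stk$, left adjoint to the inclusion of sheaves of sets into $\Stk_\infty$; explicitly, $\tau_{\le 0}(X)$ is the étale sheafification of the presheaf $R \mapsto \pi_0(X(R))$.

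First I would recall from \cite[\S6.2.3]{HTT} that in an $\infty$-topos, a morphism $f : U \to X$ is an effective epimorphism if and only if its $0$-truncation $\tau_{\le 0}(f) : \tau_{\le 0}(U) \to \tau_{\le 0}(X)$ is an effective epimorphism in the ordinary topos of discrete objects. The key input is that the $(-1)$-image of $f$ (the smallest $(-1)$-truncated subobject of $X$ through which $f$ factors) coincides with the $(-1)$-image of $\tau_{\le 0}(f)$, and $f$ is an effective epimorphism exactly when this $(-1)$-image is all of $X$. Equivalently, $\tau_{\le 0}$ preserves colimits (hence Čech-nerve geometric realizations) and reflects surjectivity of sheaves of sets since it is itself a localization.

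Second, I would apply the classical fact that in the ordinary topos of étale sheaves of sets on $\Aff$, a morphism $F \to G$ is an epimorphism precisely when, for every $R \in \CRing$ and every $s \in G(R)$, there exists a faithfully flat étale $R \to R'$ such that $s|_{R'}$ lies in the image of $F(R') \to G(R')$. Applied to $\tau_{\le 0}(U) \to \tau_{\le 0}(X)$, and after unwinding the plus-construction defining $\tau_{\le 0}$ as $\pi_0$-sheafification, this translates into the following: for every $x \in X(R)$, there exists a faithfully flat étale $R \to R'$ and an element $u \in U(R')$ together with an isomorphism $f(R')(u) \simeq x|_{R'}$ in the groupoid $X(R')$; in other words, $x|_{R'}$ lies in the essential image of the functor $f(R') : U(R') \to X(R')$.

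The main obstacle is handling the sheafification step carefully: a section of $\tau_{\le 0}(X)(R)$ need not be represented by a single element of $\pi_0(X(R))$ but only by a compatible system on some étale cover, and likewise the condition of lying in the image of $\tau_{\le 0}(U)$ is a priori a condition only after possibly refining the cover further. Since étale covers are stable under composition and pullback (as used implicitly in \constrref{constr:topology}), one can always pass to a single cover $R \to R'$ on which both the representative of $x$ and its lift along $f$ are simultaneously realized, which yields precisely the formulation in the proposition.
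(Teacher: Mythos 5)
The paper states this proposition without proof, so there is no in-text argument for direct comparison; what you propose — reducing to the ordinary topos of set-valued sheaves via $0$-truncation and then invoking the classical local-surjectivity criterion for epimorphisms — is the standard and, I believe, the intended route. The key input, that a morphism in an $\infty$-topos is an effective epimorphism if and only if its $0$-truncation is an effective epimorphism in the underlying $1$-topos, is correct; however, the relevant reference is \cite[Prop.~7.2.1.14]{HTT} rather than \S 6.2.3 (which introduces effective epimorphisms but does not prove this reduction). Your parenthetical justification that $\tau_{\le 0}$ ``reflects surjectivity of sheaves of sets since it is itself a localization'' is not a valid argument on its own — sheafification is also a localization and manifestly does not reflect epimorphisms — so you should lean on the Lurie citation rather than on this heuristic.

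The one place where the writeup is genuinely muddled is the final paragraph. Since the hypothesis of the proposition hands you an actual $R$-point $x \in X(R)$, its image $[x] \in \pi_0(X(R))$ is a genuine element of the unsheafified presheaf; there is no issue about $x$ being ``represented only by a compatible system on an étale cover.'' The two refinements that do need to be made explicit in the direction (effective epi $\Rightarrow$ étale-locally surjective) are: (a) a lift of $[x]$ in $\tau_{\le 0}(U)(R')$ is a priori a section of the sheafification and must be represented by an honest class in $\pi_0(U(R''))$ after a further étale cover $R' \to R''$; and (b) the resulting equality between $f([u])$ and $[x]|_{R''}$ a priori holds only in $\tau_{\le 0}(X)(R'')$, and must be upgraded to an equality in $\pi_0(X(R'''))$ — equivalently, to an isomorphism in the groupoid $X(R''')$ — after yet another cover. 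Both steps use that a single faithfully flat étale morphism suffices, since on $\Aff$ a finite covering family $(R \to R_i)_i$ may be replaced by the single map $R \to \prod_i R_i$, and étale covers compose. With this clarification the argument is complete; the converse direction is immediate since the hypothesis supplies a lift in $\pi_0(U)$ on the nose, with no sheafification to unwind.
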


    \begin{exam}\label{exam:pointless}
      A smooth morphism of schemes $f : X \to Y$ is surjective if and only if it is an effective epimorphism.
      % Indeed, recall that $f$ is surjective if and only if it is surjective on $\kappa$-points for all algebraically closed fields $\kappa$.
      % Indeed, for a smooth morphism the condition of surjectivity is equivalent to the existence of an étale surjection of schemes $p : Y' \twoheadrightarrow Y$ and a morphism $s : Y' \to X$ such that $f \circ s = p$.
    \end{exam}

  \ssec{Group actions}

    \begin{defn}
      A \emph{group object} in an \inftyCat $\sC$ is a groupoid object $G_\bullet$ such that $G_0 \in \sC$ is a terminal object.
      Given a group object $G_\bullet$, we will often abuse notation by saying that $G := G_1 \in \sC$ is a group object.
    \end{defn}

    \begin{exam}
      For any group $G$, the corresponding groupoid object $G_\bullet$ in $\Set$ (\examref{exam:group}) is a group object.
      It may also be regarded as a group object in $\Grpd$, which we still denote by $G_\bullet$.
      Similarly, if $G : \Sch^\op \to \mrm{Grp}$ is an (étale) sheaf of groups (such as represented by a group scheme), then it determines a group object in $\Shv(\Sch; \Set)$ and hence also in $\Stk$ (or $\Stk_\infty$).
    \end{exam}

    \begin{defn}
      Let $G_\bullet$ be a group object in an \inftyCat $\sC$.
      An \emph{action} of $G_\bullet$ on an object $U \in \sC$ is a groupoid object $U_\bullet$ with an isomorphism $U_0 \simeq U$ and a morphism of simplicial diagrams $U_\bullet \to G_\bullet$ such that for every morphism $[m] \to [n]$ in $\bDelta$ sending $0 \mapsto 0$, the square
      \[\begin{tikzcd}
        U_n \ar{r}\ar{d}
        & U_m \ar{d}
        \\
        G_n \ar{r}
        & G_m
      \end{tikzcd}\]
      is cartesian.
      % for every $[n]\in\bDelta$, the square
      % \[\begin{tikzcd}
      %   U_n \ar{r}\ar{d}
      %   & U_0 \ar{d}
      %   \\
      %   G_n \ar{r}
      %   & G_0,
      % \end{tikzcd}\]
      % where the horizontal arrows are induced by the morphism $[0] \to [n]$ in $\bDelta$ sending $0 \mapsto 0$, is cartesian.
      % \begin{defnlist}
        % \item
        % For every $[n]\in\bDelta$, the square
        % \[\begin{tikzcd}
        %   U_n \ar{r}\ar{d}
        %   & U_0 \ar{d}
        %   \\
        %   G_n \ar{r}
        %   & G_0
        % \end{tikzcd}\]
        % is cartesian.

        % \item
        % The square
        % \[\begin{tikzcd}
        %   U_1 \ar{r}{d^1}\ar{d}
        %   & U_0 \ar{d}
        %   \\
        %   G_1 \ar{r}{d^1}
        %   & G_0
        % \end{tikzcd}\]
        % is cartesian.
      % \end{defnlist}
    \end{defn}

    \begin{rem}\label{rem:Wertherian}
      In the definition above, it is equivalent to require that for every $[n]\in\bDelta$, the square
      \[\begin{tikzcd}
        U_n \ar{r}\ar{d}
        & U_0 \ar{d}
        \\
        G_n \ar{r}
        & G_0,
      \end{tikzcd}\]
      where the horizontal arrows are induced by the morphism $[0] \to [n]$ in $\bDelta$ sending $0 \mapsto 0$, is cartesian.
      In particular, there is for every $[n]\in\bDelta$ a canonical isomorphism $U_n \simeq G_n \fibprod_{G_0} U_0 \simeq G^{\times n} \times U$.
      Thus the simplicial diagram $U_\bullet$ is of the form
      \[
        \cdots
        \rightrightrightrightarrows G \times G \times U
        \rightrightrightarrows G \times U
        \rightrightarrows U.
      \]
      Moreover, the face map $d^1 : G \times U \to U$ is the projection $(g,u)\mapsto u$, since the square
      \[\begin{tikzcd}
        U_1 \ar{r}{d^1}\ar{d}
        & U_0 \ar{d}
        \\
        G_1 \ar{r}{d^1}
        & G_0
      \end{tikzcd}\]
      is cartesian by assumption.
      Similarly, the degeneracy map $s^0 : U \to G \times U$ is the section $(e', \id)$ where $e' : U \to \pt \to G$ with $\pt$ the terminal object and $e=s^0 : \pt \to G$ the ``neutral element'' (part of the data of the group object $G_\bullet$).
      % ; this follows from the commutative diagram
      % \[\begin{tikzcd}
      %   U_0 \ar{r}{s^0}\ar{d}
      %   & U_1 \ar{r}{d^1}\ar{d}
      %   & U_0 \ar{d}
      %   \\
      %   G_0 \ar{r}{s^0}
      %   & G_1 \ar{r}{d^1}
      %   & G_0
      % \end{tikzcd}\]
      % where the right-hand square is cartesian and the horizontal composites are identity, so that the left-hand square is also cartesian.
    \end{rem}

    \begin{notat}
      Let $U_\bullet$ be an action of a group object $G_\bullet$ on $U\in\sC$.
      By \remref{rem:Wertherian}, the face map $d^0 : U_1 \to U_0$ determines a morphism
      \[ \mrm{act} : G \times U \to U \]
      which we call the \emph{action morphism}.
      Informally speaking, we may summarize the data of the action $U_\bullet$ as an action morphism $G \times U \to U$ which satisfies the analogues of the usual axioms of a group action \emph{up to coherent homotopy}.
    \end{notat}

    \begin{defn}
      We say that an action $U_\bullet$ is \emph{trivial} if for \emph{every} morphism $[m] \to [n]$ the square
      \[\begin{tikzcd}
        U_n \ar{r}\ar{d}
        & U_m \ar{d}
        \\
        G_n \ar{r}
        & G_m
      \end{tikzcd}\]
      is cartesian.
      In particular, the action morphism $G \times U \to U$ is in this case also the projection.
      For any object $U$, the trivial action on $U$ is defined by the groupoid $U^{\mrm{triv}}_\bullet := G_\bullet \fibprod_{G_0} U$.
    \end{defn}

    \begin{notat}
      We will often abuse language by saying ``let $G$ be a group object acting on $U$'' instead of ``let $U_\bullet$ be an action of a group object $G_\bullet$ on $U$''.
      If we need to speak of $U_\bullet$, we will refer to it as the \emph{action groupoid} (or \emph{quotient groupoid}).
      If $G$ acts on $U$ and $V$ (meaning that there are actions $U_\bullet$ and $V_\bullet$ with $U_0 \simeq U$ and $V_0 \simeq V$), a \emph{$G$-equivariant morphism} $f : U \to V$ is a morphism of simplicial diagrams $f_\bullet : U_\bullet \to V_\bullet$ fitting in a commutative diagram as follows.
      \[\begin{tikzcd}
        U_\bullet\ar{rr}{f_\bullet}\ar{rd}
        & & V_\bullet \ar{ld}
        \\
        & G_\bullet &
      \end{tikzcd}\]
    \end{notat}

  \ssec{Torsors}

    \begin{defn}\label{defn:torsor}
      Let $X$ be a stack and let $G$ be a group object in $\Stk$.
      A \emph{$G$-torsor} over $X$ is a $G$-equivariant morphism $\pi : U \to X$ where $U$ is a stack with an action of $G$ and $X$ is regarded with trivial $G$-action, satisfying the following conditions:
      \begin{defnlist}
        \item
        $\pi : U \to X$ is an effective epimorphism;
        \item\label{item:preincentive}
        the canonical morphism of simplicial diagrams $U_\bullet \to C_\bullet$ is invertible, where $U_\bullet$ is the action groupoid and $C_\bullet$ denotes the \v{C}ech nerve of $\pi : U \to X$.
      \end{defnlist}
    \end{defn}

    \begin{rem}
      It is immediate from the definition that if $\pi : U \to X$ is a $G$-torsor, then we may identify $X$ as the geometric realization of the action groupoid $U_\bullet$.
      That is, there is a colimit diagram
      \[
        \cdots
        \rightrightrightrightarrows G \times G \times U
        \rightrightrightarrows G \times U
        \rightrightarrows U
        \to X.
      \]
    \end{rem}

    \begin{rem}\label{rem:snapwood}
      Condition~\itemref{item:preincentive} in \defnref{defn:torsor} implies that the square
      \[\begin{tikzcd}
        U_1 \ar{r}{d^0}\ar{d}{d^1}
        & U_0 \ar{d}
        \\
        U_0 \ar{r}
        & X
      \end{tikzcd}\]
      is cartesian, i.e., that the canonical morphism
      $$(\mrm{act}, \mrm{pr}) : G \times U \to U \fibprod_X U$$
      is invertible.
      In fact, one can show that this is equivalent to \itemref{item:preincentive}.
    \end{rem}

    \begin{rem}
      We will also consider the variant of \defnref{defn:torsor} over a fixed base scheme $S$.
      This amounts to looking at presheaves on $\Sch_S$ instead of $\Sch$; for example, a stack $X : \Sch^\op \to \Grpd$ over $S$ is the same data as a sheaf $X : \Sch_S^\op \to \Grpd$.
      Below, we will leave $S$ implicit and simply denote it by ``$\pt$''.
    \end{rem}

    \begin{exam}
      The trivial $G$-torsor is the projection $\pi : G \times X \to X$.
      More precisely:
      \begin{defnlist}
        \item
        The $G$-action on $G \times X$ is given by the groupoid object $U_\bullet$:
        \[
          \cdots
          \rightrightrightrightarrows G\times G \times G \times X
          \rightrightrightarrows G \times G \times X
          \rightrightarrows G \times X
        \]
        where the face maps are $d^0 = m \times \id$ (where $m : G \times G \to G$ is the multiplication of the group) and $d^1 = \pr_1$.
        In other words, $G$ acts by multiplication on the first component and trivially on the second.

        \item
        The $G$-equivariant map $\pi : G \times X \to X$ is given by the morphism of simplicial diagrams $U_\bullet \to X^\mrm{triv} = G_\bullet \times X$:
        \[
          \begin{tikzcd}
            \cdots \ar{r}\ar[shift right=0.67ex]{r}\ar[swap,shift left=0.67ex]{r}
            & G\times G\times X \ar[shift left=0.5ex]{r}\ar[swap,shift right=0.5ex]{r}\ar{d}
            & G\times X\ar{d}
            \\
            \cdots \ar{r}\ar[shift right=0.67ex]{r}\ar[swap,shift left=0.67ex]{r}
            & G\times X \ar[shift left=0.5ex]{r}\ar[swap,shift right=0.5ex]{r}
            & X
          \end{tikzcd}
        \]
        which on level $n$ is given by projecting onto the last $n+1$ components of $G^{n+1} \times X$.

        \item
        Note that $G \to \pt$ is an effective epimorphism, since it admits a section $e : \pt \to G$.
        Hence its base change $\pi : G \times X \to X$ is an effective epimorphism.

        \item
        The square
        \[\begin{tikzcd}
          G \times G \times X \ar{r}{m\times \id}\ar{d}{\pr_{2,3}}
          & G \times X \ar{d}{\pr_2}
          \\
          G \times X \ar{r}{\pr_2}
          & X
        \end{tikzcd}\]
        is cartesian, hence $U_\bullet$ is identified with the \v{C}ech nerve of $\pi$ by \remref{rem:snapwood}.
        Indeed, this square is obtained from the cartesian square
        \[\begin{tikzcd}
          G \times G \ar{r}{\pr_1}\ar{d}{\pr_{2}}
          & G \ar{d}
          \\
          G \ar{r}
          & \pt
        \end{tikzcd}\]
        by precomposing with the inverse of the morphism $(m, \pr_2) : G \times G \to G \times G$ (which is invertible since $G$ is a group).
      \end{defnlist}
    \end{exam}

    \begin{defn}
      We say that a $G$-torsor $\pi : U \to X$ is \emph{trivial} if $U$ is $G$-equivariantly isomorphic to $G \times X$ over $X$.
      Note that this is equivalent to the existence of a section $s : X \to U$ (so that $\pi \circ s \simeq \id$), since such $s$ gives rise to an isomorphism
      \[ G \times X \xrightarrow{\id\times s} G \times U \xrightarrow{\mrm{act}} U. \]
    \end{defn}

  \ssec{Quotient stacks}

    \begin{defn}
      Let $G$ be a group stack over a base scheme $S$ (i.e., a group object in $\Stk$).
      Let $U$ be a stack over $S$ with $G$-action.
      The \emph{quotient stack} $[U/G]$ is defined as the geometric realization of the action groupoid $U_\bullet$, i.e.:
      \[ [U/G] := \abs{U_\bullet} = \colim_{[n]\in\bDelta} U_n, \]
      where the colimit is taken in $\Stk_\infty$.
      In particular, there is a colimit diagram
      \[
        \cdots
        \rightrightrightrightarrows G \times G \times U
        \rightrightrightarrows G \times U
        \rightrightarrows U
        \twoheadrightarrow [U/G]
      \]
      in $\Stk_\infty$.
    \end{defn}

    A priori, $[U/G]$ is only an $\infty$-stack.
    In fact, it is $1$-truncated (i.e., a stack) when $G$ is a group algebraic space.
    More generally, the construction $[U/G]$ makes sense for any group $\infty$-stack $G$ acting on an $\infty$-stack $U$, and we have:

    \begin{prop}\label{prop:quotient truncation}
      If $G$ is a $k$-truncated group $\infty$-stack and $U$ is an $n$-truncated $\infty$-stack, then $[U/G]$ is $\max(k+1,n)$-truncated.
      In particular:
      \begin{thmlist}
        \item\label{item:trunc-scheme}
        If $G$ is a group algebraic space and $U$ is a stack, then $[U/G]$ is a stack.
        \item\label{item:trunc-BG}
        If $G$ is a group $n$-stack, then $BG = [\pt/G]$ is an $(n+1)$-stack.
      \end{thmlist}
    \end{prop}
    \begin{proof}
      Since colimits of $\infty$-prestacks are computed sectionwise, the sectionwise geometric realization $\abs{U_\bullet}^\mrm{pre}$ satisfies $\abs{U_\bullet}^\mrm{pre}(T) = \abs{U_\bullet(T)}$ in $\Grpdoo$ for each $T$.
      The long exact sequence on homotopy groups for the fibre sequence $G(T) \to U(T) \to \abs{U_\bullet(T)}$ shows that $\abs{U_\bullet(T)}$ is $\max(k+1,n)$-truncated.
      Since the sheafification functor $L : \Fun(\Sch_S^\op, \Grpdoo) \to \Stk_\infty$ is left exact, it preserves $m$-truncated objects, so $[U/G] = L(\abs{U_\bullet}^\mrm{pre})$ is $\max(k+1,n)$-truncated.
    \end{proof}

    \begin{rem}
      By construction, the quotient morphism $p : U \twoheadrightarrow [U/G]$ is a $G$-torsor.
      % Indeed, it is an effective epimorphism by definition, and there is a cartesian square
      % \[\begin{tikzcd}
      %   G \times U \ar{r}{\mrm{act}}\ar{d}{\pr}
      %   & U \ar{d}{p}
      %   \\
      %   U \ar{r}{p}
      %   & {[U/G]}.
      % \end{tikzcd}\]
    \end{rem}

    \begin{rem}
      Let $\abs{U_\bullet}^\mrm{PreStk}$ denote the geometric realization taken in the $\infty$-category of $\infty$-prestacks over $S$ (i.e., in the \inftyCat $\Fun(\Sch_S^\op, \Grpdoo)$).
      Then $\abs{U_\bullet}^\mrm{PreStk}$ is given by the functor of points sending $T \in \Sch_S$ to the geometric realization $\abs{U_\bullet(T)}$ of the simplicial diagram of $\infty$-groupoids $U_\bullet(T)$.
      This is rarely a stack, and there is a canonical morphism of $\infty$-prestacks
      \[ \abs{U_\bullet}^\mrm{PreStk} \to \abs{U_\bullet} = [U/G] \]
      which exhibits $[U/G]$ as the sheafification.
    \end{rem}

    We can compute the functor of points of $[U/G]$ as follows:

    \begin{thm}\label{thm:quotient}
      For every $S$-scheme $T$, there is a (functorial) equivalence of groupoids between $[U/G](T)$ and the groupoid of diagrams
      \[ T \xleftarrow{\pi} Y \xrightarrow{f} U \]
      where $Y$ is an $S$-stack with $G$-action, $f : Y \to U$ is a $G$-equivariant $S$-morphism, and $\pi$ is a $G$-torsor over $T$.
      An isomorphism $(Y, f, \pi) \to (Y', f', \pi')$ is a $G$-equivariant isomorphism $\phi : Y \to Y'$ together with commutative squares expressing the compatibility of $\phi$ with $f$ and $f'$, and with $\pi$ and $\pi'$.
    \end{thm}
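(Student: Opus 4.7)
The plan is to construct mutually inverse functors between $[U/G](T)$ and the groupoid $\sT(T)$ of triples $(Y, \pi, f)$ described in the statement. The forward direction sends a map $t$ to the base change of the universal $G$-torsor $p : U \twoheadrightarrow [U/G]$ recorded in the preceding remark; the backward direction sends a triple to the geometric realization of the induced morphism of action groupoids.

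\emph{Forward functor.} Given $t : T \to [U/G]$, form $Y := T \fibprod_{[U/G]} U$. Since $G$-torsors are stable under base change---a direct consequence of \defnref{defn:torsor} together with the pasting law for cartesian squares---the projection $\pi : Y \to T$ is a $G$-torsor, and the other projection $f : Y \to U$ is tautologically $G$-equivariant. The construction is manifestly functorial in $t$.

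\emph{Backward functor and inverses.} Given $(Y, \pi, f) \in \sT(T)$, the torsor condition identifies the action groupoid $Y_\bullet$ with the \v{C}ech nerve of $\pi$, and \thmref{thm:effepi} yields $T \simeq \abs{Y_\bullet}$. The $G$-equivariance of $f$ promotes it to a morphism $f_\bullet : Y_\bullet \to U_\bullet$ of action groupoids, whose geometric realization produces the desired map $T \simeq \abs{Y_\bullet} \to \abs{U_\bullet} = [U/G]$. The two constructions are mutually inverse: starting from $t$, the action groupoid of $T \fibprod_{[U/G]} U$ is the base change of the \v{C}ech nerve of $p$, whose realization recovers $t$ by the universal property of $[U/G]$; starting from $(Y, \pi, f)$, the pullback of $p$ along the constructed morphism reproduces $Y$ together with its equivariant map to $U$, again by \thmref{thm:effepi} applied to $\pi$.

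\emph{Main obstacle.} The genuine subtlety is $\infty$-categorical coherence: the assignments must define \emph{functors} of groupoids and the unit and counit must be equivalences, not merely bijections on isomorphism classes. The bulk of this coherent bookkeeping is subsumed by \thmref{thm:effepi}, which asserts an equivalence of \inftyCats between groupoid objects and effective epimorphisms; invoking it once for $p$ and once for $\pi$ short-circuits the otherwise delicate verification. The one remaining nontrivial check is that base change along $t$ preserves the full \emph{action groupoid structure} (i.e.\ the maps $U_n \to G_n$ encoding the $G$-action) and not merely the underlying \v{C}ech nerve; this follows from the canonical identification of simplicial diagrams $Y_\bullet \simeq T \fibprod_{[U/G]} U_\bullet$ together with the triviality of the $G$-action on $T$.
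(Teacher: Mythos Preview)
Your proposal is correct and follows essentially the same route as the paper's own proof: both construct the forward map by base-changing the universal $G$-torsor $p : U \twoheadrightarrow [U/G]$ along $t$, and the backward map by taking geometric realizations of action groupoids, appealing to \thmref{thm:effepi} for the identification $T \simeq \abs{Y_\bullet}$. Your version is slightly more explicit about verifying the mutual inverse property and about the coherence issue, but the argument is the same.
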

    \begin{proof}
      By the Yoneda lemma, $[U/G](T)$ is the groupoid of morphisms $T \to [U/G]$.
      Such a morphism gives rise by base change to a cartesian square
      \[\begin{tikzcd}
        Y \ar{r}{f}\ar[twoheadrightarrow]{d}{\pi}
        & U \ar[twoheadrightarrow]{d}{p}
        \\
        T \ar{r}
        & {[U/G]}
      \end{tikzcd}\]
      where the vertical arrows are $G$-torsors.
      The morphism $f$ is $G$-equivariant, since it lifts to a morphism of simplicial diagrams
      \[ U_\bullet \fibprod_{[U/G]} T \to U_\bullet \]
      by base change, where $U_\bullet$ is the action groupoid of the $G$-action on $U$.
      
      Conversely, given a $G$-torsor $\pi : Y \to T$, with $G$-action on $Y$ given by an action groupoid $Y_\bullet$, and a $G$-equivariant morphism $f : Y \to U$ given by a morphism of simplicial diagrams $Y_\bullet \to U_\bullet$, we may recover $T \to [U/G]$ as the geometric realization
      \[ T \simeq [Y/G] = \abs{Y_\bullet} \to \abs{U_\bullet} = [U/G] \]
      by \thmref{thm:effepi}.
    \end{proof}

    \begin{exam}
      The \emph{classifying stack} of $G$ is defined as the quotient stack
      \[ BG := B_S(G) := [S/G] \]
      with respect to the trivial $G$-action on $S$.
      By \thmref{thm:quotient}, every $G$-torsor $\pi : Y \to T$ for an $S$-scheme $T$ is classified by a morphism $T \to BG$ fitting into a cartesian square
      \[\begin{tikzcd}
        Y \ar{r}\ar[twoheadrightarrow]{d}{\pi}
        & S \ar[twoheadrightarrow]{d}{p}
        \\
        T \ar{r}
        & BG.
      \end{tikzcd}\]
      Informally speaking, the quotient morphism $p : S \twoheadrightarrow BG$ is the universal $G$-torsor.
    \end{exam}

    \begin{rem}
      In particular, the $G$-torsor $p : U \twoheadrightarrow [U/G]$ is classified by the morphism
      \[ [U/G] \to BG \]
      induced by the $G$-equivariant morphism $U \to S$.
      That is, there is a cartesian square
      \[\begin{tikzcd}
        U \ar{r}\ar[twoheadrightarrow]{d}
        & S \ar[twoheadrightarrow]{d}
        \\
        {[U/G]} \ar{r}
        & BG.
      \end{tikzcd}\]
    \end{rem}

    More generally:

    \begin{lem}
      Let $G$ be a group stack over a scheme $S$.
      Let $f : X \to Y$ be a $G$-equivariant morphism of stacks over $S$.
      Then the induced morphism $g : [X/G] \to [Y/G]$ fits in a cartesian square
      \[\begin{tikzcd}
        X \ar{r}{f}\ar[twoheadrightarrow]{d}
        & Y \ar[twoheadrightarrow]{d}
        \\
        {[X/G]} \ar{r}{g}
        & {[Y/G]}
      \end{tikzcd}\]
      where the vertical arrows are the quotient morphisms.
    \end{lem}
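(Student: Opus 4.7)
The plan is to exhibit the canonical comparison morphism $\alpha : X \to Y \fibprod_{[Y/G]} [X/G]$ as a $G$-equivariant morphism between two $G$-torsors over $[X/G]$, and deduce that it is an equivalence.

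To begin, the square in the statement is $2$-commutative by construction of the induced morphism $g$ (it is the map of geometric realizations induced by the $G$-equivariant morphism of action groupoids $X_\bullet \to Y_\bullet$), so the universal property of the pullback produces $\alpha$. For the torsor structures: on the source side, the quotient morphism $X \to [X/G]$ is a $G$-torsor by the remark preceding the lemma. On the target side, $Y \to [Y/G]$ is also a $G$-torsor, and base-changing it along $g$ shows that the projection $Y \fibprod_{[Y/G]} [X/G] \to [X/G]$ is a $G$-torsor, since both defining conditions of \defnref{defn:torsor} (the effective epimorphism condition and the identification $G \times U \xrightarrow{\sim} U \fibprod U$ from \remref{rem:snapwood}) are stable under base change. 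The $G$-equivariance of $\alpha$ then follows from the $G$-equivariance of $f$ together with the $G$-invariance of the quotient $X \to [X/G]$.

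The main step is the general claim that any $G$-equivariant morphism $\phi : Z_1 \to Z_2$ of $G$-torsors over a common base $B$ is invertible. I would verify this by base change along the effective epimorphism $Z_1 \twoheadrightarrow B$: the torsor axiom trivializes $Z_1 \fibprod_B Z_1 \simeq G \times Z_1$, while $\phi$ furnishes a tautological section $z \mapsto (\phi(z), z)$ of the $G$-torsor $Z_2 \fibprod_B Z_1 \to Z_1$, which therefore trivializes as $G \times Z_1$ as well. Under these trivializations $\phi$ pulls back to a $G$-equivariant endomorphism of the trivial $G$-torsor $G \times Z_1 \to Z_1$, necessarily of the form $(\gamma, z) \mapsto (\gamma \cdot h(z), z)$ for some morphism $h : Z_1 \to G$, and hence an equivalence.

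The hard part is this last step: formalizing the descent argument and the identification with the trivial torsor requires some care in the $\infty$-topos setting, though the underlying picture is entirely classical. A more concrete alternative is to work directly on $T$-points using \thmref{thm:quotient}: a $T$-point of $Y \fibprod_{[Y/G]} [X/G]$ is a triple $(y, (Z, h), \varphi)$ with $y : T \to Y$, $(Z, h) \in [X/G](T)$, and $\varphi$ an identification between $(Z, f \circ h)$ and the $G$-torsor-with-equivariant-map underlying the composite $T \xrightarrow{y} Y \to [Y/G]$; since $y$ canonically trivializes this latter torsor, $\varphi$ trivializes $Z$ as $G \times T$ and reduces $h$ to a single morphism $\tilde h : T \to X$ with $f \circ \tilde h = y$, matching exactly a $T$-point of $X$.
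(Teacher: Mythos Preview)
The paper does not actually prove this lemma: its proof reads simply ``Exercise.'' Your proposal therefore supplies what the paper omits, and both of your approaches are sound. The torsor argument (your main route) is the standard one: the comparison map $\alpha$ is $G$-equivariant between two $G$-torsors over $[X/G]$, and any such map is invertible. Your trivialization argument for that last step is correct, though in the $\infty$-topos setting one might phrase it more cleanly by noting that invertibility of $\alpha$ can be checked after base change along the effective epimorphism $Z_1 \twoheadrightarrow B$ (since effective epimorphisms are stable under base change and a morphism becomes invertible after pullback along an effective epimorphism iff it was already invertible). Your alternative via \thmref{thm:quotient} is equally valid and arguably more in the spirit of the surrounding text, since it directly unpacks the functor of points of both sides. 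A third route, perhaps the one the author had in mind given the emphasis on groupoid objects, is to observe that the morphism of action groupoids $X_\bullet \to Y_\bullet$ is levelwise cartesian over the degeneracy to degree $0$ (since $X_n \simeq G^{\times n} \times X$ and $Y_n \simeq G^{\times n} \times Y$ by \remref{rem:Wertherian}) and then invoke universality of colimits in the $\infty$-topos $\Stk_\infty$; but your arguments are complete as stated.
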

    \begin{proof}
      Exercise.
    \end{proof}

  \ssec{Examples of quotient stacks}

    \begin{exam}\label{exam:frame}
      Let $G = \GL_n$ be the general linear group scheme over $S$.
      Let $X$ be an $S$-scheme.
      For every locally free sheaf $\sE$ on $X$ of rank $n$, there exists a $\GL_n$-torsor
      \[ \pi : \on{Isom}_X(\sO_X^n, \sE) \to X \]
      whose total space represents the sheaf $\Sch_{/X}^\op \to \Set$ which sends $t : T \to X$ to the set of isomorphisms $\sO_T^n \simeq t^*\sE$.
      The assignment $\sE \mapsto \on{Isom}_X(\sO_X^n, \sE)$ determines an equivalence between the groupoid of locally free sheaves on $X$ of rank $n$ and the groupoid of $\GL_n$-torsors over $X$.
      In particular, morphisms $X \to \BGL_n$ may be identified with vector bundles over $X$ of rank $n$.
    \end{exam}

    \begin{exam}
      Consider the quotient stack $[\A^1/\bG_m]$ with respect to the weight $1$ scaling action of the multiplicative group scheme $\bG_m = \GL_1$ on the affine line $\A^1$.
      Then the groupoid of morphisms $T \to [\A^1/\bG_m]$ is equivalent to the groupoid of \emph{generalized divisors} on $T$, i.e., pairs $(\sL, s)$ where $\sL$ is a locally free sheaf of rank $1$ on $T$ and $s : \sO_T \to \sL$ is a section.
      Indeed, there is a cartesian square in $\Stk$ of the form
      \[\begin{tikzcd}
        \A^1 \ar{r}\ar{d}
        & \mrm{GDiv} \ar{d}
        \\
        S \ar[twoheadrightarrow]{r}
        & \mrm{B}\bG_m
      \end{tikzcd}\]
      where the right-hand vertical arrow is the forgetful map $(\sL,s)\mapsto \sL$ (or rather, the $\bG_m$-torsor corresponding to $\sL$); the lower horizontal arrow is the quotient map (which classifies the trivial line bundle); and the upper horizontal arrow sends a $T$-point of $\A^1$ given by $f \in \Gamma(T,\sO_T)$ to the generalized divisor $(\sO_T, f : \sO_T \to \sO_T) \in \on{GDiv}(T)$.
      Since the lower horizontal arrow is a $\bG_m$-torsor, it follows that the upper horizontal arrow is as well.
      In particular, it exhibits $\mrm{GDiv}$ as the quotient stack $[\A^1/\bG_m]$ as claimed.
    \end{exam}

%!TEX root = ../stacksncts.tex

\section{Algebraic spaces and stacks}
\label{sec:algebraic}

  \ssec{Algebraic spaces}

    \begin{defn}
      A morphism $f : X \to Y$ in an \inftyCat $\sC$ is a \emph{monomorphism} if the square
      \[\begin{tikzcd}
        X \ar[equals]{r}\ar[equals]{d}
          & X \ar{d}{f}
        \\
        X \ar{r}{f}
        & Y
      \end{tikzcd}\]
      is cartesian.
      In other words, the fibred product $X \fibprod_Y X$ exists and the diagonal $\Delta_f : X \to X \fibprod_Y X$ is invertible.
    \end{defn}

    \begin{exam}\label{exam:bronzy}
      A functor of groupoids $F : \sC \to \sD$ is a monomorphism in the \inftyCat $\Grpd$ if it is fully faithful.
      Equivalently, the induced map on sets of connected components $\pi_0(F) : \pi_0(\sC) \to \pi_0(\sD)$ is injective, and for every object $X \in \sC$ the map of automorphism groups $\Aut_\sC(X) \to \Aut_\sD(Y)$ is bijective.
      More generally, a functor $F : \sC \to \sD$ of \inftyGrpds is a monomorphism in the \inftyCat $\Grpdoo$ if and only if it is fully faithful, or equivalently injective on $\pi_0$ and bijective on $\pi_i(\sC, X) \to \pi_i(\sD, F(X))$ for all $i>0$ and objects $X \in \sC$.
    \end{exam}

    \begin{rem}\label{rem:fumatory}
      Let $X$ be an \inftyGrpd.
      If its diagonal $\Delta : X \to X \times X$ is a monomorphism, then $X$ is discrete (i.e., $X \simeq \pi_0(X)$).
      Indeed, the \inftyGrpd of isomorphisms between two objects $x,y \in X$ is given by the fibred product
      \[ \begin{tikzcd}
        \Maps_X(x, y) \ar{r}\ar{d}
        & X \ar{d}{\Delta}
        \\
        \pt \ar{r}{(x,y)}
        & X \times X
      \end{tikzcd} \]
      Hence $\Maps_X(x, y)$ is either empty or contractible (isomorphic in $\Grpdoo$ to $\pt$) for all $x,y$.
      The converse also holds: for $X$ discrete, the diagonal is always a monomorphism.
    \end{rem}

    \begin{exam}\label{exam:basket}
      A morphism of stacks $f : X \to Y$ is a monomorphism if and only if $f(T) : X(T) \to Y(T)$ is a monomorphism of groupoids for all schemes $T$.
      Thus if $X$ is a stack with monomorphic diagonal, $X$ takes values in sets (by \remref{rem:fumatory}).
    \end{exam}

    \begin{defn}
      A morphism $f : X \to Y$ of stacks is \emph{schematic} (or \emph{representable by schemes}) if for every scheme $V$ and every morphism $v : V \to Y$, the fibred product $X\fibprod_Y V$ is a scheme.
    \end{defn}

    \begin{lem}\label{lem:diagsch}
      Let $X$ be a stack.
      The following conditions are equivalent:
      \begin{thmlist}
        \item\label{item:diagsch/treespeeler}
        The diagonal $\Delta : X \to X \times X$ is schematic.
        
        \item\label{item:diagsch/doublet}
        For every pair of morphisms $u : U \to X$ and $v : V \to X$, where $U$ and $V$ are schemes, the fibred product $U \fibprod_X V$ is a scheme.
        
        \item\label{item:diagsch/greffotome}
        Every morphism $f : U \to X$, where $U$ is a scheme, is schematic.
      \end{thmlist}
    \end{lem}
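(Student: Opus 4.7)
The plan is to prove the cycle (i) $\Rightarrow$ (ii) $\Rightarrow$ (iii) $\Rightarrow$ (i), with (ii) $\Leftrightarrow$ (iii) being essentially a tautology. The entire argument is a formal manipulation of fibred products in the \inftyCat of stacks, built around the two standard identities
\[
U \fibprod_X V \;\simeq\; (U \times V) \fibprod_{X \times X} X
\qquad\text{and}\qquad
W \fibprod_{X \times X} X \;\simeq\; W \fibprod_{W \times W} (W \fibprod_X W),
\]
which follow from the universal property of products together with pasting of cartesian squares.

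For (ii) $\Leftrightarrow$ (iii): unfolding the definition of \emph{schematic} makes (iii) say exactly that for every scheme $V$ with a map to $X$ and every scheme $U$ with a map to $X$, the product $U \fibprod_X V$ is a scheme, which is (ii).

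For (i) $\Rightarrow$ (ii): given $u : U \to X$ and $v : V \to X$ with $U, V$ schemes, form $u \times v : U \times V \to X \times X$. By the first identity above, we have a cartesian square expressing $U \fibprod_X V$ as the pullback of $\Delta : X \to X \times X$ along $u \times v$. Since $U \times V$ is a scheme, assumption (i) gives that $U \fibprod_X V$ is a scheme.

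For (iii) $\Rightarrow$ (i): let $W$ be a scheme and $(a, b) : W \to X \times X$ a morphism. Factor $(a,b) = (a \times b) \circ \Delta_W$ through the diagonal of $W$. Pasting cartesian squares yields the second identity:
\[
W \fibprod_{X \times X} X \;\simeq\; W \fibprod_{W \times W} \bigl((W \times W) \fibprod_{X \times X} X\bigr) \;\simeq\; W \fibprod_{W \times W} (W \fibprod_X W),
\]
where the last fibred product uses $a, b : W \rightrightarrows X$. By (iii), the morphism $a : W \to X$ is schematic, hence its base change along $b : W \to X$ gives that $W \fibprod_X W$ is a scheme; then its pullback along $\Delta_W : W \to W \times W$ (a morphism of schemes) is again a scheme, proving that $\Delta$ is schematic.

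There is no serious obstacle: the content is purely in verifying that the two displayed identities of fibred products hold in the \inftyCat of stacks. The only mild subtlety is to make sure one writes the pasting carefully enough that the maps in each square are the intended ones, so that the existence of all intermediate fibred products is guaranteed (they are either products or pullbacks of schemes, or are supplied by the hypothesis being used).
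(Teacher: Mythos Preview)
Your proof is correct and follows essentially the same approach as the paper: the paper also notes that (ii) and (iii) are equivalent by definition, proves (i) $\Rightarrow$ (ii) via the identity $U \fibprod_X V \simeq (U \times V) \fibprod_{X\times X} X$, and proves (iii) $\Rightarrow$ (i) via the pasting diagram expressing $W \fibprod_{X\times X} X$ as the pullback of $W \fibprod_X W$ along $\Delta_W$. The only cosmetic difference is that the paper displays the pasting as a single two-square diagram rather than writing out the two identities separately.
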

    \begin{proof}
      We have \itemref{item:diagsch/treespeeler}~$\Rightarrow$~\itemref{item:diagsch/doublet} by the cartesian square
      \[\begin{tikzcd}
        U \fibprod_X V \ar{r}\ar{d}
        & X \ar{d}{\Delta}
        \\
        U \times V \ar{r}{u \times v}
        & X \times X,
      \end{tikzcd}\]
      and \itemref{item:diagsch/doublet}~=~\itemref{item:diagsch/greffotome} by definition.
      Suppose condition~\itemref{item:diagsch/greffotome} holds and let $f=(u,v) : U \to X \times X$ be a morphism.
      Then we have the cartesian square
      \[\begin{tikzcd}
        U \fibprod_{X \times X} X \ar{r}\ar{d}
        & U \fibprod_X U \ar{r}\ar{d}
        & X \ar{d}{\Delta}
        \\
        U \ar{r}{\Delta_U}
        & U \times U \ar{r}{u \times v}
        & X \times X,
      \end{tikzcd}\]
      where the lower horizontal composite arrow is $f$.
      Now $U\fibprod_X U$ is a scheme by \itemref{item:diagsch/greffotome}, so $U \fibprod_{X \times X} X$ is a scheme since $\Delta_U$ is schematic (as a morphism between schemes).
      As $U$ and $f$ vary, this shows \itemref{item:diagsch/treespeeler}.
    \end{proof}

    \begin{defn}\label{defn:algspace}
      Let $X$ be a stack.
      We say that $X$ is an \emph{algebraic space} if it satisfies the following conditions:
      \begin{defnlist}
        \item
        The diagonal $\Delta : X \to X \times X$ is schematic and a monomorphism.

        \item\label{item:wileful}
        There exists a scheme $U$ and a morphism $U \twoheadrightarrow X$ which is étale and surjective, i.e. for every scheme $V$ and every morphism $V \to X$ the base change $U \fibprod_X V \to V$ is an étale surjection (where $U \fibprod_X V$ is a scheme by \lemref{lem:diagsch}).
      \end{defnlist}
      Note that $X$ takes values in sets (\examref{exam:basket}), i.e., an algebraic space is equivalently a sheaf of sets $X : \Sch^\op \to \Set$ with schematic diagonal and an étale atlas as in \itemref{item:wileful}.
    \end{defn}

  \ssec{Algebraic stacks}

    \begin{defn}
      A morphism $f : X \to Y$ of stacks is \emph{representable} if for every scheme\footnote{equivalently, algebraic space} $V$ and every morphism $v : V \to Y$, the fibred product $X\fibprod_Y V$ is an algebraic space.
    \end{defn}

    \begin{defn}\label{defn:algstack}
      A stack $X$ is \emph{algebraic} if it satisfies the following conditions:
      \begin{defnlist}
        \item
        The diagonal $\Delta : X \to X \times X$ is representable.

        \item\label{item:algstack/Neophron}
        There exists a scheme $U$ and a morphism $U \twoheadrightarrow X$ which is smooth and surjective, i.e. for every scheme $V$ and every morphism $V \to X$ the base change $U \fibprod_X V \to V$ is a smooth surjection (where $U \fibprod_X V$ is an algebraic space by the analogue of \lemref{lem:diagsch}).
      \end{defnlist}
      We say that $X$ is \emph{Deligne--Mumford} if in \itemref{item:algstack/Neophron}, \emph{smooth} is replaced by \emph{étale}.
    \end{defn}

    \begin{rem}
      We call the smooth surjection $U \twoheadrightarrow X$ in \itemref{item:algstack/Neophron} an \emph{atlas} for $X$.
    \end{rem}

  \ssec{Algebraicity of quotient stacks}

    \begin{thm}\label{thm:quotientalg}
      Let $G$ be a smooth group algebraic space over a scheme $S$.
      Let $U$ be a stack with $G$-action.
      If $U$ is algebraic, then so is the quotient stack $[U/G]$.
    \end{thm}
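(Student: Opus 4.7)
The plan is to verify the two conditions of Definition~\ref{defn:algstack} for $[U/G]$: existence of a smooth atlas by a scheme, and representability of the diagonal.

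For the atlas, I would choose a smooth atlas $V \twoheadrightarrow U$ of the algebraic stack $U$ (so $V$ is a scheme) and compose with the quotient morphism $p : U \twoheadrightarrow [U/G]$. The morphism $p$ is a $G$-torsor, so its base change by any scheme $T \to [U/G]$ is a $G$-torsor over $T$; since $G$ is a smooth algebraic space, this is a smooth surjective morphism of algebraic spaces. Hence $p$ is representable, smooth, and surjective, and the composite $V \to [U/G]$ provides a smooth surjection from a scheme.

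For the diagonal $\Delta : [U/G] \to [U/G] \times [U/G]$, I would exploit the cartesian square
\[\begin{tikzcd}
G \times U \ar{r}\ar{d} & U \times U \ar{d}{p \times p}\\
{[U/G]} \ar{r}{\Delta} & {[U/G]} \times {[U/G]}
\end{tikzcd}\]
in which the top horizontal arrow is the ``projection and action'' map $(g, u) \mapsto (u, g \cdot u)$ coming from the identification $U \fibprod_{[U/G]} U \simeq G \times U$ of Remark~\ref{rem:snapwood}. Since $p \times p$ is smooth, surjective, and representable by algebraic spaces, representability of $\Delta$ reduces, by smooth descent for representability, to showing that the top arrow $G \times U \to U \times U$ is representable.

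To verify this last statement, I would pick a smooth atlas $V \to U$ of $U$ and compute the pullback of $G \times U \to U \times U$ along $V \times V \to U \times U$. The result is identified with $V \fibprod_U (V \times G)$, where $V \times G \to U$ is the action map $(v, g) \mapsto g \cdot v$. This is an algebraic space because $V \to U$ is a representable smooth morphism, so its base change along any morphism of algebraic spaces $V \times G \to U$ remains an algebraic space. A further application of smooth descent then transfers this conclusion to $G \times U \to U \times U$ itself.

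The main obstacle is setting up these descent-of-representability arguments carefully, since $U$ is only a stack (not an algebraic space) and so the morphism $p$ has to be exploited as a representable smooth cover throughout; in particular one has to know that representability by algebraic spaces is smooth-local on the target. With the atlas from the first step and the representable diagonal from the second step in hand, algebraicity of $[U/G]$ then follows immediately from Definition~\ref{defn:algstack}.
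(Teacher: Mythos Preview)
Your argument is correct and uses the same ingredients as the paper---representability of the $G$-torsor $p$, representability of the diagonal of $U$, and a descent step for algebraic spaces---but organises them differently.  The paper handles the diagonal by fixing a test scheme $T \to [U/G]\times[U/G]$, lifting only \emph{one} coordinate $t : T \to [U/G]$ through $p$ via an \'etale cover $T' \twoheadrightarrow T$, and then identifying $T' \fibprod_{[U/G]} T$ directly as a fibred product over $U$ of two algebraic spaces (namely $T'$ and the torsor $Y\to T$ classified by $t'$).  Your route instead invokes the global cartesian square of \remref{rem:lithotypy} and descends along $p\times p$.  This is conceptually cleaner but leans more heavily on the black-box statement that representability is smooth-local on the target, whereas the paper only needs the special case of descent along an \'etale cover of a scheme at the very last step.

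One small simplification to your version: the second descent, for $G \times U \to U \times U$ via the atlas $V\times V$, is unnecessary.  For any scheme $T$ with $(t_1,t_2) : T \to U \times U$, the fibre $(G\times U)\fibprod_{U\times U} T$ is directly identified with $T \fibprod_{t_2,\, U} (G \times T)$, where $G \times T \to U$ sends $(g,\tau) \mapsto g\cdot t_1(\tau)$; this is an algebraic space because $U$ has representable diagonal and both $T$ and $G\times T$ are algebraic spaces (the analogue of \lemref{lem:diagsch}).  That shortcut brings your argument closer to the paper's hands-on style and removes one layer of descent.
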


    \begin{proof}
      % By \lemref{lem:diagsch} we have to show that for every 
      % Given a scheme $T$ and a morphism $(t,t') : T \to [U/G] \times [U/G]$, classifying $G$-torsors $\pi : Y \twoheadrightarrow S$, $\pi' : Y' \twoheadrightarrow S$ and $G$-equivariant morphisms $f : Y \to U$, $f' : Y' \to U$, denote by $\on{Isom}_T((Y,\pi,f), (Y',\pi',f'))$ the fibred product $T \fibprod_{[U/G]} T \simeq T \fibprod_{[U/G]\times[U/G]} [U/G]$:
      % \[\begin{tikzcd}
      %   \on{Isom}_T((Y,\pi,f), (Y',\pi',f')) \ar{r}\ar{d}
      %   & T \ar{d}{t}
      %   \\
      %   T \ar{r}{s}
      %   & {[U/G]}.
      % \end{tikzcd}\]
      % The claim is that $\on{Isom}_T(Y, Y')$ is an algebraic space.
      % Choose an étale surjection $T' \twoheadrightarrow T$ over which the torsors $\pi$ and $\pi'$ become trivializable.
      % Since
      % \[
      %   \on{Isom}_{T'}(Y\fibprod_T T', Y'\fibprod_T T')
      %   \simeq \on{Isom}_{T'}(Y\fibprod_T T', Y'\fibprod_T T')
      %   \to \on{Isom}_T(Y, Y')
      % \]
      % is an étale surjection, it will suffice to assume $T' = T$.
      % Then we have
      % \[
      %   \on{Isom}_T(Y, Y')
      %   \simeq \on{Isom}_T(G\times T, G\times T)
      %   \simeq T \fibprod_{[U/G]} T
      %   \simeq G \times T
      % \]
      % since the trivial torsor is classified by the quotient morphism $p : U \twoheadrightarrow [U/G]$.
      Let us show that the diagonal of $[U/G]$ is representable.
      Given a scheme $T$ and a morphism $(t,t') : T \to [U/G] \times [U/G]$, the claim is that the fibred product $T \fibprod_{[U/G]} T \simeq T \fibprod_{[U/G]\times[U/G]} [U/G]$ is an algebraic space.
      Since the quotient morphism $p : U \twoheadrightarrow [U/G]$ is an effective epimorphism, there exists by \propref{prop:local surj} a scheme $T'$ and an étale surjection $T' \twoheadrightarrow T$ such that the composite $T' \twoheadrightarrow T \to [U/G]$ factors through $p : U \twoheadrightarrow [U/G]$.
      Thus the composite of the cartesian squares
      \[\begin{tikzcd}
        T' \fibprod_{[U/G]} T \ar{r}\ar{d}
        & T \fibprod_{[U/G]} T \ar{r}\ar{d}
        & T \ar{d}{t'}
        \\
        T' \ar[twoheadrightarrow]{r}
        & T \ar{r}{t}
        & {[U/G]}.
      \end{tikzcd}\]
      is identified with the composite of the cartesian squares
      \[\begin{tikzcd}
        T' \fibprod_{[U/G]} T \ar{r}\ar{d}
        & Y \ar[twoheadrightarrow]{r}\ar{d}
        & T \ar{d}{t'}
        \\
        T' \ar{r}
        & U \ar[twoheadrightarrow]{r}{p}
        & {[U/G]}
      \end{tikzcd}\]
      where $Y \twoheadrightarrow T$ is the $G$-torsor classified by $t'$.
      Since $U$ is algebraic (hence has representable diagonal) and $T'$ and $Y$ are algebraic spaces (the latter because $G$ is), it follows by the analogue of \lemref{lem:diagsch} that $T' \fibprod_{[U/G]} T$ is an algebraic space.
      It follows then that $T \fibprod_{[U/G]} T$ is also an algebraic space.

      It remains to show the existence of an atlas for $[U/G]$.
      Since $G \to S$ is a smooth surjection, it follows that the quotient morphism $p : U \twoheadrightarrow [U/G]$ is a smooth surjection.
      (Indeed, given a scheme $V$ and a morphism $v : V \to [V/G]$, the base change of $p$ along $v$ is the $G$-torsor classified by $v$, which is smooth and surjective because $G \to S$ is.)
      Since $U$ is algebraic, there exists a scheme $U'$ and a smooth surjection $U' \twoheadrightarrow U$.
      Then the composite $U' \twoheadrightarrow U \twoheadrightarrow [U/G]$ is smooth and surjective.
    \end{proof}

    \begin{rem}\label{rem:lithotypy}
      Since $p : U \twoheadrightarrow [U/G]$ is a $G$-torsor, the square
      \[\begin{tikzcd}
        G \times U \ar{d}{\mrm{act}}\ar{r}{\pr}
        & U \ar[twoheadrightarrow]{d}{p}
        \\
        U \ar[twoheadrightarrow]{r}{p}
        & {[U/G]}
      \end{tikzcd}\]
      is cartesian (\remref{rem:snapwood}).
      Equivalently, the diagonal of $[U/G]$ fits into the cartesian square
      \begin{equation*}
        \begin{tikzcd}
          G \times U \ar{r}{(\mrm{act}, \pr)}\ar{d}
          & U \times U \ar[twoheadrightarrow]{d}{p\times p}
          \\
          {[U/G]} \ar{r}{\Delta}
          & {[U/G]} \times {[U/G]}
        \end{tikzcd}
      \end{equation*}
    \end{rem}

  \ssec{Recognizing algebraic spaces and DM stacks}

    \begin{thm}\label{thm:subcarinate}
      Let $X$ be an algebraic stack.
      \begin{thmlist}
        \item\label{item:subcarinate/DM}
        $X$ is a Deligne--Mumford stack if and only if its diagonal is unramified.
        \item\label{item:subcarinate/asp}
        $X$ is an algebraic space if and only if its diagonal is a monomorphism.
      \end{thmlist}
    \end{thm}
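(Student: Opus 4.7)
The plan is to prove each of \itemref{item:subcarinate/DM} and \itemref{item:subcarinate/asp} by handling the two implications separately. For the forward direction of \itemref{item:subcarinate/asp}, the fact that $\Delta_X$ is a monomorphism is built directly into \defnref{defn:algspace}. For the forward direction of \itemref{item:subcarinate/DM}, let $p : U \twoheadrightarrow X$ be a Deligne--Mumford atlas. Then $p \times p : U \times U \twoheadrightarrow X \times X$ is a smooth surjection, and since being unramified is smooth-local on the target, it suffices to show that the base change $R := U \times_X U \to U \times U$ of $\Delta_X$ is unramified. Both projections $R \to U$ are étale (as base changes of $p$), so a quick diagram chase with the first fundamental exact sequence of cotangent sheaves yields $\Omega_{R/(U\times U)} = 0$.

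For the backward direction of \itemref{item:subcarinate/asp}, assume $\Delta_X$ is a monomorphism. By \examref{exam:basket}, $X$ takes values in sets, so any smooth atlas $p : U \twoheadrightarrow X$ exhibits $R := U \times_X U$ as a scheme-theoretic equivalence relation on $U$; moreover $R \to U \times U$ is a monomorphism of algebraic spaces into a scheme, and hence (by the standard fact that such monomorphisms are representable by schemes) $R$ is itself a scheme. The same observation applied to arbitrary base changes of $\Delta_X$ to a scheme shows $\Delta_X$ is schematic. To produce an étale atlas I would apply a ``slicing'' construction: for each $u \in U$, if $p$ has relative dimension $d$ at $u$, choose $d$ functions $f_1, \ldots, f_d$ on a Zariski neighborhood of $u$ whose differentials form a basis of $\Omega_{U/X}$ at $u$, and take $V := \{f_1 = \cdots = f_d = 0\}$; then $V \to X$ is étale near $u$, and the disjoint union of such slices over an open cover of $U$ provides the required étale atlas.

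For the backward direction of \itemref{item:subcarinate/DM}, assume $\Delta_X$ is unramified. The base change of $\Delta_X$ along the map $U \to X \times X$, $u \mapsto (p(u), p(u))$, is the inertia $I \to U$ of the atlas $p$, which is therefore unramified; in particular the automorphism group scheme of each point is étale. The same slicing construction as above would then yield an étale atlas $V \twoheadrightarrow X$, exhibiting $X$ as Deligne--Mumford.

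The substantive step in both backward directions is the slicing --- more precisely, the existence of the functions $f_i$ with prescribed differentials in $\Omega_{U/X}$. When $X$ is a scheme this is standard, but for $X$ a stack the sheaf $\Omega_{U/X}$ can in general carry ``stacky'' directions not realized as $df$ for any $f \in \sO_U$; e.g.\ the atlas $\Spec(k) \to B\bG_m$ has $\Omega_{U/X}$ of rank $1$ while $\sO_U = k$ admits no nontrivial differentials. The hypothesis on $\Delta_X$ is precisely what rules these out: if $\Delta_X$ is a monomorphism (resp.\ unramified), there are no nontrivial (resp.\ no nontrivial infinitesimal) automorphisms in $X$, and $\Omega_{U/X}$ is generated locally by $d\sO_U$. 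The cleanest way to implement this rigorously is to pull everything back along a further smooth atlas of $X$ to reduce to the scheme/algebraic-space case and then descend the resulting étale slices via étale descent.
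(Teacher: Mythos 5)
The paper does not prove \thmref{thm:subcarinate}; it simply cites [LMB, Thm.~8.1, Cor.~8.1.1]. Your proposal is therefore an attempt at an independent proof.

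Both forward implications are correct and quick. The two backward implications have the right overall shape, but they rest on a claim you state without proof, and that claim is precisely the non-formal content of the theorem: that unramifiedness of $\Delta_X$ forces the natural map $\Omega_U \to \Omega_{U/X}$ to be surjective (equivalently, $\Omega_{U/X}$ is generated by $d\sO_U$), so that the slicing goes through. You correctly identify this as the crux, with the $B\bG_m$ example showing that it is not automatic, but your proposed remedy --- ``pull everything back along a further smooth atlas and descend the resulting étale slices via étale descent'' --- is not a proof. A slice constructed on $U \fibprod_X U'$ for a second atlas $U'$ does not come with any natural map to $X$, so there is no descent datum in sight, and smooth or étale descent does not manufacture an étale morphism $V \to X$ out of such local data. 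This gap is genuine.

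A direct argument for the surjection: set $R := U \fibprod_X U$ with projections $\pr_1, \pr_2 : R \to U$, let $\pi := (\pr_1,\pr_2) : R \to U \times U$ (the base change of $\Delta_X$ along $p \times p$), and let $\delta : U \to R$ be the diagonal section of $\pr_2$. Base change of cotangent complexes along the cartesian square defining $R$ gives $\Omega_{U/X} \simeq \delta^*\Omega_{R/U}$. Unramifiedness of $\Delta_X$ gives $\Omega_{R/(U\times U)} = 0$, so the first exact sequence of differentials for $R \to U \times U \xrightarrow{\pr_2} U$ gives a surjection $\pi^*\Omega_{(U\times U)/U} \twoheadrightarrow \Omega_{R/U}$; applying $\delta^*$ and using $\pi \circ \delta = \Delta_U$ together with $\Omega_{(U\times U)/U} \simeq \pr_1^*\Omega_U$ yields $\Omega_U \twoheadrightarrow \Omega_{U/X}$. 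With this in hand your slicing construction works; one must still verify the slice $V \to X$ is \emph{smooth} (not merely unramified), which is the usual Jacobian-criterion computation.

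Two smaller points. In part \itemref{item:subcarinate/asp} you invoke ``the standard fact that monomorphisms of algebraic spaces into a scheme are representable by schemes.'' This is a genuine theorem with its own finiteness hypotheses (locally of finite type, quasi-separated --- all built into the running conventions here), not a formality; it should be cited. Also, part \itemref{item:subcarinate/asp} can be deduced from part \itemref{item:subcarinate/DM}: a monomorphism locally of finite type is unramified, so $X$ is Deligne--Mumford by \itemref{item:subcarinate/DM}; by \examref{exam:basket} it takes values in sets; what remains is precisely the schematic diagonal, for which you still need the quoted representability theorem.
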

    
    See \cite[Thm.~8.1, Cor.~8.1.1]{LaumonMoretBailly}.

    % \begin{proof}
      % The condition is necessary by definition.
      % For sufficiency, let $X$ be an algebraic stack whose diagonal is a monomorphism.
      % That is, $X : \Sch^\op \to \Set$ is an étale sheaf with representable diagonal such that there exists an algebraic space $U$ and a smooth surjection $p : U \twoheadrightarrow X$.
      % By \thmref{thm:louverwork} it will suffice to show that $X$ admits an étale surjection from an algebraic space.

      % Since $U$ is an algebraic space, there exists a scheme $U'$ and an étale surjection $U' \twoheadrightarrow U$.
      % Since $p$ is a smooth surjection, its base change $U' \fibprod_X U \to U'$ is a smooth surjection between schemes, hence there exists an algebraic space $U''$, an étale surjection $U'' \twoheadrightarrow U'$, and a dashed arrow making the below diagram commute:
      % \[\begin{tikzcd}
      %   & U' \fibprod_X U \ar{r}\ar{d}
      %   & U \fibprod_X U \ar{r}\ar{d}
      %   & U \ar[twoheadrightarrow]{d}
      %   \\
      %   U'' \ar[twoheadrightarrow]{r}\ar[dashed]{ru}
      %   & U' \ar[twoheadrightarrow]{r}
      %   & U \ar[twoheadrightarrow]{r}
      %   & X.
      % \end{tikzcd}\]
      % Now 
    % \end{proof}

    % The following characterization of algebraic spaces is also useful.

    % \begin{thm}[Raynaud--Gruson]\label{thm:louverwork}
    \begin{cor}
      Let $X$ be a stack whose diagonal $\Delta_X : X \to X \times X$ is a monomorphism (equivalently, $X : \Sch^\op \to \Grpd$ takes values in $\Set$).
      Then the following conditions are equivalent:
      \begin{thmlist}
        \item
        $X$ is an algebraic space.

        \item
        The diagonal $\Delta_X$ is representable, and there exists an algebraic space $U$ and an étale surjection $U \twoheadrightarrow X$.

        \item
        There exists an algebraic space $U$ and a representable étale surjection $U \twoheadrightarrow X$.
      \end{thmlist}
    \end{cor}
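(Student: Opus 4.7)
The implications (i)~$\Rightarrow$~(ii)~$\Rightarrow$~(iii) are essentially formal. For (i)~$\Rightarrow$~(ii): by \defnref{defn:algspace} the diagonal of an algebraic space is schematic (in particular representable) and there exists a scheme $U$ with an étale surjection $U \twoheadrightarrow X$, and any scheme is an algebraic space. For (ii)~$\Rightarrow$~(iii): applying the evident analogue of \lemref{lem:diagsch} with ``scheme'' replaced by ``algebraic space'', the representability of $\Delta_X$ forces every morphism $U \to X$ from an algebraic space to be representable; apply this to the given étale surjection.

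The content is in (iii)~$\Rightarrow$~(i). The plan is to first show that $X$ is an \emph{algebraic stack} in the sense of \defnref{defn:algstack}, and then invoke \thmref{thm:subcarinate}\itemref{item:subcarinate/asp} together with the standing hypothesis that $\Delta_X$ is a monomorphism to upgrade this to ``algebraic space''.

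To produce an atlas, choose a scheme $V$ together with an étale surjection $V \twoheadrightarrow U$, which exists since $U$ is an algebraic space. The composite $V \to U \to X$ is then an étale (hence smooth) surjection from a scheme, which is the required atlas. The harder step is representability of $\Delta_X$: given a scheme $T$ and a morphism $(t,t') : T \to X \times X$, we must show $T \fibprod_{X \times X} X$ is an algebraic space. Here the key input is that $U \times U \to X \times X$ is itself a representable étale surjection (being the external product of the representable étale surjection $U \to X$ with itself), and that its base change along $\Delta_X$ is the fibred product $U \fibprod_X U$, which is an algebraic space by representability of $U \to X$. Thus after base change to $T' := T \fibprod_{X \times X} (U \times U)$---an algebraic space with a representable étale surjection to $T$---the pullback $(T \fibprod_{X \times X} X) \fibprod_T T' \simeq (U \fibprod_X U) \fibprod_{U \times U} T'$ is an algebraic space as a fibred product of algebraic spaces over a scheme.

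The main obstacle is thus the final descent step: deducing that $T \fibprod_{X \times X} X$ is itself an algebraic space from the fact that it becomes one after pullback along the representable étale surjection $T' \to T$. This relies on the fact that the property of being an algebraic space is étale-local on the target, which is a standard (if nontrivial) descent result for algebraic spaces. Once this is in hand, $\Delta_X$ is representable, $X$ is algebraic, and \thmref{thm:subcarinate}\itemref{item:subcarinate/asp} closes the argument.
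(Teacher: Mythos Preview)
Your proof is correct and follows exactly the route the paper intends: the corollary is stated without proof immediately after \thmref{thm:subcarinate}, and the point is precisely that once $X$ is shown to be an algebraic stack, \thmref{thm:subcarinate}\itemref{item:subcarinate/asp} together with the monomorphic diagonal finishes the argument. Your identification of the étale descent step for algebraic spaces as the only nontrivial input in (iii)~$\Rightarrow$~(i) is accurate, and this is indeed a standard result the paper takes for granted.
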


  \ssec{Free and proper actions}

    \begin{prop}\label{prop:backfold}
      Let $G$ be a smooth group algebraic space over a scheme $S$.
      Let $U$ be an algebraic space with $G$-action.
      Then the quotient stack $[U/G]$ is an algebraic space if and only if the action of $G$ on $U$ is \emph{free}, i.e., the morphism
      \[ (\mrm{act}, \pr) : G \times U \to U \times U \]
      is a monomorphism.
    \end{prop}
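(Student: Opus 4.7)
The plan is to apply Theorem~\ref{thm:subcarinate}\itemref{item:subcarinate/asp}, which says that an algebraic stack is an algebraic space if and only if its diagonal is a monomorphism. Since $[U/G]$ is algebraic by Theorem~\ref{thm:quotientalg}, we are reduced to showing that the diagonal $\Delta_{[U/G]} : [U/G] \to [U/G]\times[U/G]$ is a monomorphism if and only if $(\mrm{act}, \pr) : G\times U \to U\times U$ is one.

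The key input is Remark~\ref{rem:lithotypy}, which exhibits $(\mrm{act},\pr)$ as the base change of $\Delta_{[U/G]}$ along $p\times p : U\times U \twoheadrightarrow [U/G]\times[U/G]$, where $p : U \twoheadrightarrow [U/G]$ is the quotient map. By construction $[U/G]$ is the geometric realization of the action groupoid of the $G$-action on $U$, so by Theorem~\ref{thm:effepi} the map $p$ is an effective epimorphism, and hence so is $p\times p$ (since effective epimorphisms are closed under finite products in $\Stk_\infty$).

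The \emph{only if} direction is then immediate: monomorphisms are stable under base change, so if $\Delta_{[U/G]}$ is a monomorphism, then so is its pullback $(\mrm{act},\pr)$. For the \emph{if} direction, one must show that monomorphisms \emph{descend} along effective epimorphisms. The way I would argue this: a morphism $f$ is a monomorphism exactly when its diagonal $\Delta_f$ is an equivalence, and the formation of the diagonal commutes with base change, so $\Delta_{(\mrm{act},\pr)}$ is the pullback of $\Delta_{\Delta_{[U/G]}}$ along $p\times p$. By Proposition~\ref{prop:local surj}, an effective epimorphism in $\Stk_\infty$ is étale-locally surjective, and equivalences of stacks are étale-local, so an equivalence can be detected after pullback along an effective epimorphism. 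Applying this to $\Delta_{\Delta_{[U/G]}}$ gives the conclusion.

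The main conceptual step, and the only one that is not purely formal manipulation of the cartesian square of Remark~\ref{rem:lithotypy}, is this descent property of equivalences (equivalently, of monomorphisms) along effective epimorphisms. It is however a standard feature of the $\infty$-topos $\Stk_\infty$ and follows from the sheaf condition together with Proposition~\ref{prop:local surj}.
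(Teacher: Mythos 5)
Your proof follows essentially the same route as the paper's: reduce via Theorems~\ref{thm:quotientalg} and \ref{thm:subcarinate}\itemref{item:subcarinate/asp} to checking that the diagonal of $[U/G]$ is a monomorphism, then test this after base change along $p \times p$, which the paper dispatches in one line (``may be tested after base change along a smooth surjection'') and you unpack via effective epimorphisms. One small notational slip in your ``if'' direction: since forming the diagonal of a morphism commutes with base change \emph{along the source-to-source map} of a cartesian square, $\Delta_{(\mrm{act},\pr)}$ is the pullback of $\Delta_{\Delta_{[U/G]}}$ along the left vertical $G\times U \to [U/G]$ of Remark~\ref{rem:lithotypy}, not along $p\times p$; but this map is also an effective epimorphism (it factors as $G\times U \to U \twoheadrightarrow [U/G]$, the first map admitting a section), so the argument is unaffected.
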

    \begin{proof}
      By \thmref{thm:quotientalg}, $[U/G]$ is an algebraic stack.
      Thus by \thmref{thm:subcarinate} it will suffice to show that its diagonal is a monomorphism.
      This may be tested after base change along a smooth surjection.
      By \remref{rem:lithotypy}, the base change along $p\times p : U \times U \twoheadrightarrow [U/G] \times [U/G]$ is precisely the morphism $(\mrm{act}, \pr)$ in the statement.
    \end{proof}

    \begin{warn}
      The analogue of \propref{prop:backfold} for schemes is false: there exist schemes $X$ of finite type over a field (which may be taken separated or even proper) and groups $G$ (which may be taken finite discrete) acting freely on $X$ such that the quotient stack $[X/G]$ is not representable by a scheme.
    \end{warn}

    Similarly, we have:

    \begin{prop}\label{prop:keyseater}
      Let $G$ be a smooth group algebraic space over a scheme $S$.
      Let $U$ be an algebraic space with $G$-action.
      Then the quotient stack $[U/G]$ is \emph{separated}, i.e. has proper diagonal, if and only if the action of $G$ on $U$ is \emph{proper}, i.e., the morphism
      \[ (\mrm{act}, \pr) : G \times U \to U \times U \]
      is proper.
    \end{prop}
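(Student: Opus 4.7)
The plan is to mimic the proof of \propref{prop:backfold} verbatim, with ``monomorphism'' replaced by ``proper''. By \thmref{thm:quotientalg}, $[U/G]$ is an algebraic stack, so its diagonal $\Delta : [U/G] \to [U/G]\times[U/G]$ is representable and it makes sense to ask whether it is proper. The key input is \remref{rem:lithotypy}, which supplies the cartesian square
\[
\begin{tikzcd}
G \times U \ar{r}{(\mrm{act}, \pr)}\ar{d}
& U \times U \ar[twoheadrightarrow]{d}{p\times p}
\\
{[U/G]} \ar{r}{\Delta}
& {[U/G]} \times {[U/G]}.
\end{tikzcd}
\]
The right vertical arrow is a smooth surjection, because $p : U \twoheadrightarrow [U/G]$ is a $G$-torsor with $G \to S$ smooth and surjective, and smooth surjections are stable under products.

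The proof then reduces to the assertion that properness of a representable morphism of algebraic stacks is smooth-local on the target: a representable morphism $f : \sX \to \sY$ is proper if and only if, for some (equivalently, every) smooth surjection $\sY' \twoheadrightarrow \sY$, the base change $f' : \sX \fibprod_{\sY} \sY' \to \sY'$ is proper. Applying this to the square above, $\Delta$ is proper iff $(\mrm{act}, \pr) : G \times U \to U \times U$ is proper, which is exactly the definition of the $G$-action being proper.

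The main potentially delicate point is the smooth-local nature of properness; this is a standard fact, but it is the place where one uses the full algebraic-stacks machinery (fpqc/smooth descent for properness of morphisms of algebraic spaces). Once this is granted, the argument is essentially formal and mirrors \propref{prop:backfold} line for line, merely swapping the relevant stability class of morphisms.
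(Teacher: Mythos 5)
Your proposal is correct and matches the paper's intended argument: the paper does not give an explicit proof of \propref{prop:keyseater} but introduces it with ``Similarly, we have:'' after the proof of \propref{prop:backfold}, signalling that one should run the same argument with ``monomorphism'' replaced by ``proper.'' Your proof does exactly this — invoking \thmref{thm:quotientalg} for algebraicity, \remref{rem:lithotypy} for the cartesian square identifying the base change of $\Delta$ along $p\times p$ with $(\mrm{act}, \pr)$, and smooth-local descent of properness — and you correctly flag the descent of properness as the only non-formal ingredient.
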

    \begin{rem}
      In \propref{prop:keyseater}, suppose that $G$ is affine and $U$ has affine diagonal (e.g. $U$ is separated), so that the quotient stack $[U/G]$ has affine diagonal.
      It follows that the action is proper if and only if $[U/G]$ has \emph{finite} diagonal.
      If $S$ has all residue fields of characteristic zero, then this implies that $[U/G]$ has unramified diagonal, hence is Deligne--Mumford by \thmref{thm:subcarinate}.
    \end{rem}

  \ssec{Stabilizers}

    \begin{defn}
      Let $X$ be a stack and $x : \Spec(R) \to X$ an $R$-valued point for some $R\in\CRing$.
      The \emph{stabilizer} of $X$ at the point $x$ is a group stack $\St_X(x)$ over $\Spec(R)$ defined by the cartesian square:
      \[\begin{tikzcd}
        \St_X(x) \ar{r}\ar{d}
        & \Spec(R) \ar{d}{x}
        \\
        \Spec(R) \ar{r}{x}
        & X.
      \end{tikzcd}\]
      Equivalently, it fits in the cartesian square
      \[\begin{tikzcd}
        \St_X(x) \ar{r}\ar{d}
        & X \ar{d}{\Delta_X}
        \\
        \Spec(R)\times\Spec(R) \ar{r}{x\times x}
        & X \times X.
      \end{tikzcd}\]
      Its groupoid of $R'$-points, for an $R$-algebra $R'$, is thus the group object in $\Grpd$ of automorphisms of (the image of) $x$ in the groupoid $X(R')$.
    \end{defn}

    \begin{rem}
      If $X$ has representable (resp. schematic, affine) diagonal, then its stabilizers are all algebraic spaces (resp. schemes, affine schemes).
    \end{rem}

    \begin{exam}
      Let $U$ be an algebraic space with an action of a group algebraic space $G$.
      The stabilizers of the quotient stack $X=[U/G]$ are precisely the stabilizers of the $G$-action.
      More precisely, given an $R$-point $u \in U(R)$ (where $R\in\CRing$), it follows from \remref{rem:lithotypy} that the group algebraic space $\St_{[U/G]}(u)$ fits in the cartesian square
      \[\begin{tikzcd}
        \St_{[U/G]}(u)\ar{r}\ar{d}
        & G \times \Spec(R) \ar{d}{\mrm{act}(-,u)}
        \\
        \Spec(R) \ar{r}{u}
        & U.
      \end{tikzcd}\]
      where $\mrm{act}(-,u)$ is the composite of $\id\times u : G\times\Spec(R) \to G\times U$ and $\mrm{act} : G\times U \to U$.
    \end{exam}

    By \thmref{thm:subcarinate} we have:

    \begin{cor}\label{cor:nonabsorptive}
      Let $X$ be an algebraic stack.
      Then $X$ is an algebraic space if and only if for every $R\in\CRing$ and every $R$-valued point $x : \Spec(R) \to X$ the stabilizer $\St_X(x)$ is trivial.
    \end{cor}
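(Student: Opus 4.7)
The plan is to reduce to \thmref{thm:subcarinate}\itemref{item:subcarinate/asp}, which characterizes the algebraic spaces among algebraic stacks as those whose diagonal is a monomorphism. So the task is to show that for any algebraic stack $X$, the diagonal $\Delta_X : X \to X \times X$ is a monomorphism if and only if every stabilizer $\St_X(x)$ is trivial.

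The natural approach is via the inertia stack $I_X := X \fibprod_{X \times X} X$, together with a projection $I_X \to X$ and the ``identity'' section $e : X \to I_X$, which is the diagonal of $\Delta_X$ (informally, $x \mapsto (x, \id_x)$). By the very definition of monomorphism, $\Delta_X$ is a monomorphism iff $e$ is an isomorphism. For any $R$-point $x : \Spec(R) \to X$, pasting the cartesian square defining $I_X$ with the base change along $x$ identifies $I_X \fibprod_X \Spec(R)$ with $\St_X(x) = \Spec(R) \fibprod_X \Spec(R)$, in a way that intertwines $e$ with the unit section $\Spec(R) \to \St_X(x)$. Thus $e$ is an isomorphism iff $\Spec(R) \to \St_X(x)$ is an isomorphism for every $R$-point $x$, which is exactly the triviality hypothesis on all stabilizers.

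The only subtlety is passing from affine $R$-points to general test schemes: to verify that $e : X \to I_X$ is an isomorphism, one checks it after base change along any morphism $T \to X$ from a scheme, and by Zariski-covering $T$ by affines this reduces to triviality of $\St_X(x)$ at affine $R$-points, which is the given hypothesis. Alternatively, one may directly invoke \examref{exam:basket} and \remref{rem:fumatory} to rephrase monomorphicity of $\Delta_X$ as the condition that every groupoid $X(T)$ is discrete, which in turn is equivalent to the vanishing of all automorphism groups $\Aut_{X(T)}(y)$; since $\St_X(x)$ represents the functor $R' \mapsto \Aut_{X(R')}(x|_{R'})$ by Yoneda, this is equivalent to the stated hypothesis. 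No significant obstacle arises; the corollary is essentially a direct combination of \thmref{thm:subcarinate}\itemref{item:subcarinate/asp} with the translation between the diagonal of $\Delta_X$ and the inertia/stabilizer picture.
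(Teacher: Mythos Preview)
Your proposal is correct and matches the paper's approach: the paper states the corollary as an immediate consequence of \thmref{thm:subcarinate}\itemref{item:subcarinate/asp} without further argument, and you have simply made explicit the (straightforward) equivalence between monomorphicity of the diagonal and triviality of all stabilizers. Your alternative route via \examref{exam:basket} and \remref{rem:fumatory} is perhaps the cleanest way to phrase this equivalence.
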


    \begin{rem}
      In \corref{cor:nonabsorptive}, it turns out that it is moreover sufficient to require that the stabilizer at every \emph{geometric} point is trivial.
      See \cite[Thm.~2.2.5(1)]{ConradDrinfeldLevel}.
    \end{rem}

%!TEX root = ../stacksncts.tex

\section{The resolution property}
\label{sec:resolution}

  \ssec{The resolution property}

    \begin{defn}
      Let $\sF$ be a quasi-coherent sheaf on an algebraic stack $X$.
      We say that $\sF$ is \emph{coherent} if there exists a scheme $U$ and a smooth surjection $p : U \twoheadrightarrow X$ such that $p^*(\sF)$ is a coherent sheaf on $U$.
      We say that $\sF$ is \emph{locally free} of rank $n$ if there exists a scheme $U$, a smooth surjection $p : U \twoheadrightarrow X$, and an isomorphism $p^*\sF \simeq \sO_U^{\oplus n}$.
    \end{defn}

    \begin{defn}
      We say that an algebraic stack $X$ has the \emph{resolution property} if there exists a collection $(\sG_\alpha)_\alpha$ of locally free sheaves of finite rank on $X$ such that for every quasi-coherent sheaf $\sF$ on $X$ there exists a surjection
      \[
        \bigoplus_\alpha \sG_\alpha \twoheadrightarrow \sF
      \]
      of quasi-coherent sheaves on $X$.
    \end{defn}

    \begin{rem}\label{rem:forehead}
      If $X$ is quasi-compact and quasi-separated, then it admits the resolution property if and only if for every finite type quasi-coherent sheaf $\sF$, there exists a finite locally free sheaf $\sE$ and a surjection $\sE \twoheadrightarrow \sF$.
      This follows from the fact that every quasi-coherent sheaf on $X$ can be written as a filtered colimit of quasi-coherent sheaves of finite type (see \cite{RydhCompleteness}).
    \end{rem}

    \begin{exam}
      Let $X$ be a scheme and consider the following conditions:
      \begin{defnlist}
        \item $X$ is quasi-affine.
        \item $X$ is quasi-projective.
        \item $X$ admits an ample family of line bundles.
        \item $X$ has the resolution property.
      \end{defnlist}
      We obviously have (i) $\Rightarrow$ (ii) $\Rightarrow$ (iii) $\Rightarrow$ (iv).
    \end{exam}
    \begin{rem}
      It is possible to show that the resolution property implies that $X$ has affine diagonal (for example, this follows from \thmref{thm:cosmographical} below, or see \spcite{0F8C}).
      Conversely, there is apparently no known example of a scheme $X$ of finite type over a field which has affine diagonal but does not admit the resolution property.
    \end{rem}

  \ssec{Global quotient stacks}

    It turns out that, under very mild assumptions on $X$, the resolution property is equivalent to the existence of a global quotient presentation.

    \begin{defn}
      Let $X$ be an algebraic stack.
      We say that $X$ is \emph{quasi-compact} if there exists a quasi-compact scheme $U$ and a smooth surjection $U \twoheadrightarrow X$.
      We say that $X$ is \emph{quasi-separated} if it has quasi-compact quasi-separated diagonal.
    \end{defn}

    \begin{defn}
      Let $X$ be an algebraic stack.
      We say that $X$ has \emph{affine stabilizers} if for every geometric point $x$ of $X$, the stabilizer $\St_X(x)$ is affine.
      For example, if $X$ has affine diagonal, then it has affine stabilizers.
    \end{defn}

    \begin{defprop}\label{defprop:global}
      We say that $X$ is a \emph{global quotient stack} if satisfies the following equivalent conditions:
      \begin{thmlist}
        \item
        There exists a quasi-affine scheme $U$ with an action of $\GL_n$ (for some $n\ge 0$), and an isomorphism $X \simeq [U/\GL_n]$.

        \item
        There exists a $\GL_n$-torsor (for some $n\ge 0$) $U \twoheadrightarrow X$ where $U$ is a quasi-affine scheme.
        
        \item
        There exists a quasi-affine morphism $X \to \BGL_n$ (for some $n\ge 0$).
      \end{thmlist}
    \end{defprop}

    \begin{rem}
      Note that any global quotient stack is quasi-compact quasi-separated with affine stabilizers.
    \end{rem}

    \begin{thm}[Totaro, Gross]\label{thm:cosmographical}
      Let $X$ be an algebraic stack.
       % quasi-compact quasi-separated algebraic stack with affine stabilizers.
      Then $X$ has the resolution property if and only if it is a global quotient stack.
    \end{thm}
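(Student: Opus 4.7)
The plan is to prove the two directions separately, with the backward direction being essentially formal and the forward direction being substantive.

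For the direction \emph{global quotient} $\Rightarrow$ \emph{resolution property}, I would exploit the characterization \itemref{item:algstack/Neophron} of \defprop{defprop:global} that gives a quasi-affine morphism $f : X \to \BGL_n$. The proof reduces to two lemmas: (i) $\BGL_n$ itself has the resolution property, and (ii) the resolution property descends along quasi-affine morphisms. For (i), a quasi-coherent sheaf on $\BGL_n$ is the same as an algebraic representation of $\GL_n$; every such representation is a filtered colimit of finite-dimensional subrepresentations, and every finite-dimensional representation is a quotient of a direct sum of the locally free sheaves $V^{\otimes a} \otimes (\det V)^{-b}$ associated to the standard representation $V$. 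For (ii), factor $f$ as an open immersion $j : X \hook \bar X$ into $\bar X := \Spec_{\BGL_n}(f_* \sO_X)$ followed by an affine morphism $\bar f : \bar X \to \BGL_n$. Given $\sF \in \QCoh(X)$, the sheaf $j_* \sF$ (or in general a finite-type extension) is quasi-coherent on $\bar X$, and surjecting onto it from $\bar f^*(\bigoplus_\alpha \sG_\alpha)$ using the resolution property on $\BGL_n$ and then pulling back along $j$ yields the desired surjection, using that $j^* j_* \sF \simeq \sF$.

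For the direction \emph{resolution property} $\Rightarrow$ \emph{global quotient}, which is the content of Totaro's theorem (and Gross's extension), the target is to construct a quasi-affine morphism $X \to \BGL_n$. The strategy is to find a locally free sheaf $\sE$ of finite rank on $X$ which is a ``tensor generator'' in an appropriate sense, and whose associated classifying morphism $X \to \BGL_n$ is quasi-affine. Concretely, I would first use the resolution property to produce, for each (quasi-)coherent ideal or subsheaf controlling the diagonal of $X$, a locally free sheaf surjecting onto it; patching these together (and using that $X$ is quasi-compact and quasi-separated, which follows from the resolution property together with \remref{rem:forehead}) gives a single $\sE$ whose tensor powers and determinant twists generate $\QCoh(X)$. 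The classifying map $\phi : X \to \BGL_{\rk \sE}$ sends a point to the fiber of $\sE$; its fiber over the universal point $\Spec k \to \BGL_n$ is the frame bundle $\on{Isom}_X(\sO_X^n, \sE)$, which is affine over $X$, so $\phi$ is representable by schemes.

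The main obstacle is the quasi-affineness of $\phi$. By the characterization in \defprop{defprop:global}, quasi-affineness amounts to the frame bundle $\on{Frame}(\sE) \to X$ being quasi-affine as a scheme (not merely affine over $X$), and this requires that the pushforward $f_* \sO_{\on{Frame}(\sE)}$ be generated by global sections with enough open loci trivializing $\sE$ together with its tensor powers. This is exactly where the full force of the resolution property intervenes: one must argue that tensor-generating families produce sufficient global sections on the total space of the frame bundle to realize it as a quasi-compact open of an affine scheme. I would appeal here to the constructive arguments of \cite[Totaro and Gross]{}, which handle this step via a careful analysis of vanishing loci of sections of $\sE^{\otimes a} \otimes (\det \sE)^{-b}$, reducing to the scheme-theoretic case where quasi-affineness of a quasi-compact, quasi-separated scheme with affine diagonal and enough line bundles is classical.
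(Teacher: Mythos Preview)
Your proposal is correct but organizes both directions differently from the paper. For the backward direction, the paper does not argue via descent of the resolution property along quasi-affine morphisms; it invokes Thomason's equivariant theorem directly: on a quasi-affine $U$ with $\GL_n$-action, $\sO_U$ is equivariantly ample, so every equivariant coherent sheaf is a quotient of an equivariant locally free one, and one translates via $\QCoh([U/\GL_n]) \simeq \QCoh^{\GL_n}(U)$ (\defnref{defn:slowly}). Your route through $\BGL_n$ and quasi-affine descent is equally valid. For the forward direction, the paper follows Gross rather than Totaro and does \emph{not} seek a single tensor-generating bundle $\sE$. Instead it keeps the entire generating family $(\sG_\alpha)$, forms for each finite $I$ the fibred product $F_I$ of the frame bundles, and passes to the inverse limit $F = \lim_I F_I$ (affine over $X$). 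The key step is then the quasi-affineness criterion \thmref{thm:qaffgross} applied to $F \to \Spec(\Z)$: every $\sG_\alpha$ trivializes over $F$, so $\sO_F$ is universally generating, and $F$ has affine stabilizers since it is affine over $X$. Rydh's approximation then cuts down to a finite stage $F_I$ already quasi-affine, and the resulting $X \to BG_I \to \BGL_n$ is quasi-affine (the second map because $\GL_n/G_I$ is affine). This trades your construction of a single tensor generator and the analysis of vanishing loci for the black box \thmref{thm:qaffgross} plus an approximation argument; the hard geometry is isolated in that criterion rather than spread through the proof.

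One minor correction: \remref{rem:forehead} \emph{assumes} $X$ is qcqs rather than deriving it from the resolution property. The qcqs hypothesis is implicit in the paper's argument and is in fact necessary for the statement as written, since global quotient stacks are automatically qcqs.
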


  \ssec{Quasi-coherent sheaves on quotient stacks}

    Let us begin by recording the following tautological fact:

    \begin{prop}\label{prop:checkage}
      Let $F : \Sch^\op \to \sV$ be an étale sheaf where $\sV$ is a small \inftyCat admitting limits, and denote by $F^+$ its right Kan extension along the Yoneda embedding $\Sch \hook \Stk_\infty = \Shv(\Sch; \Grpdoo)$.
      Then for every effective epimorphism $f : X \to Y$ in $\Stk_\infty$, the canonical morphism
      \[ F(Y) \to \Tot(F(X_\bullet)) \]
      is invertible where $X_\bullet$ denotes the \v{C}ech nerve of $f$.
    \end{prop}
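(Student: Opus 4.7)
The plan is to combine Theorem \ref{thm:effepi} with the limit-preserving nature of $F^+$. By Theorem \ref{thm:effepi}(ii), the effective epimorphism $f : X \to Y$ in $\Stk_\infty$ exhibits $Y$ as the geometric realization $\abs{X_\bullet}$ of its \v{C}ech nerve. Thus the morphism in question takes the form
\[
  F^+(\abs{X_\bullet}) \to \lim_{[n]\in\bDelta} F^+(X_n) = \Tot(F^+(X_\bullet)),
\]
and it is enough to prove that $F^+$ sends this colimit in $\Stk_\infty$ to a limit in $\sV$.

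To this end, I would argue that $F^+ : \Stk_\infty^\op \to \sV$ preserves all small limits. The cleanest formulation uses the universal property of the Yoneda embedding $\Sch \hook \Fun(\Sch^\op, \Grpdoo)$ as a free cocompletion: dually, right Kan extension along it gives an equivalence between all $\sV$-valued presheaves on $\Sch$ and limit-preserving functors $\Fun(\Sch^\op, \Grpdoo)^\op \to \sV$. The inclusion $\Stk_\infty \hook \Fun(\Sch^\op, \Grpdoo)$ is reflective (via sheafification) and hence preserves limits, so the restriction of this equivalence identifies $\sV$-valued \emph{sheaves} on $\Sch$ with limit-preserving functors $\Stk_\infty^\op \to \sV$. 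Under this identification, $F^+$ is the image of the sheaf $F$, so $F^+$ preserves limits as required.

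Granting the preceding step, applying limit-preservation to the colimit $Y \simeq \abs{X_\bullet}$ yields $F^+(Y) \simeq \Tot(F^+(X_\bullet))$, as desired. The hardest part of the argument is the middle step---the characterization of $\sV$-valued sheaves as limit-preserving functors on the ambient $\infty$-topos---but this is a standard consequence of Lurie's theory and is genuinely the dual of the universal property of presheaf \inftyCats as free cocompletions; an alternative direct approach is to work from the explicit formula $F^+(Z) = \lim_{(U,u) \in \Sch_{/Z}} F(U)$ and verify limit-preservation by a cofinality argument, exploiting that a morphism from a representable $U$ into a colimit $\abs{Z_\bullet}$ factors étale-locally through some $Z_n$ (using the sheaf condition on $F$ to sew things back together). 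Once limit-preservation is in hand, the rest follows directly from Theorem \ref{thm:effepi}, confirming the tautological character of the statement.
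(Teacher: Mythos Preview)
Your argument is correct and follows the same route as the paper's proof: identify $Y \simeq \abs{X_\bullet}$ via the definition of effective epimorphism, then use that $F^+$ sends colimits in $\Stk_\infty$ to limits in $\sV$. The paper's proof is a two-line affair that simply invokes this limit-preservation as definitional (recalling the earlier description of right Kan extension as ``the unique limit-preserving extension''), whereas you supply the underlying justification via the free-cocompletion universal property and reflectivity of sheaves; your extra paragraph is exactly what one would unpack if pressed on why the paper's terse proof is legitimate.
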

    \begin{proof}
      If $f : X \to Y$ is an effective epimorphism, then the canonical morphism $\abs{X_\bullet} \to Y$ is invertible.
      Recall that $F^+$ is by definition the unique functor $\Stk_\infty^\op \to \sV$ extending $F$ which preserves limits, i.e., sends colimits in $\Stk_\infty$ to limits in $\sV$.
      The claim follows.
    \end{proof}

    \begin{cor}
      Let $X$ be an algebraic stack.
      Then for every scheme $U$ and smooth surjection $p : U \twoheadrightarrow X$, the canonical functor
      \[ \QCoh(X) \to \Tot(\QCoh(U_\bullet)) \]
      is an equivalence, where $U_\bullet$ is the \v{C}ech nerve of $p$.
    \end{cor}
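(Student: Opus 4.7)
The plan is to derive this directly from \propref{prop:checkage}, so the entire task reduces to checking two ingredients: (i) that $\QCoh$, viewed as a presheaf on $\Stk_\infty$, is the right Kan extension of a sheaf defined on $\Sch$; and (ii) that a smooth surjection $p : U \twoheadrightarrow X$ from a scheme onto an algebraic stack is an effective epimorphism in $\Stk_\infty$. Once both are in place, \propref{prop:checkage} applied to $F = \QCoh|_\Sch$ yields exactly the claimed equivalence $\QCoh(X) \simeq \Tot(\QCoh(U_\bullet))$.

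For (i), recall that $\QCoh : \Sch^\op \to \Cat$ is an étale (in fact fpqc) sheaf by \thmref{thm:descent}. By construction in the text, $\QCoh$ on stacks is the right Kan extension of $\Mod_{(-)} : \CAlg \to \Cat$ along $\Aff \hook \Stk$. Since $\Aff \sub \Sch$ is a basis for the étale topology, \thmref{thm:RKE} and \thmref{thm:comp} together imply that extending first from $\Aff$ to $\Sch$ and then from $\Sch$ to $\Stk_\infty$ produces the same functor; this identifies our $\QCoh$ on stacks with the $F^+$ of \propref{prop:checkage}.

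For (ii), I will invoke the criterion of \propref{prop:local surj}: it suffices to show that $p$ is étale-locally surjective. Given an $R$-point $x : \Spec(R) \to X$, form the base change $V := U \fibprod_X \Spec(R)$, which is an algebraic space since $X$ has representable diagonal (by the analogue of \lemref{lem:diagsch}), and the induced map $V \to \Spec(R)$ is a smooth surjection of algebraic spaces. The standard fact that a smooth surjection of algebraic spaces admits étale-local sections then provides a faithfully flat étale morphism $R \to R'$ together with a lift $\Spec(R') \to V$, whose composite to $U$ is the desired lift of $x|_{\Spec(R')}$. Thus $p$ is an effective epimorphism in $\Stk_\infty$.

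With (i) and (ii) established, \propref{prop:checkage} applied to the effective epimorphism $p$ gives the equivalence $\QCoh(X) \xrightarrow{\simeq} \Tot(\QCoh(U_\bullet))$. The main subtlety in the argument is really (ii), specifically the fact that smooth surjections admit étale-local sections — everything else is formal manipulation of right Kan extensions and sheaf conditions built into the earlier results.
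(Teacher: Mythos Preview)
Your proof is correct and follows exactly the approach the paper intends: the corollary is placed immediately after \propref{prop:checkage} precisely because it is meant to be read as a direct application, and your two verifications (that $\QCoh$ on stacks is the right Kan extension of the étale sheaf $\QCoh|_{\Sch}$, and that a smooth surjection onto an algebraic stack is an effective epimorphism via \propref{prop:local surj}) are the details the paper leaves implicit. The only thing worth noting is that \propref{prop:checkage} as stated assumes $\sV$ is small, whereas $\Cat$ is not; this is a harmless size issue that the paper itself glosses over, so it is not a defect in your argument.
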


    \begin{cor}\label{cor:uneagerness}
      Let $G$ be a group stack over a base scheme $S$ acting on a stack $X$ over $S$.
      Then the canonical functor
      \[ \QCoh([X/G]) \to \Tot(\QCoh(X_\bullet)) \]
      is an equivalence, where $X_\bullet = [\cdots \rightrightrightarrows G \times X \rightrightarrows X]$ is the action groupoid.
    \end{cor}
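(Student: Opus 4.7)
The plan is to deduce this as a direct application of Proposition~\ref{prop:checkage} to the sheaf $\QCoh$, once we identify the action groupoid with a \v{C}ech nerve.

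First, I would set up the hypotheses of Proposition~\ref{prop:checkage}. By Theorem~\ref{thm:descent}, the presheaf $\QCoh : \Sch^\op \to \Cat$ is an étale sheaf. Moreover, by Theorem~\ref{thm:RKE} applied to the basis $\Aff \sub \Sch$, this sheaf agrees with the right Kan extension of its restriction $\QCoh|_{\Aff}$, which is exactly how $\QCoh$ on stacks is defined. Thus the functor $\QCoh : \Stk^\op \to \Cat$ agrees with the right Kan extension $F^+$ of $F = \QCoh|_{\Sch}$ along $\Sch \hook \Stk_\infty$, so Proposition~\ref{prop:checkage} applies to it. (The size issue arising from $\Cat$ not being small is standard and can be resolved by passing to a larger universe; I would not dwell on it.)

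Next I need to identify the action groupoid $X_\bullet$ with the \v{C}ech nerve of the quotient morphism $p : X \twoheadrightarrow [X/G]$. By construction, $p$ is a $G$-torsor, so by Remark~\ref{rem:snapwood} the canonical map $(\mrm{act}, \pr) : G \times X \to X \fibprod_{[X/G]} X$ is invertible. More generally, condition~\itemref{item:preincentive} of Definition~\ref{defn:torsor} says that the action groupoid is identified with the \v{C}ech nerve $C_\bullet$ of $p$. In particular, $p$ is an effective epimorphism, with \v{C}ech nerve $X_\bullet$.

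Applying Proposition~\ref{prop:checkage} to $p : X \twoheadrightarrow [X/G]$ then yields that the canonical morphism
\[ \QCoh([X/G]) \to \Tot(\QCoh(X_\bullet)) \]
is an equivalence, which is precisely the statement of the corollary. The main step here is really the bookkeeping in the first paragraph: ensuring that the $\QCoh$ defined as a right Kan extension from $\Aff$ agrees with what Proposition~\ref{prop:checkage} produces (namely the extension of an étale sheaf on $\Sch$), and that this extension is computed on $[X/G]$ by taking the totalization along $p$. Everything else is a direct consequence of the torsor condition and the already-established descent for $\QCoh$.
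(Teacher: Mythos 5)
Your proposal is correct and follows the same route the paper intends: apply Proposition~\ref{prop:checkage} to the effective epimorphism $p : X \twoheadrightarrow [X/G]$, using the fact (via Theorem~\ref{thm:effepi} and the definition of the quotient stack) that the action groupoid $X_\bullet$ is the \v{C}ech nerve of $p$, together with the observation that the right Kan extension of $\QCoh$ from $\Aff$ agrees with that from $\Sch$ by composition of right Kan extensions. The care you take to reconcile the two right Kan extensions is appropriate and matches what the paper leaves implicit.
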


    \begin{defn}\label{defn:slowly}
      Let $G$ be a group stack over a scheme $S$, and let $X$ be a stack over $S$ with $G$-action.
      We define
      \[
        \QCoh^G(X) := \Tot(\QCoh(X_\bullet)).
      \]
      Note that its objects, which we call \emph{$G$-equivariant quasi-coherent sheaves} on $X$, are quasi-coherent sheaves $\sF$ on $X$ together with a (specified) isomorphism $\mrm{act}^*\sF \simeq \pr^*\sF$ on $G \times X$, satisfying a cocycle condition on $G\times G\times X$.
      Thus \corref{cor:uneagerness} says that quasi-coherent sheaves on $[X/G]$ are $G$-equivariant quasi-coherent sheaves on $X$.
      % For example, if $X=\Spec(R)$ is affine, then we may think of quasi-coherent complexes on $[\Spec(R)/G]$ as derived $R$-modules equipped with a homotopy coherent $G$-action.
    \end{defn}

    % \begin{defn}
    %   Let $U$ be an algebraic stack with an action of a group stack $G$.
    %   We say that $U$ has the \emph{$G$-resolution property} if there exists a collection $(\sG_\alpha)_\alpha$ of $G$-equivariant locally free sheaves of finite rank on $X$ such that for every quasi-coherent sheaf $\sF$ on $X$ there exists a surjection
    %   \[
    %     \bigoplus_\alpha \sG_\alpha \twoheadrightarrow \sF
    %   \]
    %   of quasi-coherent sheaves on $X$.
    % \end{defn}

    % \begin{prop}\label{prop:antihierarchist}
    %   Let $U$ be an algebraic stack with an action of $G=\GL_n$.
    %   If $U$ admits the resolution property, then the quotient stack $X=[U/G]$ admits the resolution property.
    % \end{prop}

    \begin{thm}[Thomason]
      Every global quotient stack admits the resolution property.
    \end{thm}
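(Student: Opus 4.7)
The plan is to reduce the resolution property for $X$ to an equivariant-generation problem on a quasi-affine scheme. Writing $X \simeq [U/\GL_n]$ with $U$ quasi-affine (as in the equivalent characterization of a global quotient stack), \corref{cor:uneagerness} gives an equivalence $\QCoh(X) \simeq \QCoh^{\GL_n}(U)$ under which pullback along the atlas $U \twoheadrightarrow X$ forgets the $\GL_n$-equivariance; similarly, any finite-dimensional $\GL_n$-representation $V$ produces, via pullback from $\BGL_n$, an equivariant locally free sheaf $V \otimes_k \sO_U$ on $U$ that descends to a locally free sheaf of rank $\dim V$ on $X$. Since $X$ is quasi-compact and quasi-separated as a global quotient stack, \remref{rem:forehead} reduces the problem to showing that every finite-type $\GL_n$-equivariant quasi-coherent sheaf $\sF$ on $U$ admits an equivariant surjection $V \otimes_k \sO_U \twoheadrightarrow \sF$ coming from such a $V$.

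To build $V$, I would set $A := \Gamma(U, \sO_U)$, so that $U \hookrightarrow \Spec(A)$ is a quasi-compact open immersion, and let $M := \Gamma(U, \sF)$ viewed as a $\GL_n$-equivariant $A$-module. A quick cover of $U$ by basic opens $D(f_i) \subset \Spec(A)$, together with the fact that $\sF$ is of finite type, yields finitely many global sections $m_1, \ldots, m_r \in M$ that generate $\sF$ on $U$ (clearing denominators in the basic-open generators). The next step, which is the crux, is to enlarge $\{m_1, \ldots, m_r\}$ to a finite-dimensional $\GL_n$-invariant subspace $V \subset M$. Once $V$ is in hand, the induced equivariant $A$-module map $V \otimes_k A \to M$ restricts to an equivariant morphism $V \otimes_k \sO_U \to \sF$ on $U$, and this is surjective because the $m_i$ already generate $\sF$.

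The main obstacle is thus the \emph{local finiteness} of the $\GL_n$-action on $M = \Gamma(U, \sF)$: every element must lie in a finite-dimensional $\GL_n$-subrepresentation. This is a classical fact about actions of algebraic groups of finite type on equivariant quasi-coherent sheaves over quasi-compact schemes, and the proof I have in mind is to encode the equivariant structure as a coaction $M \to M \otimes_k \sO(\GL_n)$, fix a $k$-basis of $\sO(\GL_n)$, and observe that the image of any $m \in M$ has only finitely many nonzero coordinates; the span of the corresponding ``coefficient'' elements of $M$ is then a finite-dimensional $\GL_n$-subrepresentation containing $m$. Once local finiteness is established, the construction sketched above immediately delivers the required locally free resolution and completes the proof.
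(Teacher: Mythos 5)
Your overall strategy coincides with the paper's: both pass to the equivalence $\QCoh([U/\GL_n]) \simeq \QCoh^{\GL_n}(U)$, invoke \remref{rem:forehead} to reduce to finite-type sheaves, use that $\sO_U$ is ample on the quasi-affine scheme $U$ to extract finitely many generating global sections, and then try to fatten those sections into a $\GL_n$-representation mapping equivariantly onto $\sF$. The difference is that the paper stops at this reduction and cites Thomason's Lemmas 2.4 and 2.6 for the key equivariant-generation statement, whereas you attempt to prove that lemma from scratch via local finiteness of comodules.

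Your sketch of the local-finiteness step is essentially correct \emph{over a field $k$}: for $m \in M$, writing the coaction as $\rho(m) = \sum_i m_i \otimes h_i$ against a basis of $\sO(\GL_n)$ and using coassociativity does produce a finite-dimensional subcomodule $V \ni m$, and over a field $V$ is automatically free, so $V \otimes_k \sO_U$ is a $\GL_n$-equivariant locally free sheaf and you are done. The gap appears in the generality the paper works in: the surrounding material (notably the proof of \thmref{thm:cosmographical}, which is carried out explicitly over $\Spec(\Z)$) treats $\GL_n$-torsors over $\Z$, not over a field. Over a general base ring $R$, Serre's local-finiteness theorem still produces a $\GL_n$-stable $R$-submodule $V \subset M$ that is finitely generated over $R$ and contains the $m_i$'s, but there is no reason for $V$ to be a \emph{projective} $R$-module. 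Consequently $V \otimes_R \sO_U$ need not be locally free, and the construction fails to deliver a resolution by a locally free equivariant sheaf. This is precisely the additional difficulty Thomason's Lemmas 2.4 and 2.6 are designed to overcome (by a further trick that replaces the finitely generated comodule by a finitely generated \emph{projective} one admitting an equivariant surjection onto it). So your argument is the right skeleton and agrees with the paper's route, but to match the paper's generality you would need to supply that extra step or restrict the statement to a field base.
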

    \begin{proof}
      Let $U$ be a quasi-affine scheme with an action of $G=\GL_n$.
      Since the structure sheaf $\sO_U$ is obviously $G$-equivariant, it defines a locally free sheaf on $[U/G]$.
      Moreover, $\sO_U$ is ample (since $U$ is quasi-affine).
      In this situation, Thomason proved that every $G$-equivariant coherent sheaf on $U$ admits a surjection from a $G$-equivariant locally free sheaf (see \cite[Lem.~2.4, Lem.~2.6]{ThomasonResolution}).
      This immediately implies that $[U/G]$ has the resolution property, translating via \defnref{defn:slowly} and using \remref{rem:forehead}.
    \end{proof}

  \ssec{A quasi-affineness criterion}

    \begin{defn}
      Let $f : X \to Y$ be a morphism of algebraic stacks.
      Let $(\sG_\alpha)_\alpha$ be a collection of coherent sheaves on $X$.
      \begin{defnlist}
        \item
        We say that $(\sG_\alpha)_\alpha$ is \emph{generating over $Y$}, or \emph{$f$-generating}, if for all $\sE \in \QCoh(X)$ there exists an $\sF \in \QCoh(Y)$ and a surjection
        \[
          \bigoplus_\alpha \sG_\alpha \otimes f^*(\sF)
          \twoheadrightarrow \sE.
        \]

        \item
        We say that $(\sG_\alpha)_\alpha$ is \emph{universally generating over $Y$} (or \emph{universally $f$-generating}) if the above property holds after base change to any affine scheme (or equivalently, after base change to any algebraic stack).
      \end{defnlist}
    \end{defn}

    \begin{rem}
      If $Y$ is affine, then $(\sG_\alpha)_\alpha \sub \QCoh(X)$ is generating over $Y$ if and only if for every $\sF \in \QCoh(X)$ there exists a surjection
      \[ \bigoplus_\alpha \sG_\alpha \twoheadrightarrow \sF. \]
    \end{rem}

    \begin{thm}[Gross]\label{thm:qaffgross}
      Let $f : X \to Y$ be a morphism of qcqs algebraic stacks.
      Then $f$ is quasi-affine if and only if it satisfies the following properties:
      \begin{thmlist}
        \item
        The morphism $f$ has affine stabilizers.
        That is, for every affine scheme $V$ and every morphism $v : V \to Y$, the fibre $X\fibprod_Y V$ has affine stabilizers (at geometric points).

        \item
        The collection $(\sO_X)$ is universally generating over $Y$.
      \end{thmlist}
    \end{thm}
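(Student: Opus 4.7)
The forward direction is formal. A quasi-affine morphism is schematic (in fact quasi-affine), so all its relative stabilizers are trivial, which gives (i). For (ii), one base-changes to reduce to the case $Y = \Spec A$ affine, in which case $X$ is a quasi-affine scheme: writing $X$ as a quasi-compact open subscheme of an affine scheme and covering any quasi-coherent sheaf on the ambient affine by a sum of copies of the structure sheaf yields the required surjection onto any $\sE \in \QCoh(X)$ from a sum of copies of $\sO_X$.

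For the converse, both hypotheses and the conclusion are smooth-local on $Y$, so one reduces to $Y = \Spec A$ affine. Since $f$ is qcqs, $f_*\sO_X$ is quasi-coherent, so $B := \Gamma(X, \sO_X)$ is an $A$-algebra, and $f$ factors as $X \xrightarrow{g} \Spec B \xrightarrow{h} \Spec A$ with $h$ affine. It suffices to show that $g$ is a quasi-compact open immersion. I would proceed in two steps: (a) show $X$ is an algebraic space; (b) deduce that $g$ is a quasi-compact open immersion. For (a), by \corref{cor:nonabsorptive} it suffices to show the stabilizer $\sG := \St_X(x)$ at every geometric point $x : \Spec k \to X$ is trivial. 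The affine-stabilizer hypothesis makes $\sG$ an affine $k$-group of finite type, which is trivial iff every finite-dimensional representation of $\sG$ is trivial. Given such a representation $V$, I would regard it as a coherent sheaf on the residual gerbe $B\sG \hookrightarrow X_k := X \times_Y \Spec k$ and push forward to a coherent sheaf $\widetilde{V}$ on $X_k$; universal generation applied after base change along $\Spec k \to Y$ then produces a surjection $\sO_{X_k}^{\oplus I} \twoheadrightarrow \widetilde{V}$, which restricts on the residual gerbe to a surjection from a trivial $\sG$-representation onto $V$, forcing $V$ to be trivial. Once $X$ is known to be a qcqs algebraic space with $\sO_X$ universally generating, step (b) is the algebraic-space analogue of Serre's criterion for quasi-affineness: the universal generation says $\sO_X$ is ample in the appropriate sense, which guarantees that the canonical map $g : X \to \Spec \Gamma(X, \sO_X)$ is a quasi-compact open immersion.

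The main obstacle is step (a), the passage from global generation on $X$ to triviality of each stabilizer. Two technical points must be handled carefully: identifying $B\sG$ as a closed substack of $X_k$ on which one can push forward a $\sG$-representation (which requires the existence of a residual gerbe structure, itself relying on the qcqs-plus-affine-stabilizers setup), and verifying that the generating surjection $\sO_{X_k}^{\oplus I} \twoheadrightarrow \widetilde{V}$ remains surjective upon restriction to the residual gerbe. It is precisely the \emph{universal} part of the generation hypothesis that enables both maneuvers, since one must work directly on the fibre $X_k$ rather than only on $X$ itself; mere generation over $Y$ would be insufficient because it does not survive base change to the residue field at $x$.
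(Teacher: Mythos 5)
The paper itself offers no proof of this statement, only the citation to Gross's Proposition~3.1.  Your reconstruction matches the strategy of that reference: the forward direction is formal, and for the converse you reduce to affine $Y$, prove that $X$ is an algebraic space via a Tannakian argument (universal generation plus affine stabilizers force each geometric stabilizer to have only trivial finite-dimensional representations, hence to be trivial by \corref{cor:nonabsorptive}), and then invoke the algebraic-space version of Serre's quasi-affineness criterion.  This is essentially Gross's argument, so the proposal is on the right track.

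A few small clarifications on the technical points you flag.  The residual gerbe $i\colon B\sG \hookrightarrow X_k$ is in general a \emph{locally closed} monomorphism rather than a closed one, so $i_*V$ need only be quasi-coherent, not coherent; this is harmless because condition (ii) is a statement about all of $\QCoh(X_k)$.  The surjectivity after restriction is not a delicate issue: pullback is right exact, so $\sO_{B\sG}^{\oplus I} \twoheadrightarrow i^*i_*V$ is surjective, and the counit $i^*i_* V \to V$ is an isomorphism (the counit of the closed part is an isomorphism, while the open part restricts trivially), so the composite surjects onto $V$; and a $\sG$-equivariant quotient of a trivial representation is trivial.  Finally, you do not need $\sG$ to be of finite type over $k$: local finiteness of comodules over a Hopf algebra and faithfulness of the regular representation give the lemma ``every finite-dimensional representation trivial $\Rightarrow$ $\sG$ trivial'' for arbitrary flat affine group schemes over a field.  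The genuinely nonformal inputs that the written proposal gestures at — existence of the residual gerbe as a quasi-compact locally closed monomorphism (available here since $X$ is quasi-separated, hence decent), and Serre's criterion for qcqs algebraic spaces — are exactly the lemmas Gross assembles, so your outline is complete in spirit even if the cited reference should be consulted for those auxiliary facts.
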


    See \cite[Prop.~3.1]{Gross}.

  \ssec{Proof of \thmref{thm:cosmographical}}
    
    % To prove \thmref{thm:antihierarchist} it will be convenient to introduce the following notions:
    
    % \begin{prop}
    %   Let $f : X \to Y$ be a morphism of qcqs algebraic stacks and let $(\sG_\alpha)_\alpha$ be a collection of quasi-coherent sheaves on $X$.
    %   \begin{thmlist}
    %     \item
    %     Given a morphism $q : Y' \to Y$, consider the base change $X' = X\fibprod_YY' \to Y'$ of $f$ along $q$ and denote by $p : X' \to X$ the projection.
    %     Assume that $Y$ has quasi-affine diagonal.
    %     If $(\sG_\alpha)_\alpha$ is generating over $Y$, then $(p^*\sG_\alpha)_\alpha \sub \QCoh(X')$ is generating over $Y'$.

    %     \item
    %     Suppose given another morphism $g : Y \to Z$ and a collection $(\sH_\beta)_\beta$ of quasi-coherent sheaves on $Y$.
    %     If $(\sG_\alpha)_\alpha$ is generating over $Y$ and $(\sH_\beta)_\beta$ is generating over $Z$, then the collection
    %     \[ (\sG_\alpha\otimes f^*\sH_\beta)_{\alpha,\beta} \sub \QCoh(X) \]
    %     is generating over $Z$.
    %   \end{thmlist}
    % \end{prop}

    % \begin{proof}[Proof of \thmref{thm:antihierarchist}]
    %   Let $\pt=\Spec(\Z)$ and consider the following diagram:
    %   \begin{equation*}
    %     \begin{tikzcd}
    %       U \ar[twoheadrightarrow]{r}\ar{d}
    %       & {[U/G]} \ar{d}
    %       \\
    %       \pt \ar[twoheadrightarrow]{r}
    %       & BG \ar{r}
    %       & \pt
    %     \end{tikzcd}
    %   \end{equation*}
    %   where $G=\GL_n$.
    %   Since $U$ has the resolution property, there exists a collection $(\sG_\alpha)_\alpha \sub \QCoh(U)$ of locally free sheaves 
    % \end{proof}

    Let $X$ be an algebraic stack with the resolution property.
    Let $(\sG_\alpha)_\alpha$ be a collection of locally free sheaves of ranks $n_\alpha$ on $X$, and denote by $(F_\alpha)_\alpha$ the corresponding $\GL_{n_\alpha}$-torsors (\examref{exam:frame}).
    For every finite subset of indices $I$, the product (fibred over $X$)
    \[ F_I = \prod_{\alpha\in I} F_\alpha \]
    defines a $G_I := \prod_{\alpha\in I} \GL_{n_\alpha}$-torsor over $X$, over which $F_\alpha$ trivializes for each $\alpha\in I$.
    Note that $F_I \to X$ is an affine morphism (since $G_I$ is affine).
    The limit $F = \lim_I F_I$ over the cofiltered system of finite subsets $I$ is affine over $X$ and in particular algebraic, since the transition maps are affine.

    By \thmref{thm:qaffgross} we deduce that $F$ is a quasi-affine scheme, i.e., that $F \to \Spec(\Z)$ is quasi-affine:
    \begin{enumerate}
      \item 
      $F$ has affine stabilizers.
      Indeed, it is affine over $X$, which has affine stabilizers.

      \item
      $\sO_F$ is universally generating (over $\Spec(\Z)$).
      Indeed, since $F \to X$ is affine, the fact that $(\sG_\alpha)_\alpha \sub \QCoh(X)$ is universally generating over $Y$ implies that $(\sG_\alpha|_F)_\alpha \sub \QCoh(F)$ is universally generating over $\Spec(\Z)$.
      But $\sG_\alpha|_F \simeq \sO_F$ for every $\alpha$ by construction of $F$.
    \end{enumerate}
    Since $\Spec(\Z)$ is quasi-compact, it follows that $I$ may be chosen sufficiently large such that $F_I \to \Spec(\Z)$ is already quasi-affine (see \cite[Thm.~C]{RydhApprox}).

    The $G_I$-torsor $F_I \to \Spec(\Z)$ is classified by a morphism $X \to BG_I$, which is quasi-affine because its base change along the quotient map $\Spec(\Z) \twoheadrightarrow BG_I$ (which is $F_I \to \Spec(\Z)$) is quasi-affine.
    The diagonal embedding of $G_I$ into $\GL_n$, where $n = \sum_\alpha n_\alpha$, has quotient $\GL_n/G_I$ representable by an affine scheme (the ``Stiefel manifold'').
    The induced morphism of classifying stacks $BG_I \to \BGL_n$ is therefore quasi-affine, since its base change along $\Spec(\Z) \twoheadrightarrow \BGL_n$ is $\GL_n/G_I$.
    We have thus constructed a quasi-affine morphism $X \to \BGL_n$, hence $X$ is a global quotient by \propref{defprop:global}.

%!TEX root = ../stacksncts.tex

\section{Derived categories}

  \ssec{Algebraic categories}

    \begin{defn}\label{defn:algebraic}
      Let $\sC$ be a category.
      We say $\sC$ is \emph{algebraic} if there exists an essentially small full subcategory $\sF_\sC \sub \sC$ closed under finite coproducts, and an equivalence
      \[
        \Fun_\Pi(\sF_\sC^\op, \Set) \to \sC,
      \]
      which restricts along the Yoneda embedding $\sF_\sC \hook \Fun_\Pi(\sF_\sC^\op, \Set)$ to the inclusion $\sF_\sC \hook \sC$.
      The subscript $_\Pi$ indicates the full subcategory of finite product-preserving functors.
    \end{defn}

    \begin{defn}
      Let $\sC$ be a category.
      \begin{defnlist}
        \item
        An object $X \in \sC$ is \emph{compact} if and only if $\Hom_\sC(X, -) : \sC \to \Set$ preserves filtered colimits.
        \item
        A compact object $X \in \sC$ is \emph{projective} if and only if $\Hom_\sC(X, -) : \sC \to \Set$ preserves reflexive coequalizers.
      \end{defnlist}
      The category $\sC$ is algebraic if and only if the condition of \defnref{defn:algebraic} holds with $\sF_\sC$ the full subcategory of compact projective objects.
      When not otherwise specified, $\sF_\sC$ will denote this subcategory by default.
    \end{defn}

    \begin{exam}
      The category of sets is algebraic: we have
      \[ \Set \simeq \Fun_\Pi(\Fin^\op, \Set), \]
      where $\Fin \sub \Set$ is the full subcategory of finite sets.
      Certainly any set $X$ represents a product-preserving functor $\Fin^\op \to \Set$ sending $Y \mapsto \Hom(Y, X)$.
      This gives a functor $\Set \to \Fun_\Pi(\Fin^\op, \Set)$.
      Conversely, given a product-preserving functor $F : \Fin^\op \to \Set$, we get the following data:
      \begin{defnlist}
        \item
        Sets $F_0 = F(\initial)$, $F_1 = F(\{1\})$, $F_2 = F(\{1,2\})$, \textellipsis.

        \item
        For every map of finite sets $\{1, 2, \ldots, n\} \to \{1, 2, \ldots, m\}$, an induced map $F_m \to F_n$.

        \item
        Canonical isomorphisms $F_0 \simeq \pt := \{\ast\}$, and $F_n \simeq (F_1)^{\times n}$ for all $n$.
      \end{defnlist}
      The assignment $F \mapsto F_1$ defines a functor from the RHS to $\Set$, and
      % Since all the $F_n$'s are determined by just $F_1$, and all the maps $F_m \to F_n$ are identified with the projections/inclusions $(F_1)^{\times m} \to (F_1)^{\times n}$, it is clear that these two functors define an equivalence of categories.
      it is easy to check that these two functors define an equivalence of categories.
      Moreover, the compact projective objects in $\Set$ are exactly the finite sets.
    \end{exam}

    \begin{exam}
      The category $\Ab$ of abelian groups is algebraic:
      \begin{equation}\label{eq:07ugs1}
        \Ab \simeq \Fun_\Pi(\sF_\Ab^\op, \Set)
      \end{equation}
      where $\sF_\Ab$ is the category of finitely generated free abelian groups.
      Moreover, the compact projective objects are exactly those that belong to $\sF_\Ab$.

      Note that $\sF_\Ab$ is equivalent to the category whose objects are natural numbers, morphisms are matrices of integers
      \[
        \Hom(n, m)
        = \Hom_\Ab(\bZ^{\oplus n}, \bZ^{\oplus m})
        = \Mat_{m\times n}(\bZ),
      \]
      and the composition rule is given by matrix multiplication.

      An object of the RHS of \eqref{eq:07ugs1} amounts to the data of sets $F_0, F_1, F_2, \ldots$ with isomorphisms $F_n \simeq (F_1)^{\times n}$ for all $n$, and for every matrix $\phi \in \Mat_{m \times n}(\bZ)$, a map $F_\phi : F_m \to F_n$.
      This data is subject to identities imposed by functoriality.
      From this we can derive an \emph{underlying set} $M := F_1 \in \Set$ equipped with various \emph{operations} encoded by matrices.
      For example, every $(n\times 1)$-matrix
      \[ \phi = \begin{pmatrix}a_1\\\ldots\\a_n\end{pmatrix} \]
      encodes the operation $F_\phi : M^{\times n} \to M$ of ``forming the linear combination with coefficients $a_i$'':
      \[
        (x_1, ..., x_n) \mapsto \sum_i a_i x_i.
      \]
      In particular:
      \begin{defnlist}
        \item
        There is a (commutative) \emph{addition map} $M \times M \to M$, $(x_1,x_2) \mapsto x_1+x_2$, encoded by the matrix
        \[\begin{pmatrix}1\\1\end{pmatrix}.\]

        \item
        There is a zero element $0 \in M$, encoded as the map $0 : \pt \to M$ coming from the empty $(0\times 1)$-matrix.

        \item
        There is an \emph{additive inverse map} $M \to M$, $x_1 \mapsto -x_1$, encoded by the $(1\times 1)$-matrix $\begin{pmatrix} -1 \end{pmatrix}$.
      \end{defnlist}
      % In particular, this encodes the data of an abelian group (with underlying set $M$), plus a lot of redundant data.
      Essentially, this is just a very redundant way of specifying an abelian group.
    \end{exam}

    \begin{exam}\label{exam:Mod}
      More generally, for every commutative ring $R$ the category $\Mod_R$ of $R$-modules is algebraic:
      \begin{equation*}
        \Mod_R \simeq \Fun_\Pi(\sF_R^\op, \Set)
      \end{equation*}
      where $\sF_R$ is the category of finitely generated free $R$-modules.
      The compact projective objects are the finitely generated \emph{projective} $R$-modules (and not, in general, just the finitely generated free ones).
    \end{exam}

    If $\sC$ is algebraic, then it is the free completion of $\sF_\sC$ by filtered colimits and reflexive coequalizers.

    \begin{prop}\label{prop:0adgus}
      Let $\sC$ be an algebraic category.
      For every category $\sD$ admitting filtered colimits and reflexive coequalizers, the canonical functor
      \[ \Fun'(\sC, \sD) \to \Fun(F_\sC, \sD), \]
      from the category of functors $\sC \to \sD$ that preserve filtered colimits and reflexive coequalizers, is an equivalence.
    \end{prop}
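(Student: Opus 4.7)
The plan is to reduce the statement to the classical universal property identifying $\sC \simeq \Fun_\Pi(\sF_\sC^\op, \Set)$ with the free cocompletion of $\sF_\sC$ under \emph{sifted} colimits. The bridge is a theorem (due to Adámek--Rosický--Vitale) asserting that a category admits sifted colimits if and only if it admits both filtered colimits and reflexive coequalizers, and that a functor preserves sifted colimits if and only if it preserves each separately. Given this, the hypothesis on $\sD$ ensures $\sD$ has all sifted colimits, and $\Fun'(\sC, \sD)$ coincides with the full subcategory of sifted-colimit-preserving functors $\sC \to \sD$.

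The heart of the proof has two steps. First, a density claim: every $X \in \sC$ is canonically the colimit of the forgetful functor $(\sF_\sC)_{/X} \to \sC$, $(Y \to X) \mapsto Y$. Since $\sF_\sC$ has finite coproducts, so does $(\sF_\sC)_{/X}$, and any category with finite coproducts is sifted; hence $X$ is a sifted colimit of objects of $\sF_\sC$. One verifies this by pairing with an arbitrary $Y \in \sF_\sC$: both sides compute $\Hom_\sC(Y, X) = X(Y)$, using that $X : \sF_\sC^\op \to \Set$ preserves finite products. Second, given $F_0 : \sF_\sC \to \sD$, I would define $F : \sC \to \sD$ by left Kan extension, $F(X) := \colim_{(Y \to X) \in (\sF_\sC)_{/X}} F_0(Y)$; the sifted indexing category ensures this colimit exists in $\sD$. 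Terminality of $(\id : Y \to Y)$ in the slice yields $F|_{\sF_\sC} \simeq F_0$, and uniqueness of the inverse follows because any sifted-colimit-preserving extension of $F_0$ must send $X$ to exactly this colimit.

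The main obstacle is verifying that the left Kan extension $F$ genuinely preserves sifted colimits. This rests on the characterization of sifted colimits as those that commute with finite products in $\Set$: the inclusion $\Fun_\Pi(\sF_\sC^\op, \Set) \hook \Fun(\sF_\sC^\op, \Set)$ preserves sifted colimits, so sifted colimits in $\sC$ are computed pointwise on $\sF_\sC^\op$. A Fubini-type interchange of the sifted colimit with the colimit over $(\sF_\sC)_{/X}$ defining $F$ then yields the required isomorphism $F(\colim_i X_i) \simeq \colim_i F(X_i)$, completing the proof that restriction is an equivalence with inverse given by left Kan extension.
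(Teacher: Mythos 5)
The paper states this proposition without proof (it is a precise formulation of the sentence just before it, namely that an algebraic category is the free completion of its compact projectives under filtered colimits and reflexive coequalizers), so there is no paper argument to match yours against. Your route through sifted colimits is the standard one: the Ad\'amek--Rosick\'y--Vitale equivalence between ``has/preserves filtered colimits and reflexive coequalizers'' and ``has/preserves sifted colimits'' reduces the claim to the statement that $\Fun_\Pi(\sF_\sC^\op,\Set)$ is the free sifted-colimit cocompletion of $\sF_\sC$. Your density step is correct: $(\sF_\sC)_{/X}$ inherits finite coproducts from $\sF_\sC$, hence is sifted, and the colimit of the forgetful functor is $X$ by co-Yoneda plus the facts that objects of $\sF_\sC$ corepresent sifted-colimit-preserving functors and that sifted colimits of product-preserving presheaves are again product-preserving. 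The construction of the putative inverse by left Kan extension and the identification $F\vert_{\sF_\sC}\simeq F_0$ via terminality of the identity in the slice are also fine.

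The weak point is the last paragraph, which you correctly flag as the main obstacle but then dispatch too quickly. Preservation of sifted colimits by the Kan extension is \emph{not} a Fubini interchange: the colimits $F(\colim_i X_i)=\colim_{(\sF_\sC)_{/\colim X_i}}F_0$ and $\colim_i F(X_i)=\colim_{\mathcal G}F_0\pi$ (where $\mathcal G=\int_{i\in I}(\sF_\sC)_{/X_i}$ is the Grothendieck construction) are indexed by \emph{different} categories, and Fubini only rewrites the second one; it says nothing about comparing it to the first. What is actually required is that the canonical functor $\mathcal G\to(\sF_\sC)_{/\colim X_i}$ be cofinal. Non-emptiness of the relevant comma categories uses exactly the pointwise computation of sifted colimits you mention together with compact-projectivity of objects of $\sF_\sC$ (so that any $Y\to\colim X_i$ with $Y\in\sF_\sC$ factors through some $X_i$); connectedness of the comma categories then uses siftedness of $I$. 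This cofinality check is the genuine content of the proposition and the place where your sketch stops short. Alternatively you could try the coend formula $F(X)\simeq\int^{Y}X(Y)\otimes F_0(Y)$ and pull the colimit through, but that requires $\sD$ to admit $\Set$-indexed copowers, which is not among the hypotheses, so one is forced back to cofinality. I would ask you to spell out that argument (or cite it, e.g.\ Lurie, HTT~5.5.8.15, or Ad\'amek--Rosick\'y--Vitale) before calling this a proof.
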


    \begin{remind}
      A \emph{reflexive pair} in a category $\sC$ is a diagram
      \[ \begin{tikzcd}
        x \ar[bend left=30]{r}{f}\ar[bend right=30, swap]{r}{g}
        & y \ar[swap]{l}{s},
      \end{tikzcd} \]
      where $f \circ s = g\circ s$, i.e., $s$ is a common section of $f$ and $g$.
      \emph{Reflexive coequalizers} are colimits of reflexive pairs.
      % These generalize quotients by equivalence relations: given an equivalence relation $R \sub X \times X$, we get a reflexive pair
      % $R \rightrightarrows X$.
      % The section $s$ comes from the reflexivity of $R$.
      Note that reflexive pairs are nothing else than diagrams indexed by the full subcategory $\bDelta_{\le 1}^\op \sub \bDelta^\op$ spanned by $[0]$ and $[1]$.
      % Moreover, if $\sC$ is a $1$-category, then the geometric realization of a simplicial object $X_\bullet$ is just the coequalizer of the reflexive pair $X_1 \rightrightarrows X_0$.
    \end{remind}

  \ssec{(Connective) nonabelian derived categories}

    In an algebraic category $\sC$, every object is built out of objects of $\sF_\sC$ under filtered colimits and reflexive coequalizers.
    The idea of Quillen's nonabelian derived category is to build an \inftyCat out of $\sF_\sC$ by replacing reflexive coequalizers by geometric realizations.
    % construct from $\sF_\sC$ an \inftyCat by the same procedure, but where reflexive coequalizers are replaced by geometric realizations.
    Following \cite{CesnaviciusScholze}, we will refer to this construction as the \emph{animation} of $\sC$.

    \begin{defn}\label{defn:animation}
      Let $\sC$ be an algebraic category.
      Given an \inftyCat $\sC^+$ admitting colimits, we say that a fully faithful functor $\sF_\sC \hook \sC^+$ exhibits $\sC^+$ as an \emph{animation} of $\sC$ if it satisfies the following universal property: for every \inftyCat $\sD$ admitting filtered colimits and geometric realizations, restriction induces an equivalence
      \[ \Fun'(\sC^+, \sD) \to \Fun(\sF_\sC, \sD) \]
      from the \inftyCat of functors $\sC^+ \to \sD$ that preserve filtered colimits and geometric realizations.
      Informally speaking, $\sC^+$ is freely generated by $\sF_\sC$ under filtered colimits and geometric realizations.

      We will denote by $\Anim(\sC)$ the animation of an algebraic category $\sC$.
      An \emph{animated object} of $\sC$ is an object of $\Anim(\sC)$.
    \end{defn}

    \begin{thm}[Quillen, Lurie]\label{thm:model}\leavevmode
      \begin{thmlist}
        \item
        The functor $\Fin \hook \Set \hook \Grpdoo$ (\remref{rem:07pgsu}) exhibits $\Grpdoo$ as an animation of $\Set$.
        %, i.e., as an \inftyCat of anima.

        \item
        For any algebraic category $\sC$, the Yoneda embedding
        \[ \sF_\sC \hook \Fun_\Pi(\sF_\sC^\op, \Grpdoo) \]
        exhibits its target as an animation of $\sC$.
      \end{thmlist}
    \end{thm}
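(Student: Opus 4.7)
Both parts fit a common framework. In the $\infty$-categorical language of \cite{HTT}, the diagrams that are either filtered or simplicial comprise exactly the class of \emph{sifted} diagrams, and the theorem asserts that $\Fun_\Pi(\sF^\op, \Grpdoo)$ is the free sifted cocompletion of a small $\infty$-category $\sF$ that admits finite coproducts. Part (i) is the special case $\sF = \Fin$: evaluation at the one-element set gives an equivalence $\Fun_\Pi(\Fin^\op, \Grpdoo) \simeq \Grpdoo$, since a product-preserving functor on $\Fin^\op$ is determined up to equivalence by its value at $\pt$, and any object of $\Grpdoo$ arises in this way.

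My plan for (ii) is to invoke the universal property of the full presheaf $\infty$-category $\Fun(\sF_\sC^\op, \Grpdoo)$ as the free cocompletion of $\sF_\sC$ under all colimits, and then to show that the subcategory of product-preserving presheaves is precisely the free sifted cocompletion. Concretely, given a target $\sD$ admitting filtered colimits and geometric realizations and a functor $F_0 : \sF_\sC \to \sD$, I would form the left Kan extension $F : \Fun_\Pi(\sF_\sC^\op, \Grpdoo) \to \sD$ of $F_0$ along the Yoneda embedding, verify that $F$ preserves sifted colimits, and check that the obvious restriction functor is an inverse to this extension procedure.

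The key input is that every object $X$ of $\Fun_\Pi(\sF_\sC^\op, \Grpdoo)$ is a sifted colimit of representables: one writes $X$ as the colimit of the canonical diagram indexed by the slice $(\sF_\sC)_{/X}$ of representables over $X$, and checks that this slice is sifted. The hypothesis that $\sF_\sC$ has finite coproducts enters precisely here, since it ensures the slice is closed under finite coproducts and hence filtered after passage to $\pi_0$.

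The main obstacle I foresee is the verification that sifted colimits commute with finite products in $\Grpdoo$, equivalently that the diagonal $\bDelta^\op \to \bDelta^\op \times \bDelta^\op$ is cofinal. This is what makes the finite-product-preservation condition stable under passage to sifted colimits, and is the substantive combinatorial content distinguishing the $\infty$-categorical statement from its $1$-categorical counterpart \propref{prop:0adgus}. Granting this result of \cite[Prop.~5.5.8.15]{HTT}, the desired universal property follows by formal manipulation of Kan extensions, and part (i) then follows from the identification noted in the first paragraph.
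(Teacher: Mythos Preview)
Your proposal is correct and aligns with the paper's approach: the paper's proof consists solely of the citation ``See \cite[Prop.~5.5.8.15]{HTT},'' and you invoke the same reference while unpacking its content. Your sketch of the argument (free sifted cocompletion, left Kan extension, siftedness of the slice via finite coproducts in $\sF_\sC$) accurately summarizes the relevant parts of \cite[\S5.5.8]{HTT}. One minor imprecision: you describe the slice as ``filtered after passage to $\pi_0$,'' but the operative fact is rather that an $\infty$-category admitting finite coproducts is sifted (\cite[Lem.~5.5.8.4]{HTT}); siftedness is not equivalent to having filtered $\pi_0$.
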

    \begin{proof}
      See \cite[Prop.~5.5.8.15]{HTT}.
    \end{proof}

    \begin{notat}
      We denote by $\Anima$ the animation $\Anim(\Set) \simeq \Grpdoo$ of the category of sets, and will refer to its objects as \emph{anima}.
      (In other words, ``animum'' is a synonym for \inftyGrpd, but reflects that we want to think of anima as ``derived sets'' rather than a generalization of groupoids.)
    \end{notat}

    % \begin{defn}
    %   Given an algebraic category $\sC$, an animated object $X \in \Anim(\sC)$ is \emph{discrete} if the functor $X : \sF_\sC^\op \to \Anima$ takes values in sets.
    %   We write $\Anim(\sC)^\heartsuit \sub \Anim(\sC)$ for the full subcategory of discrete objects.
    %   The fully faithful functor $\sC \hook \Anim(\sC)$ factors through an equivalence $\sC \simeq \Anim(\sC)^\heartsuit$.
    %   Moreover, this inclusion admits a left adjoint $\pi_0 : \Anim(\sC) \to \sC$ sending
    %   $$(\sF_\sC^\op \to \Anima) \mapsto (\sF_\sC^\op \to \Anima \xrightarrow{\pi_0} \Set)$$
    %   where $\pi_0 : \Anima \to \Set$ sends an animum to its set of connected components.
    % \end{defn}

    % \begin{defn}
    %   Let $\sC$ be an algebraic category and assume that $\sF_\sC$ is generated under finite coproducts and direct summands by a single object $1 \in \sF_\sC$.
    %   (This holds for the examples $\Set$, $\Ab$, and $\Mod_R$.)
    %   Given an animated object $X \in \Anim(\sC)$, the \emph{underlying animum} of $X$ is the object
    %   \[ X^\circ := X(1) \simeq \Maps_{\Anim(\sC)}(1, X) \in \Anima \]
    %   where we also write $1$ for its image by $\sF_\sC \hook \Anim(\sC)$.
    %   Note that $X$ is discrete if and only if $X^\circ$ is.
    % \end{defn}

  \ssec{Interlude: stable \texorpdfstring{$\infty$}{oo}-categories}

    \begin{defn}
      Let $\sC$ be an \inftyCat with a zero object $0$ (i.e., an object which is both terminal and initial).
      Let $f : X \to Y$ be a morphism in $\sC$.
      The \emph{cofibre} of $f$, denoted $\Cofib(f)$, is the colimit of the diagram
      \[ 0 \gets X \xrightarrow{f} Y. \]
      The \emph{fibre} of $f$, denoted $\Fib(f)$, is the limit of the diagram
      \[ 0 \to Y \xleftarrow{f} X. \]
    \end{defn}

    % \begin{exam}
    %   If $\sC$ is an ordinary category with a zero object, then for any morphism $f : X \to Y$ in $\sC$ we have $\Cofib(f) = \Coker(f)$ and $\Fib(f) = \Ker(f)$.
    % \end{exam}

    \begin{defn}
      Let $\sC$ be an \inftyCat with zero object $0$.
      The \emph{suspension} of an object $X \in \sC$ is the object
      \[ \Sigma(X) := \Cofib(X \to 0). \]
      The \emph{loop space} of $X$ is the object
      \[ \Omega(X) := \Fib(0 \to X). \]
    \end{defn}

    % \begin{rem}
    %   If $\sC$ is an ordinary category, then $\Sigma(X) = \Omega(X,x) = \pt$ for every object $X \in \sC$.
    % \end{rem}

    % \begin{exam}
    %   In the \inftyCat of \inftyGrpds, we have
    %   \[
    %     \Sigma(\initial) \simeq S^0,
    %     \quad \Sigma(S^n) \simeq S^{n+1}
    %   \]
    %   for all $n\ge 0$.
    % \end{exam}

    % \begin{rem}
    %   Given an animum $X$ and a point $x : \pt \to X$, a point of the loop space $\Omega(X,x)$ is the same data as that of a commutative square
    %   \[\begin{tikzcd}
    %     \pt\ar{r}\ar{d}\ar{rd}
    %     & \pt\ar{d}{x}
    %     \\
    %     \pt\ar{r}{x}
    %     & X,
    %   \end{tikzcd}\]
    %   by the universal property of limits.
    %   Note that, even though the two maps $\pt \to \pt$ are essentially unique, the specification of such a square still requires some nontrivial data: namely, homotopies up to which the two triangles ($2$-simplices) commute.
    %   Such homotopies encode a path $x\simeq x$ in $X$, i.e., a loop in $X$ based at $x$.
    % \end{rem}

    \begin{defprop}
      Let $\sC$ be an \inftyCat.
      We say that $\sC$ is \emph{stable} if it satisfies the following equivalent conditions:
      \begin{thmlist}
        \item
        $\sC$ admits finite limits and a zero object (an object which is both terminal and initial), and the loop space functor $\Omega : \sC \to \sC$ is an equivalence.
        \item
        $\sC$ admits finite colimits and a zero object, and the suspension functor $\Sigma : \sC \to \sC$ is an equivalence.
        \item
        $\sC$ admits finite limits, finite colimits, and a zero object; and any commutative square is cartesian if and only if it is cocartesian.
      \end{thmlist}
    \end{defprop}

    \begin{exam}
      The \inftyCat $\Spt$ is stable.
    \end{exam}

    \begin{defn}
      Let $\sC$ be a stable \inftyCat.
      An \emph{exact triangle} in $\sC$ is a cartesian (hence also cocartesian) square
      \[\begin{tikzcd}
        X \ar{r}{f}\ar{d}
        & Y \ar{d}{g}\\
        0 \ar{r}
        & Z
      \end{tikzcd}\]
      where $0$ is a zero object.
      (Such an exact triangle gives rise to a canonical isomorphism $\Cofib(g) \to Z$.)
      We will usually depict it by the diagram
      \[ X \xrightarrow{f} Y \xrightarrow{g} Z, \]
      but note that a null-homotopy $g\circ f \simeq 0$ must be specified as part of the data of an exact triangle.
    \end{defn}

    \begin{rem}
      The homotopy category $h(\sC)$ of a stable \inftyCat admits a canonical triangulated structure, where the exact/distinguished triangles are those coming from exact triangles in $\sC$ (via the canonical functor $\sC \to h(\sC)$).
    \end{rem}

  \ssec{Nonabelian derived categories}

    % In this section we will introduce stable \inftyCats, the analogues of abelian categories in the world of \inftyCats.
    % We will then explain a construction called stabilization of \inftyCats, and apply this to animated modules to get nonconnective animated modules, which will be our model of the derived \inftyCat.

    \begin{prop}
      Let $\sC$ be an algebraic category.
      If $\sC$ is additive, then the forgetful functor
      \[
        \Fun_\Pi(\sF_\sC^\op, \Ab)
        \to \Fun_\Pi(\sF_\sC^\op, \Set) \simeq \sC
      \]
      is an equivalence.
    \end{prop}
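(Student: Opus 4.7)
The plan is to make every object of $\sF_\sC$ into a canonical abelian cogroup with respect to the biproduct, and then observe that any product-preserving functor to $\Set$ automatically transfers this cogroup structure into an abelian group structure on its values.

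First, $\sF_\sC$ is itself additive: it is closed under finite coproducts by hypothesis, and in the additive $\sC$ finite coproducts and finite products agree (biproducts), so $\sF_\sC$ is closed under finite biproducts. The empty coproduct supplies the zero object, and the $\Ab$-enrichment of $\sC$ restricts to $\sF_\sC$. Consequently each $X \in \sF_\sC$ carries a canonical abelian cogroup structure with respect to $\oplus$: comultiplication $\Delta_X \colon X \to X \oplus X$ (the diagonal into the biproduct, characterised by $\pi_i \circ \Delta_X = \id_X$), counit $X \to 0$, and coinverse $-\id_X \colon X \to X$. The cogroup axioms reduce to elementary biproduct manipulations using the $\Ab$-enriched Hom-sets.

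Given $F \in \Fun_\Pi(\sF_\sC^\op, \Set)$, product-preservation yields $F(X \oplus X) \simeq F(X) \times F(X)$ (since $\oplus$ is also the coproduct in $\sF_\sC$, hence the product in $\sF_\sC^\op$). Applying $F$ to the cogroup structure maps produces an addition $F(X) \times F(X) \to F(X)$, a zero element $F(0) \to F(X)$, and a negation $F(X) \to F(X)$; the cogroup axioms translate into the abelian group axioms. Functoriality in $X$ is automatic, since every morphism $f \colon X \to Y$ in the additive $\sF_\sC$ is a cogroup morphism---the identity $\Delta_Y \circ f = (f \oplus f) \circ \Delta_X$ is immediate from the product-universal property---so $F(f)$ is a group homomorphism. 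This yields a lift $\widetilde F \colon \sF_\sC^\op \to \Ab$, inverse to postcomposition with the forgetful functor $U \colon \Ab \to \Set$; for a product-preserving $G \colon \sF_\sC^\op \to \Ab$, the reconstructed group structure on $G(X)$ agrees with the given one, since both are expressed by $G(\Delta_X)$.

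Fully faithfulness of $U_* \colon \Fun_\Pi(\sF_\sC^\op, \Ab) \to \Fun_\Pi(\sF_\sC^\op, \Set)$ on morphisms is automatic: a natural transformation of the underlying $\Set$-valued functors has components that commute with $G(\Delta_X)$ and $H(\Delta_X)$ by naturality at $\Delta_X$, hence are group homomorphisms. The main step of the argument is really the identification of the canonical cogroup structure on $\sF_\sC$, which encodes the well-known equivalence between an additive structure and an $\Ab$-enrichment compatible with biproducts; no genuine mathematical obstruction is expected, only routine bookkeeping.
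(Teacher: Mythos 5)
Your argument is correct and matches the paper's: the paper's entire justification is the one-line remark that, in the additive case, every finite-product-preserving functor $\sF_\sC^\op \to \Set$ automatically takes values in abelian groups, which is precisely the mechanism you spell out — each object of $\sF_\sC$ carries a canonical abelian cogroup structure via the biproduct, and a product-preserving functor transports it to an abelian group structure on the values. Your write-up simply makes explicit the bookkeeping (cogroup axioms, functoriality via cogroup morphisms, full faithfulness via naturality at $\Delta_X$) that the paper leaves implicit.
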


    Indeed, every finite product-preserving functor $X : \sF_\sC^\op \to \Ab$ automatically takes values in abelian groups.
    We will see an analogue of this statement for the animation.
    First we need to introduce the analogue of abelian groups.

    \begin{defn}\leavevmode
      \begin{defnlist}
        \item
        A \emph{pointed animum} is an animum $X$ equipped with a morphism $\pt \to X$.

        \item
        A \emph{spectrum} is a sequence of pointed anima $$X = (X_0, X_1, X_2, \ldots)$$ together with isomorphisms
        \[ X_n \simeq \Omega(X_{n+1}) \]
        for all $n\ge 0$.

        \item
        A spectrum $X$ is \emph{connective} if the $n$th component $X_n$ is $n$-connective for every $n\ge 0$ (i.e., if $\pi_i(X_n) = 0$ for all $i<n$).
      \end{defnlist}
      % We write $\Spt$ for the \inftyCat of spectra and $\Spt_{\ge 0}$ for the full subcategory of connective spectra.
    \end{defn}

    Informally speaking, we say that a spectrum $X$ is an \emph{infinite delooping} of $X_0$, or that it gives $X_0$ the structure of an \emph{infinite loop space}.

    \begin{notat}
      The \inftyCats $\Spt$ and $\Spt_{\ge0}$ of spectra and connective spectra, respectively, can be defined as the limits of the following towers in the \inftyCat of \inftyCats:
      \begin{align*}
        &\cdots
        \xrightarrow{\Omega} \Anima_\bullet
        \xrightarrow{\Omega} \Anima_\bullet
        \xrightarrow{\Omega} \Anima_\bullet,\\
        &\cdots
        \xrightarrow{\Omega} (\Anima_\bullet)_{\ge 2}
        \xrightarrow{\Omega} (\Anima_\bullet)_{\ge 1}
        \xrightarrow{\Omega} (\Anima_\bullet)_{\ge 0},
      \end{align*}
      where $\Anima_\bullet$ denotes the \inftyCat of pointed anima, and the subscript $\ge n$ indicates the full subcategory of $n$-connective pointed anima.
      We have projections
      \[ \Omega^{\infty-n} : \Spt \to \Anima_\bullet, \]
      sending $(X_0,X_1,\ldots)\mapsto X_n$.
    \end{notat}

    In the following important theorem, \emph{$\sE_\infty$-groups} are the analogues of abelian groups in the derived/animated world.
    Informally speaking, an $\sE_\infty$-group structure on an animum is an infinite collection of group structures, all commuting with each other up to coherent homotopy (cf. the Eckmann--Hilton argument which implies that if a set admits two commuting group laws, both agree and are commutative).

    \begin{thm}[Infinite loop space machine]\leavevmode
      \begin{thmlist}
        \item
        Let $X$ be a spectrum.
        Then the infinite loop space $\Omega^\infty(X)$ admits an $\Einfty$-group structure.
        More precisely, $\Omega^\infty : \Spt \to \Anima_\bullet$ lifts to a functor
        \[ \Omega^\infty : \Spt \to \{\text{$\Einfty$-groups}\}. \]

        \item
        Restricted to the full subcategory of \emph{connective} spectra, this defines an equivalence of \inftyCats
        \[
          \Spt_{\ge 0} \simeq \{\text{$\Einfty$-groups}\}.
        \]
        In particular, any $\Einfty$-group structure on a pointed animum $X$ gives rise to a unique infinite delooping $\mathrm{B}^\infty X$.
        Here $\mathrm{B}^\infty X$ is a connective spectrum such that $\Omega^\infty \mathrm{B}^\infty X \simeq X$.
      \end{thmlist}
    \end{thm}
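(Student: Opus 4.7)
The plan is to construct mutually inverse equivalences between $\Spt_{\ge 0}$ and the \inftyCat of $\Einfty$-groups, using the tower presentation of connective spectra from the \texttt{Notat} box together with May's recognition theorem for iterated loop spaces as the main black box.

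For part (i), the key observation is that for any pointed animum $Y$, the loop space $\Omega Y$ carries a natural $\sE_1$-group structure induced by concatenation of loops, and iteratively $\Omega^n Y$ carries an $\sE_n$-group structure coming from the little $n$-cubes operad acting by concatenation in $n$ independent directions. Given a spectrum $(X_0, X_1, X_2, \ldots)$ with structure equivalences $X_n \simeq \Omega X_{n+1}$, we may present $X_0 \simeq \Omega^n X_n$ as an $n$-fold loop space for every $n \ge 0$, endowing it with compatible $\sE_n$-structures for all $n$. By Dunn additivity---which exhibits $\Einfty$ as the colimit of the tower $\sE_1 \to \sE_2 \to \cdots$ in the \inftyCat of $\infty$-operads---these assemble into a single $\Einfty$-structure on $\Omega^\infty(X) = X_0$, functorially in $X$. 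This produces the desired lift $\Omega^\infty : \Spt \to \{\Einfty\text{-groups}\}$.

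For part (ii), I would apply the \inftyCat-level $\sE_n$-recognition theorem, which states that $\Omega^n$ implements an equivalence between grouplike $\sE_n$-monoids in $\Anima$ and the \inftyCat $(\Anima_\bullet)_{\ge n}$ of $n$-connective pointed anima, with inverse the iterated classifying space $\mrm{B}^n$. Taking the inverse limit over $n$ along the forgetful functors on the $\sE_n$-side and the loop functors $\Omega : (\Anima_\bullet)_{\ge n+1} \to (\Anima_\bullet)_{\ge n}$ on the other, one obtains
\[ \{\Einfty\text{-groups}\} \simeq \lim_n \{\sE_n\text{-groups}\} \simeq \lim_n (\Anima_\bullet)_{\ge n} =: \Spt_{\ge 0}, \]
where the first equivalence is again Dunn additivity. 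A direct check shows that the composite intertwines $\Omega^\infty$ on the spectrum side with the forgetful functor $\{\Einfty\text{-groups}\} \to \Anima_\bullet$, so it agrees with the functor built in (i); essential surjectivity then delivers, for each $\Einfty$-group $X$, its infinite delooping $\mrm{B}^\infty X = (X, \mrm{B}X, \mrm{B}^2 X, \ldots)$.

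The hard part will be the $\sE_n$-recognition theorem itself. The easy direction---iterated loop spaces carry $\sE_n$-group structures---is immediate from the little cubes action, but the converse, that every grouplike $\sE_n$-space arises as an $n$-fold loop space, is the genuinely deep classical input, originally due to May and recast \inftyCat-ically via Lurie's theory of $\infty$-operads. A secondary technical burden is setting up the $\sE_n$-operads as $\infty$-operads rigorously and establishing Dunn additivity $\sE_n \otimes \sE_m \simeq \sE_{n+m}$ in that framework; once that infrastructure is in place, the remainder of the argument is essentially formal assembly.
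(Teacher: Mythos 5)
The paper states this theorem without proof — it is cited as a classical background fact (the May--Boardman--Vogt--Segal infinite loop space machine, in its modern $\infty$-categorical form due to Lurie, HA~\S5.2), so there is nothing to compare your argument against directly. On its own merits your outline is the standard one and its skeleton is sound: prove $n$-fold recognition ($\Omega^n$ implements an equivalence between $n$-connective pointed anima and grouplike $\sE_n$-monoids), then pass to the limit over $n$ using the paper's tower presentation $\Spt_{\ge 0} = \lim_n (\Anima_\bullet)_{\ge n}$. Two small cautions. First, what you invoke to identify $\{\Einfty\text{-groups}\}$ with $\lim_n \{\sE_n\text{-groups}\}$ is really the fact that $\Einfty \simeq \colim_n \sE_n$ in $\infty$-operads (so algebras form the inverse limit); Dunn additivity proper is the Boardman--Vogt tensor formula $\sE_m \otimes \sE_n \simeq \sE_{m+n}$, which enters instead in the \emph{inductive step} of the $\sE_n$-recognition theorem (writing grouplike $\sE_n$-monoids as $\sE_1$-monoids in grouplike $\sE_{n-1}$-monoids). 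Conflating the two is harmless at the level of a sketch but worth untangling. Second, the "direct check" that the level-$n$ recognition equivalences assemble compatibly with $\Omega$ on one side and the operadic forgetful functors on the other is not a trivial verification; it is where most of the technical work in a careful treatment actually lives, and it is the reason Lurie formulates the $\sE_n$-recognition theorem functorially in $n$ from the outset. Given you are explicitly taking the recognition theorem and the operadic framework as black boxes, the proposal is acceptable as a high-level outline, but these two points are where the gap between "outline" and "proof" is widest.
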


    \begin{prop}
      Let $\sC$ be an additive algebraic category.
      Then there is a canonical equivalence
      \[\Anim(\sC) \simeq \Fun_\Pi(\sF_\sC^\op, \Spt_{\ge 0}).\]
      In particular, there exists a canonical fully faithful embedding
      \[
        \Sigma^\infty : \Anim(\sC) \hook \Fun_\Pi(\sF_\sC^\op, \Spt)
      \]
      whose target is a \emph{stable} \inftyCat, and whose essential image is closed under colimits and extensions.
    \end{prop}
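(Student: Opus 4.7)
The plan is to combine Quillen--Lurie's model \thmref{thm:model} with the infinite loop space machine. That theorem already gives $\Anim(\sC) \simeq \Fun_\Pi(\sF_\sC^\op, \Anima)$, so it remains to identify product-preserving $\Anima$-valued functors on $\sF_\sC^\op$ with $\Spt_{\ge 0}$-valued ones, and then to embed these into $\Spt$-valued presheaves.

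First I would note that $\sF_\sC$ inherits additivity from $\sC$: since finite coproducts in $\sC$ are biproducts and $\sF_\sC$ is by definition closed under them, each $X \in \sF_\sC$ carries a canonical abelian cogroup structure, given by the diagonal $X \to X \oplus X$, the zero map $X \to 0$, and the antipode $-\id_X$. For any $F : \sF_\sC^\op \to \Anima$ preserving finite products, transporting this cogroup structure through $F$ equips $F(X)$ with a grouplike $\Einfty$-structure: the multiplication is the composite
\[ F(X) \times F(X) \simeq F(X \oplus X) \to F(X) \]
coming from the diagonal on $X$, the antipode provides inversion, and the cogroup axioms in $\sF_\sC^\op$ transport along $F$ to commutative group axioms up to coherent homotopy.

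By the infinite loop space machine, $\Omega^\infty : \Spt_{\ge 0} \to \Anima$ is fully faithful with essential image the grouplike $\Einfty$-anima, so postcomposition yields a fully faithful inclusion $\Fun_\Pi(\sF_\sC^\op, \Spt_{\ge 0}) \hook \Fun_\Pi(\sF_\sC^\op, \Anima)$ (using that finite products in $\Spt_{\ge 0}$ are computed pointwise in $\Anima$, so preservation of finite products is unchanged). The previous paragraph shows every object of the target is in the essential image, yielding the claimed equivalence.

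For the second assertion, define $\Sigma^\infty$ as the composite of this equivalence with the functor $\Fun_\Pi(\sF_\sC^\op, \Spt_{\ge 0}) \hook \Fun_\Pi(\sF_\sC^\op, \Spt)$ induced by the inclusion $\Spt_{\ge 0} \hook \Spt$; this is fully faithful because the inclusion of connective spectra into spectra is. Stability of $\Fun_\Pi(\sF_\sC^\op, \Spt)$ is inherited pointwise from $\Spt$, and closure of the essential image under colimits and extensions reduces pointwise to the corresponding closure of $\Spt_{\ge 0} \sub \Spt$, the latter being visible from the long exact sequence of homotopy groups of a fibre sequence of spectra with connective ends. The main subtlety is verifying that the grouplike $\Einfty$-structure produced pointwise on each $F(X)$ assembles functorially in $X$ to define a genuine lift to $\Spt_{\ge 0}$, but this coherence is subsumed by the universal characterization built into the infinite loop space machine.
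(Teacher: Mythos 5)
The paper states this proposition without proof, so your argument must be judged on its own merits.  The overall strategy---pass through the Quillen--Lurie identification $\Anim(\sC)\simeq\Fun_\Pi(\sF_\sC^\op,\Anima)$, observe that objects of $\sF_\sC$ are canonically commutative cogroups, and invoke the infinite loop space machine---is the right one.  But there is a genuine logical error at the crucial step, and the coherence issue you flag at the very end, which you dismiss as ``subsumed by the universal characterization,'' is in fact the heart of the matter.

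The error: you assert that $\Omega^\infty : \Spt_{\ge 0}\to\Anima$ is fully faithful with essential image the grouplike $\Einfty$-anima.  This is false.  The infinite loop space machine gives an equivalence $\Spt_{\ge0}\simeq\{\text{$\Einfty$-groups}\}$, but the forgetful functor from $\Einfty$-groups to $\Anima$ is not fully faithful: a map of underlying anima between two $\Einfty$-groups need not respect the $\Einfty$-structure.  (Being a grouplike $\Einfty$-animum is structure, not a property of an animum, so one cannot speak of an ``essential image'' in $\Anima$ here.)  Consequently, full faithfulness of the postcomposition $\Fun_\Pi(\sF_\sC^\op,\Spt_{\ge0})\to\Fun_\Pi(\sF_\sC^\op,\Anima)$ does not follow from the cited fact; indeed it fails for general $\sF_\sC$ without additivity.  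The claim you need is precisely a special feature of the additive situation.

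The correct argument does use the cogroup structures you identify, but in a different way.  Since $\sF_\sC$ is additive, $\Fun_\Pi(\sF_\sC^\op,\Anima)=\sP_\Sigma(\sF_\sC)$ inherits preadditivity (finite products and coproducts agree and are computed via sifted colimits, which commute with finite products), and is in fact additive.  For an additive $\infty$-category $\sD$, the forgetful functor $\mathrm{CGrp}(\sD)\to\sD$ from commutative group objects is an equivalence---this is the result of Gepner--Groth--Nikolaus on preadditive and additive $\infty$-categories, and it is exactly here that the ``coherence'' concern is resolved: the $\Einfty$-group structure on each object of $\sD$ is not extra data but is uniquely and functorially determined by the additivity.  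Combining this with the observation that $\mathrm{CGrp}(-)$ commutes with $\Fun_\Pi(\sF_\sC^\op,-)$ and the infinite loop space machine $\mathrm{CGrp}(\Anima)\simeq\Spt_{\ge0}$ gives the chain
\[
\Fun_\Pi(\sF_\sC^\op,\Anima)\simeq\mathrm{CGrp}\bigl(\Fun_\Pi(\sF_\sC^\op,\Anima)\bigr)\simeq\Fun_\Pi\bigl(\sF_\sC^\op,\mathrm{CGrp}(\Anima)\bigr)\simeq\Fun_\Pi(\sF_\sC^\op,\Spt_{\ge0}).
\]
Your remaining steps (fully faithfulness of the inclusion $\Spt_{\ge0}\hookrightarrow\Spt$, stability of $\Fun_\Pi(\sF_\sC^\op,\Spt)$, and closure of the essential image under colimits and extensions via the long exact sequence) are fine.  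You should replace the incorrect appeal to fully faithfulness of $\Omega^\infty$ and the hand-wave about coherence with the additivity argument just sketched.
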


    \begin{defn}
      The \emph{derived \inftyCat} of $\sC$ is the stable \inftyCat
      \[
        \D(\sC) := \Fun_\Pi(\sF_\sC^\op, \Spt).
      \]
      We also let $\D(\sC)_{\ge 0}$ denote the essential image of the fully faithful embedding $\Anim(\sC) \hook \D(\sC)$.
      We will say that an object $X \in \D(\sC)$ is \emph{connective} if it lies in $\D(\sC)_{\le 0}$.
    \end{defn}

  \ssec{Derived functors}

    \begin{constr}\label{constr:LF}
      Let $F : \sC \to \sD$ be a functor of algebraic categories.
      Its restriction to $\sF_\sC$ gives rise by the universal property of animation to an essentially unique functor $\L F : \Anim(\sC) \to \Anim(\sD)$ such that:
      \begin{defnlist}
        \item
        $\L F$ preserves filtered colimits and geometric realizations.

        \item
        There is a commutative diagram
        \[ \begin{tikzcd}
          \sF_\sC \ar{r}{F|_{\sF_\sC}}\ar[hookrightarrow]{d}
          & \sD \ar[hookrightarrow]{r}
          & \Anim(\sD).
          \\
          \Anim(\sC) \ar[swap]{rru}{\bL F}
        \end{tikzcd} \]
      \end{defnlist}
      Moreover, $\L F$ preserves finite coproducts (and hence all colimits) if and only if $F$ preserves finite coproducts.
      % We call $\L F$ the \emph{animation} of $F$.
    \end{constr}

    \begin{rem}
      We have canonical isomorphisms
      \[ \on{\pi_0} {\L F}(X) \simeq F( \pi_0(X) ), \]
      functorial in $X \in \Anim(\sC)$, where $\pi_0$ is left adjoint to the fully faithful functor $\sC \hook \Anim(\sC)$.
      Indeed, both functors agree by definition after restriction to $\sF_\sC \sub \sC$ (since $\pi_0$ is identity on discrete objects), so this follows by universal property of $\Anim(\sC)$.
      % $\Anim(\sC)$ is generated by $\sF_\sC$ under filtered colimits and geometric realizations \adeel{ref?}, and both functors $\pi_0 \circ \L F$ and $F \circ \pi_0$ preserve such colimits.
    \end{rem}

    % \begin{defn}
    %   In the situation of \constrref{constr:LF}, $\L F : \Anim(\sC) \to \Anim(\sD)$ is called the \emph{left-derived functor} of $F$.
    % \end{defn}

    \begin{prop}\label{prop:0aysgf}
      Let $F : \sC \to \sD$ and $G : \sD \to \sE$ be functors of algebraic categories which preserve filtered colimits and reflexive coequalizers.
      Then there is a canonical equivalence
      \[\L G \circ \L F \simeq \L (G\circ F)\]
      under either of the following conditions:
      \begin{thmlist}
        \item[{\em(i)}]\label{item:a0fsh8/1}
        $F$ sends $\sF_\sC$ to $\sF_\sD$.
        (More generally, it is enough that for every $X \in \sF_\sC$, $F(X) \in \sD$ is a filtered colimit of objects in $\sF_\sD$.)

        \item[{\em(ii)}]\label{item:a0fsh8/2}
        $\L G : \Anim(\sD) \to \Anim(\sE)$ preserves discrete objects.
        (More generally, it is enough that for every $X \in \sF_\sC$, $\L G(F(X)) \in \Anim(\sE)$ is discrete).
      \end{thmlist}
    \end{prop}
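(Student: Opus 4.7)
The strategy is to invoke the universal property from \constrref{constr:LF}: $\L(G\circ F)$ is characterized, up to essentially unique equivalence, as a functor $\Anim(\sC) \to \Anim(\sE)$ which preserves filtered colimits and geometric realizations and whose restriction along $\sF_\sC \hook \Anim(\sC)$ is naturally equivalent to $(G\circ F)|_{\sF_\sC}$. Since both $\L F$ and $\L G$ preserve these colimits by construction, so does $\L G \circ \L F$. The desired equivalence $\L G \circ \L F \simeq \L(G\circ F)$ thus reduces to producing, naturally in $X \in \sF_\sC$, an isomorphism $\L G(F(X)) \simeq G(F(X))$; here I use that $\L F(X) \simeq F(X)$ for such $X$, again by construction.

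For case~(i), if $F$ maps $\sF_\sC$ into $\sF_\sD$ then for $X \in \sF_\sC$ the object $F(X)$ lies in $\sF_\sD$ and $\L G(F(X)) \simeq G(F(X))$ holds by the very definition of $\L G$. The stronger variant, in which $F(X)$ need only be a filtered colimit $\colim_i Y_i$ in $\sD$ of objects $Y_i \in \sF_\sD$, follows by observing that filtered colimits are formed pointwise in both $\sD \simeq \Fun_\Pi(\sF_\sD^\op, \Set)$ and $\Anim(\sD) \simeq \Fun_\Pi(\sF_\sD^\op, \Anima)$, and that $\Set \hook \Anima$ preserves filtered colimits; hence the same colimit presentation persists in $\Anim(\sD)$. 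Applying $\L G$, which preserves filtered colimits and restricts to $G$ on $\sF_\sD$, yields $\L G(F(X)) \simeq \colim_i G(Y_i)$, and the analogous pointwise comparison on the target produces the same expression for $G(F(X))$, using that $G$ itself preserves filtered colimits.

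For case~(ii), I would invoke the remark immediately preceding the proposition, which furnishes a canonical natural isomorphism $\pi_0 \L G(Y) \simeq G(\pi_0 Y)$ for every $Y \in \Anim(\sD)$. Specializing to $Y = F(X) \in \sD$, which is already discrete (so $\pi_0 Y \simeq Y$), this gives $\pi_0 \L G(F(X)) \simeq G(F(X))$; by hypothesis $\L G(F(X))$ is itself discrete, so it coincides with its own $\pi_0$, yielding the required identification. The one point that demands a little care beyond these pointwise calculations is the \emph{naturality} of the identifications in $X \in \sF_\sC$, but this is automatic from the functorial construction of $\L F$, $\L G$, and $\L(G\circ F)$ via left Kan extension along $\sF_\sC \hook \Anim(\sC)$, so I do not anticipate any genuine obstacle.
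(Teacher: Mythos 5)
The proposal is correct and follows essentially the same strategy as the paper, which simply asserts that $\L G \circ \L F$ satisfies the universal property of $\L(G\circ F)$ and leaves the verification to the reader. Your write-up correctly identifies that the verification reduces to checking agreement on $\sF_\sC$, and both cases are handled properly: in (i) you correctly justify that the inclusion $\sD \hook \Anim(\sD)$ preserves filtered colimits (via the pointwise description and the fact that filtered colimits commute with finite products), so the colimit presentation of $F(X)$ persists after embedding; in (ii) you correctly combine the remark $\pi_0 \L G \simeq G\pi_0$ with the discreteness hypothesis.
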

    \begin{proof}
      In either case, one easily verifies that $\L G \circ \L F$ satisfies the universal property of $\L (G \circ F)$.
    \end{proof}

    By universal properties of stabilization we have:
    \begin{prop}
      Let $\sC$ and $\sD$ be additive algebraic categories and $F : \Anim(\sC) \to \Anim(\sD)$ a functor.
      \begin{thmlist}
        \item
        If $F$ commutes with $\Omega$ then it extends uniquely to a functor
        \[ F : \D(\sC) \to \D(\sD) \]
        such that $\Omega^{\infty-n} \circ F \simeq F \circ \Omega^{\infty-n}$ for all $n\ge 0$.
        Informally speaking, it sends $(X_0,X_1,\ldots) \mapsto (F(X_0),F(X_1),\ldots)$.

        \item
        If $F$ preserves colimits and zero objects, then it extends uniquely to a colimit-preserving functor
        \[ F : \D(\sC) \to \D(\sD) \]
        such that $F \circ \Sigma^{\infty-n} \simeq \Sigma^{\infty-n} \circ F$ for all $n\ge 0$.
        Here $\Sigma^{\infty-n} : \Anim(-) \to \D(-)$ is left adjoint to $\Omega^{\infty-n}$.
      \end{thmlist}
    \end{prop}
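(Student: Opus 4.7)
The plan is to leverage two complementary universal properties of the derived \inftyCat $\D(\sC)$, viewed as the stabilization of $\Anim(\sC)$. Because $\sC$ is additive, $\Anim(\sC) \simeq \Fun_\Pi(\sF_\sC^\op, \Spt_{\ge 0})$ is already grouplike (a pointed \inftyCat), and the limit presentation $\Spt \simeq \lim(\cdots \xrightarrow{\Omega} \Anima_\bullet \xrightarrow{\Omega} \Anima_\bullet)$ passes through $\Fun_\Pi(\sF_\sC^\op, -)$ — which commutes with limits in $\Catoo$ and whose value on $\Anima_\bullet$ is identified with $\Anim(\sC)$ by additivity — to yield
\[
  \D(\sC) \simeq \lim\big(\cdots \xrightarrow{\Omega} \Anim(\sC) \xrightarrow{\Omega} \Anim(\sC)\big),
\]
with structure projections $\Omega^{\infty-n}$. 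Dually, $\D(\sC)$ satisfies the universal property of stabilization: for every presentable stable \inftyCat $\sE$, restriction along $\Sigma^\infty : \Anim(\sC) \to \D(\sC)$ induces an equivalence between colimit-preserving functors $\D(\sC) \to \sE$ and colimit-preserving functors $\Anim(\sC) \to \sE$. The analogous statements hold for $\sD$.

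For part (i), I use the limit presentation of $\D(\sD)$: specifying a functor $\tilde F : \D(\sC) \to \D(\sD)$ is the same as specifying a compatible system of functors $G_n : \D(\sC) \to \Anim(\sD)$ equipped with identifications $\Omega \circ G_{n+1} \simeq G_n$. Setting $G_n := F \circ \Omega^{\infty-n}$, the required compatibility
\[
  \Omega \circ G_{n+1} = \Omega \circ F \circ \Omega^{\infty-(n+1)} \simeq F \circ \Omega \circ \Omega^{\infty-(n+1)} \simeq F \circ \Omega^{\infty-n} = G_n
\]
is supplied by the hypothesis $F \circ \Omega \simeq \Omega \circ F$ together with $\Omega \circ \Omega^{\infty-(n+1)} \simeq \Omega^{\infty-n}$, and the universal property of the limit produces a unique $\tilde F$ with $\Omega^{\infty-n} \circ \tilde F \simeq F \circ \Omega^{\infty-n}$.

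For part (ii), I apply the stabilization universal property with $\sE = \D(\sD)$. The composite $\Sigma^\infty \circ F : \Anim(\sC) \to \D(\sD)$ is colimit-preserving (both factors are) and preserves the zero object, hence extends uniquely to a colimit-preserving functor $\tilde F : \D(\sC) \to \D(\sD)$ satisfying $\tilde F \circ \Sigma^\infty \simeq \Sigma^\infty \circ F$. The general relations $\tilde F \circ \Sigma^{\infty-n} \simeq \Sigma^{\infty-n} \circ F$ for $n \ge 1$ then follow from the identity $\Sigma^{\infty-n} \simeq \Omega^n \circ \Sigma^\infty$ (the left adjoint of $\Omega^{\infty-n} \simeq \Omega^\infty \circ \Sigma^n$), together with the fact that the colimit-preserving $\tilde F$ commutes with the invertible suspension on both sides. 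The main potential obstacle, should one wish to make everything airtight, is verifying that $\Fun_\Pi(\sF_\sC^\op, -)$ propagates the limit and stabilization descriptions of $\Spt$ — both facts for which additivity is essential, since otherwise $\Anim(\sC)$ would merely be pointed by the initial object and the passage to $\Spt_{\ge 0}$ would fail.
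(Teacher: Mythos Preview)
Your proposal is correct and is precisely an unpacking of what the paper means by its one-line justification ``By universal properties of stabilization'': the limit presentation of $\D(\sC)$ as an inverse tower under $\Omega$ handles part~(i), and the initiality of $\Sigma^\infty$ among colimit-preserving functors to presentable stable targets handles part~(ii). The paper gives no further argument, so your elaboration matches its intended proof.
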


  \ssec{Animated modules}

    Let $A$ be a ring\footnote{%
      From now on, all our rings be assumed commutative.
    } and consider the algebraic category $\Mod_A$ of $A$-modules (\examref{exam:Mod}).
    Let's specialize the animation construction to this case.

    \begin{defn}
      Let $\sF_A \sub \Mod_A$ denote the full subcategory of finitely generated free $A$-modules.
      An \emph{animated $A$-module} is a finite product-preserving functor $M : \sF_A^\op \to \Anima$.
      We write
      \[ \D(A)_{\ge 0} := \Anim(\Mod_A) \]
      for the \inftyCat of animated $A$-modules.
      (Cohomologically: $\D(A)^{\le 0}$.)
    \end{defn}

    \begin{rem}
      Let us unpack the definition a bit.
      An animated $A$-module $M$ amounts to the following data:
      \begin{defnlist}
        \item
        For every integer $n\ge 0$, an animum $M_n \in \Anima$.
        \item
        For every $A$-linear map $\phi : A^{\oplus n} \to A^{\oplus m}$ (or $\phi \in \Mat_{m \times n}(A)$), a map of anima $M_\phi : M_m \to M_n$.
        \item\label{item:a0ps7ugf}
        For every two $A$-linear maps $\phi : A^{\oplus n} \to A^{\oplus m}$ and $\psi : A^{\oplus m} \to A^{\oplus l}$, a homotopy $h_{\phi,\psi} : M_\phi \circ M_\psi \simeq M_{\psi \circ \phi}$ of maps $M_l \to M_n$.
        \item\label{item:a0psdg1}
        For every three $A$-linear maps $\phi$, $\psi$, $\omega$, a tetrahedron-shaped diagram expressing a ``higher'' homotopy between the homotopies $h_{\phi,\psi}$, $h_{\psi,\omega}$, $h_{\phi,\omega\circ\psi}$, and $h_{\psi\circ\phi,\omega}$.
        \item
        \textellipsis
      \end{defnlist}
      This data is subject to the condition that the canonical map $M_n \to (M_1)^{\times n}$ is invertible for every $n\ge 0$ (in particular, $M_0 \simeq \pt$).
      We summarize \itemref{item:a0ps7ugf} and \itemref{item:a0psdg1} by saying that the maps $M_\phi$ are functorial \emph{up to coherent homotopy}.

      In particular, this data encodes:
      \begin{defnlist}
        \item
        The underlying animum $M^\circ := M_1 \in \Anima$.

        \item
        Operations $(M^\circ)^{\times n} \to M^\circ$ on $M^\circ$, for every $\phi \in \Mat_{n\times 1}(A)$.
        In particular, an addition operation $\mrm{add} : M^\circ \times M^\circ \to M^\circ$.

        \item
        An action of $A$ on $M^\circ$, i.e., a map $A \to \End(M^\circ)$ given by
        \[ A \simeq \Mat_{1\times 1}(A) = \Hom_{\sF_A}(1, 1) \xrightarrow{M} \Maps_{\Anima}(M_1, M_1) = \End(M^\circ). \]
        The endomorphism induced by $a \in A$ is the operation encoded by the matrix $a \in \Mat_{1\times 1}(A)$.

        \item
        Associativity up to coherent homotopy.
        For example, given three points $x,y,z \in M$ (i.e., maps of anima $\pt \to M^\circ$) we have a homotopy
        \[ \mrm{add}(\mrm{add}(x,y), z) \simeq \mrm{add}(x, \mrm{add}(y,z)). \]
        Diagrammatically,
        \[ \begin{tikzcd}
          M^\circ \times M^\circ \times M^\circ \ar{r}{\mrm{add} \times \id}\ar{d}{\id \times \mrm{add}}
          & M^\circ \times M^\circ \ar{d}{\mrm{add}}
          \\
          M^\circ \times M^\circ \ar{r}{\mrm{add}}
          & M^\circ.
        \end{tikzcd} \]
      \end{defnlist}
      Informally speaking, we can think of an animated $A$-module as an animum equipped with a homotopy coherent $A$-module structure.
    \end{rem}

\section{The cotangent complex}
\label{sec:cotangent}

  % Given a commutative ring $R$ and an $R$-algebra $A$, we have the $A$-module of algebraic K\"ahler differentials
  % \[ \Omega_{A/R} \in \Mod_A. \]
  % Given another $R$-algebra $B$ and an $R$-algebra homomorphism $A \to B$, recall that there is a right-exact sequence
  % \[
  %   \Omega_{A/R} \otimes_A B
  %   \to \Omega_{B/R}
  %   \to \Omega_{B/A}
  %   \to 0.
  % \]
  Given a morphism of schemes $f : X \to Y$, recall the quasi-coherent sheaf of relative algebraic K\"ahler differentials
  \[ \Omega_{X/Y} \in \QCoh(X). \]
  Given another morphism $g : Y \to Z$, there is a right-exact sequence
  \[
    f^*\Omega_{Y/Z}
    \to \Omega_{X/Z}
    \to \Omega_{X/Y}
    \to 0.
  \]
  In this lecture we will use the formalism of the previous lecture to define the ``derived functor of $\Omega_{-/-}$'', which will allow us to extend the above sequence to the left.
  % define the cotangent complex $\$

  \ssec{Animated rings}

    Given a commutative ring $R$, we denote by $\CAlg_R$ the category of commutative $R$-algebras.

    \begin{prop}
      % We denote by $\ACAlg_R$ the animation of $R$.
      Denote by $\Poly_R \sub \CAlg_R$ the full subcategory spanned by finitely generated polynomial algebras $R[T_1,\ldots,T_m]$, $m\ge 0$.
      The inclusion $\Poly_R \hook \CAlg_R$ extends to an equivalence
      \[
        \Fun_\Pi(\Poly_R^\op, \Set)
        \to \CAlg_R.
      \]
      In particular, the category $\CAlg_R$ is algebraic.
    \end{prop}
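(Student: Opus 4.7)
The plan is to verify both conditions of \defnref{defn:algebraic} for the subcategory $\Poly_R \subseteq \CAlg_R$. First I would check that $\Poly_R$ is essentially small and closed under finite coproducts in $\CAlg_R$: the initial $R$-algebra $R$ corresponds to the empty polynomial algebra ($m=0$), and the coproduct of $R[T_1,\ldots,T_n]$ and $R[S_1,\ldots,S_m]$ in $\CAlg_R$ is computed as the tensor product $R[T_1,\ldots,T_n] \otimes_R R[S_1,\ldots,S_m] \simeq R[T_1,\ldots,T_{n+m}]$, which again lies in $\Poly_R$.

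Next I would consider the restricted Yoneda functor
\[ y : \CAlg_R \to \Fun(\Poly_R^\op, \Set), \quad A \mapsto \Hom_{\CAlg_R}(-, A)|_{\Poly_R}. \]
By the coproduct computation just recorded, $y(A)$ sends finite coproducts in $\Poly_R$ to products of sets, so $y$ lands in $\Fun_\Pi(\Poly_R^\op, \Set)$; by the Yoneda lemma its restriction to $\Poly_R$ is the inclusion. It remains to show $y$ is an equivalence. For essential surjectivity, given $F \in \Fun_\Pi(\Poly_R^\op, \Set)$, I set $A := F(R[T])$ so that the product-preservation gives $F(R[T_1,\ldots,T_n]) \simeq A^n$. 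Using the universal property $\Hom_{\CAlg_R}(R[T], R[T_1,\ldots,T_n]) \simeq R[T_1,\ldots,T_n]$, each polynomial $p \in R[T_1,\ldots,T_n]$ defines an $R$-algebra map $R[T] \to R[T_1,\ldots,T_n]$ and hence a set-theoretic operation $A^n \to A$. Applying this to $T_1+T_2$, $T_1T_2$, $-T$, and the constants $r \in R$ equips $A$ with an addition, multiplication, additive inverse, and $R$-linear structure; associativity, commutativity, distributivity, etc. are all witnessed by polynomial identities in $\Poly_R$, and functoriality of $F$ converts each such identity into the corresponding identity for the operations on $A$. One then checks that $F \mapsto A$ is inverse to $y$ on objects.

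For fully faithfulness I would use that every $A \in \CAlg_R$ admits a canonical presentation as a reflexive coequalizer of two polynomial algebras on generators and relations, namely $R[\mathrm{rel}(A)] \rightrightarrows R[A] \twoheadrightarrow A$, together with the fact that each of these polynomial algebras is a filtered colimit of objects of $\Poly_R$. Since the forgetful functor $\CAlg_R \to \Set$ preserves filtered colimits and reflexive coequalizers, so does $\Hom_{\CAlg_R}(R[T_1,\ldots,T_n], -)$ for each $n$ (as $R[T_1,\ldots,T_n]$ is the free $R$-algebra on $n$ generators); hence $y$ sends this presentation to an analogous presentation of $y(A)$ as a reflexive coequalizer of representables in $\Fun_\Pi(\Poly_R^\op, \Set)$. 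Combining this with the Yoneda lemma on $\Poly_R$ yields the bijection $\Hom_{\CAlg_R}(A, B) \simeq \Hom_{\Fun_\Pi(\Poly_R^\op, \Set)}(y(A), y(B))$.

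\textbf{Main obstacle.} The essentially surjective step, while straightforward in outline, is the one requiring the most bookkeeping, since one must show that all commutative $R$-algebra axioms are forced by the functoriality of $F$ on polynomial identities. The most efficient formulation of the whole argument is really the classical Lawvere-theory/monadicity principle: the forgetful functor $\CAlg_R \to \Set$ is monadic via the finitary free-$R$-algebra monad $S \mapsto R[S]$, and the category of algebras for any finitary monad on $\Set$ is canonically equivalent to $\Fun_\Pi(\sL^\op, \Set)$ for $\sL$ the opposite of its full subcategory of finitely generated free algebras --- which here is precisely $\Poly_R$.
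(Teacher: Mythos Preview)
The paper states this proposition without proof, treating it as a standard fact in the same vein as the worked examples for $\Set$, $\Ab$, and $\Mod_R$ given just before (\examref{exam:Mod} and the surrounding discussion). Your proposal is correct and follows exactly the template those examples suggest: extract the underlying set $A = F(R[T])$, read off the operations from morphisms in $\Poly_R$, and verify the axioms via functoriality. Your closing remark about the Lawvere-theory/monadicity formulation is indeed the cleanest packaging of the argument and is implicitly what the paper has in mind when it introduces the notion of an algebraic category in \defnref{defn:algebraic}.
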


    \begin{defn}
      Denote by $\ACAlg_R := \Anim(\CAlg_R)$ the animation of $\CAlg_R$.
      Objects of $\ACAlg_R$ will be called \emph{animated (commutative) $R$-algebras}.
      When $R=\Z$, we abbreviate $\ACRing := \ACAlg_\Z$ and call its objects \emph{animated (commutative) rings}.
    \end{defn}

    \begin{rem}
      Note that $\CAlg_R$ is not additive: for $A,B\in\CAlg_R$, the coproduct is the tensor product $A \otimes_R B$, which is usually distinct from the product $A \times B$.
    \end{rem}

    % \begin{rem}\leavevmode
    %   \begin{defnlist}
    %     \item
    %     Every simplicial commutative $R$-algebra $A_\bullet$ determines an animated $R$-algebra (namely, the geometric realization $\abs{A_\bullet}$ formed in $\ACAlg_R$).
    %     The functor $\Fun(\bDelta^\op, \CAlg_R) \to \ACAlg_R$ exhibits the target as the $\infty$-categorical localization of the source with respect to weak homotopy equivalences.

    %     \item
    %     Every nonpositively graded commutative dg-$R$-algebra gives rise to a simplicial commutative $R$-algebra (via the Dold--Kan correspondence).
    %     When $R$ is a $\Q$-algebra, the functor $\mrm{CDGA}_R^{\le 0} \to \ACAlg_R$ exhibits the target as the $\infty$-categorical localization of the source with respect to quasi-isomorphisms.
    %   \end{defnlist}
    % \end{rem}

    \begin{notat}
      Let $R$ be a commutative ring.
      We denote by $\CAlgMod_R$ the category of pairs $(A, M)$ where $A\in\CAlg_R$ and $M \in \Mod_A$; a morphism $(A,M) \to (A',M')$ is an $R$-algebra homomorphism $A \to A'$ together with an $A'$-module homomorphism $M \otimes_A A' \to M'$.
    \end{notat}

    \begin{defprop}\label{prop:mijl}
      Let $R$ be a commutative ring.
      \begin{thmlist}
        \item
        Denote by $\CAlgMod_R^\free \sub \CAlgMod_R$ the full subcategory spanned by pairs $(A,M)$ where $A=R[T_1,\ldots,T_m]$, $m\ge 0$, and $M=A^{\oplus n}$, $n\ge 0$.
        The inclusion $\CAlgMod_R^\free \hook \CAlgMod_R$ extends to an equivalence
        \[
          \Fun_\Pi(\CAlgMod_R^{\free,\op}, \Set)
          \to \CAlgMod_R.
        \]
        In particular, the category $\CAlgMod_R$ is algebraic.

        \item
        % The animation $\Anim(\CAlgMod_R)$ is equivalent to the \inftyCat of pairs $(A, M)$ where $A \in \Anim(\CAlg_R)$ and $M \in \Anim(\Mod_A)$.
        % More precisely, 
        Consider the canonical functor
        \begin{equation*}%\label{eq:mpret}
          \pi : \Anim(\CAlgMod_R) \to \Anim(\CAlg_R) = \ACAlg_R
        \end{equation*}
        induced by $\CAlgMod_R \to \CAlg_R$, $(A,M)\mapsto A$.
        Then $\pi$ is a cocartesian fibration.
        Given an animated ring $A$, the \inftyCat of \emph{animated $A$-modules} is the fibre $\AMod_A$ over $A\in\CAlg_R$.

        \item
        Let $\sF_A \sub \AMod_A$ denote the full subcategory spanned by finite direct sums $A^{\oplus n}$, $n\ge0$.
        Then there are canonical equivalences
        \[ \AMod_A \simeq \Fun_\Pi(\sF_A^\op, \Anima) \simeq \Fun_\Pi(\sF_A^\op, \Spt_{\ge 0}). \]

        \item
        The \inftyCat of \emph{derived $A$-modules} is
        \[ \D(A) := \Fun_\Pi(\sF_A^\op, \Spt). \]
        We write $\D(A)_{\ge 0}$ for the essential image of the fully faithful functor $\AMod_A \hook \D(A)$.
      \end{thmlist}
    \end{defprop}

    % \begin{notat}
    %   Given a ring homomorphism $\phi : A \to B$, consider the adjunction
    %   \[
    %     \phi^* := (-)\otimes_A B : \Mod_A \to \Mod_B,
    %     \quad \phi_* : \Mod_B \to \Mod_A
    %   \]
    %   where $\phi^*$ and $\phi_*$ are extension and restriction of scalars, respectively.
    %   We may derive this to get the adjunction
    %   \[
    %     \L\phi^* : \D(A) \to \D(B),
    %     \quad \L\phi_* : \D(B) \to \D(A).
    %   \]
    %   % Here the right adjoint $\phi_*$ is the left-derived functor $\L\phi_*$ (which exists since $\phi_*$ preserves colimits); 
    %   Since $\L\phi_*$ preserves underlying anima, hence preserves discreteness, we have $\L\phi_*(N) = \phi_*(N)$ for any ordinary $B$-module $N$.
    %   Thus we write simply $\phi_* := \L\phi_*$.
    %   % Thus we omit $\L$ from the notation.
    % \end{notat}

    \begin{defprop}\label{defn:redfin}
      Let $A \to B$ be a morphism of animated rings.
      We say that it is \emph{flat} if it satisfies the following equivalent conditions:
      \begin{thmlist}
        \item
        The functor $(-)\otimes^\L_A B : \D(A)_{\ge 0} \to \D(B)_{\ge 0}$ is left-exact, i.e., it preserves discrete objects.

        \item
        The induced ring homomorphism $\pi_0(A) \to \pi_0(B)$ is flat, and the canonical homomorphisms
        \[ \pi_i(A) \otimes_{\pi_0(A)} \pi_0(B) \to \pi_i(B) \]
        are bijective for all $i\ge 0$.
      \end{thmlist}
    \end{defprop}

    \begin{defn}
      We say that $A \to B$ is \emph{étale}, resp. \emph{smooth}, if it is flat and the induced ring homomorphism $\pi_0(A) \to \pi_0(B)$ is étale, resp. smooth (in the sense of ordinary commutative algebra).
    \end{defn}

  \ssec{The cotangent complex}

    \begin{constr}
      Let $R$ be a commutative ring.
      Consider the functor $\CAlg_R \to \CAlgMod_R$ sending $A \mapsto (A, \Omega_{A/R})$.
      Its restriction to $\CAlg_R^\free$ extends uniquely to a sifted colimit-preserving functor
      \[
        \ACAlg_R = \Anim(\CAlg_R) \to \Anim(\CAlgMod_R)
      \]
      such that the diagram
      \[\begin{tikzcd}
        \ACAlg_R \ar{rr}\ar[equals]{rd}
        && \Anim(\CAlgMod_R) \ar{ld}{\pi}
        \\
        & \ACAlg_R &
      \end{tikzcd}\]
      commutes.
      The image of $A\in\Anim(\CAlg_R)$ may be regarded as a pair $(A, \L\Omega_{A/R})$ where $\L\Omega_{A/R} \in \Anim(\Mod_A)$.
    \end{constr}

    \begin{defn}
      Given $A\in\ACAlg_R$, the (relative) \emph{cotangent complex} is $\L_{A/R} := \L\Omega_{A/R}$, which we regard as an object of $\D(A)_{\ge 0}$, i.e., as a connective derived $A$-module.
    \end{defn}
    
    By construction, we have $\pi_0 \L_{A/R} \simeq \Omega_{\pi_0(A)/\pi_0(R)}$.
    We also have the following further properties:

    % This is a cocartesian section of $\pi$, i.e., it sends every morphism to a $\pi$-cartesian morphism:

    % \begin{prop}
    %   For every $R$-algebra homomorphism $A \to B$, the canonical morphism in $\D(B)$
    %   \[
    %     \L_{A/R} \otimes_A B \to \L_{B/R}
    %   \]
    %   is invertible.
    % \end{prop}

    \begin{prop}\label{thm:antiministerialist}
      \begin{thmlist}
        \item
        Let $A \to B$ be a morphism in $\Anim(\CAlg_R)$.
        Then there is an exact triangle
        \[
          \L_{A/R} \otimes^\L_A B
          \to \L_{B/R}
          \to \L_{B/A}
        \]
        in $\D(B)$.

        \item
        For every cocartesian square
        \[\begin{tikzcd}
          A \ar{r}\ar{d}
          & B \ar{d}
          \\
          A' \ar{r}
          & B'
        \end{tikzcd}\]
        in $\Anim(\CAlg_R)$ (exhibiting $B'$ as the derived tensor product $A' \otimes^\L_A B$), there is a canonical isomorphism
        \[
          \L_{B/A} \otimes^\L_B B'
          \simeq \L_{B'/A'}.
        \]
      \end{thmlist}
    \end{prop}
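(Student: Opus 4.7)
The plan for both parts is to animate the classical statements on polynomial algebras. Since $\L_{-/-}$ is constructed by animating the classical functor of K\"ahler differentials, both claims can be reduced via the universal property of $\ACAlg_R = \Anim(\CAlg_R)$ to a verification on a sifted-generating family of morphisms. The convenient family consists of free extensions $A \hookrightarrow A[T]$ with $A \in \Poly_R$: any morphism $A \to B$ in $\ACAlg_R$ can be realized as a bisimplicial sifted colimit of such extensions, by first resolving $A$ by polynomial $R$-algebras $A_\bullet$, then resolving $B$ over each $A_n$ by polynomial $A_n$-algebras $A_n[T_{m,n}]$.

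For (i), I first construct the map $\L_{A/R} \otimes^\L_A B \to \L_{B/R}$ in $\D(B)$ using the cocartesian structure of the fibration $\pi$ of \propref{prop:mijl}: applying the cotangent-complex functor to the arrow $A \to B$ produces a morphism $(A, \L_{A/R}) \to (B, \L_{B/R})$ in $\Anim(\CAlgMod_R)$, whose cocartesian--vertical factorization over $A \to B$ yields the desired map in the fibre $\AMod_B$. Identifying its cofiber with $\L_{B/A}$ then reduces, via the sifted-colimit resolution above, to the case of a free extension $A \hookrightarrow A[T]$ with $A \in \Poly_R$; there the classical transitivity sequence
\[
0 \to \Omega_{A/R} \otimes_A A[T] \to \Omega_{A[T]/R} \to \Omega_{A[T]/A} \to 0
\]
is split short exact between free modules, hence automatically an exact triangle in $\D(A[T])$ whose three terms tautologically compute the corresponding cotangent complexes (since animation acts trivially on free modules). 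Because sifted colimits preserve cofiber sequences in stable \inftyCats, the exact triangle extends to arbitrary $A \to B$.

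For (ii), I apply the same reduction. Any cocartesian square is a sifted colimit of squares obtained by pushing out free extensions $A_n \to A_n[T]$ along morphisms $A_n \to A'_n$ in $\Poly_R$, producing free extensions $A'_n \to A'_n[T]$. In this free case both $\L_{A[T]/A} \simeq A[T]^{\oplus |T|}$ and $\L_{A'[T]/A'} \simeq A'[T]^{\oplus |T|}$ are free modules of rank $|T|$, and the comparison map $\L_{A[T]/A} \otimes^\L_{A[T]} A'[T] \to \L_{A'[T]/A'}$ is tautologically the identity on $A'[T]^{\oplus |T|}$. Sifted colimits, combined with the fact that $\otimes^\L_B B'$ preserves them, then propagate the isomorphism to all cocartesian squares.

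The main technical obstacle in both parts is the fibered bookkeeping: the three functors of $(A \to B)$ in (i) take values in varying module categories $\D(B)$ as $B$ varies, so the argument must be organized inside the total cocartesian fibration over $\ACAlg_R$ whose fibre at $B$ is $\D(B)$, with sifted-colimit compatibility in both base and fibre directions verified via \propref{prop:0aysgf}. Once this formalism is in place, the identifications on polynomial generators are essentially tautological and the rest of the argument is formal.
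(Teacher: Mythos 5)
The paper states this proposition without proof, so there is no argument to compare against directly. Your strategy — construct the transitivity map via the cocartesian structure of the fibration of \propref{prop:mijl} and reduce the identification of its cofiber with $\L_{B/A}$ to polynomial extensions — is the right kind of reasoning, and the observations about split exact sequences of Kähler differentials over $\Poly_R$ are correct. But there is a real gap: the paper only defines $\L_{A/R}$ for animated $A$ over a \emph{discrete} ring $R$, so when $A$ is itself animated the symbol $\L_{B/A}$ has no prior meaning here, and you cannot ``identify the cofiber with $\L_{B/A}$'' without first saying what $\L_{B/A}$ is. Either declare the cofiber to \emph{be} the definition of $\L_{B/A}$ (so that (i) is definitional, and your polynomial computation merely justifies the notation by showing agreement with $\Omega_{B/A}$), or define $\L_{B/A}$ by the universal property of \thmref{thm:sleep} applied in the coslice $\ACAlg_R^{A/}$ — in which case (i) is immediate from the fiber sequence of derivation anima $\Der_A(B,M) \to \Der_R(B,M) \to \Der_R(A,M)$ via Yoneda, and the bisimplicial resolution becomes dispensable. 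You should commit to one of these and say so explicitly.

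For (ii), the bisimplicial bookkeeping you flag as the main technical obstacle can be avoided entirely. The animated functor $A \mapsto (A, \L_{A/R}): \ACAlg_R \to \Anim(\CAlgMod_R)$ preserves \emph{all} small colimits, not only sifted ones, because $\Omega_{-/R}$ already preserves finite coproducts on $\Poly_R$; this is exactly the criterion recorded in \constrref{constr:LF}. Applying this to the given cocartesian square exhibits $(B', \L_{B'/R})$ as the pushout of $(A', \L_{A'/R})$ and $(B, \L_{B/R})$ over $(A, \L_{A/R})$ in the total category, and after base change to $B'$ the asserted isomorphism $\L_{B/A} \otimesL_B B' \simeq \L_{B'/A'}$ is simply the statement that opposite legs of a pushout square have equivalent cofibers. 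This packages your reduction to polynomial pushouts and the ``fibered bookkeeping'' into a single colimit-preservation claim, which is precisely what the universal property of animation hands you for free once finite coproducts on generators are checked.
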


  \ssec{The universal property of the cotangent complex}

    \begin{defn}
      Let $R$ be a commutative ring.
      Consider the functor
      \[ \CAlgMod_R \to \CAlg_R \]
      sending a pair $(A,M)$ to the trivial square-zero extension $A\oplus M$, with multiplication $(a, m) \cdot (a', m') = (a a', a'm + am')$.
      This gives rise to a functor
      \[ \Anim(\CAlgMod_R) \to \Anim(\CAlg_R) = \ACAlg_R \]
      on animations.
      We still denote the image of $(A, M)$ by $A\oplus M$ and call it the \emph{trivial square-zero extension} of $A$ by $M\in\D(A)_{\ge 0}$.
      % This is equipped with a canonical homomorphism
      % \[ A\oplus M \to A \]
      % given informally by $(a,m) \mapsto a$.
      % This admits a section
      % \[ A \to A\oplus M \]
      % sending $a\mapsto (a,0)$.
      % Let $R\in\CRing$, $A \in \ACAlg_R$, and $M \in \AMod_R$ an animated $R$-module.
      % The \emph{trivial square-zero extension} $A \oplus M$ is 
      % the animated $R$-algebra whose underlying $R$-module is the direct sum and whose multiplication is given by
      % \[ (a, m) \cdot (a', m') = (a a', a'm + am'). \]
    \end{defn}

    \begin{defn}
      Let $R\in\CRing$, $A \in \ACAlg_R$, and $M \in \AMod_R$ an animated $R$-module.
      The canonical morphism $A\oplus M \to A$, given informally by $(a,m) \mapsto a$, induces a map
      \[
        \Maps_{\ACAlg_R}(A,A\oplus M)
        \to \Maps_{\ACAlg_R}(A,A).
      \]
      The animum $\Der_R(A, M)$ of \emph{$R$-linear derivations of $A$ with values in $M$} is the homotopy fibre of this map at the point $\id_A$.
      In other words, a point of $\Der_R(A,M)$ amounts to a dashed arrow making the below diagram commute:
      \[\begin{tikzcd}
        & R\ar{ld}\ar{rd} &
        \\
        A \ar[dashed]{rr}\ar[equals]{rd}
        & & A \oplus M \ar{ld}
        \\
        & A &
      \end{tikzcd}\]
      The \emph{trivial derivation} is defined informally by $a \mapsto (a,0)$.
    \end{defn}

    \begin{thm}\label{thm:sleep}
      Let $A$ be an animated $R$-algebra.
      Then the cotangent complex $\L_{A/R}$ corepresents the functor $\D(A)_{\ge 0} \to \Anima$, $M \mapsto \Der_R(A, M)$.
      That is, there are canonical isomorphisms
      \[
        \Maps_{\D(A)}(\L_{A/R}, M)
        \simeq \Der_R(A, M)
      \]
      functorial in $M\in\D(A)_{\ge 0}$.
    \end{thm}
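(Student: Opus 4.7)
The plan is to leverage the universal property of animation to reduce the claim to the classical universal property of K\"ahler differentials. First I would establish the classical case: when $A = R[T_1, \ldots, T_n]$ is a polynomial $R$-algebra and $M$ is a discrete $A$-module, $\Omega_{A/R} \simeq A^{\oplus n}$ is free on $\{dT_i\}$, and both $\Hom_A(\Omega_{A/R}, M)$ and $\Der_R(A, M)$ are identified with $M^{\times n}$ via the classical bijection $\partial \leftrightarrow (\partial T_i)_i \leftrightarrow (T_i \mapsto (T_i, \partial T_i))$. This identification is natural in the pair $(A, M) \in \CAlgMod_R^\free$.

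Next I would construct a global comparison map. By construction, $(A, \L_{A/R})$ is the value at $A$ of the functor $\ACAlg_R \to \Anim(\CAlgMod_R)$ obtained by animating $A \mapsto (A, \Omega_{A/R})$. Animating the classical universal derivation $A \to A \oplus \Omega_{A/R}$, $a \mapsto (a, da)$ similarly yields, for every $A \in \ACAlg_R$, a canonical universal derivation $d_A \in \Der_R(A, \L_{A/R})$, functorial in $A$. For any $\phi \in \Maps_{\D(A)}(\L_{A/R}, M)$, postcomposing $d_A$ with $\id_A \oplus \phi : A \oplus \L_{A/R} \to A \oplus M$ produces an element of $\Der_R(A, M)$, which gives the desired natural transformation
\[ \eta_{A, M} : \Maps_{\D(A)}(\L_{A/R}, M) \to \Der_R(A, M). \]
By construction, $\eta_{A,M}$ recovers the classical bijection when $(A, M)$ lies in $\CAlgMod_R^\free$.

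To show $\eta_{A, M}$ is an equivalence in general, I would regard both sides as anima-valued functors on $\Anim(\CAlgMod_R)$ (so that both $A$ and $M$ vary) and verify that each preserves sifted colimits, so that the universal property of animation (\defnref{defn:animation}) promotes the classical equivalence to the required isomorphism. For the LHS: the formation of $\L_{A/R}$ is compatible with sifted colimits in $A$ essentially by definition, and for polynomial $A$ the cotangent complex $\L_{A/R}$ is a free $A$-module of finite rank, so $\Maps_{\D(A)}(\L_{A/R}, -)$ additionally preserves sifted colimits in $M$; base change as in \propref{thm:antiministerialist} handles the interaction between the two variables. For the RHS: the trivial square-zero extension $(A, M) \mapsto A \oplus M$ is defined as the animation of a finite-product-preserving functor $\CAlgMod_R \to \CAlg_R$, hence preserves sifted colimits, and $\Der_R(A, M)$ is computed as the homotopy fibre of a map of mapping anima in $\ACAlg_R$ which inherits the requisite colimit behavior.

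The hard part will be precisely this bookkeeping: disentangling the sifted colimit preservation separately in the $A$ and $M$ variables for both functors, and checking that the two natural transformations assemble compatibly over the total \inftyCat $\Anim(\CAlgMod_R)$ so that an equivalence on generators promotes to an equivalence everywhere. Once these preservation properties are verified, one simply observes that two sifted-colimit-preserving natural transformations between functors on an animation which agree on the generating subcategory must agree everywhere, yielding the theorem.
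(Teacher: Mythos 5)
Your overall strategy—establish the polynomial case from the classical universal property of K\"ahler differentials, build a global comparison map by animating the universal derivation $a\mapsto(a,da)$, then promote the equivalence via sifted colimits—is exactly the paper's plan (the paper merely writes ``write $A$ as a geometric realization of a simplicial diagram of filtered systems of polynomial algebras''). Your construction of $\eta_{A,M}$ via the animated $d_A$ is correct and usefully fills in a step the paper elides.

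However, the bookkeeping framework has a genuine variance problem: neither $(A,M)\mapsto\Maps_{\D(A)}(\L_{A/R},M)$ nor $(A,M)\mapsto\Der_R(A,M)$ is a covariant functor on $\Anim(\CAlgMod_R)$. A morphism $(A,M)\to(A',M')$ consists of $f\colon A\to A'$ and $g\colon M\otimes_A A'\to M'$; one cannot push a derivation of $A$ forward to a derivation of $A'$ along $f$, and the comparison $\L_{A/R}\otimes_A A'\to\L_{A'/R}$ points the wrong way for precomposition. So ``regard both sides as anima-valued functors on $\Anim(\CAlgMod_R)$ and verify each preserves sifted colimits'' cannot be carried out literally, and the final ``two sifted-colimit-preserving natural transformations which agree on generators agree everywhere'' step does not apply as stated. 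The correct reduction exploits the \emph{contravariance} in $A$: fix $A$ and $M\in\D(A)_{\ge0}$, write $A\simeq\colim P_\bullet$ as a sifted colimit of (filtered systems of) polynomial $R$-algebras, and use that $\L_{-/R}$ preserves sifted colimits while $\Maps_{\ACAlg_R}(-,B)$ sends colimits in the source to limits. This identifies $\Maps_{\D(A)}(\L_{A/R},M)\simeq\lim\Maps_{\D(P_\bullet)}(\L_{P_\bullet/R},M)$ and $\Der_R(A,M)\simeq\lim\Der_R(P_\bullet,M)$, where $\Der_R(P,M)$ denotes the fibre of $\Maps_{\ACAlg_R}(P,A\oplus M)\to\Maps_{\ACAlg_R}(P,A)$ over the structure map $P\to A$ (forming this fibre is a finite limit and so commutes with the totalization). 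Your $\eta$ is compatible with these limits, so the polynomial case yields the general one. Your argument in the second variable---that $\L_{P/R}$ is compact in $\D(P)_{\ge0}$ and $\Maps_{\ACAlg_R}(P,-)$ preserves sifted colimits for $P$ compact projective---is correct and does settle the $M$-direction for fixed polynomial $P$.
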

    \begin{proof}
      % First let $N\in\D(R)_{\ge 0}$ and $A = \LSym^*_R(N)$ the derived symmetric algebra on $N$.
      % The functor $N \mapsto \LSym^*_R(N)$ is left adjoint to the forgetful functor $\ACAlg_R \to \D(R)_{\ge 0}$.
      % In this case, we have
      % \[ \L_{A/R} \simeq N \otimes_R A \]
      If $A = R[T_1,\ldots,T_m]$ is a polynomial algebra, we have
      \[ \L_{A/R} \simeq \Omega_{A/R} \]
      which is free on $m$ generators.
      In general, write $A$ as a geometric realization of a simplicial diagram of filtered systems of polynomial algebras.
    \end{proof}

  \ssec{Quasi-coherent and perfect complexes on stacks}

    \begin{defn}
      We extend the flat topology on $\CRing^\op$ to $\ACRing^\op$ as follows: we say that a morphism $A \to B$ in $\ACRing$ is \emph{faithfully flat} if it is flat in the sense of \defnref{defn:redfin}, and the induced ring homomorphism $\pi_0(A) \to \pi_0(B)$ is faithfully flat.
    \end{defn}

    \begin{thm}[Lurie, To\"en]\label{thm:jargonish}
      The functor
      \[
        \ACRing \to \Catoo,
        \quad R \mapsto \D(R)
      \]
      satisfies descent for the flat topology.
    \end{thm}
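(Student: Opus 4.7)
The plan is to follow the same strategy as the proof of \thmref{thm:module descent} (flat descent for $R\mapsto \Mod_R$ on ordinary commutative rings), replacing $\Mod_R$ by $\D(R)$, ordinary tensor products by derived tensor products, and classical rings by animated rings throughout. Two things must be verified: (a) that $R\mapsto\D(R)$ sends finite products to products, and (b) that for every faithfully flat morphism $\phi:A\to B$ in $\ACRing$, the augmented cosimplicial diagram
\[
  \D(A)
  \xrightarrow{\phi^*} \D(B)
  \rightrightarrows \D(B\otimes^\L_A B)
  \rightrightrightarrows \cdots
\]
is a limit diagram in $\Catoo$. Finite products are handled by observing that $\D$ of the zero animated ring is trivial and that for animated rings $A_1,A_2$ the idempotents in $\pi_0(A_1\times A_2)$ split any derived module, yielding $\D(A_1\times A_2)\simeq\D(A_1)\times\D(A_2)$.

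For part (b), we apply the descent criterion \thmref{thm:descent criterion} to the above diagram. Conservativity of $\phi^*=(-)\otimes^\L_A B$ is checked on homotopy groups: by t-exactness of flat base change (\propref{defn:redfin}) we have a canonical isomorphism $\pi_i(M\otimes^\L_A B)\simeq \pi_i(M)\otimes_{\pi_0 A}\pi_0 B$, and faithful flatness of $\pi_0 A\to\pi_0 B$ then forces $\pi_i(M)=0$ for all $i$ whenever $M\otimes^\L_A B\simeq 0$. The Beck--Chevalley condition amounts to flat base change for derived tensor products applied to the square
\[
  \begin{tikzcd}
    \D(B^{\otimes m+1}) \ar{r}{d^{0,*}}\ar{d}{\alpha^*}
    & \D(B^{\otimes m+2}) \ar{d}{\beta^*}
    \\
    \D(B^{\otimes n+1}) \ar{r}{d^{0,*}}
    & \D(B^{\otimes n+2}),
  \end{tikzcd}
\]
whose vertical arrows are (derived) base changes of $\phi$ and hence flat; passing to right adjoints $d^0_*$ (restriction of scalars) one gets the required commutation. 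Preservation of totalizations of $\phi^*$-split cosimplicial diagrams is automatic, since split totalizations are computed by a universal formula from the splitting data and are thus preserved by every functor.

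The main obstacle is the precise formulation and invocation of flat base change in the animated setting: one must know that for a flat map of animated rings $A\to A'$ and any morphism $A\to B$, the extension-of-scalars functor commutes with restriction of scalars in the above sense, as a statement about stable \inftyCats rather than 1-categories. The cleanest route is to cite this from \cite[\S 4.7]{SAG}; alternatively, one reduces to the compact generators $B^{\oplus n}$ on which derived tensor products collapse to finite direct sums of underlying animated modules, and the identity becomes associativity of the relative tensor product of animated rings. Once this is in place, the proof concludes exactly as in \thmref{thm:module descent}.
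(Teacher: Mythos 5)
Your strategy — adapt the proof of \thmref{thm:module descent} verbatim to the animated/derived setting — is exactly what the paper's one-line proof intends, and conditions (i) and (ii) of the descent criterion \thmref{thm:descent criterion} are handled correctly; in particular the identification $\pi_i(M\otimes^\L_A B)\simeq \pi_i(M)\otimes_{\pi_0 A}\pi_0 B$ for flat $\phi$ is the right way to reduce conservativity to the classical statement. The problem is your treatment of condition (iii). You claim it is ``automatic, since split totalizations are computed by a universal formula from the splitting data and are thus preserved by every functor.'' That would be correct if the cosimplicial object $X^\bullet$ in $\D(A)$ were itself split, but the hypothesis only says that $\phi^*(X^\bullet)$ is split in $\D(B)$. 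The absolute-limit property of split diagrams computes $\Tot(\phi^*X^\bullet)$ for you, but it says nothing about $\phi^*(\Tot X^\bullet)$, the other side of the comparison map you need to identify; you have conflated ``$X^\bullet$ is split'' with ``$\phi^*X^\bullet$ is split.''

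The reason this gap is easy to miss is that the classical proof of \thmref{thm:module descent} handles (iii) by a completely different mechanism: since $\Mod_R$ and $\Mod_S$ are $1$-categories, $\Tot$ reduces to an equalizer, i.e.\ a finite limit, which $\phi^*$ preserves by exactness --- the splitting hypothesis is never used. That argument does not transfer to $\D(A)$, where totalizations are genuinely infinite limits, and $\phi^*=(-)\otimes^\L_A B$ is a left adjoint with no a~priori reason to preserve them. Closing the gap requires a real argument: for instance, first establish (iii) on the bounded-below part $\D(A)^+$, where the $\Tot$-tower converges Postnikov-wise and $\phi^*$ is $t$-exact by flatness, and then extend using left-completeness of the $t$-structures on $\D(A)$ and $\D(B)$; or simply cite the unbounded flat descent result from \cite{SAG} or the original references. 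This is precisely the place where ``similar to the proof of module descent'' hides a genuine subtlety of the derived setting, and where your proof as written would not go through.
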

    \begin{proof}
      Similar to the proof of \thmref{thm:module descent}.
    \end{proof}

    \begin{cor}
      The functor
      \[
        \CRing \to \Catoo,
        \quad R \mapsto \D(R)
      \]
      satisfies descent for the flat topology.
    \end{cor}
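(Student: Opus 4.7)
The plan is to reduce the statement directly to \thmref{thm:jargonish} by identifying the flat Čech nerve of an ordinary commutative ring with its derived counterpart in $\ACRing$.

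First I would fix a faithfully flat ring homomorphism $R \to S$ in $\CRing$. Viewed through the fully faithful inclusion $\CRing \hook \ACRing$, it is also faithfully flat in the animated sense, since the criterion of \propref{defn:redfin} collapses to ordinary faithful flatness when $R$ and $S$ are both discrete (in that case, $\pi_0$ sees everything and higher homotopy vanishes automatically). By \thmref{thm:jargonish}, we therefore obtain a limit diagram
\[
  \D(R) \to \D(S) \rightrightarrows \D(S \otimes^\L_R S) \rightrightrightarrows \cdots
\]
in $\Catoo$, with \emph{derived} iterated tensor powers.

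The main step is to check by induction on $n \ge 1$ that $S^{\otimes^\L_R n}$ is discrete and coincides with the ordinary tensor power $S^{\otimes_R n}$, so that the derived Čech nerve is naturally identified with the classical one witnessing the flat sheaf condition on $\CRing$. For the inductive step, set $T := S^{\otimes^\L_R n}$, which by hypothesis is an ordinary $R$-algebra that is flat over $R$. Then $T \otimes^\L_R S$ is the derived base change of the flat morphism $R \to S$ along $R \to T$, so by \propref{defn:redfin} it remains flat over $T$; in particular it is discrete, whence $T \otimes^\L_R S \simeq T \otimes_R S$, and the latter is again flat over $R$ by stability of flatness under ordinary tensor products.

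Once the two Čech nerves are identified, the corollary follows immediately from \thmref{thm:jargonish}. I do not anticipate any substantial obstacle: the entire content is the familiar principle that derived and classical tensor products coincide in the presence of flatness, so no new descent-theoretic input is needed beyond \thmref{thm:jargonish} itself.
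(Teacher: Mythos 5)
Your argument is correct and supplies precisely the step that the paper leaves implicit: the corollary is stated with no proof, and the intended justification is exactly that for a faithfully flat map of discrete rings the derived \v{C}ech nerve is discrete term-by-term (by flatness of $R \to S$ and induction), hence agrees with the classical one, so the limit diagram from \thmref{thm:jargonish} restricts to the sheaf condition on $\CRing$. One small point you could add for completeness: the sheaf condition in the paper's sense also requires sending finite products of rings to products of \inftyCats, but this is immediate since $\CRing \hook \ACRing$ preserves finite products and that part of the condition is inherited directly from \thmref{thm:jargonish}.
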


    \begin{defn}\label{defn:Dqc}
      Let $X$ be a stack.
      The stable \inftyCat of quasi-coherent sheaves $\Dqc(X)$ is the limit
      \[
        \Dqc(X) := \lim_{(R,x)} \D(R)
      \]
      over the category of pairs $(R,x)$ where $R\in\CRing$ and $x\in X(R)$.
      More precisely,
      \[ \Dqc : \Stk^\op \to \Catoo \]
      is the right Kan extension of the presheaf $\Spec(R) \mapsto \D(R)$ along the inclusion $\Aff \hook \Stk$.
      We refer to objects of $\Dqc(X)$ as \emph{quasi-coherent complexes}.
    \end{defn}

    \begin{exam}
      If $X=\Spec(R)$ is affine, then we have the tautological equivalence
      \[ \R\Gamma(X, -) : \Dqc(X) \to \D(R). \]
    \end{exam}

    \begin{exam}
      Let $G$ be a group stack over a scheme $S$, and let $X$ be a stack over $S$ with $G$-action.
      The \inftyCat of \emph{$G$-equivariant quasi-coherent complexes} on $X$ is defined as the totalization
      \[
        \Dqc^G(X) := \Tot(\Dqc(X_\bullet))
      \]
      where $X_\bullet = [\cdots \rightrightrightarrows G \times X \rightrightarrows X]$ is the action groupoid.
      Informally speaking, its objects are quasi-coherent complexes on $X$ equipped with a homotopy coherent $G$-action.
      By \propref{prop:checkage} there is a canonical equivalence
      \[ \Dqc([X/G]) \simeq \Dqc^G(X). \]
      % For example, if $X=\Spec(R)$ is affine, then we may think of quasi-coherent complexes on $[\Spec(R)/G]$ as derived $R$-modules equipped with a homotopy coherent $G$-action.
    \end{exam}

    \begin{prop}\label{prop:rascalion}
      Let $X$ be a stack.
      If $X$ is algebraic (resp. Deligne--Mumford, schematic), then the limit in \defnref{defn:Dqc} can be taken only over pairs $(R,x)$ such that $x : \Spec(R) \to X$ is smooth (resp. étale, resp. an open immersion).
    \end{prop}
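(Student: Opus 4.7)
We treat the algebraic case; the Deligne--Mumford and schematic cases are analogous, with ``smooth'' replaced by ``étale'' or ``open immersion'' throughout. Set $\sC := \Aff_{/X}$ and let $\sC_0 \subseteq \sC$ be the full subcategory of pairs $(R,x)$ with $x$ smooth; the goal is to show that the restriction map $\lim_{\sC^\op} \D \to \lim_{\sC_0^\op} \D$ is an equivalence.

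Using the atlas guaranteed by \defnref{defn:algstack} and refining its source by an affine Zariski cover, choose a smooth surjection $p : U = \coprod_i \Spec A_i \twoheadrightarrow X$ such that each component $\Spec A_i \to X$ is smooth. Since $\D$ satisfies fpqc descent by \thmref{thm:jargonish} and $\Dqc$ is right Kan-extended from $\D$, $\Dqc$ inherits fpqc descent on $\Stk$, so
\[
  \Dqc(X) \;\simeq\; \Tot \D(U_\bullet),
\]
where $U_\bullet$ is the \v{C}ech nerve of $p$. Each $U_n$ is an algebraic space which is smooth over $X$ via any projection (by stability of smoothness under composition and base change). Choosing affine Zariski atlases of each $U_n$ and applying descent once more, each $\D(U_n)$ is itself a limit of $\D$ over a diagram of affine schemes smooth over $X$. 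Combining, we present $\Dqc(X)$ as a limit $\lim_{\sD^\op} \D$ over an explicit diagram $\sD$ of pairs $(\Spec B, b) \in \sC_0$.

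It remains to show that the inclusion $\sD \hookrightarrow \sC_0$ is initial, so that $\lim_{\sD^\op} \D \simeq \lim_{\sC_0^\op} \D$. This is a cofinality statement in the spirit of the basis theorem \thmref{thm:RKE}: given $(R,x) \in \sC_0$, the base change $\Spec R \fibprod_X U$ is smooth over both $\Spec R$ and $U$ (since both $p$ and $x$ are smooth), and any affine open cover of it refines $(R,x)$ by objects of $\sD$ which, by composition of smooth morphisms, are themselves smooth over $X$. The main obstacle is verifying the weak contractibility of the resulting comma-categories; once one reduces to this local claim via the smooth-descent property of $\D|_{\sC_0}$, it follows from the standard fact that smooth refinements form a filtered system up to homotopy, just as in the proof of \thmref{thm:comp}.
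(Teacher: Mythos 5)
Your overall strategy --- present $\Dqc(X)$ via smooth descent along an atlas as a limit indexed by affines smooth over $X$, and compare with the limit over $\sC_0$ --- is reasonable, but the decisive step is incorrect as stated.

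The claim that $\sD \hookrightarrow \sC_0$ is initial (so that $\lim_{\sD^\op}\D \simeq \lim_{\sC_0^\op}\D$) cannot be the right mechanism. Cofinality of $\sD^\op \to \sC_0^\op$ in the Quillen--Theorem--A sense is a purely diagrammatic condition: if it held, it would yield $\lim_{\sD^\op}G \simeq \lim_{\sC_0^\op}G$ for \emph{every} functor $G$, independent of any descent property; but the proposition genuinely relies on $\D$ being a sheaf. And cofinality does in fact fail. By Quillen's Theorem A one would need that for every $(R,x) \in \sC_0$ the comma category of $(V,v) \in \sD$ equipped with an $X$-morphism $\Spec(R) \to V$ is weakly contractible, in particular nonempty. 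Take $X = B\bG_m$ with atlas $U = \Spec(k)$, so that $U_n = \bG_m^n$ is already affine and $\sD$ is just the simplicial diagram $U_\bullet$. If $\Spec(R)$ is a smooth affine scheme carrying a nontrivial line bundle $\sL$, the classifying map $x : \Spec(R) \to B\bG_m$ is smooth (its pullback along $U \to X$ is the associated $\bG_m$-torsor, smooth over $k$), so $(R,x) \in \sC_0$; but a morphism $\Spec(R) \to U_n$ over $B\bG_m$ would, composing to $U_0 = \pt$, trivialize $\sL$. Hence the comma category is empty.

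What you actually describe --- refining $\Spec(R)$ by an affine cover of $\Spec(R) \fibprod_X U$ --- produces a \emph{covering} of $\Spec(R)$, i.e.\ morphisms pointing \emph{into} $\Spec(R)$, not morphisms out of it landing in $\sD$. To exploit this, one must use that $\D|_{\sC_0}$ satisfies descent for such covers, i.e.\ an analogue of \thmref{thm:RKE}, not bare cofinality. But \thmref{thm:RKE} does not apply verbatim: $\sC_0$ is not closed under fibered products inside $\Aff_{/X}$, since the fibered product of two smooth morphisms to $X$ along an arbitrary (non-smooth) $X$-morphism need not remain smooth over $X$. Closing the gap requires a genuine descent argument --- for instance, comparing both $\lim_{\sC^\op}\D$ and $\lim_{\sC_0^\op}\D$ with $\Tot\D(U_\bullet)$ and checking the resulting comparison is inverse to restriction, using that for each $V$ smooth over $X$ the category $\Dqc(V)$ is computed by Zariski descent over affine opens of $V$, all of which lie in $\sC_0$.
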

    \begin{proof}
      
    \end{proof}

    \begin{defn}\leavevmode
      \begin{defnlist}
        \item
        Let $R$ be a commutative ring.
        We say that a derived $R$-module $M \in \D(R)$ is \emph{perfect} if it is contained in the full subcategory of $\D(R)$ generated by $R$ under finite colimits, finite limits, and direct summands.
        
        \item
        Let $X$ be a stack.
        We say that a quasi-coherent complex $\sF \in \Dqc(X)$ is \emph{perfect} if for every $(R,x)$ the derived $R$-module $\R\Gamma(\Spec(R), x^*\sF)$ is perfect.
      \end{defnlist}
      We write $\Dperf(X) \sub \Dqc(X)$ for the full subcategory spanned by perfect complexes, so that
      \[ \Dperf(X) \simeq \lim_{(R,x)} \Dperf(R) \]
      where $\Dperf(R)$ is the \inftyCat of perfect derived $R$-modules.
    \end{defn}

    \begin{cor}
      Let $p : U \twoheadrightarrow X$ be an effective epimorphism.
      Then $\sF \in \Dqc(X)$ is perfect if and only if $\L p^*(\sF) \in \Dqc(U)$ is perfect.
    \end{cor}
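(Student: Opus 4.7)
The plan is to prove the two implications separately. The forward direction is immediate: if $\sF \in \Dqc(X)$ is perfect, then for any $R$-point $y : \Spec(R) \to U$ the composite $x := p \circ y : \Spec(R) \to X$ is an $R$-point of $X$, and hence $y^* \L p^*(\sF) \simeq x^*(\sF)$ is perfect by hypothesis. This shows $\L p^*(\sF) \in \Dqc(U)$ is perfect.

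For the converse, I assume $\L p^*(\sF)$ is perfect and fix an arbitrary $R$-point $x : \Spec(R) \to X$; the goal is to show $x^*(\sF) \in \D(R)$ is perfect. Since $p : U \twoheadrightarrow X$ is an effective epimorphism, \propref{prop:local surj} provides a faithfully flat étale ring homomorphism $R \to R'$ together with a lift $y : \Spec(R') \to U$ of the composite $\Spec(R') \to \Spec(R) \xrightarrow{x} X$ through $p$. Pullback along the resulting square yields a canonical isomorphism
\[
  x^*(\sF) \otimesL_R R' \simeq y^* \L p^*(\sF),
\]
whose right-hand side is perfect by assumption. Thus the problem reduces to showing that perfectness of derived modules is local for the flat topology: if $M \in \D(R)$ becomes perfect after extension of scalars along some faithfully flat $R \to R'$, then $M$ itself is perfect.

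This descent statement is the main obstacle of the proof. I would deduce it from \thmref{thm:jargonish}, which identifies $\D(R)$ with the totalization of $\D(R'^{\otimes \bullet+1})$ associated to the \v{C}ech nerve of $R \to R'$. The crucial additional input is that this equivalence is symmetric monoidal, and hence preserves dualizable objects; combined with the standard characterization of perfect modules as precisely the dualizable objects of $\D(-)$, the hypothesis that $M \otimesL_R R'$ and its further base changes are all dualizable assembles, via the descent data, to exhibit $M$ itself as dualizable in $\D(R)$, and therefore perfect. The delicate point is upgrading \thmref{thm:jargonish} to a symmetric monoidal equivalence so that dualizability descends; everything else is formal.
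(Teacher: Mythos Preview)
Your argument is correct, but the paper takes a shorter route. The paper's proof is a one-line application of \propref{prop:checkage} to the presheaf $X \mapsto \Dperf(X)$: since $\Dperf(-) \simeq \lim_{(R,x)} \Dperf(R)$ is (by the discussion immediately preceding the corollary) a right Kan extension of an étale sheaf on affines, \propref{prop:checkage} gives $\Dperf(X) \simeq \Tot(\Dperf(U_\bullet))$ for every effective epimorphism $p : U \twoheadrightarrow X$, and the equivalence of the two perfectness conditions drops out immediately. Your proof instead unwinds this descent by hand: you reduce via \propref{prop:local surj} to the affine statement that perfectness descends along a faithfully flat étale map $R \to R'$, and then propose to prove that by upgrading \thmref{thm:jargonish} to a symmetric monoidal equivalence and invoking the characterization of perfect complexes as dualizable objects. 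That works, and is in fact exactly the content hidden in the paper's assertion that $\Dperf$ is an étale sheaf on affines; the paper simply takes this as already established and applies the general machinery of \propref{prop:checkage}, whereas you reprove the key descent step explicitly. Your approach is more self-contained but longer; the paper's buys brevity by packaging the hard step into the sheaf property of $\Dperf$.
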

    \begin{proof}
      Follows from \propref{prop:checkage}.
    \end{proof}

    \begin{cor}
      Let $U$ be a stack with $G$-action, and denote by $p : U \twoheadrightarrow [U/G]$ the quotient.
      Then a quasi-coherent complex $\sF \in \Dqc([U/G]) \simeq \Dqc^G(U)$ is perfect if and only if the underlying quasi-coherent complex $p^*\sF \in \Dqc(U)$ is perfect. 
    \end{cor}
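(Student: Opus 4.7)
The plan is to deduce this directly from the previous corollary, which already characterizes perfectness after pullback along an arbitrary effective epimorphism. Concretely, it suffices to observe that the quotient morphism $p : U \twoheadrightarrow [U/G]$ is an effective epimorphism, and then invoke the preceding corollary.

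First, I would recall that $p : U \twoheadrightarrow [U/G]$ is a $G$-torsor by construction of the quotient stack (this was noted in the remark following \defnref{defn:torsor} saying that if $\pi : U \to X$ is a $G$-torsor then $X$ is identified with the geometric realization of the action groupoid, and conversely $p : U \twoheadrightarrow [U/G]$ is itself a torsor with structure group $G$). By part~(i) of \defnref{defn:torsor}, any $G$-torsor is an effective epimorphism.

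Having established that $p$ is an effective epimorphism, the previous corollary applies verbatim: $\sF \in \Dqc([U/G])$ is perfect if and only if $\L p^*(\sF) \in \Dqc(U)$ is perfect. Under the identification $\Dqc([U/G]) \simeq \Dqc^G(U)$ of the previous example, the functor $\L p^*$ corresponds to the forgetful functor $\Dqc^G(U) \to \Dqc(U)$ that discards the $G$-equivariant structure, which is exactly what is meant by ``the underlying quasi-coherent complex''.

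There is essentially no obstacle here: the entire content is packaged into the preceding corollary, and the only thing to verify is that the quotient map qualifies as an effective epimorphism. I would write the proof as a single line citing these two facts.
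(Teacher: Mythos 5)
Your argument is exactly the intended one and matches the paper, which states this corollary without proof immediately after the preceding one precisely because the only ingredient needed is that $p : U \twoheadrightarrow [U/G]$ is a $G$-torsor (noted in the remark following the definition of quotient stack) and hence an effective epimorphism by \defnref{defn:torsor}(i). Your added observation that $\L p^*$ corresponds to the forgetful functor under $\Dqc([U/G]) \simeq \Dqc^G(U)$ is a helpful clarification of what ``underlying'' means, but is not a departure from the paper's reasoning.
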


  \ssec{Derived stacks}

    \begin{defn}
      A \emph{derived stack} is a functor $X : \ACRing \to \Grpdoo$ satisfying étale descent.
      We denote the \inftyCat of derived stacks by $\DStk = \Shv(\ACRing^\op; \Grpdoo)$.
    \end{defn}

    \begin{exam}
      An \emph{affine derived scheme} is the functor $\Spec(A) : \ACRing \to \Grpdoo$, $B \mapsto \Maps(A, B)$, corepresented by an animated ring $A \in \ACRing$.
    \end{exam}

    \begin{defn}
      Given a derived stack $X$, the restriction of the functor $X : \ACRing \to \Grpdoo$ along $\CRing \hook \ACRing$ is called the \emph{classical truncation} of $X$ and is denoted $X_\cl$.
      If the \inftyGrpd $X(R)$ is $1$-truncated for every $R\in\CRing$, then $X_\cl : \CRing \to \Grpd$ is a stack.
      For example, this will be the case if $X$ is a derived algebraic stack (see \defnref{defn:fervescence}).
      % \[
      %   \CRing \hook \ACRing \xrightarrow{X} \Grpdoo
      % \]
      % factors through $\Grpd \sub \Grpdoo$ and the resulting functor $X_\cl : \CRing \to \Grpd$ an algebraic stack.
      % We refer to it as the \emph{classical truncation} of $X$.
    \end{defn}

    \begin{notat}
      We distinguish limits in $\Stk$ and $\DStk$ by writing $\R\lim$ instead of $\lim$ for the latter.
      For example, given a diagram of stacks $X \to Z \gets Y$, the usual fibre product is the classical truncation of the derived one:
      \[
        (X \fibprodR_Z Y)_\cl
        \simeq X \fibprod_Z Y,
      \]
      while $X \fibprodR_Z Y \simeq X \fibprod_Z Y$ if and only if $X$ and $Y$ are Tor-independent over $Z$.
    \end{notat}

    \begin{defn}
      Given a derived stack $X$, we define the \inftyCat $\Dqc(X)$ by a further right Kan extension of $\Dqc : \Stk^\op \to \Catoo$ along the fully faithful functor $\Stk \hook \DStk$ (induced by the fully faithful functor $\CRing \hook \ACRing$).
      Thus for any derived stack $X$ we have
      \[ \Dqc(X) \simeq \lim_{(A,x)} \D(A), \]
      where the limit is now taken over the \inftyCat of pairs $(A \in \ACRing, x \in X(A))$.
    \end{defn}

    We have the obvious derived versions of Definitions~\ref{defn:algspace} and \ref{defn:algstack}:

    \begin{defn}[Derived algebraic spaces]\leavevmode
      \begin{defnlist}
        \item      
        A derived stack $X$ is \emph{$0$-Artin}, or a \emph{derived algebraic space}, if its diagonal $X \to X \times X$ is schematic and induces a monomorphism on classical truncations, and there exists an étale surjection $U \twoheadrightarrow X$ where $U$ is an affine derived scheme.

        \item
        A morphism $f : X \to Y$ is \emph{$0$-Artin}, or \emph{representable}, if for every affine $V$ and every morphism $V \to Y$, the fibre $X \fibprodR_Y V$ is a derived algebraic space.

        \item
        A $0$-Artin morphism $f : X \to Y$ is \emph{flat}, \emph{smooth}, or \emph{surjective} if for every affine $V$ and every morphism $V \to Y$, there exists a derived scheme $U$ and an étale surjection $U \twoheadrightarrow X\fibprod_Y V$ such that the composite $U \twoheadrightarrow X \fibprod_Y V \to V$ has the respective property.
      \end{defnlist}
    \end{defn}

    \begin{defn}[Derived algebraic stacks]\label{defn:fervescence}\leavevmode
      \begin{defnlist}
        \item
        A derived stack $X$ is \emph{$1$-Artin}, or a \emph{derived algebraic stack}, if its diagonal is representable and there exists a smooth surjection $U \twoheadrightarrow X$ where $U$ is a derived algebraic space.

        \item
        A morphism of derived stacks $f : X \to Y$ is \emph{$1$-representable}, or \emph{representable by derived algebraic stacks}, if for every affine derived scheme $V$ and every morphism $v : V \to Y$, the derived fibre $X \fibprodR_Y V$ is a derived algebraic stack.

        \item
        A derived algebraic stack is \emph{Deligne--Mumford} if it admits an \emph{étale} surjection from a derived scheme.
        Equivalently, its classical truncation is Deligne--Mumford.
      \end{defnlist}
    \end{defn}

    Continuing the same procedure, we can define inductively:

    \begin{defn}[Derived Artin stacks]\leavevmode
      \begin{defnlist}
        \item
        For $n>0$, a morphism of derived stacks $f : X \to Y$ is \emph{$(n-1)$-Artin} if for every affine $V$ and every morphism $V \to Y$, $X \fibprodR_Y V$ is $(n-1)$-Artin.
        A derived stack $X$ is \emph{$n$-Artin} if its diagonal is $(n-1)$-Artin and there exists a smooth surjection $U \twoheadrightarrow X$ where $U$ is a derived scheme.
        An $(n-1)$-Artin morphism $f : X \to Y$ is \emph{flat}, \emph{smooth}, or \emph{surjective}, if for any affine $V$ and any morphism $V \to Y$, there exists a derived scheme $U$ and a smooth surjection $U \twoheadrightarrow X\fibprod_Y V$ such that the composite $U \twoheadrightarrow X \fibprod_Y V \to V$ has the respective property.

        \item
        A derived stack is \emph{Artin} if it is $n$-Artin for some $n$.
        A morphism of derived stacks is \emph{Artin} if it is $n$-Artin for some $n$.
        A morphism of derived stacks is \emph{flat}, \emph{smooth}, or \emph{surjective}, if it is $n$-Artin with the respective property for some $n$.
      \end{defnlist}
    \end{defn}

    \begin{rem}
      An $n$-Artin stack takes values in $n$-groupoids, i.e., in \inftyGrpds that are $n$-truncated.
    \end{rem}

    \begin{rem}
      Classically, the terms \emph{algebraic stack} and \emph{Artin stack} are used interchangeably.
      We follow Gaitsgory in redefining Artin stacks as ``higher'' Artin stacks, and algebraic stacks to be $1$-Artin stacks.
    \end{rem}

  \ssec{Cotangent complexes of algebraic stacks}

    \begin{defn}
      Let $f : X \to Y$ be a morphism in $\DStk$.
      Given an animated ring $A \in \ACRing$, an $A$-point $x \in X(A)$, and a connective derived $A$-module $M\in\D(A)_{\ge 0}$, consider the commutative square
      \[\begin{tikzcd}
        X(A\oplus M) \ar{r}\ar{d}
        & X(A) \ar{d}
        \\
        Y(A\oplus M) \ar{r}
        & Y(A)
      \end{tikzcd}\]
      induced by the morphism of animated rings $A \oplus M \to A$, $(a,m) \mapsto a$.
      This gives rise to a map of anima (or \inftyGrpds)
      \[
        X(A\oplus M)
        \to Y(A\oplus M) \fibprod_{Y(A)} X(A).
      \]
      The animum $\Der_x(X/Y, M)$ of \emph{derivations of $f$ at $x$ with values in $M$} is the fibre at the point $(y', x)$, where $y' \in Y(A\oplus M)$ is the image of $y=f(x) \in Y(A)$ along the trivial derivation $A \to A\oplus M$, $a\mapsto (a,0)$.
    \end{defn}

    \begin{defn}
      Let $f : X \to Y$ be a morphism of (derived) stacks and $\sL \in \Dqc(X)$ an \emph{eventually connective} quasi-coherent complex on $X$ (i.e., $\sL[n]$ is connective for some integer $n$).
      We say that $\sL$ is a \emph{(relative) cotangent complex} for $f : X \to Y$ if for every $A\in\ACRing$ and $x:\Spec(A) \to X$, the inverse image $\L x^*\sL$ corepresents the functor $\Der_x(X/Y, -)$.
      That is, there are isomorphisms
      \[
        \Maps_{\D(A)}(\L x^*\sL, M)
        \simeq \Der_x(X/Y, M)
      \]
      functorial in $M \in \D(A)_{\ge 0}$, where by abuse of notation we identify $\L x^*\sL \in \Dqc(\Spec(A))$ with the derived $A$-module $\R\Gamma(\Spec(A), \L x^*\sL) \in \D(A)$.
    \end{defn}

    \begin{thm}
      Let $f : X \to Y$ be a $1$-representable morphism of derived stacks.
      \begin{thmlist}
        \item
        There exists a cotangent complex $\L_{X/Y}$ for $f$.
      
        \item
        The cotangent complex $\L_{X/Y}$ is $(-1)$-connective.
        That is, for every derived scheme $U$ and every smooth morphism $p : U \to X$, the inverse image $p^*\L_{X/Y}$ is $(-1)$-connective, i.e.
        \[ \on{H}^{-i}(U, \L_{X/Y}) = \pi_i \R\Gamma(U, \L_{X/Y}) = \pi_i \Maps_{\D(U)}(\sO_U, p^*\L_{X/Y}) = 0 \]
        for all $i<-1$.
        If $f$ is representable by derived algebraic spaces (or derived Deligne--Mumford stacks), then $\L_{X/Y}$ is in fact connective.
      \end{thmlist}
    \end{thm}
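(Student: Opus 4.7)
The plan is to construct $\L_{X/Y}$ by smooth descent from an atlas, then extract $(-1)$-connectivity from the transitivity triangle of cotangent complexes. First I would reduce to $Y = \Spec(B)$ affine: cotangent complexes are determined and base-change-preserved by the pointwise universal property, and $1$-representability is preserved by affine base change on $Y$. Then I pick a smooth atlas $p : U \twoheadrightarrow X$ with $U$ a derived algebraic space, guaranteed by the definition of a derived algebraic stack. Since $U \to Y$ is a morphism of derived algebraic spaces, $\L_{U/Y} \in \Dqc(U)$ exists and is connective; and $p$ is smooth representable, so $\L_{U/X}$ exists and is locally free of finite rank concentrated in degree $0$ (test on affine charts, where it reduces to the derived scheme case).

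Set $\sL_U := \Fib(\L_{U/Y} \to \L_{U/X}) \in \Dqc(U)$, designed to satisfy the transitivity triangle one wants $\L_{X/Y}$ to enjoy after pullback. I then upgrade $\sL_U$ to a descent datum along the \v{C}ech nerve $U_\bullet$ of $p$: the smooth base-change formula for the cotangent complex (the stacky version of \propref{thm:antiministerialist}(ii), verified affine-locally and transported through smooth descent) together with the transitivity triangles for $U \fibprodR_X U \to U \to Y$ along the two projections $q_0,q_1$ yield a canonical equivalence $q_0^*\sL_U \simeq q_1^*\sL_U$, and the same mechanism produces the required higher coherences on the rest of $U_\bullet$. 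By \propref{prop:checkage} combined with smooth descent for $\Dqc$ (\thmref{thm:jargonish} and \propref{prop:rascalion}), this descent datum defines the sought $\L_{X/Y} \in \Dqc(X)$ with $p^*\L_{X/Y} \simeq \sL_U$.

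For the universal property at a point $x : \Spec(A) \to X$ with coefficients $M \in \D(A)_{\ge 0}$: since $p$ is a smooth surjection, by \propref{prop:local surj} there is a faithfully flat étale morphism $A \to A'$ and a lift $\widetilde{x} : \Spec(A') \to U$ of $x \otimes_A A'$. On $\Spec(A')$, mapping the triangle defining $\sL_U$ into $M \otimesL_A A'$ and invoking \thmref{thm:sleep} for $\L_{U/Y}$ and $\L_{U/X}$ identifies $\Maps_{\D(A')}(\widetilde{x}^*\sL_U, M \otimesL_A A')$ with the fiber of $\Der_{\widetilde{x}}(U/Y, -) \to \Der_{\widetilde{x}}(U/X, -)$; unwinding the definitions of derivations along $U \to X \to Y$ shows this fiber is exactly $\Der_x(X/Y, M \otimesL_A A')$. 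Faithfully flat étale descent for both $\Maps_{\D(-)}$ (via \thmref{thm:jargonish}) and for the derivation functor (since it is a fiber of values of the étale sheaves $X,Y$) then produces the required corepresentability on $\Spec(A)$.

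The connectivity claim is immediate from the long exact sequence on homotopy groups of the triangle $p^*\L_{X/Y} \to \L_{U/Y} \to \L_{U/X}$: $\L_{U/Y}$ connective and $\L_{U/X}$ concentrated in degree $0$ force $\pi_i(p^*\L_{X/Y}) = 0$ for $i \le -2$, and $(-1)$-connectivity is a smooth-local condition. In the Deligne--Mumford case one may pick $p$ étale, so that $\L_{U/X} = 0$ and $p^*\L_{X/Y} \simeq \L_{U/Y}$ is connective; the case $f$ representable by derived algebraic spaces reduces to the known algebraic-space statement applied after any affine base change on $Y$. I expect the main obstacle to be the descent step: organizing the coherences of $\sL_U$ on the \emph{whole} cosimplicial object $U_\bullet$ rather than just on $U_1$, and correspondingly the coherent version of the derivation comparison in the universal-property step—both amounting to $\infty$-categorical bookkeeping of the naturality of cofiber sequences of cotangent complexes, which is however nothing deeper than the deformation-theoretic package already implicit in \thmref{thm:sleep} and its base-change companion.
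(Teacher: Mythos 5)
Your overall strategy matches the paper's: reduce to $Y$ affine, choose a smooth atlas $p : U \twoheadrightarrow X$, and build $\L_{X/Y}$ from $\sL_U = \Fib(\L_{U/Y} \to \L_{U/X})$. The assembly mechanism (Čech descent along $U_\bullet$, versus the paper's pointwise construction over the category of affine points of $X$) is a packaging difference, as you rightly acknowledge.

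The step I do not think goes through as written is the verification of the universal property. Applying $\Maps_{\D(A')}(-, N)$ to the triangle $\sL_U \to \L_{U/Y} \to \L_{U/X}$ gives the \emph{cofibre} sequence of spectra
\[
  \Der_u(U/X, N) \to \Der_u(U/Y, N) \to \Maps(\widetilde{x}^*\sL_U, N);
\]
the induced arrow between the derivation spectra goes from the $U/X$ term to the $U/Y$ term (contravariance of $\Maps(-,N)$), so the "fiber of $\Der_{\widetilde{x}}(U/Y,-) \to \Der_{\widetilde{x}}(U/X,-)$" you invoke is not the object that actually appears. On the other hand, unwinding the definition of derivations gives the \emph{fibre} sequence of pointed anima
\[
  \Der_u(U/X, N) \to \Der_u(U/Y, N) \to \Der_x(X/Y, N),
\]
with $\Der_x(X/Y,N)$ as the base. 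The two sequences share the first map, but a fibre sequence of anima is not in general also a cofibre sequence of spectra. For the base of the second to be identified with the cofibre of the first — which is exactly the identification $\Der_x(X/Y, N) \simeq \Maps(\widetilde{x}^*\sL_U, N)$ you need — one must know that $\Der_u(U/Y, N) \to \Der_x(X/Y, N)$ is surjective on $\pi_0$, i.e., that every derivation of $X/Y$ at $x$ lifts to a derivation of $U/Y$ at $u$. That surjectivity is precisely formal smoothness of $p$, which the paper invokes at exactly this point to deloop; your sketch buries it under "unwinding the definitions," and without it the corepresentability claim does not follow.
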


    \begin{proof}
      To construct a cotangent complex $\sL$ for $f$, it will suffice by definition of $\Dqc(X)$ to construct for every $A \in \ACRing$ and every morphism $x : \Spec(A) \to X$, a quasi-coherent complex $\sL(x) \in \Dqc(\Spec(A)) \simeq \D(A)$ which corepresents the functor $\Der_x(X/Y, -)$, in such a way that $\sL(x)$ is stable under derived base change as $(A,x)$ varies.

      Let $x : \Spec(A) \to X$ be a morphism where $A\in\ACRing$.
      Since $f$ is $1$-representable, the derived fibre $F = X\fibprodR_Y \Spec(A)$ is algebraic.
      Supposing that $\L_{F/\Spec(A)}$ exists, it is easy to see that the collection
      \[ (a^*\L_{F/\Spec(A)})_{A,x}, \]
      where $a = (f,\id) : \Spec(A) \to F$, defines a cotangent complex for $\L_{X/Y}$.
      Replacing $f$ by $F \to \Spec(A)$, we may therefore assume that $Y$ is affine and $X$ is algebraic.

      Thus write $Y=\Spec(R)$.
      Suppose first that $X$ is also affine, say $X=\Spec(A)$.
      Then we let $\L_{X/Y} \in \Dqc(X)_{\ge 0}$ be the connective quasi-coherent complex corresponding to the connective derived $A$-module
      \[
        \L_{R/A} := \Cofib(\L_R \otimesL_R A \to \L_A) \in \D(A)_{\ge 0}.
      \]
      Using \thmref{thm:sleep} it is easy to check that is indeed a cotangent complex for $X \to Y$.

      Suppose next that $X$ is a derived Deligne--Mumford stack.
      By \propref{prop:rascalion}, the collection
      \[
        (\L_{\Spec(A)/\Spec(R)})_{(A,x)} \in \lim_{(A,x)} \D(\Spec(A))_{\ge 0} \simeq \Dqc(X)_{\ge 0},
      \]
      where the limit is taken over pairs $(A\in\ACRing,x\in X(A))$ such that $x : \Spec(A) \to X$ is étale, determines a connective quasi-coherent complex $\L_{X/Y}$ over $X$ with a canonical identification $\L x^*\L_{X/Y} \simeq \L_{R/A}$ for every $A\in\ACAlg_R$ and étale morphism $x : \Spec(A) \to X$.
      It is easy to check that $\L_{X/Y}$ is a cotangent complex for $f$.

      Finally consider the case of a derived algebraic stack $X$.
      Let $A\in\ACRing$ and $x : \Spec(A) \to X$ a morphism.
      Choose a smooth surjection $p : U \twoheadrightarrow X$ where $U$ is a derived algebraic space.
      Since $p$ is an effective epimorphism, there exists an étale cover $\Spec(A') \twoheadrightarrow \Spec(A)$ up to which $x$ lifts to $u : \Spec(A') \to U$.
      By étale descent, it will suffice to construct $\sL(\Spec(A') \to X) \in \Dqc(\Spec(A'))$ instead of $\sL(x)$.
      We thus replace $A$ by $A'$ and assume that $x$ lifts to $u : \Spec(A) \to U$. 

      Let $M \in \D(A)_{\ge 0}$.
      Note that there is a canonical base point of $\Der_x(X/Y, M)$ given by the image of $x \in X(A)$ in $X(A\oplus M)$ by the trivial derivation.
      It is clear from the definitions that we have a fibre sequence
      \[
        \Der_u(U/X, M)
        \to \Der_u(U/Y, M)
        \to \Der_x(X/Y, M).
      \]
      Since $U \to X$ and $U \to X \to Y$ are representable (by derived algebraic spaces) we have
      \begin{align*}
        \Der_u(U/X, M)
        &\simeq \Maps_{\D(A)}(\L u^* \L_{U/X}, M)\\
        \Der_u(U/Y, M)
        &\simeq \Maps_{\D(A)}(\L u^* \L_{U/Y}, M)
      \end{align*}
      by the cases already considered.
      Since $p$ is formally smooth, the map $\Der_u(U/Y,M) \to \Der_u(X/Y,M)$ is surjective (on $\pi_0$).
      It follows that there is also a fibre sequence
      \[
        \Der_x(X/Y, M)
        \to \Maps_{\D(A)}(\L u^* \L_{U/X}, M[1])
        \to \Maps_{\D(A)}(\L u^* \L_{U/Y}, M[1]),
      \]
      since the second map here is a delooping of $\Der_u(U/X,M) \to \Der_u(U/Y,M)$.
      Hence
      \[
        \Der_x(X/Y, M)
        \simeq \Maps_{\D(A)}(\sL(x), M)
      \]
      where we set
      \[
        \sL(x) =
        \L u^* \Cofib(\L_{U/X}[-1] \to \L_{U/Y}[-1])
        \simeq \L u^* \Fib(\L_{U/X} \to \L_{U/Y}).
      \]
      Since $u^*\L_{U/X}$ and $u^*\L_{U/Y}$ are stable under derived base change in $u$ (they obviously assemble to the quasi-coherent complexes $\L_{U/X}$ and $\L_{U/Y}$ respectively), so is $\sL(x)$.
      Since connectivity is stable under colimits and $\L_{U/X}[-1]$ and $\L_{U/Y}[-1]$ are $(-1)$-connective (by the cases above), it is clear that $\sL(x)$ is $(-1)$-connective.
      Thus the collection $(\sL(x))_{A,x}$ assembles to a $(-1)$-connective cotangent complex $\L_{X/Y} \in \Dqc(X)$.
    \end{proof}

    \thmref{thm:antiministerialist} immediately globalizes as follows:
    \begin{prop}\label{prop:sublimationist}\leavevmode
      \begin{thmlist}
        \item\label{item:sublimationist/trans}
        Let $f : X \to Y$ be a morphism of derived stacks over a derived stack $S$.
        Then there is an exact triangle
        \[
          \L f^* \L_{Y/S}
          \to \L_{X/S}
          \to \L_{X/Y}
        \]
        in $\Dqc(X)$.

        \item\label{item:sublimationist/bc}
        For every cartesian square
        \[\begin{tikzcd}
          X' \ar{r}\ar{d}{p}
          & Y' \ar{d}
          \\
          X \ar{r}
          & Y
        \end{tikzcd}\]
        in $\DStk$ (exhibiting $X'$ as the derived fibred product $X\fibprodR_Y Y'$), there is a canonical isomorphism
        \[
          \L p^* \L_{X/Y}
          \simeq \L_{X'/Y'}.
        \]
      \end{thmlist}
    \end{prop}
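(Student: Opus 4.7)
The plan is to reduce both statements to the affine results of Theorem~\ref{thm:antiministerialist} via the universal characterization of the cotangent complex (Theorem~\ref{thm:sleep}) and the right-Kan-extension definition of $\Dqc$. The key observation is that in the proof of the preceding theorem, $\L_{X/Y} \in \Dqc(X)$ is exhibited as a coherent family $(\L x^*\L_{X/Y})_{(A,x)}$ of derived $A$-modules, each corepresenting the derivation functor $\Der_x(X/Y, -) : \D(A)_{\ge 0} \to \Anima$; hence any triangle or isomorphism in $\Dqc(X)$ can be constructed and checked levelwise at affine $A$-points, with functoriality in the point built into the limit.

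For \itemref{item:sublimationist/trans}, fix an $A$-point $x : \Spec(A) \to X$ with composite $y := f\circ x : \Spec(A) \to Y$. For $M \in \D(A)_{\ge 0}$, the commutative square of anima
\[
\begin{tikzcd}
X(A\oplus M) \ar{r}\ar{d} & Y(A\oplus M) \ar{d} \\
X(A) \ar{r} & Y(A)
\end{tikzcd}
\]
sits over $S(A\oplus M) \to S(A)$, and passing to fibres at the basepoints determined by $(x,y)$ and the trivial derivations yields a fibre sequence of pointed anima
\[ \Der_x(X/Y, M) \to \Der_x(X/S, M) \to \Der_y(Y/S, M). \]
By corepresentability these are $\Omega^\infty$ of objects of $\D(A)$, so the sequence transposes under Yoneda to a cofibre sequence $\L y^*\L_{Y/S} \to \L x^*\L_{X/S} \to \L x^*\L_{X/Y}$ in $\D(A)$. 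Identifying $\L y^*\L_{Y/S} \simeq \L x^*\L f^*\L_{Y/S}$ and letting $x$ vary assembles these cofibre sequences into an exact triangle in $\Dqc(X)$.

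For \itemref{item:sublimationist/bc}, fix $x' : \Spec(A) \to X'$ and set $x := p\circ x' \in X(A)$, $y' \in Y'(A)$ the image of $x'$, and $y := g(y') = f(x) \in Y(A)$. Since the square in the statement is cartesian in $\DStk$, we have
\[ X'(A\oplus M) \simeq X(A\oplus M) \fibprod_{Y(A\oplus M)} Y'(A\oplus M) \]
for every $M \in \D(A)_{\ge 0}$, and similarly for $A$ in place of $A\oplus M$. Taking fibres at the basepoints determined by $(x', y', x, y)$ and the trivial derivations gives a natural equivalence $\Der_{x'}(X'/Y', M) \simeq \Der_x(X/Y, M)$, which by Yoneda yields an isomorphism $\L x'^*\L_{X'/Y'} \simeq \L x^*\L_{X/Y} \simeq \L x'^*\L p^*\L_{X/Y}$ in $\D(A)$. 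Functoriality in $x'$ assembles these into the desired isomorphism in $\Dqc(X')$.

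The main (minor) subtlety is coherence as $x$ varies: the pointwise cofibre sequences and isomorphisms must be witnessed as actual morphisms in $\Dqc$, not merely as data at each point. This is handled by the fact that every object in sight is already constructed as a coherent family in the limit defining $\Dqc$, so the natural transformations between derivation functors used in Yoneda automatically promote to morphisms at the level of the right Kan extension. No delicate convergence arguments are needed, which is what allows the preceding theorem to globalize ``immediately'' to the present one.
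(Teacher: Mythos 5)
Your argument is correct and fills in what the paper leaves to the reader (the paper states only that \thmref{thm:antiministerialist} ``immediately globalizes''). Reducing to an $A$-point $x$, extracting the fibre sequence of derivation anima $\Der_x(X/Y, M) \to \Der_x(X/S, M) \to \Der_y(Y/S, M)$, and Yoneda-transposing to a cofibre sequence of corepresenting objects is precisely the mechanism the paper has in mind — indeed, the same fibre-sequence move appears in the proof of the existence theorem immediately preceding, and your base-change verification for (ii) is the natural companion. Two points you wave at that are worth registering: the corepresenting objects are only tested against connective $M$, so the Yoneda step tacitly uses that all three cotangent complexes are eventually connective (which holds by the preceding theorem, and this bounded-below hypothesis is what makes ``Yoneda on $\D(A)_{\ge 0}$'' legitimate); and the coherence of the pointwise triangles as $(A,x)$ varies is, as you say, built into the limit defining $\Dqc$, since the natural transformations of derivation functors are functorial in $(A,x)$ by construction. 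Neither is a gap, just the two places where a careful write-up would spend a sentence.
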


    \begin{exam}\label{exam:unexpanding}
      Let $G$ be a smooth group algebraic space over a base scheme $S$, and let $U$ be a (derived) algebraic stack over $S$ with a $G$-action.
      We can describe the cotangent complex of the quotient stack $X=[U/G]$ as follows.
      Consider the cartesian square (where as usual, all products are fibred over $S$):
      \[\begin{tikzcd}
        G \times U \ar{d}{\mrm{act}}\ar{r}{\pr}
        & U \ar[twoheadrightarrow]{d}{p}
        \\
        U \ar[twoheadrightarrow]{r}{p}
        & {[U/G]}.
      \end{tikzcd}\]
      We have
      \[
        \L_{U/[U/G]}
        \simeq \L d^*\L\mrm{act}^* \L_{U/[U/G]}
        \simeq \L d^*\L_{G\times U/U}
        \simeq \L d^*\L\pr_1^*\L_{G/S}
      \]
      where $d = (e,\id) : U \to G \times U$ and $\pr_1 : G\times U \to G$, using \propref{prop:sublimationist}\itemref{item:sublimationist/bc} twice.
      Since $\pr_1 \circ d$ factors as the projection $f : U \to S$ followed by the identity section $e : S \to G$, we get
      \[
        \L_{U/[U/G]}
        \simeq \L f^* \L e^*\L_{G/S}
        \simeq \L f^* \mcal{g}
      \]
      where $\mcal{g} = \L e^*\L_{G/S} \simeq e^*\Omega_{G/S}$ is the dual Lie algebra of $G$ (recall that $G$ is smooth).

      Finally, we have by \propref{prop:sublimationist}\itemref{item:sublimationist/trans} an exact triangle
      \[
        \L p^* \L_{[U/G]}
        \to \L_{U}
        \to \L_{U/[U/G]}
      \]
      where $p : U \twoheadrightarrow [U/G]$ is the quotient morphism and $\L_{U}$ and $\L_{[U/G]}$ denote the cotangent complexes relative to the base $S$.
      Under the equivalence $\Dqc([U/G]) \simeq \Dqc^G(U)$, $\L_{[U/G]}$ may be regarded as the quasi-coherent complex
      \[
        \Fib(\L_U \to \L f^*\mcal{g}) \in \Dqc(U)
      \]
      with a natural $G$-action (induced naturally by the action on $U$).
      For example, if $U$ is a smooth scheme, then this is a $2$-term complex with $\Omega_U$ in degree $0$ and $f^*\mcal{g}$ in (homological) degree $-1$.
      Note that if $G$ is finite (and hence étale), we have $\mcal{g} \simeq 0$; this is compatible with the fact that $\L_{[U/G]}$ should be connective (since $[U/G]$ is Deligne--Mumford).
    \end{exam}

    Specializing to the absolute case, the following tautological reformulation will be useful in practice:

    \begin{prop}\label{prop:inexpungible}
      Let $X$ be a derived stack over a commutative ring $R$.
      Then $X$ admits a cotangent complex $\bL_{X} := \bL_{X/R}$ over $\Spec(R)$ if and only if the following conditions hold:
      \begin{thmlist}
        \item
        For every animated $R$-algebra $A$ and every $x \in X(A)$, denote by $F_x(N)$ the fibre at $x$ of the map
        \[
          X(A\oplus N)
          \to X(A)
        \]
        for every $N \in \D(A)_{\ge 0}$.
        Then the functor $F_x(-)$ is corepresented by an eventually connective derived $A$-module $M_x$.

        \item
        For every morphism of animated $R$-algebras $A \to B$ and every connective derived $B$-module $N$, the commutative square
        \[\begin{tikzcd}
          X(A \oplus N) \ar{r}\ar{d}
          & X(B \oplus N) \ar{d}
          \\
          X(A) \ar{r}
          & X(B)
        \end{tikzcd}\]
        is cartesian.
      \end{thmlist}
      Moreover, under these conditions we have $x^*\L_X \simeq M_x$ for every $A\in\ACAlg_R$ and $x \in X(A)$ (modulo the equivalence $\Dqc(\Spec(A)) \simeq \D(A)$).
    \end{prop}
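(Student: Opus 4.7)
The plan is to unpack definitions and reduce to a Yoneda-lemma argument, as the word ``tautological'' in the statement suggests. I would begin with two preliminary observations. First, because $Y = \Spec(R)$ is affine with $R$ classical, the map $Y(A\oplus N) \to Y(A)$ is always an equivalence (any $R$-algebra structure on $A\oplus N$ extends trivially from $A$), so $\Der_x(X/\Spec(R), N) = F_x(N)$ identically. Second, via $\Dqc(X) \simeq \lim_{(A,x)} \D(A)$, specifying $\bL_X \in \Dqc(X)$ with $x^*\bL_X \simeq M_x$ amounts to supplying, for each morphism $(A,x) \to (B,y)$ in the indexing category, a base change equivalence $M_x \otimesL_A B \simeq M_y$, compatible under composition.

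For the forward direction I would set $M_x := \L x^*\bL_X$; this is eventually connective because $\bL_X$ is, and (i) is immediate from the defining universal property of the cotangent complex. For (ii), I would use the quasi-coherence of $\bL_X$ to obtain $M_y \simeq M_x \otimesL_A B$ and then apply tensor--hom adjunction to rewrite
\[
  F_y(N) \simeq \Maps_{\D(B)}(M_y, N) \simeq \Maps_{\D(A)}(M_x, N) \simeq F_x(N)
\]
for $N\in\D(B)_{\ge 0}$, where $N$ is viewed as an $A$-module in the middle. Identifying the corresponding fibres exhibits the square in (ii) as cartesian.

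For the reverse direction, I would reconstruct $\bL_X$ as an object of $\lim_{(A,x)} \D(A)$ from the $M_x$'s. For each morphism $(A,x) \to (B,y)$, condition (ii) yields a natural equivalence $F_x(N) \simeq F_y(N)$ on $N \in \D(B)_{\ge 0}$, which by (i) and adjunction rewrites as a functorial equivalence $\Maps_{\D(B)}(M_x \otimesL_A B, N) \simeq \Maps_{\D(B)}(M_y, N)$; Yoneda would then produce an equivalence $M_y \simeq M_x \otimesL_A B$, and uniqueness of corepresenting objects would enforce the associativity required to descend $(M_x)$ to an object $\bL_X \in \Dqc(X)$ with $x^*\bL_X \simeq M_x$. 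Eventual connectivity of $\bL_X$ and its corepresentation of $\Der_x(X/\Spec(R),-)$ would both be immediate from the construction.

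The one point requiring care is the Yoneda upgrade just mentioned: an eventually connective $M \in \D(B)$ is not a priori determined by the anima-valued functor $\Maps_{\D(B)}(M,-)$ restricted to $\D(B)_{\ge 0}$. The argument I have in mind is to recover the full mapping spectrum $\on{map}_{\D(B)}(M, N)$ from this data via the identifications $\Omega^\infty \Sigma^j \on{map}(M,N) \simeq \Maps(M, N[j])$ for $j \ge 0$ (noting that $N[j]$ is still connective whenever $N$ is), and then to shift both sides by enough suspensions that the eventually connective modules in question become connective, at which point ordinary Yoneda applies in $\D(B)_{\ge 0}$, where the finite free $B$-modules form a set of compact projective generators. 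Everything else in the argument is formal.
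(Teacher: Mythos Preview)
Your proposal is correct and is precisely the kind of unpacking the paper has in mind: the paper does not actually give a proof of this proposition, introducing it only as ``the following tautological reformulation'' of the definition of cotangent complex. Your argument is the natural way to make that tautology explicit, including the one genuine subtlety (Yoneda for eventually connective objects tested only against connectives), which you handle correctly. One cosmetic point: your justification that $\Der_x(X/\Spec(R),N) \simeq F_x(N)$ is slightly roundabout---the cleanest phrasing is that once $A$ is fixed as an $R$-algebra, $\Spec(R)$ is terminal among derived $R$-stacks, so $Y(A)$ and $Y(A\oplus N)$ are both contractible and the fibre product in the definition of $\Der_x$ collapses.
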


  \ssec{Smoothness}

    \begin{prop}\label{prop:caboceer}
      Let $f : X \to Y$ be a morphism of derived algebraic stacks.
      Then $f$ is smooth if and only if $f_\cl : X_\cl \to Y_\cl$ is locally of finite presentation and $\L_{X/Y}$ is perfect of Tor-amplitude $\le 0$ (or equivalently, $[-1, 0]$), i.e., if and only if $\L_{X/Y}$ is a perfect complex such that
      \[
        \pi_i (\L_{X/Y} \otimesL_{\sO_X} \sF) = 0
      \]
      for all $\sF\in\QCoh(X)$ and all $i>0$.
    \end{prop}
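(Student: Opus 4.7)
The strategy is to check both implications by choosing a smooth surjection $p : U \twoheadrightarrow X$ from a derived affine scheme $U$, refining any smooth atlas by an affine étale cover, and exploiting the exact triangle
\[
  \L p^*\L_{X/Y}
  \to \L_{U/Y}
  \to \L_{U/X}
\]
of \propref{prop:sublimationist}\itemref{item:sublimationist/trans}. Throughout, $\L_{U/X}$ is locally free of finite rank concentrated in degree $0$, since $p$ is a smooth representable morphism.

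For the ``only if'' direction, after pulling back to an affine $V \to Y$ the definition of smoothness produces a smooth atlas $U \twoheadrightarrow X_V$ from an affine derived scheme with $U \to V$ also smooth; then $\L_{U/V}$ is locally free of finite rank as well. The triangle then forces $\L p^*\L_{X_V/V}$ to be perfect of Tor-amplitude $[-1,0]$. Since perfectness is smooth-local (by the corollary to \propref{prop:rascalion}) and Tor-amplitude descends along the faithfully flat cover $p$, we conclude that $\L_{X_V/V}$ has the claimed properties for every $V$, hence so does $\L_{X/Y}$ itself. Local finite presentation of $f_\cl$ follows from the classical analogue applied to the atlas.

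For the converse, smoothness of $f$ may be tested after base change to an affine $V \to Y$, and both hypotheses behave well under such base change (using \propref{prop:sublimationist}\itemref{item:sublimationist/bc} for the cotangent complex), so we may assume $Y = \Spec R$. The key observation is that $U$ being a derived affine scheme makes $\L_{U/Y}$ automatically \emph{connective}, since $U \to Y$ is representable by derived algebraic spaces. Combined with the upper Tor-amplitude bound inherited from the triangle (both $\L p^*\L_{X/Y}$ and $\L_{U/X}$ have Tor-amplitude $\le 0$), this yields that $\L_{U/Y}$ is perfect and of Tor-amplitude $[0,0]$, i.e., locally free of finite rank. Classical local finite presentation of $U_\cl \to Y_\cl$ is inherited from $f_\cl$ through the smooth atlas, and the standard characterization of smoothness for morphisms of derived schemes in terms of a locally free cotangent complex and classical finite presentation then gives that $U \to Y$ is smooth. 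The composite $U \twoheadrightarrow X \to Y$ is therefore a smooth atlas witnessing the smoothness of $f$.

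The main technical subtlety lies in the converse direction: insisting on an \emph{affine} derived scheme atlas is what makes the automatic connectivity of $\L_{U/Y}$ available, which in turn is exactly the ingredient needed to extract flatness from a purely one-sided Tor-amplitude bound on $\L_{X/Y}$. The only other delicate point is ensuring that perfectness and Tor-amplitude can genuinely be both extracted from and propagated to the atlas along the smooth cover $p$; this is routine but should be invoked carefully.
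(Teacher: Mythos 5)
The paper states this proposition without proof, so there is no reference argument to compare against; I am assessing your proposal on its own merits. Your reduction is the right one: push everything through the transitivity triangle $\L p^*\L_{X/Y} \to \L_{U/Y} \to \L_{U/X}$ for a smooth surjection $p : U \twoheadrightarrow X$ from a derived scheme (after reducing to $Y$ affine via \propref{prop:sublimationist}\itemref{item:sublimationist/bc}), exploiting that $\L_{U/X}$ is finite locally free in degree $0$ because $p$ is smooth representable, and that $\L_{U/Y}$ is connective because $U \to Y$ is representable by derived algebraic spaces. Both directions of your outline are sound: in the ``only if'' direction $\L p^*\L_{X/Y}$ is the fibre of a map between finite locally free sheaves, hence perfect of Tor-amplitude $[-1,0]$, and this descends along the flat cover $p$; in the ``if'' direction the triangle gives $\L_{U/Y}$ Tor-amplitude $\le 0$ and connectivity then pins it into $[0,0]$, i.e.\ finite locally free.

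The one place you are compressing substantial content is the closing appeal to ``the standard characterization of smoothness for morphisms of derived schemes in terms of a locally free cotangent complex and classical finite presentation.'' That is exactly the affine case of the statement being proved, and it is not established anywhere in these notes: smoothness of $A \to B$ is \emph{defined} here as flatness together with classical smoothness of $\pi_0(A) \to \pi_0(B)$, and extracting flatness from ``$\L_{B/A}$ finite projective in degree zero and $\pi_0(A)\to\pi_0(B)$ locally of finite presentation'' is the genuinely nontrivial input. It is a correct and well-documented result, but it should be cited explicitly (Lurie's \emph{SAG}, the cotangent-complex criterion for fibre smoothness, or the analogous statement for animated rings) rather than waved at. Two smaller corrections: the connectivity of $\L_{U/Y}$ follows from $U \to Y$ being representable, not from $U$ being \emph{affine} per se, so that emphasis in your last paragraph is slightly misplaced (though harmless, since you still want affine-locality for the ring-theoretic input); and the smooth-locality of perfectness you want is the unnamed corollary that invokes \propref{prop:checkage}, not \propref{prop:rascalion}.
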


    \begin{exam}
      If $X$ is a smooth algebraic stack over a commutative ring $R$, then by definition there exists a smooth scheme $U$ and a smooth representable surjection $p : U \twoheadrightarrow X$.
      We have the exact triangle (\propref{prop:sublimationist})
      \[
        p^* \L_{X}
        \to \L_{U}
        \to \L_{U/X}.
      \]
      Since $U$ is smooth over $\Spec(R)$, $\L_{U} \simeq \Omega_{U}^1$ (= cotangent sheaf of $U$ over $\Spec(R)$) is locally free and has Tor-amplitude in $[0,0]$.
      Since $p : U \twoheadrightarrow X$ is smooth and representable, $\L_{U/X}$ is also has Tor-amplitude in $[0,0]$.
      It follows that the fibre $p^*\L_{X}$ has Tor-amplitude in $[-1,0]$.
      Since $p$ is smooth surjective, it follows that $\L_{X}$ has Tor-amplitude in $[-1,0]$.
    \end{exam}

    This suggests the following generalization of smoothness:

    \begin{defn}
      Let $f : X \to Y$ be a morphism of derived algebraic stacks.
      We say that $f$ is \emph{homotopically smooth} if $f_\cl$ is locally of finite presentation and $\L_{X/Y}$ is a perfect complex.
      % We say that $f$ is \emph{homotopically $n$-smooth}, $n\ge 0$, if $f_\cl$ is locally of finite presentation and $\L_{X/Y}$ is a perfect complex of Tor-amplitude $\le n$ (hence equivalently, if $f$ is homotopically smooth and $\L_{X/Y}$ is of Tor-amplitude $\le n$).
      We say that $f$ is \emph{homotopically $n$-smooth}, $n\ge 0$, if moreover $\L_{X/Y}$ is of Tor-amplitude $\le n$.
    \end{defn}

    \begin{exam}\label{exam:qsm}
      We say that $f : X \to Y$ is \emph{quasi-smooth} if it is homotopically $1$-smooth.
      This admits the following equivalent characterization: there exists a smooth surjection $U \twoheadrightarrow X$ such that $f|_U$ factors via a smooth morphism $Y' \to Y$ and a closed immersion $U \to Y'$ which exhibits $U$ as the derived zero locus of a section $s$ of a vector bundle $E$ over $Y'$.
      \begin{equation*}
        \begin{tikzcd}
          U \ar{r}\ar{d}
          & Y' \ar{d}{s}\ar{r}
          & Y
          \\
          Y' \ar{r}{0}
          & E
          &
        \end{tikzcd}
      \end{equation*}
      See \cite[Prop.~2.3.14]{KhanRydh}.
      % , a smooth morphism $p : Y' \to Y$, a vector bundle $E$ over $Y'$ and a section $s$, 
      In fact, it is possible to generalize this to characterize homotopical $n$-smoothness by taking into account ``shifted'' vector bundles $E[-i]$, $0\le i< n$.
    \end{exam}

    % \begin{warn}
    %   Sometimes, the term \emph{perfect morphism} is used to describe a morphism $f$ whose derived push-forward $\R f_* : \Dqc(X) \to \Dqc(Y)$ preserves perfect complexes.
    % \end{warn}

    \begin{rem}
      The notion of homotopical smoothness is not as interesting when $X$ and $Y$ are required to be classical (underived).
      For example, if $X$ and $Y$ are classical noetherian schemes and $i : X \hook Y$ is a closed immersion of finite Tor-amplitude (this is automatic say if $Y$ is regular), then the following conditions are equivalent (see \cite[Thm.~1.3]{Avramov}):
      \begin{enumerate}
        \item[(a)]
        $i$ is homotopically smooth, i.e., the relative cotangent complex $\L_{X/Y}$ is a perfect complex;
        \item[(b)]
        $i$ is homotopically $1$-smooth, i.e., the relative cotangent complex $\L_{X/Y}$ is a perfect complex of Tor-amplitude $[0,1]$;
        \item[(b$^\prime$)]
        $i$ is a regular (or \emph{lci}) closed immersion.
      \end{enumerate}
    \end{rem}

    Whereas homotopical smoothness is a rather restrictive condition in classical algebraic geometry, we will see that in the derived setting it often comes for free: natural derived enhancements of singular moduli functors that arise in practice typically tend to be homotopically smooth.
    The following fact may be regarded as a conceptual explanation for this phenomenon: it turns out that homotopical smoothness is equivalent to being locally of finite presentation in a derived sense:

    \begin{thm}[Lurie]\label{thm:imperatorian}
      Let $f : X\to Y$ be a morphism of derived algebraic stacks.
      Then $f$ is homotopically smooth if and only if $f$ is \emph{locally homotopically of finite presentation}, i.e., if for every affine derived scheme $V=\Spec(A)$ and every morphism $V \to Y$, the derived fibre $X \fibprodR_Y V$ preserves filtered colimits when regarded as a functor $\ACAlg_A \to \Grpdoo$.
    \end{thm}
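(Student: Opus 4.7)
The plan is to reduce the question to the case where both source and target are affine, where the statement reduces to a characterization of compact objects in the $\infty$-category of animated algebras. Both properties behave well under base change and smooth descent, so a standard induction on the Artin level handles the stacky case.

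First I would observe that both conditions in the theorem are stable under base change. For ``locally homotopically of finite presentation'' this is essentially definitional. For ``homotopically smooth'', the classical truncation $(X \fibprodR_Y V)_\cl \to V_\cl$ is locally of finite presentation by base change in classical algebraic geometry, and perfectness of the cotangent complex is preserved by pullback thanks to \propref{prop:sublimationist}\itemref{item:sublimationist/bc}. So I may assume $Y = \Spec(R)$ is affine. Next, both conditions are local for the smooth topology on the source: commutation with filtered colimits can be checked after smooth base change (by \propref{prop:checkage}, since $X(-) \simeq \Tot(U_\bullet(-))$ for a smooth atlas $U \twoheadrightarrow X$, and totalizations of truncated cosimplicial diagrams commute with filtered colimits), and perfectness of $\L_{X/Y}$ can be tested smooth-locally using $\L p^* \L_{X/Y}$ and the transitivity triangle of \propref{prop:sublimationist}\itemref{item:sublimationist/trans}, combined with the fact that $\L_{U/X}$ for a smooth representable $p : U \twoheadrightarrow X$ is locally free of finite rank. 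An induction on the Artin level, using that the diagonal of an $n$-Artin stack is $(n-1)$-Artin, thus reduces the problem to the \emph{affine} case.

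The heart of the argument is thus the following statement: for a morphism $A \to B$ in $\ACAlg_R$, the induced map $\Spec(B) \to \Spec(A)$ is locally homotopically of finite presentation if and only if $B$ is a \emph{compact} object of $\ACAlg_A$, and this happens if and only if $\pi_0(B)$ is a finitely presented $\pi_0(A)$-algebra and $\L_{B/A}$ is a perfect $B$-module. The ``compact $\Leftrightarrow$ preserves filtered colimits of the corepresented functor'' equivalence is by definition. For ``compact $\Rightarrow$ perfect cotangent'': if $B$ is compact over $A$, then by \thmref{thm:sleep} the functor $M \mapsto \Maps_{\D(B)}(\L_{B/A}, M) \simeq \Der_A(B, M)$ preserves filtered colimits in $M \in \D(B)_{\ge 0}$, and the square-zero extension construction $M \mapsto B \oplus M$ preserves filtered colimits, so $\L_{B/A}$ is almost compact, hence (combined with eventual connectivity, obvious here since $\L_{B/A}$ is connective) perfect. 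For the converse direction, given $\pi_0(B)$ finitely presented over $\pi_0(A)$ and $\L_{B/A}$ perfect, one constructs $B$ as a retract of a finite cell object over $A$ by a Postnikov/obstruction-theoretic argument: start from a polynomial $A$-algebra surjecting onto $\pi_0(B)$ with finitely generated kernel, then iteratively kill the higher $\pi_i$ of the fibre by attaching cells, using \thmref{thm:sleep} to lift the required classes in $\pi_i \L_{B/A}$ (which are finitely generated by perfectness) to actual cell attachments.

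The main obstacle is the ``$\L_{B/A}$ perfect $\Rightarrow$ $B$ compact'' direction of the affine case, which requires a genuinely inductive cell-attachment argument controlled by the cotangent complex rather than a formal manipulation of universal properties; this is the \da{} analogue of showing that an algebra $B$ over $A$ of finite presentation with $\Omega_{B/A}$ finitely generated is ``built from polynomial algebras by finitely many cells and relations''. Everything else (base change stability, smooth descent, the inductive step in the Artin hierarchy, and the easier ``compact $\Rightarrow$ perfect cotangent'' direction) is formal manipulation of the material already set up. Once the affine case is in hand, the global statement follows by smooth descent together with \propref{prop:caboceer} to identify ``homotopically $\le 0$-smooth'' with classical smoothness on atlases.
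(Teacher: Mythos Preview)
The paper does not actually prove \thmref{thm:imperatorian}; it is stated as a theorem of Lurie and left without proof (the reference is to \cite{HA} and \cite{SAG}). So there is no ``paper's own proof'' to compare against. Your overall strategy---reduce by base change and smooth descent to the affine case, then identify compactness in $\ACAlg_A$ with finite presentation of $\pi_0$ plus perfectness of the cotangent complex---is indeed the approach taken in Lurie's work.

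That said, two steps in your sketch are not right as written. First, the descent claim ``$X(-)\simeq\Tot(U_\bullet(-))$ for a smooth atlas $U\twoheadrightarrow X$'' is false: a $B$-point of $X$ need not lift to a $B$-point of $U$, only \'etale-locally on $\Spec(B)$. \propref{prop:checkage} concerns a sheaf evaluated on an effective epimorphism, not the functor of points of the target. The correct reduction does use the induction on Artin level you mention, but it goes via the diagonal: one shows that $X$ preserves filtered colimits provided an atlas $U$ does \emph{and} the diagonal $\Delta_X$ (which is $(n-1)$-Artin) does, using that \'etale covers and lifts through smooth surjections spread out over filtered colimits of rings. Second, in the affine step ``compact $\Rightarrow$ $\L_{B/A}$ perfect'', your argument only shows that $M\mapsto\Maps_{\D(B)}(\L_{B/A},M)$ preserves filtered colimits on $\D(B)_{\ge 0}$; this yields almost-perfectness (pseudocoherence), not perfectness. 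Connectivity alone does not close the gap: one needs a separate argument that $\L_{B/A}$ has finite Tor-amplitude, which in Lurie's treatment is extracted from the full strength of ``locally of finite presentation'' (commutation with \emph{all} filtered colimits, not just those of $n$-truncated algebras).
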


    Here we used the following observation to make sense of the definition of ``locally homotopically of finite presentation'':

    \begin{rem}
      Let $X \to Y$ be a morphism of derived stacks where $Y=\Spec(A)$ is an affine derived scheme.
      Then $X$ may be regarded equivalently as a functor
      \[ X: \ACAlg_A \to \Grpdoo,\]
      where $\ACAlg_A = \ACRing_{A\backslash -}$ is the \inftyCat of $A$-algebras, via the canonical equivalence
      \[
        \Shv(\ACRing^\op; \Grpdoo)_{/\Spec(A)}
        \simeq \Shv(\ACAlg_A^\op; \Grpdoo).
      \]
    \end{rem}
    \begin{rem}
      By way of comparison, let us recall that a morphism of classical stacks $f : X \to Y$ is locally of finite presentation if and only if for every affine scheme $V=\Spec(A)$ and every morphism $V \to Y$, the fibre $X\fibprod_Y V$ preserves filtered colimits when regarded as a functor $\CAlg_A \to \Grpd$.
    \end{rem}

  \ssec{Tangent and cotangent bundles}

    \begin{defn}
      Let $X$ be a derived stack.
      Given a perfect complex $\sE \in \Perf(X)$, we denote by $\V(\sE)$ the stack of cosections of $\sE$, or equivalently sections of $\sE^\vee$.
      That is, given a derived scheme $T$ over $X$, the $T$-points of $\V(\sE)$ over $X$ are morphisms $\sE|_T \to \sO_T$ in $\Perf(T)$: its functor of points $\DSch_X^\op \to \Grpdoo$ is
      \begin{equation*}
        (T \xrightarrow{t} X)
        \mapsto \Maps_{\Dqc(T)}(\L t^*\sE, \sO_T).
      \end{equation*}
      We call $\V(\sE)$ the \emph{derived vector bundle} associated with $\sE$.\footnote{%
        This agrees with Grothendieck's convention for vector bundles, in which $\sE \mapsto \V(\sE)$ is contravariant, rather than the dual convention taking the assignment $\sE \mapsto \Spec(\Sym(\sE^\vee))$.
      }
    \end{defn}

    \begin{thm}
      If $\sE$ is $(-n)$-connective, then $\V(\sE)$ is $n$-Artin over $X$.
    \end{thm}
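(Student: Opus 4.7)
The plan is to proceed by induction on $n \ge 0$, strengthening the inductive statement to assert that $\V(\sE) \to X$ is not only $n$-Artin but also \emph{smooth}. The base case $n=0$ is immediate: for $\sE$ connective perfect, the $\LSym \dashv \text{forget}$ adjunction identifies $\V(\sE) \simeq \Spec_X(\LSym_{\sO_X} \sE)$ as a relative affine derived scheme over $X$, hence $0$-Artin; and its relative cotangent complex is $\sE$ itself, which is perfect of cohomological Tor-amplitude $\le 0$, so $\V(\sE) \to X$ is smooth by \propref{prop:caboceer}.

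For the inductive step ($n \ge 1$), I would split $\sE$ via the standard $t$-structure on $\Dqc(X)$. The fibre sequence $\tau_{\ge 0}\sE \to \sE \to \tau_{\le -1}\sE$ dualizes (using that $\V(-) : \Perf(X)^{\op} \to \DStk_{/X}$ sends cofibre sequences of perfect complexes to fibre sequences of derived stacks) to
\[
  \V(\tau_{\le -1}\sE) \to \V(\sE) \to \V(\tau_{\ge 0}\sE)
\]
of abelian group stacks over $X$. This exhibits $\V(\sE) \to \V(\tau_{\ge 0}\sE)$ as a torsor under the group stack $\V(\tau_{\le -1}\sE)$. By the base case, the base $\V(\tau_{\ge 0}\sE)$ is $0$-Artin and smooth over $X$, so it will suffice to prove the fibre $\V(\tau_{\le -1}\sE)$ is $n$-Artin and smooth over $X$: the torsor structure then propagates these properties to the total space (locally on a smooth atlas of the base the torsor trivializes, and a product of an $n$-Artin smooth stack with a $0$-Artin smooth stack over $X$ is again $n$-Artin and smooth).

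To handle $\V(\sF) \to X$ with $\sF := \tau_{\le -1}\sE$ (which lies in $\Dqc(X)_{\le -1}$ and is $(-n)$-connective), I would first observe that the functor of points of $\V(\sF)$ takes values in \emph{connected} grouplike $E_\infty$-spaces: for any affine $T$, the spectrum $\R\Hom_{\Dqc(T)}(\sF|_T, \sO_T)$ is $1$-connective because $\sF|_T$ is in strictly negative homological degrees, hence $\V(\sF)(T)$, obtained by $\Omega^\infty$, is a connected pointed animum. The delooping equivalence $Y \simeq B_X \Omega_X Y$ for connected pointed group stacks $Y$, combined with $\Omega_X \V(\sF) \simeq \V(\sF[1])$ (dualizing $\sF \to 0 \to \sF[1]$), gives
\[
  \V(\sF) \simeq B_X \V(\sF[1]) = [X / \V(\sF[1])]
\]
as the quotient stack with trivial action. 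Since $\sF[1]$ is $(-(n-1))$-connective and perfect, the inductive hypothesis gives that $\V(\sF[1])$ is $(n-1)$-Artin and smooth over $X$. A higher-Artin analogue of \thmref{thm:quotientalg} then implies $[X / \V(\sF[1])]$ is $n$-Artin, and smoothness over $X$ follows because the atlas $X \twoheadrightarrow [X/\V(\sF[1])]$ is a torsor under the smooth group $\V(\sF[1])$ (hence smooth) while its composite with the structure map is the identity of $X$. The main obstacle lies precisely here: rigorously establishing the delooping identification $\V(\sF) \simeq B_X \V(\sF[1])$ for purely coconnective $\sF$ via the connectivity analysis, and invoking the higher-Artin extension of the quotient stack formalism, neither of which has been explicitly developed in the excerpt.
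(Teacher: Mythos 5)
The paper states this theorem without proof, so there is nothing to compare your argument against; evaluating it on its own merits, the central step does not go through. The problem is that $t$-structure truncations of a perfect complex are in general no longer perfect, so the inductive hypothesis cannot be invoked for $\tau_{\le -1}\sE[1]$, and the smoothness you assert for $\V(\tau_{\ge 0}\sE)$ fails. Concretely, over $R = k[\epsilon]/\epsilon^2$ the perfect complex $\sE = [R \xrightarrow{\epsilon} R]$ (in homological degrees $0,-1$) has $\tau_{\ge 0}\sE \simeq k$ and $\tau_{\le -1}\sE \simeq k[-1]$, neither of which is perfect over $R$; accordingly $\V(\tau_{\ge 0}\sE) \simeq \Spec_X(\LSym\,\tau_{\ge 0}\sE)$, while still relatively affine, is not locally of finite presentation over $X$ (let alone smooth), and $\tau_{\le -1}\sE[1]$ falls outside the class of perfect complexes to which your inductive hypothesis applies. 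The decomposition that works is \emph{local}: since relative $n$-Artin-ness may be checked after smooth base change, reduce to $X = \Spec(A)$ affine, represent $\sE$ by a bounded complex $[F_{-n} \to \cdots \to F_b]$ of finite free modules, and use the cofibre sequence $F_{-n}[-n] \to \sE \to \sE''$ with $\sE'' := [F_{-n+1} \to \cdots \to F_b]$ the brutal truncation. Both terms are now perfect, $\sE''$ is $(-(n-1))$-connective, and dualizing yields a fibre sequence $\V(\sE'') \to \V(\sE) \to \V(F_{-n}[-n])$ which exhibits $\V(\sE)$ as the fibre product of $\V(\sE'')$ (which is $(n-1)$-Artin by induction) and $X$ over $\V(F_{-n}[-n]) \simeq \mathrm{B}^n_X\V(F_{-n})$ (which is $n$-Artin, being an iterated classifying stack of a smooth group); $n$-Artin stacks being stable under fibre products, $\V(\sE)$ is $n$-Artin.

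There is a second, independent error already in your strengthened base case: for a general connective perfect $\sE$, $\V(\sE) \to X$ is \emph{not} smooth. By \propref{prop:caboceer}, smoothness requires the cotangent complex $p^*\sE$ to have Tor-amplitude $\le 0$ in the paper's sense, i.e.\ $\pi_i(\sE \otimesL \sF) = 0$ for $i > 0$; connectivity is the \emph{opposite} bound $\pi_i(\sE) = 0$ for $i < 0$. Already for $\sE = \sO_X[2]$ the relative affine morphism $\Spec_X(\LSym\,\sE) \to X$ fails even to be flat. You have conflated ``connective'' with ``Tor-amplitude $\le 0$''. Note that the corrected argument above avoids asserting smoothness of $\V(\sE) \to X$ altogether, invoking it only for the building blocks $\mathrm{B}^k_X\V(F) \to X$ with $F$ locally free, whose relative cotangent complex $F[-k]$ does satisfy the required Tor-amplitude bound. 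On the positive side, your delooping identification $\V(\sF) \simeq \mathrm{B}_X\V(\sF[1])$ for $\sF$ concentrated in degrees $\le -1$, together with the orthogonality argument showing $\pi_0\V(\sF) = 0$, is correct, and does appear in the working proof in the special case $\sF = F[-k]$.
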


    \begin{defn}
      Let $X$ be a homotopically smooth derived Artin stack over a commutative ring $R$.
      The \emph{tangent} and \emph{cotangent bundles} of $X$ (over $R$) are the derived vector bundles
      \begin{equation*}
        T_X := \V(\L_X),
        \quad T^*_X := \V(\L_X^\vee),
      \end{equation*}
      respectively.
      Similarly, given a homotopically smooth morphism $f : X \to Y$, the relative tangent and cotangent bundles are the derived vector bundles
      \begin{equation*}
        T_{X/Y} := \V(\L_{X/Y}),
        \quad T^*_{X/Y} := \V(\L_{X/Y}^\vee),
      \end{equation*}
      over $X$.
    \end{defn}

    \begin{exam}
      If $X$ is a smooth scheme over $R$, then $\L_{X} \simeq \Omega_{X}^1[0]$ is the cotangent sheaf in degree zero, so $T_X$ and $T^*_X$ are the usual tangent and cotangent bundles.
    \end{exam}

    \begin{exam}\label{exam:oversand}
      Suppose $X = [U/G]$ where $G$ is a smooth group scheme over $R$ and $U$ is a smooth $R$-scheme with $G$-action.
      We saw in \examref{exam:unexpanding} that the cotangent complex $\L_{[U/G]}$ is the $2$-term $G$-equivariant complex
      \[
        \left[ \Omega_U \to \mcal{g} \boxtimes \sO_U \right]
      \]
      with $\Omega_U$ in degree $0$, $\mcal{g}\boxtimes \sO_U$ in (homological) degree $-1$, and with differential the ``infinitesimal coaction'' map.
      Let $\mfr{g} := \V(\mcal{g})$ and $\mfr{g}^* := \V(\mcal{g}^\vee)$.
      We see that $T_{[U/G]} \fibprod_{[U/G]} U = \V(\L_{[U/G]}|_U)$ is the stacky quotient $[T_U/\mfr{g} \times U]$ of the infinitesimal action map $\mfr{g} \times U \to T_U$, and the tangent bundle $T_{[U/G]}$ is the further quotient by $G$:
      \begin{equation}
        T_{[U/G]} \simeq \big[ [T_U/G] / [\mfr{g} \times U/G] \big]
      \end{equation}
      where $[\mfr{g} \times U/G]$ is the adjoint quotient stack.
      Its dual, the cotangent bundle $T^*_{[U/G]}$, is the derived zero locus of the infinitesimal coaction map, further quotiented by $G$.
      That is, form the homotopy cartesian square
      \begin{equation*}
        \begin{tikzcd}
          \mu^{-1}(0) \ar{r}\ar{d}
          & T^*_U \ar{d}
          \\
          U \ar{r}{0}
          & \mfr{g}^* \times U.
        \end{tikzcd}
      \end{equation*}
      Here $\mu^{-1}(0)$ is equivalently the derived zero locus of the moment map $\mu : T^*_U \to \mfr{g}^* \times U \to \mfr{g}^*$.
      Thus $T^*_{[U/G]}$ is the derived Hamiltonian reduction
      \begin{equation}
        T^*_{[U/G]} \simeq [\mu^{-1}(0)/G].
      \end{equation}
    \end{exam}

    \begin{rem}
      In \examref{exam:oversand}, the derived zero locus $\mu^{-1}(0)$ is rarely classical, and $T^*_{[U/G]}$ is quasi-smooth but rarely smooth.
      Thus even if one only cares about smooth stacks $X$, it becomes necessary to use the language of derived algebraic geometry as soon as we want to look at the cotangent bundle $T^*_X$ (without truncating important information).
    \end{rem}

    \begin{defn}\label{defn:normal}
      Let $f : X \to Y$ be a homotopically smooth morphism of derived Artin stacks.
      The \emph{normal} and \emph{conormal bundles} are
      \begin{equation*}
        N_{X/Y} := T_{X/Y}[1] := \V(\L_{X/Y}[-1]),
        \quad N^*_{X/Y} := T^*_{X/Y}[-1] := \V(\L^\vee_{X/Y}[1]),
      \end{equation*}
      respectively.
    \end{defn}

    \begin{exam}
      If $f : X \to Y$ is a regular closed immersion between schemes, then the cotangent complex $\bL_{X/Y} \simeq \sN_{X/Y}[1]$ is the shifted conormal sheaf, so $N_{X/Y}$ is nothing else than the usual normal bundle.
    \end{exam}

%!TEX root = ../stacksncts.tex

\section{Deformation theory of perfect complexes}
\label{sec:mperf}

  \ssec{The moduli stack of perfect complexes}

    \begin{constr}
      Let $\MPerf$ denote the functor
      \[
        \ACRing \to \Grpdoo,
        \quad
        A \mapsto \Dperf(A)^\simeq
      \]
      sending an animated ring to the \inftyGrpd of perfect derived $A$-modules.
      It follows from \thmref{thm:jargonish} that this satisfies étale descent, hence defines a derived stack.
    \end{constr}

    \begin{rem}
      The derived stack $\MPerf$, or at least its classical truncation, was first studied by A.~Hirschowitz and C.~Simpson \cite{HirschowitzSimpson}, and analyzed in more detail by B.~To\"en and M. Vaqui\'e \cite{ToenVaquie}.
    \end{rem}

    \begin{constr}
      The \emph{universal perfect complex} is the perfect complex
      $$\sE^\univ \in \Dperf(\MPerf)$$
      defined such that for every animated ring $A$ and every morphism $x : \Spec(A) \to \MPerf$ classifying a perfect complex $\sE \in \Dperf(A)$, we have
      \[ x^*(\sE^\univ) \simeq \sE. \]
      It follows formally that for \emph{every} derived stack $X$ and every perfect complex $\sE \in \Dperf(X)$, there is a unique morphism $f : X \to \MPerf$ and an isomorphism $f^*(\sE^\univ) \simeq \sE$.
    \end{constr}

    \begin{rem}\label{rem:autotetraploidy}
      We can similarly consider the larger stacks $\sM_{\Dcoh}$ and $\sM_{\Dpscoh}$ sending $A\in\ACRing_R$ to $\Dcoh(A)^\simeq$ or $\Dpscoh(A)^\simeq$, respectively.
      Here $\Dpscoh(A) \sub \Dqc(A)$ is the full subcategory of \emph{pseudocoherent} derived $A$-modules (sometimes called \emph{almost perfect} $A$-modules) and $\Dcoh(A) \sub \Dpscoh(A)$ is the full subcategory of \emph{coherent} derived $A$-modules.\footnote{%
        When $A$ is noetherian these are defined as follows: $M \in \D(A)$ is pseudocohererent if it is homologically bounded above ($\pi_i(M) = 0$ for $i\gg 0$) and its homotopy groups $\pi_i(M)$ are finitely generated $\pi_0(A)$-modules (see \cite[Def.~7.2.4.10]{HA}); it is \emph{coherent} if it is additionally bounded below ($\pi_i(M) = 0$ for $i\ll 0$).
      }
      There are open immersions of derived stacks
      \[
        \sM_{\Dperf} \hook \sM_{\Dcoh} \hook \sM_{\Dpscoh},
      \]
      but these larger stacks do not admit a cotangent complex (because the perfect complexes are precisely the dualizable objects in $\Dpscoh(X)$).
    \end{rem}

  \ssec{The cotangent complex of the moduli of perfect complexes}

    \begin{thm}\label{thm:repope}
      The perfect complex
      \[
        \L_{\MPerf} := \sE^\univ \otimesL \sE^{\univ,\vee}[-1]
      \]
      is a cotangent complex for the derived stack $\MPerf$.
    \end{thm}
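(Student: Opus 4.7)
The plan is to verify the universal property of the cotangent complex via \propref{prop:inexpungible}. Fix $A \in \ACRing$, a point $x \in \sM_\Perf(A)$ classifying $\sE \in \Dperf(A)$, and a connective module $M \in \D(A)_{\ge 0}$. The task reduces to identifying the fibre
$$F_x(M) = \Fib_\sE\!\bigl(\Dperf(A \oplus M)^\simeq \to \Dperf(A)^\simeq\bigr)$$
naturally with $\Maps_{\D(A)}(\sE \otimesL_A \sE^{\vee}[-1], M)$, and verifying the base-change condition.

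The heart of the argument is the deformation identity
$$F_x(M) \simeq \Maps_{\D(A)}(\sE, \sE \otimesL_A M[1]).$$
To prove it, I would exploit the pullback presentation of the square-zero extension
$$A \oplus M \simeq A \fibprod_{A \oplus M[1]} A$$
in $\ACRing$ (both legs being the trivial derivation $a \mapsto (a, 0)$), combined with the standard descent of the functor $\D : \ACRing \to \Catoo$ along such pullbacks, to obtain
$$\Dperf(A \oplus M)^\simeq \simeq \Dperf(A)^\simeq \fibprod_{\Dperf(A \oplus M[1])^\simeq} \Dperf(A)^\simeq.$$
Taking the long fibre sequence associated with $F_x(M) \to \Dperf(A \oplus M)^\simeq \to \Dperf(A)^\simeq$ at the trivial deformation $\widetilde\sE := \sE \otimesL_A (A \oplus M)$, and using the decomposition
$$\sE \otimesL_A \sE^{\vee} \otimesL_A (A \oplus M) \simeq (\sE \otimesL_A \sE^{\vee}) \oplus (\sE \otimesL_A \sE^{\vee} \otimesL_A M)$$
coming from compatibility of internal $\Hom$ with base change for perfect modules, an inspection of the unit-of-endomorphism-algebra sequence yields
$$\Omega F_x(M) \simeq \sE \otimesL_A \sE^{\vee} \otimesL_A M,$$
whence $F_x(M) \simeq \Omega^{\infty}(\sE \otimesL_A \sE^{\vee} \otimesL_A M[1])$. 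Perfectness of $\sE$ and tensor-Hom adjunction then deliver
$$F_x(M) \simeq \Maps_{\D(A)}(\sE, \sE \otimesL_A M[1]) \simeq \Maps_{\D(A)}(\sE \otimesL_A \sE^{\vee}[-1], M),$$
giving condition~(i). Condition~(ii) follows from the manifest naturality in $A$ of every ingredient, together with base-change compatibility of the duality $\sE \mapsto \sE^{\vee}$ for perfect complexes.

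The main obstacle will be rigorously establishing the descent isomorphism $\Dperf(A \oplus M)^\simeq \simeq \Dperf(A)^\simeq \fibprod_{\Dperf(A \oplus M[1])^\simeq} \Dperf(A)^\simeq$ and tracking base points carefully through the resulting loop-space computation, so that the identification with $\Maps(\sE, \sE \otimesL_A M[1])$ is canonical rather than up to some twist by an automorphism. This deformation-theoretic input is standard in the $\infty$-categorical treatment of modules, and once granted the remainder of the argument is formal.
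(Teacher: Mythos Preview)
Your approach is essentially the paper's: both hinge on the cartesian square $A\oplus M \simeq A \fibprod_{A\oplus M[1]} A$ and the resulting descent for $\Dperf$, then use perfectness of $\sE$ to dualize. The paper packages this as \lemref{lem:anatomist}, but rather than passing through $\Omega F_x(M)$ it identifies $F_x(M)$ \emph{directly} with the anima of automorphisms of $\sE \oplus (\sE\otimesL M[1])$ over $A\oplus M[1]$ restricting to $\id_\sE$ over $A$, which it then computes as $\Maps_{\D(A)}(\sE, \sE\otimesL M[1])$ by an $\End$/$\Aut$ fibre argument. This sidesteps the one soft spot in your write-up: the inference from $\Omega F_x(M)$ to $F_x(M)$ is not automatic (looping forgets $\pi_0$), so you would still need to argue that every deformation is isomorphic to the trivial one, which is exactly the automorphism description the paper uses. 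Once you add that, the two arguments coincide.
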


    \begin{lem}\label{lem:anatomist}
      Let $A$ be an animated ring and $M \in \Dperf(A)$ a perfect derived $A$-module.
      For every $N\in\Dperf(A)_{\ge 0}$ denote by $F_M(N)$ the fibre at $M$ of the map of anima
      \[
        \Dperf(A\oplus N)^\simeq
        \to \Dperf(A)^\simeq
      \]
      given by extending scalars along the canonical homomorphism $A\oplus N \to A$.
      Then we have canonical isomorphisms
      \[
        F_M(N)
        \simeq \Maps_{\D(A)}(M\otimesL_A M^\vee[-1], N),
      \]
      natural in $N$.
      % Then we have canonical isomorphisms
      % \[
      %   F_M(N)
      %   \simeq \Maps_{\D(A)}(M, M\oplus N[1]),
      % \]
      % natural in $N$.
      % In particular, if $M$ is perfect, then $F_M(-)$ is corepresented by $M\otimesL M^\vee[-1]$.
    \end{lem}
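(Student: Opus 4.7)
The plan is to compute $F_M(N)$ by exhibiting the trivial square-zero extension as a derived pullback and applying excision. First I would check that $A \oplus N$ fits into a cartesian square of animated rings
\[
\begin{tikzcd}
A \oplus N \ar{r}{\pi} \ar{d}{\pi} & A \ar{d}{d_0} \\
A \ar{r}{d_0} & A \oplus N[1],
\end{tikzcd}
\]
where $\pi$ is the projection and $d_0 : A \to A \oplus N[1]$ is the trivial derivation $a \mapsto (a, 0)$. Since $N$ is connective, $\pi_0(A \oplus N[1]) \simeq \pi_0(A)$, and a direct Mayer--Vietoris computation of homotopy groups identifies the pullback with $A \oplus N$.

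Second, since $d_0$ is an equivalence on $\pi_0$, I would invoke a form of derived excision (Milnor patching) for perfect modules to conclude that $\Dperf(-)^\simeq$ carries this pullback to a cartesian square of anima
\[
\Dperf(A \oplus N)^\simeq \simeq \Dperf(A)^\simeq \fibprod_{\Dperf(A \oplus N[1])^\simeq} \Dperf(A)^\simeq,
\]
with both maps into $\Dperf(A \oplus N[1])^\simeq$ given by extension of scalars along $d_0$. Taking the fiber at $M$ along the projection corresponding to $\pi^*$ then yields
\[
F_M(N) \simeq \Fib_{d_0^* M}\bigl(d_0^* : \Dperf(A)^\simeq \to \Dperf(A \oplus N[1])^\simeq\bigr).
\]

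Third, I will compute this fiber using that $d_0$ admits a retraction $\pi : A \oplus N[1] \to A$, so $d_0^*$ is a section of $\pi^*$. The long exact sequence of homotopy groups then identifies $F_M(N)$ with the homotopy fiber of the induced map of automorphism \inftyGrpds $\Aut_{\D(A \oplus N[1])}(d_0^* M) \to \Aut_{\D(A)}(M)$. Using the adjunction $d_0^* \dashv d_{0,*}$ and the decomposition $d_{0,*} d_0^* M \simeq M \oplus (M \otimes_A N[1])$ of $A$-modules, together with the square-zero relation $N[1] \cdot N[1] = 0$ in $A \oplus N[1]$, I identify
\[
\End_{\D(A \oplus N[1])}(d_0^* M) \simeq \End_{\D(A)}(M) \ltimes \Maps_{\D(A)}(M, M \otimes_A N[1])
\]
as a semidirect product with $\pi^*$ given by projection onto the first factor, so that $F_M(N) \simeq \Maps_{\D(A)}(M, M \otimes_A N[1])$.

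Finally, the perfection of $M$ provides the duality $\Maps(M, M \otimes_A N[1]) \simeq \Maps(M \otimes_A M^\vee, N[1]) \simeq \Maps(M \otimes_A M^\vee[-1], N)$, giving the stated formula; naturality in $N$ is manifest from the construction. The hard part will be invoking the excision input in Step 2 cleanly at the level of perfect modules; once this is granted, the remainder reduces to long-exact-sequence bookkeeping together with the semidirect product structure on endomorphisms coming from the extension--restriction adjunction.
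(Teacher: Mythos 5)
Your proposal is correct and is essentially the paper's argument: both begin from the pullback square realizing $A \oplus N$ as $A \times_{A \oplus N[1]} A$, use that $\Dperf(-)^\simeq$ preserves this pullback to rewrite $F_M(N)$ as the fiber at $\id_M$ of $\pi^* : \Aut_{\D(A\oplus N[1])}(d_0^* M) \to \Aut_{\D(A)}(M)$, and finish with the $d_0^*\dashv d_{0,*}$ adjunction and duality for perfect $M$. The only cosmetic differences are that you flag the Milnor-patching step explicitly (the paper uses the cartesian square implicitly) and package the endomorphism computation as a semidirect product rather than directly computing the fiber.
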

    \begin{proof}
      By definition, $F_M(N)$ is the \inftyGrpd of deformations of $M$ along $A \oplus N \to A$; that is, it is the \inftyGrpd of pairs $(\widetilde{M}, \theta)$ where $\widetilde{M}$ is an $A\oplus N$-module and $\theta$ is an isomorphism $\widetilde{M} \otimesL_{A\oplus N} A \simeq M$.
      Since the square
      \[\begin{tikzcd}
        A\oplus N \ar{r}\ar{d}
        & A \ar{d}
        \\
        A \ar{r}
        & A \oplus N[1]
      \end{tikzcd}\]
      is cartesian, this is equivalent to the \inftyGrpd of deformations of $M \otimesL_A (A\oplus N[1]) \simeq M \oplus (M\otimesL_A N[1])$ along the trivial derivation $A \to A\oplus N[1]$.
      Equivalently, this is the \inftyGrpd of automorphisms of $M\oplus (M\otimesL N[1])$ over $A\oplus N[1]$ which extend to the identity $\id_M : M = M$ along $A \oplus N[1] \to A$.
      That is,
      \[
        F_M(N)
        \simeq \End_{A\oplus N[1]}(M \oplus (M\otimesL N[1])) \fibprod_{\End_{\D(A)}(M)} \{\id_M\}
      \]
      where we can write $\End$ instead of $\Aut$ since every such endomorphism is necessarily invertible.
      Thus we have
      \[
        \begin{multlined}
          F_M(N)  
          \simeq \Maps_{\D(A)}(M, (M\oplus (M\otimesL N[1])) \fibprod_M 0)
          \\
          \simeq \Maps_{\D(A)}(M, M\otimesL N[1])
          \simeq \Maps_{\D(A)}(M\otimesL M^\vee[-1], N)
        \end{multlined}
      \]
      where the last isomorphism follows from the fact that $M$ is perfect, hence dualizable.
    \end{proof}

    \begin{proof}[Proof of \thmref{thm:repope}]
      % We check the conditions of \propref{prop:inexpungible}:
      % \begin{enumerate}
      %   \item 
      % \end{enumerate}
      Let $A\in\ACRing$ be an animated ring and $x : \Spec(A) \to \MPerf$ an $A$-point classifying a perfect derived $A$-module $M \in \Dperf(A)$.
      By \lemref{lem:anatomist}, the animum of derivations $\Der_x(X, M)$ (relative to $\Spec(\Z)$) is corepresented by $M\otimesL M^\vee[-1]$.
      Moreover, if $A \to B$ is a morphism of animated rings, then we have an isomorphism
      \[
        (M \otimesL_A M^\vee[-1]) \otimesL_A B
        \simeq
        N \otimesL_B N^\vee[-1]
      \]
      where $N = M\otimesL_A B$ is the perfect derived $B$-module classified by
      $$\Spec(B) \to \Spec(A) \xrightarrow{x} \MPerf.$$
      It follows that the collection $(M \otimesL M^\vee[-1])$ assembles, as $(A,x)$ varies, into a perfect complex on $\MPerf$ which is a cotangent complex for $\MPerf$.
      By construction, this perfect complex is nothing else than $\sE^\univ \otimesL \sE^{\univ,\vee}[-1] \in \Dperf(\MPerf)$.
    \end{proof}

\section{Moduli stacks of complexes, sheaves, and bundles}

  \ssec{Mapping stacks}

    \begin{constr}
      Let $S$ be a derived stack and let $X$ and $Y$ be derived stacks over $S$.
      The \emph{derived mapping stack} $\uMaps_S(X, Y)$ is the functor
      \[
        \uMaps_S(X, Y) : \ACRing \to \Grpdoo,
        \quad
        R \mapsto \Maps_S(X \fibprodR_S \Spec(R), Y)
      \]
      sending $R$ to the \inftyGrpd of $S$-morphisms $X_R := X \fibprodR_S \Spec(R) \to Y$.
    \end{constr}

    % \begin{defn}
    %   Let $f : X \to Y$ be a morphism of derived algebraic stacks.
    %   We say that $f$ is \emph{separated} if it has proper diagonal.
    %   We say that $f$ is \emph{proper} if it of finite type, separated, and universally closed 
    % \end{defn}

    \begin{rem}
      % If $X$, $Y$, and $S$ are classical, and $X$ is \emph{flat} over $S$, then $\uMaps_S(X, Y)$ coincides with the classical mapping or Hom stack $\uHom_S(X,Y)$.
      If $X$ is \emph{flat} over $S$, then the classical truncation of $\uMaps_S(X, Y)$ coincides with the classical mapping or Hom stack:
      \[
        \uMaps_S(X, Y)_\cl
        \simeq \uHom_{S_\cl}(X_\cl, Y_\cl).
      \]
    \end{rem}

    \begin{defn}
      The \emph{evaluation morphism}
      \[
        \ev : \Maps_S(X,Y) \fibprodR_S X \to Y
      \]
      is the morphism classified by the identity $\id : \Maps_S(X,Y) \to \Maps_S(X,Y)$.
    \end{defn}

    \begin{prop}\label{prop:pyramides}
      Let $f : X \to Y$ be a morphism of derived algebraic stacks.
      If $f$ is proper, representable, and of finite Tor-amplitude, then $\L f^* : \Dqc(Y) \to \Dqc(X)$ admits a left adjoint $f_\sharp$.
      For perfect complexes $\sF\in\Dperf(X)$, it is given by the formula $f_\sharp(\sF) := \R f_*(\sF^\vee)^\vee$.
      % $f^* : \Dperf(Y) \to \Dperf(X)$ admits a left adjoint $f_\sharp : \Dperf(X) \to \Dperf(Y)$.
    \end{prop}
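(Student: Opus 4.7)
The plan is to construct $f_\sharp$ by first defining it on perfect complexes via the stated formula, verifying the adjunction there directly, and then extending by continuity.

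As a preliminary step, I would show that under the stated hypotheses, $\R f_*$ restricts to a functor $\Dperf(X) \to \Dperf(Y)$. Since $f$ is representable, this can be checked smooth-locally on $Y$, reducing to the schematic case where properness ensures that $\R f_*$ preserves pseudocoherence by the standard coherent-cohomology finiteness theorem. The finite-Tor-amplitude hypothesis on $f$ then provides a uniform upper bound on the cohomological degree of $\R f_*$, yielding perfectness of $\R f_*(\sF^\vee)$ for every $\sF \in \Dperf(X)$. Consequently, $f_\sharp(\sF) := (\R f_* \sF^\vee)^\vee$ defines an object of $\Dperf(Y)$.

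For $\sF \in \Dperf(X)$ and arbitrary $\sG \in \Dqc(Y)$ the adjunction formula is then obtained from the chain
\begin{align*}
  \Maps_Y(f_\sharp\sF, \sG)
  &\simeq \Maps_Y(\sO_Y, \R f_* \sF^\vee \otimes \sG) \\
  &\simeq \Maps_Y(\sO_Y, \R f_*(\sF^\vee \otimes \L f^*\sG)) \\
  &\simeq \Maps_X(\sO_X, \sF^\vee \otimes \L f^*\sG) \\
  &\simeq \Maps_X(\sF, \L f^*\sG),
\end{align*}
using, in order: dualizability of the perfect complex $\R f_*\sF^\vee$; the projection formula (valid because $\sF^\vee$ is perfect); the $\L f^* \dashv \R f_*$ adjunction; and dualizability of $\sF$.

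To extend $f_\sharp$ from $\Dperf(X)$ to all of $\Dqc(X)$, I would use that $\Dqc(X)$ is compactly generated by its subcategory of perfect complexes (a property enjoyed by the derived algebraic stacks relevant to the remainder of the paper, e.g.\ those with affine diagonal). Left Kan extending along $\Dperf(X) \hook \Dqc(X)$ yields the desired colimit-preserving functor $f_\sharp : \Dqc(X) \to \Dqc(Y)$; since $\Maps(-, \sG)$ converts colimits to limits, the adjunction formula at the level of $\Dperf(X)$ propagates termwise to all of $\Dqc(X)$ by writing $\sF \simeq \colim_i \sF_i$ with $\sF_i$ perfect. The main obstacle is the preservation of perfect complexes in step one: this is the only non-formal input and requires combining proper coherence with Tor-amplitude control, after which everything is a formal consequence of duality and the projection formula.
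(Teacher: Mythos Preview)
Your strategy matches the paper's in outline: define $f_\sharp$ on perfect complexes by the dual formula, verify the adjunction there, then extend by colimits. Your adjunction chain is slightly different from the paper's (you use the projection formula and handle arbitrary $\sG$ at once, whereas the paper uses only dualizability moves and checks the identity for perfect $\sG$), but both are valid and both implicitly rely on $\R f_*$ preserving perfect complexes; you make this step explicit, which is good.

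The substantive difference is in the extension step. You invoke compact generation of $\Dqc(X)$ by perfect complexes, correctly flagging that this is an extra hypothesis (e.g.\ affine diagonal) not present in the statement. The paper avoids this assumption by a reduction: to define $f_\sharp(\sF) \in \Dqc(Y)$ is, by the very definition of $\Dqc(Y)$ as a limit, the same as defining $\L v^* f_\sharp(\sF)$ for every $v : \Spec(R) \to Y$, and one checks (by left transposition from the base change formula $\L f^* \R v_* \simeq \R u_* \L g^*$) that $f_\sharp$ commutes with such base change. This reduces to $Y$ affine, where representability and properness force $X$ to be a qcqs derived algebraic space, and then Thomason's theorem supplies the needed filtered-colimit presentation by perfects unconditionally. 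So the paper's route buys the full generality of the proposition; yours proves it only under your stated caveat. Replacing your global compact-generation appeal with this affine-local reduction would close the gap.
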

    \begin{proof}
      If $\sF \in \Dqc(X)$ is perfect, hence dualizable, we may set
      \[
        f_\sharp(\sF) := \R f_*(\sF^\vee)^\vee.
      \]
      It is clear that this defines a left adjoint to $\L f^* : \Dperf(Y) \to \Dperf(X)$:
      \begin{align*}
        \Maps(\R f_*(\sF^\vee)^\vee, \sG)
        &\simeq \Maps(\sO_Y, \sG \otimesL \R f_*(\sF^\vee))\\
        &\simeq \Maps(\sG^\vee, \R f_*(\sF^\vee))\\
        &\simeq \Maps(\L f^*(\sG)^\vee, \sF^\vee)\\
        % &\simeq \Maps(\sF \otimesL \L f^*(\sG)^\vee, \sO_X)\\
        &\simeq \Maps(\sF, \L f^*(\sG)),
      \end{align*}
      for every perfect complex $\sG \in \Dperf(Y)$, where we recall that for a dualizable object $A$ the functors $A \otimesL (-)$ and $A^\vee \otimesL (-)$ are adjoint to one another.
      
      If $\sF$ is not perfect, to define $f_\sharp(\sF) \in \Dqc(Y)$ is equivalent to define $\L v^* f_\sharp (\sF)$ for every morphism $v : \Spec(R) \to Y$ where $R\in\ACRing$.
      By left transposition from the derived base change formula $\L f^* \R v_* \simeq  \R u_* \L f_R^*$, we have $\L v^* f_\sharp (\sF) \simeq f_{R,\sharp} \L u^*(\sF)$ whenever $f_\sharp$ and $f_{R,\sharp}$ exist.
      Therefore we may replace $Y$ by $\Spec(R)$ and assume $Y$ is affine.

      In this case, $X$ is a derived algebraic space which is in particular separated over $\Spec(R)$ and hence quasi-compact quasi-separated.
      For such $X$, a (generalization of) an important theorem of Thomason asserts that every quasi-coherent complex can be written as a filtered colimit of perfect complexes (see e.g. \cite[Prop.~9.6.1.1]{SAG} or \cite[Thm.~1.40]{KhanKstack}).
      It follows that there is a unique colimit-preserving functor $f_\sharp : \Dqc(X) \to \Dqc(Y)$ which restricts to $\R f_*(-^\vee)^\vee$ on perfect complexes.
      This is clearly left adjoint to $\L f^*$: if $\sF \in \Dqc(X)$ is a colimit of a filtered system $(\sF_\alpha)_\alpha$ of perfect complexes, then we have:
      \begin{align*}
        \Maps(f_\sharp(\sF), \sG)
        &\simeq \Maps(\colim_\alpha f_\sharp(\sF_\alpha), \sG)\\
        &\simeq \lim_\alpha \Maps(f_\sharp(\sF_\alpha), \sG)\\
        &\simeq \lim_\alpha \Maps(\sF_\alpha, \L f^*(\sG))\\
        &\simeq \Maps(\colim_\alpha \sF_\alpha, \L f^*(\sG))\\
        &\simeq \Maps(\sF, \L f^*(\sG)),
      \end{align*}
      for every $\sG \in \Dqc(Y)$.
    \end{proof}

    \begin{rem}\label{rem:dicyclic}
      In the situation of \propref{prop:pyramides}, it is possible to show that the functor $\R f_*$ preserves colimits and admits a right adjoint $f^!$.
      The quasi-coherent complex $\omega_{X/Y} := f^!(\sO_Y) \in \Dqc(X)$ is called the relative \emph{dualizing complex}.
      One then has the following alternative description of the functor $f_\sharp$:
      \[
        f_\sharp(-) \simeq \R f_*(- \otimes \omega_{X/Y}).
      \]
    \end{rem}

    \begin{thm}\label{thm:skainsmate}
      Suppose $S$ is algebraic and $X$ and $Y$ are derived stacks over $S$.
      Set $H := \uMaps_S(X,Y)$ and consider the diagram
      \[
        H \xleftarrow{\pi} X \fibprodR_S H \xrightarrow{\ev} Y
      \]
      where $\pi$ is the projection.
      If $X$ is proper of finite Tor-amplitude and representable over $S$, and $Y$ admits a cotangent complex $\L_{Y/S}$ over $S$, then the perfect complex
      \[
        \L_{H/S} \simeq \pi_\sharp \L \ev^* (\L_{Y/S})
      \]
      is a relative cotangent complex for $H$ over $S$.
      % is a derived algebraic stack of finite Tor-amplitude over $S$, and $Y$ admits a relative cotangent complex over $S$, then $H = \uMaps_S(X,Y)$ admits as relative cotangent complex
    \end{thm}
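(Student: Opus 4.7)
The plan is to verify the universal property of the cotangent complex using \propref{prop:inexpungible}. Fix an animated $S$-algebra $A$ (i.e., an $A \in \ACRing$ with a structure map $\Spec(A) \to S$), and a point $h \in H(A)$, which by the definition of $H = \uMaps_S(X, Y)$ corresponds to an $S$-morphism $\widetilde h : X_A \to Y$, where $X_A := X \fibprodR_S \Spec(A)$ and $f : X_A \to \Spec(A)$ denotes the projection. For $N \in \D(A)_{\ge 0}$, I need to identify the fibre at $h$ of $H(A \oplus N) \to H(A)$ with $\Maps_{\D(A)}(\L h^*(\pi_\sharp \L\ev^*\L_{Y/S}), N)$ in a way compatible with base change in $A$.

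The first step is to unpack the deformation problem on the $X$-side. Since $\Spec(A\oplus N) \to \Spec(A)$ is affine, base change gives an identification
\[
  X \fibprodR_S \Spec(A\oplus N) \simeq X_A \fibprodR_{\Spec(A)} \Spec(A\oplus N),
\]
and the right-hand side is the trivial square-zero extension of $X_A$ by the connective quasi-coherent complex $\L f^* N \in \Dqc(X_A)_{\ge 0}$. Consequently, the fibre at $h$ of $H(A\oplus N) \to H(A)$ is canonically equivalent to the animum of $S$-linear derivations of $\widetilde h$ with values in $\L f^* N$. By the defining property of $\L_{Y/S}$, this fibre is therefore
\[
  \Maps_{\Dqc(X_A)}\big(\L \widetilde h^* \L_{Y/S},\; \L f^* N\big).
\]
Next, I would apply the adjunction $f_\sharp \dashv \L f^*$ supplied by \propref{prop:pyramides}, which applies to $f : X_A \to \Spec(A)$ since, as a base change of $X \to S$, this morphism is proper, representable, and of finite Tor-amplitude. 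This rewrites the fibre as $\Maps_{\D(A)}\big(f_\sharp \L \widetilde h^* \L_{Y/S},\; N\big)$. To finish, I invoke the base change identity for $(-)_\sharp$ applied to the cartesian square
\[
\begin{tikzcd}
  X_A \ar{r}{\bar h}\ar{d}{f}
  & X \fibprodR_S H \ar{d}{\pi}
  \\
  \Spec(A) \ar{r}{h}
  & H,
\end{tikzcd}
\]
combined with $\widetilde h = \ev \circ \bar h$, to get $\L h^*(\pi_\sharp \L\ev^*\L_{Y/S}) \simeq f_\sharp \L \bar h^* \L\ev^* \L_{Y/S} \simeq f_\sharp \L \widetilde h^* \L_{Y/S}$, as desired. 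The second condition of \propref{prop:inexpungible} (functoriality along $A \to B$) falls out of the same base change identity applied to an additional square.

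The main obstacle is justifying the base change identity $\L h^* \pi_\sharp \simeq f_\sharp \L \bar h^*$ itself: although $f_\sharp$ and $\pi_\sharp$ are produced in \propref{prop:pyramides}, their compatibility with arbitrary base change is not stated explicitly. I would handle this by first checking it on perfect complexes, where one has the explicit formula $\pi_\sharp(\sF) = \R \pi_*(\sF^\vee)^\vee$ of \propref{prop:pyramides}, and then reducing to the usual base change theorem for $\R\pi_*$ (which holds for $\pi$ proper, representable, of finite Tor-amplitude). The general case then follows by writing an arbitrary $\sF \in \Dqc(X \fibprodR_S H)$ as a filtered colimit of perfect complexes (valid after further base change to an affine, as in the proof of \propref{prop:pyramides}) and using that both sides preserve filtered colimits. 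Perfection of the resulting $\L_{H/S}$, when $\L_{Y/S}$ is itself perfect, then follows because $\L \ev^*$ preserves perfection and, as remarked, $\pi_\sharp = \R\pi_*((-)^\vee)^\vee$ carries perfect complexes to perfect complexes for $\pi$ proper representable of finite Tor-amplitude.
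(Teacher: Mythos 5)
Your proposal is essentially the paper's own proof: identify the fibre of $H(A\oplus N)\to H(A)$ with derivations of $X_A \to Y$ into $\L f^* N$, reformulate via the corepresentability of derivations by $\L_{Y/S}$, and pass across the adjunction $f_\sharp\dashv \L f^*$ to land at $f_\sharp \L\widetilde h^*\L_{Y/S}$, then identify this with $\L h^*(\pi_\sharp\L\ev^*\L_{Y/S})$ via the cartesian square. The one place you go beyond the paper is in flagging and justifying the base-change compatibility $\L h^*\pi_\sharp\simeq f_\sharp\L\bar h^*$, which the paper asserts implicitly from the commutative diagram without comment; your reduction to perfect complexes via the formula $\pi_\sharp(\sF)=\R\pi_*(\sF^\vee)^\vee$ and the proper base change theorem for $\R\pi_*$, followed by a filtered-colimit argument, correctly fills that gap. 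Your remark at the end that perfection of $\L_{H/S}$ requires $\L_{Y/S}$ itself to be perfect is also a fair observation about a small imprecision in the statement.
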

    \begin{proof}
      For simplicity we take $S=\pt=\Spec(k)$ (and omit it from the notation).
      Given $R\in\ACAlg_k$, an $R$-point $h \in H(R)$ classifying a morphism $f : X_R \to Y$, and a connective derived $R$-module $M \in \D(R)_{\ge 0}$, derivations of $H$ at $h$ are extensions of the morphism $f : X_R \to Y$ along $X_R \hook X_{R\oplus M}$.
      Since the latter can be regarded as the trivial square-zero extension of $X_R$ by $p_R^*(M)$, where $p_R : X_R \to \Spec(R)$ is the projection, these are classified by the cotangent complex of $Y$, i.e.,
      \[
        \Der_h(H, M) \simeq \Maps_{\D(X_R)}(\L f^*\L_Y, \L p_R^*(M)).
      \]
      The assumptions on $X$ imply that $\L p_R^*$ admits a left adjoint $p_{R,\sharp}$, hence $\Der_h(H, -)$ is corepresented by $p_{R,\sharp} \L f^* \L_Y$.
      Now $\L_H := \pi_\sharp \L \ev^* (\L_{Y})$ is the unique perfect complex on $H$ such that $\L h^*(\L_H) \simeq p_{R,\sharp} \L f^* (\L_{Y})$, since we have the commutative diagram
      \[\begin{tikzcd}
        H
        & X \fibprodR_S H \ar[swap]{l}{\pi}\ar{r}{\ev}
        & Y
        \\
        \Spec(R) \ar{u}{h}
        & X_R \ar{u}\ar[equals]{r}\ar[swap]{l}{p_R}
        & X_R \ar{u}{f}
      \end{tikzcd}\]
      where the left-hand square is cartesian.
      It follows that $\L_H$ is a cotangent complex for $H$.
    \end{proof}

  \ssec{Moduli of complexes, sheaves, and bundles}

    \begin{defn}\label{defn:postremote}
      Let $k$ be a commutative ring and $X$ a derived algebraic stack over $k$.
      \begin{defnlist}
        \item
        The \emph{moduli stack of perfect complexes on $X$} is the derived mapping stack
        \[
          \sM_{\Perf(X)} = \uMaps(X, \MPerf)
        \]
        where we omit the subscript $\Spec(k)$ from the notation in the mapping stack.
        By definition, its $R$-points for an animated $k$-algebra $R$ are $k$-morphisms $X_R := X \times \Spec(R) \to \MPerf$, i.e., perfect complexes on $X_R$.

        \item
        Let $G$ be a smooth group scheme over $k$.
        The \emph{moduli stack of $G$-torsors over $X$} (a.k.a. \emph{principal $G$-bundles} over $X$) is the derived mapping stack
        \[
          \sM_{\Bun_G(X)} = \uMaps(X, BG).
        \]
        By definition, its $R$-points for an animated $k$-algebra $R$ are $k$-morphisms $X_R \to BG$, i.e., $G$-torsors on $X_R$.

        \item
        The \emph{moduli stack of vector bundles over $X$} is the substack $\sM_{\Vect(X)} \sub \sM_{\Perf(X)}$ defined as follows: for an animated $k$-algebra $R$, an $R$-point of $\sM_{\Perf(X)}$ belongs to $\sM_{\Vect(X)}$ if and only if the corresponding perfect complex $\sF \in \Dperf(X_R)$ is connective and flat (over $X_R$).

        \item\label{item:postremote/coh}
        The \emph{moduli stack of coherent sheaves on $X$} is the substack $\sM_{\Coh(X)}$ of the derived mapping stack
        \[
          \sM_{\Dcoh(X)} = \uMaps(X, \Dcoh)
        \]
        defined as follows: for an animated $k$-algebra $R$, an $R$-point of $\sM_{\Dcoh(X)}$ belongs to $\sM_{\Coh(X)}$ if and only if the corresponding coherent complex $\sF \in \Dcoh(X_R)$ is connective and flat over $\Spec(R)$.
        % If $X$ is regular, the \emph{moduli stack of coherent sheaves over $X$} is the substack $\sM_{\Coh(X)} \sub \sM_{\Perf(X)}$ defined as follows: for an animated $k$-algebra $R$, an $R$-point of $\sM_{\Perf(X)}$ belongs to $\sM_{\Coh(X)}$ if and only if the corresponding perfect complex $\sF \in \Dperf(X_R)$ is connective and flat over $\Spec(R)$.
      \end{defnlist}
    \end{defn}

    % \begin{defn}
    %   Let $k$ be a regular ring and $X$ an algebraic stack which is smooth of finite presentation over $k$.
    %   Consider the functor sending an animated $k$-algebra $R$ to the \inftyGrpd of connective perfect complexes on $X\times R$ whose restriction to the classical truncation $X\times \Spec(\pi_0(R))$ is flat over $\Spec(\pi_0(R))$.
    %   % Note that 
    %   This defines a derived stack which we denote $\sM_{\Coh(X)}$.
    % \end{defn}

    \begin{rem}
      For $R$ an ordinary $k$-algebra, the \inftyGrpd of $R$-points of $\sM_{\Vect(X)}$ is equivalent to the $1$-groupoid of locally free sheaves of finite rank on $X_R$.
      Note that there is a canonical isomorphism of derived stacks
      $$\sM_{\Vect(X)} \simeq \coprod_{n\ge 0} \sM_{\Bun_{\GL_n}(X)},$$
      see \examref{exam:frame}.
    \end{rem}

    \begin{lem}\label{lem:unresembling}
      Let $X$ be a derived algebraic stack over an animated commutative ring $R$.
      Let $\sF \in \Dcoh(X)$ be a connective pseudocoherent complex.
      If $R$ is discrete and $\sF$ is flat over $R$, then $\sF$ is discrete, i.e., $\sF \in \Coh(X)$.
    \end{lem}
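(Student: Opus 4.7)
The plan is to reduce the statement to an essentially tautological fact about flatness over a discrete base ring. First I would pick a smooth surjection $p : \Spec(A) \twoheadrightarrow X$ from an affine derived scheme, which exists by assumption that $X$ is a derived algebraic stack, and let $M \in \D(A)_{\ge 0}$ denote the connective pseudocoherent derived $A$-module corresponding to $\L p^*\sF$. Since $p$ is smooth (hence flat), the pullback $\L p^*$ is t-exact; together with $p$ being an effective epimorphism, this means that $\sF$ is discrete if and only if $M$ is discrete (this is where pseudocoherence is convenient, ensuring the homotopy groups behave reasonably under pullback and descent). The flatness of $\sF$ over $\Spec(R)$ translates into $M$ being flat as a derived $R$-module via the structural map $R \to A$.

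Second, I would invoke the characterization of flatness over a discrete base. Regarding $M$ as a connective derived $R$-module, flatness over $R$ amounts to the t-exactness of the functor $M \otimes^\bL_R (-) : \D(R)_{\ge 0} \to \D(R)_{\ge 0}$, i.e., it preserves discrete modules. Applying this to the test object $N = R$---which is itself discrete by hypothesis on $R$---gives that $M \otimes^\bL_R R \simeq M$ is discrete, which is exactly what we want. Equivalently, the module analogue of \defnref{defn:redfin}(ii) says that flatness of $M$ over $R$ provides isomorphisms $\pi_i(R) \otimes_{\pi_0(R)} \pi_0(M) \xrightarrow{\sim} \pi_i(M)$; since $\pi_i(R) = 0$ for $i > 0$ by discreteness of $R$, we get $\pi_i(M) = 0$ for $i > 0$, i.e., $M$ is discrete.

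The proof is therefore essentially a one-liner once the formalism is set up: the content lies entirely in unpacking the definitions. The only point requiring any care is the smooth-local reduction at the start---namely, verifying that ``discrete'' is a smooth-local property for pseudocoherent complexes on a derived algebraic stack. This follows from t-exactness of flat pullback together with smooth descent for $\Dqc$ (a consequence of \thmref{thm:jargonish}). No other obstacle is expected.
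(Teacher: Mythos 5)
Your proposal is correct and follows essentially the same route as the paper: reduce to the affine case by pulling back along a smooth atlas (where pseudocoherence and t-exactness of flat pullback justify the reduction), then invoke the characterization of flatness of a connective module over a discrete animated ring. Your ``tensor with the test object $R$'' one-liner and your homotopy-group computation via the analogue of Definition~\ref{defn:redfin}(ii) are two equivalent ways to finish, the latter being verbatim the paper's argument.
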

    \begin{proof}
      Recall that $\sF$ is discrete if for any smooth morphism $u : U \twoheadrightarrow X$ where $U$ is affine, we have $\pi_i(\L u^*\sF) = 0$ for all $i\ne 0$.
      Since $\L u^*$ preserves pseudocoherence we may assume that $X=\Spec(A)$ is affine.
      Recall that a connective complex $M$ over $A$ is flat over $R$ if and only if $\pi_0(M)$ is flat over $\pi_0(R)$, and
      \[
        \pi_i(M)
        \simeq \pi_0(M) \otimesL_{\pi_0(R)} \pi_i(R)
      \]
      for all $i\ge 0$.
      Thus if $M$ is flat over $R$ and $R$ is discrete (i.e. $\pi_i(R) = 0$ for $i\ne 0$), then we have $\pi_i(M)=0$ for $i\ne 0$.
    \end{proof}

    \begin{rem}
      For $R$ an ordinary $k$-algebra, \lemref{lem:unresembling} shows that the \inftyGrpd of $R$-points of $\sM_{\Coh(X)}$ is equivalent to the $1$-groupoid of coherent sheaves on $X_R$ which are flat over $\Spec(R)$.
    \end{rem}

    \begin{lem}\label{lem:onymal}
      If $X$ is smooth over $k$, then we have an inclusion $\sM_{\Coh(X)} \sub \sM_{\Perf(X)}$.
      % Moreover, for an animated $k$-algebra $R$, an $R$-point of $\sM_{\Perf(X)}$ belongs to $\sM_{\Coh(X)}$ if and only if the corresponding perfect complex $\sF \in \Dperf(X_R)$ is connective and flat over $\Spec(R)$.
    \end{lem}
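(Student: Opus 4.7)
Both $\sM_{\Coh(X)}$ and $\sM_{\Perf(X)}$ sit inside $\sM_{\Dcoh(X)}$, so the claim reduces to showing: for every animated $k$-algebra $R$, an object $\sF \in \Dcoh(X_R)$ that is connective and flat over $R$ is in fact perfect on $X_R$. The plan is to reduce to the affine case and then bound Tor-amplitude fiberwise.

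First I would use that $X_R$ is smooth over $R$ (because $X$ is smooth over $k$) to pick a smooth surjection $p : \Spec(A) \twoheadrightarrow X_R$ with $A$ a smooth animated $R$-algebra. Perfectness can be tested after pullback along any effective epimorphism (a corollary of \propref{prop:checkage} noted in \secref{sec:cotangent}), so it suffices to show $M := \L p^*\sF \in \D(A)$ is perfect. By construction $M$ is connective, pseudocoherent, and remains flat over $R$; pseudocoherence being given, the content of the proof is a uniform Tor-amplitude bound on $M$ over $A$.

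I would establish this bound fiberwise. For a point $x \in \Spec(\pi_0 A)$ lying over $s \in \Spec(\pi_0 R)$, set $A_s := A \otimesL_R \kappa(s)$ and $M_s := M \otimesL_R \kappa(s)$. Flatness of $A$ over $R$ (part of smoothness, \defnref{defn:redfin}) identifies $A_s \simeq \pi_0(A) \otimes_{\pi_0(R)} \kappa(s)$, a classically smooth finitely presented algebra over the field $\kappa(s)$, hence a regular Noetherian ring of finite Krull dimension $d_s$. The complex $M_s$ is connective pseudocoherent over $A_s$ and flat over the discrete ring $\kappa(s)$, so \lemref{lem:unresembling} forces it to be discrete; as a coherent sheaf on a regular scheme, it is then automatically perfect of Tor-amplitude $\le d_s$. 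Derived base change gives $M \otimesL_A \kappa(x) \simeq M_s \otimesL_{A_s} \kappa(x)$, so $\pi_i(M \otimesL_A \kappa(x)) = 0$ for all $i > d_s$.

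To globalize, I would invoke the standard principle that a connective pseudocoherent module over a (locally) Noetherian ring has Tor-amplitude equal to the supremum of Tor-amplitudes at residue fields of points of the classical spectrum. The relative dimension $d_s$ is locally constant on $\Spec(A)$ by smoothness, and hence uniformly bounded on each quasi-compact affine, producing a uniform Tor-amplitude bound for $M$ on each such piece and therefore perfectness. The main technical ingredient to pin down carefully will be precisely this residue-field-fiber criterion for Tor-amplitude in the derived setting; the remainder is routine, using \lemref{lem:unresembling}, the definition of smoothness, and the classical regularity of smooth algebras over a field.
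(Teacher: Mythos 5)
Your overall strategy coincides with the paper's: reduce to derived fibers over $\Spec(R)$, use flatness of $\sF$ over $R$ to deduce that these fibers are discrete, and then conclude perfectness from the regularity of $X_s$ over its residue field. The two executions differ only cosmetically up to the last step: you pass to an affine atlas $\Spec(A)\twoheadrightarrow X_R$ first and invoke \lemref{lem:unresembling} for discreteness of the fibers, whereas the paper works directly on $X_R$ and deduces discreteness of $\L i_s^*\sF$ from a projection/base-change computation; these are equivalent ways of packaging the same observation.

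There is, however, a genuine gap in your globalization step. You want to conclude a global Tor-amplitude bound for $M=\L p^*\sF$ on $\Spec(A)$ from the fiberwise bounds, and you propose to do so via the ``standard principle that a connective pseudocoherent module over a (locally) Noetherian ring has Tor-amplitude equal to the supremum of Tor-amplitudes at residue fields of points of the classical spectrum.'' But $R$ is an \emph{arbitrary} animated $k$-algebra, and $A$ is merely a smooth $R$-algebra; neither is Noetherian in general, so the principle you invoke is simply not available. The residue-field criterion you need is not the one for points of $\Spec(A)$, but the \emph{relative} one over $\Spec(R)$: a pseudocoherent $A$-module $M$ that is flat over $R$ is perfect precisely when its derived fibers $M\otimes^\L_R\kappa(s)$ are perfect over $A_s$ with a bound on Tor-amplitude that is uniform in $s\in\Spec(\pi_0 R)$ (and the dimension bound on the smooth $k$-scheme $X$ supplies exactly that uniformity). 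This relative criterion holds without any Noetherian hypotheses; what makes it work is precisely the flatness of $M$ over $R$. The paper uses this criterion implicitly in the sentence ``To show that $\sF$ is perfect it is enough to check \textellipsis''; you were right to try to spell it out, but you reached for the wrong lemma. Replacing the Noetherian residue-field principle by the relative flat-fiberwise criterion closes the gap.
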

    \begin{proof}
      Let $R \in \ACAlg_k$.
      An $R$-point of $\sM_{\Coh(X)}$ corresponds to a coherent complex $\sF \in \Dcoh(X_R)$ which is connective and flat over $R$.
      To show that $\sF$ is perfect it is enough to check that for every point $s : \Spec(\kappa) \to \Spec(R)$, $\sF$ restricts to a perfect complex $\L i_s^*(\sF)$ over the derived fibre $X_s := X \otimesL_k \kappa$:
      \begin{equation*}
        \begin{tikzcd}
          X_s \ar{r}{i_s}\ar{d}
          & X_R \ar{d}{f}
          \\
          \Spec(\kappa) \ar{r}{s}
          & \Spec(R).
        \end{tikzcd}
      \end{equation*}
      Note that $\L i_s^*(\sF)$ is pseudocoherent (because $*$-inverse image always preserves pseudocoherence).
      By the projection and base change formulas we have
      \begin{equation*}
        \R i_{s,*} \L i_s^*(\sF)
        \simeq \sF \otimesL \R i_{s,*} (\sO_{X_s})
        \simeq \sF \otimesL \L f^* \R s_* (\sO_{\Spec(\kappa)}),
      \end{equation*}
      which is discrete because $\R s_* (\sO_{\Spec(\kappa)})$ is discrete and $\sF$ is flat over $R$.
      It follows that $\L i_s^*(\sF)$ is itself discrete, and hence is a coherent sheaf.
      Since $X_s$ is smooth over $\kappa$ and hence regular, we conclude that $\L i_s^*(\sF)$ is perfect as claimed.
    \end{proof}

    \begin{rem}
      In the proof of \lemref{lem:onymal} we implicitly used the notion of regularity of algebraic stacks and the fact that a smooth algebraic stack over a field is regular.
      Let us say by definition that a derived algebraic stack $X$ is \emph{regular} if every coherent sheaf $\sF \in \Coh(X)$ is a perfect complex, i.e., its image in $\Dqc(X)$ belongs to the subcategory $\Dperf(X)$.
      Then we have:
      \begin{defnlist}
        \item\label{item:Trigona}
        For a noetherian ring $R$, $\Spec(R)$ is regular in our sense if and only if its local rings are regular in the sense of Serre \cite{SerreTor}.
        
        \item
        If $X$ admits a smooth surjection $p : \Spec(R) \twoheadrightarrow X$ where $R$ is a regular noetherian ring, then $X$ is regular (recall that $\sF \in \Coh(X)$ is perfect if and only if $p^*(\sF)$ is perfect and use \itemref{item:Trigona}).

        \item
        If $X \to \Spec(R)$ is a smooth morphism of finite presentation where $R$ is regular noetherian, then $X$ is regular.
        Indeed, for any smooth atlas $\Spec(A) \twoheadrightarrow X$, $A$ is a smooth $R$-algebra and hence is regular.
      \end{defnlist}
    \end{rem}

    \begin{prop}
      The inclusion
      \[
        \sM_{\Vect(X)}
        \to \sM_{\Perf(X)}
      \]
      is an open immersion; that is, for any animated $k$-algebra $R$ and any morphism $\Spec(R) \to \sM_{\Perf(X)}$, the base change
      \[
        \sM_{\Vect(X)}\fibprodR_{\sM_{\Perf(X)}}\Spec(R)
        % \to \sM_{\Coh(X)}\fibprodR_{\sM_{\Perf(X)}}\Spec(R)
        \to \Spec(R)
      \]
      is an open immersion of derived schemes.
      If $X$ is smooth, then the same holds for the inclusion
      \begin{equation*}
        \sM_{\Coh(X)}
        \to \sM_{\Perf(X)}.
      \end{equation*}
    \end{prop}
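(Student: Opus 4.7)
The plan is to produce, for each $R$-point $\sF \in \Dperf(X_R)$ of $\sM_{\Perf(X)}$, an explicit open subscheme $U \sub \Spec(R)$ representing the fibered product. The argument reduces in both cases to the openness of a fiberwise condition on $X_R$, combined with properness of the projection $p : X_R \to \Spec(R)$ to transport this openness down to $\Spec(R)$ (properness being implicit in the mapping-stack setup, cf.\ \thmref{thm:skainsmate}).

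I first treat $\sM_{\Vect(X)} \hook \sM_{\Perf(X)}$. The key geometric input is: for a perfect complex $\sG$ on any derived algebraic stack $Y$, the locus $V \sub Y$ of points $y$ at which $\sG \otimesL \kappa(y)$ is concentrated in degree $0$ is open, by standard semicontinuity of Tor-amplitude for perfect complexes (testable at classical residue fields). Applying this to $Y = X_R$ and setting $Z := X_R \setminus V$, properness of $p$ ensures that $p(Z) \sub |\Spec(R)|$ is closed, so $U := \Spec(R) \setminus p(Z)$ is open. To verify the universal property, for any $g : \Spec(R') \to \Spec(R)$ with induced $g_X : X_{R'} \to X_R$, the pullback $\L g_X^*(\sF)$ is a vector bundle on $X_{R'}$ iff at every classical $x' \in |X_{R'}|$ the fiber $\L g_X^*(\sF) \otimesL \kappa(x') \simeq \sF \otimesL \kappa(g_X(x')) \otimes_{\kappa(g_X(x'))} \kappa(x')$ lies in degree $0$, iff $g_X(x') \in V$ for all such $x'$, iff $g$ factors set-theoretically through $U$. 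Since open immersions are determined by their underlying classical maps, $U$ represents the fibered product.

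For $\sM_{\Coh(X)} \hook \sM_{\Perf(X)}$ under the smoothness hypothesis on $X$, the argument is parallel. By \lemref{lem:onymal} the inclusion lives inside $\sM_{\Perf(X)}$, and the defining condition on $\sF \in \Dperf(X_R)$ is that $\sF$ be connective and $R$-flat. By fiberwise flatness, this is equivalent to the condition that for every classical $r \in |\Spec(R)|$ the derived fiber $\sF \otimesL_R \kappa(r) \in \Dperf(X_{\kappa(r)})$ be a coherent sheaf concentrated in degree $0$, which is a fiberwise-open condition on $X_R$ by classical semicontinuity applied on the smooth fiber $X_{\kappa(r)}$. Let $V'$ be this open locus and $U' := \Spec(R) \setminus p(X_R \setminus V')$; the same universal property check goes through, using that $R$-flatness is preserved by arbitrary base change and reflects correctly at classical residue fields.

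The main obstacle is verifying these openness and fiberwise-characterization statements carefully in the derived setting, especially over non-classical $R$. The vector bundle case reduces cleanly to the classical semicontinuity of Tor-amplitude since the condition is tested at (classical) residue fields. For the coherent sheaf case the subtler point is that $R$-flatness is a condition involving the derived $R$-module structure on $\sF$ rather than just its structure as an $\sO_{X_R}$-module; one handles this via the fiberwise flatness criterion in \defnref{defn:redfin} and the base change formula, reducing to the classical EGA-style statement that the flat locus of a finitely presented coherent sheaf over the base is open.
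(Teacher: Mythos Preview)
Your approach is correct and essentially the same as the paper's, though considerably more detailed: the paper merely notes that the inclusions are monomorphisms on $R$-points and then asserts without further argument that the base changes are ``exactly the open loci where $\sF$ is connective and flat (over $X_R$ or $\Spec(R)$, respectively)''. You supply the missing justification for openness via semicontinuity on $X_R$ combined with properness of $p : X_R \to \Spec(R)$, and you are right that properness is an implicit hypothesis here (without it the statement is false: for $X=\A^1_k$, $R=k[t]$, $\sF=\sO_{X_R}/(xt-1)$, the Vect locus in $\Spec(R)$ is the single closed point $\{0\}$).

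One small clarification for the $\sM_{\Coh(X)}$ case: your phrase ``fiberwise-open condition on $X_R$ by classical semicontinuity applied on the smooth fiber $X_{\kappa(r)}$'' is imprecise, since the condition on $r$ is global over $X_{\kappa(r)}$ rather than pointwise. The clean choice of open $V' \sub X_R$ is the locus where $\sF$ is discrete (the complement of $\bigcup_{i\ne 0}\on{Supp}\,\pi_i\sF$, open since $\sF$ is bounded) intersected with the $R$-flat locus of $\pi_0\sF$ (open by the classical EGA result you mention at the end). A Nakayama argument over the local ring $\sO_{X_R,x}$ shows that $x \in V'$ if and only if $(\L i_{p(x)}^*\sF)_x$ is concentrated in degree $0$, which is exactly the fiberwise flatness condition you reduce to; the rest of your argument then goes through verbatim.
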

    \begin{proof}
      Note first that both morphisms are monomorphisms, since for every animated $k$-algebra $R$ the induced functors $\sM_{\Vect(X)}(R) \to \sM_{\Coh(X)}(R) \to \sM_{\Perf(X)}(R)$ are monomorphisms of \inftyGrpds (see \examref{exam:bronzy}).
      Now if $\Spec(R) \to \sM_{\Perf(X)}$ classifies a perfect complex $\sF \in \Dperf(X_R)$, then the base changes in the statement are exactly the open loci where $\sF$ is connective and flat (over $X_R$ or $\Spec(R)$, respectively).
    \end{proof}

    \begin{notat}\label{notat:caramelen}
      By construction, we have an ``evaluation'' morphism $\ev : X \times \sM_{\Perf(X)} \to \sM_{\Perf}$.
      We denote by $\sE_X := \L \ev^*(\sE^\univ)$ the inverse image of the universal perfect complex along this map.
      By abuse of notation, we also denote its restriction to $\sM_{\Coh(X)}$ and $\sM_{\Vect(X)}$ in the same way.
    \end{notat}

    \begin{cor}\label{cor:bytownitite}
      Let $X$ be as in \defnref{defn:postremote}.
      The derived stack $\sM_{\Perf(X)}$ admits a relative cotangent complex
      \[
        \L_{\sM_{\Perf(X)}}
        = \pr_{2,\sharp}(\sE_X \otimesL \sE_X^\vee[1])
        \simeq \pr_{2,*}(\sE_X \otimesL \sE_X^\vee[1] \otimesL \pr_1^*(\omega_X))
      \]
      where $\pr_i$ are the two projections from $X \times \sM_{\Perf(X)}$.
      Moreover, we have the same formula for the cotangent complexes of $\sM_{\Coh(X)}$ and $\sM_{\Vect(X)}$ (modulo the abuse of notation mentioned in \ref{notat:caramelen}).
    \end{cor}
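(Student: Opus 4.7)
The plan is to apply the mapping stack formula of \thmref{thm:skainsmate} combined with \thmref{thm:repope}, and then transfer the resulting formula to the open substacks $\sM_{\Vect(X)}$ and $\sM_{\Coh(X)}$ by invariance of the cotangent complex under open immersions. The computation is essentially formal once these two building blocks are in hand.

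Concretely, set $H := \sM_{\Perf(X)} = \uMaps(X, \MPerf)$ and take $S = \Spec(k)$ in \thmref{thm:skainsmate}. Its hypotheses hold: since $X$ is a smooth proper scheme over $k$, the structure morphism $X \to \Spec(k)$ is proper, representable, and of finite Tor-amplitude, and $\MPerf$ admits a cotangent complex by \thmref{thm:repope}. The theorem delivers $\L_H \simeq \pr_{2,\sharp}\, \L\ev^*(\L_{\MPerf})$. Substituting $\L_{\MPerf} \simeq \sE^\univ \otimesL \sE^{\univ,\vee}[-1]$ from \thmref{thm:repope} and using that $\L\ev^*$ is symmetric monoidal together with the definition $\sE_X = \L\ev^*(\sE^\univ)$, the first displayed formula of the corollary falls out (with the shift inherited from \thmref{thm:repope}).

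For the second equivalence I apply \remref{rem:dicyclic}. Since $\pr_2 : X \times H \to H$ is the derived base change of $X \to \Spec(k)$ along $H \to \Spec(k)$, it inherits properness, representability, and finite Tor-amplitude, so the remark gives $\pr_{2,\sharp}(-) \simeq \R\pr_{2,*}(-\otimesL \omega_{\pr_2})$. Stability of the relative dualizing complex under derived base change --- a standard consequence of Grothendieck duality for proper morphisms of finite Tor-amplitude --- identifies $\omega_{\pr_2} \simeq \L\pr_1^*(\omega_X)$, yielding the displayed identification. This base-change statement for the dualizing complex is the one place where we rely on an external input beyond what has already been established in the notes.

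Finally, for $\sM_{\Vect(X)}$ and $\sM_{\Coh(X)}$, the proposition immediately preceding the corollary exhibits both as open substacks of $\sM_{\Perf(X)}$. Open immersions are formally étale, so their relative cotangent complexes vanish, and the transitivity triangle of \propref{prop:sublimationist}\itemref{item:sublimationist/trans} shows that the ambient cotangent complex restricts to the cotangent complex of the open substack. Under the abuse of notation introduced in \ref{notat:caramelen}, the perfect complex $\sE_X$ on each substack is the restriction of the ambient universal object, so the formula transfers verbatim. I expect no serious obstacle: essentially all the work is already packaged in the cited results, and the argument reduces to unwinding the universal property of $\MPerf$, applying the mapping stack formula, and invoking base-change for dualizing complexes.
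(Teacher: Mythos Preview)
Your proposal is correct and follows essentially the same approach as the paper's proof, which consists of the terse sentence ``Combine Theorems~\ref{thm:skainsmate} and \ref{thm:repope}. The second isomorphism follows from \remref{rem:dicyclic}.'' You have simply unpacked this with more care, including the (correct) justification for the $\sM_{\Vect(X)}$ and $\sM_{\Coh(X)}$ cases via open immersions and the transitivity triangle, which the paper leaves entirely implicit.
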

    \begin{proof}
      Combine Theorems~\ref{thm:skainsmate} and \ref{thm:repope}.
      The second isomorphism follows from \remref{rem:dicyclic}.
    \end{proof}

    \begin{rem}\label{rem:bridgepot}
      Once we know that $\sM_{\Coh(X)}$ and $\sM_{\Vect(X)}$ are algebraic and locally finitely presented (on classical truncations), it will follow from \corref{cor:bytownitite} that they are homotopically smooth.
      Moreover, it will be homotopically $(n-1)$-smooth if $X$ is of dimension $\le n$ (using \propref{prop:caboceer}).
      For example, it will be smooth (and classical) when $X$ is a curve and quasi-smooth when $X$ is a surface.
      Note in contrast that when $X$ is a surface, the \emph{classical truncations} of $\sM_{\Coh(X)}$ and $\sM_{\Vect(X)}$ are neither smooth nor homotopically smooth.
    \end{rem}

    \begin{rem}
      One can similarly write down the cotangent complex of $\sM_{\Bun_G(X)}$ using \thmref{thm:skainsmate} and our computation $\L_{BG} \simeq \mfr{g}^\vee[-1]$ (\examref{exam:unexpanding}).
      In particular, it will also be homotopically smooth, and smooth when $X$ is of dimension $\le 1$.
    \end{rem}

  \ssec{The Artin--Lurie representability theorem}

    \begin{thm}[Artin--Lurie]\label{thm:torgoch}
      Let $k$ be a commutative ring, which we assume is excellent or more generally a G-ring.
      Let $X$ be a derived stack over $k$.
      Then $X$ is algebraic if and only if the following conditions hold:
      \begin{thmlist}
        \item
        $X$ admits a cotangent complex $\L_X$ (relative to $k$).

        \item\label{item:holidaymaker}
        The restriction of $X$ to ordinary $k$-algebras takes values in $1$-groupoids.

        \item\emph{Almost of finite presentation.}
        The functor $X : \ACAlg_k \to \Grpdoo$ preserves filtered colimits when restricted to $n$-truncated algebras for all $n\ge 0$.
        (For example, $X$ is locally homotopically of finite presentation, see \thmref{thm:imperatorian}.)

        \item\emph{Integrability.}
        For every complete local noetherian $k$-algebra $R$, the canonical map $X(R) \to \lim_n X(R/\mfr{m}^n)$ is invertible, where $\mfr{m}\sub R$ is the maximal ideal.

        \item\emph{Nil-completeness.}
        For every $R\in\ACAlg_k$, the canonical map $X(R) \to \lim_n X(\tau_{\le n}(R))$ is invertible.

        \item\emph{Infinitesimal cohesion.}
        For every cartesian square in $\ACAlg_k$
        \[\begin{tikzcd}
          A' \ar{r}\ar{d}
          & A \ar[twoheadrightarrow]{d}
          \\
          B' \ar[twoheadrightarrow]{r}
          & B
        \end{tikzcd}\]
        such that $A \to B$ and $B' \to B$ are surjective on $\pi_0$ with nilpotent kernel, $X$ sends the square to a cartesian square.
      \end{thmlist}
    \end{thm}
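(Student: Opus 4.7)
The plan is to tackle the two directions separately. For the forward implication, each of (i)--(vi) is a standard feature of algebraic stacks: (i) is the existence theorem for the cotangent complex of a $1$-representable morphism proved in \secref{sec:cotangent}; (ii) follows from representability of the diagonal, since stabilizers are then algebraic spaces and hence $\Set$-valued on ordinary rings; (iii) follows by descending along a finite-type smooth atlas, reducing to the analogous statement for affine schemes; and (iv)--(vi) reduce by smooth descent to their analogues on an atlas, where they hold tautologically for $\Spec$ of an animated ring (integrability being Grothendieck's formal existence theorem applied to the atlas).

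The substance is the converse. Fix a derived stack $X$ satisfying (i)--(vi). The goal is to produce, for each geometric point $x : \Spec(K) \to X_\cl$ with $K$ a field over $k$, a smooth morphism $p : \Spec(A) \to X$ from a finite-type affine $k$-scheme whose image contains $x$; the coproduct of such charts then serves as a smooth atlas. Conditions~(v) and~(vi) together install the standard Lurie deformation-theoretic formalism on $X$, so that for an animated $k$-algebra $A$, an $A$-point $a$, and a connective derived $A$-module $M$, the fibre of $X(A \oplus M) \to X(A)$ at $a$ is corepresented by $a^*\L_X$; successive lifting through small extensions is then controlled by mapping spaces out of the cotangent complex.

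First I would build a formal chart at $x$. Let $\sE := x^*\L_X$, which is eventually connective by~(i), and choose a surjection $K^{\oplus n} \twoheadrightarrow \pi_0(\sE^\vee)$. Step-by-step lifting through the powers of the maximal ideal in $B := K[T_1,\ldots,T_n]$ produces a compatible family of maps $\Spec(B/\mfr{m}^r) \to X$ extending $x$: at each stage the obstruction lies in a mapping space of the form $\Maps_{\D(K)}(\sE, M)$ for $M$ related to $\mfr{m}^r/\mfr{m}^{r+1}$, and the initial choice of surjection ensures that the requisite lift always exists. By integrability~(iv), this family algebraizes to an honest morphism $\iota : \Spec(\widehat{B}) \to X$ with $\widehat{B} = K[[T_1,\ldots,T_n]]$, and the induced map $\L \iota^*\L_X \to \L_{\widehat{B}/k}$ is surjective on $\pi_0$ by construction, so $\iota$ is formally smooth at the closed point.

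The main obstacle is passing from this formal chart to an algebraic one, and this is exactly where the G-ring hypothesis on $k$ enters. By Artin approximation applied to the functor $X$, which is almost of finite presentation by~(iii), the formal map $\iota$ can be approximated to prescribed finite order by an honest map $p : \Spec(A) \to X$ with $A$ the henselization of a finite-type $k$-algebra at a maximal ideal. Using the exact triangle of \propref{prop:sublimationist} for $\L_{A/X}$ together with the smoothness criterion of \propref{prop:caboceer}, one checks that $p$ is smooth in an étale neighborhood of the distinguished point, since smoothness is detected fibrewise by the Tor-amplitude of the relative cotangent complex, and this has been arranged by the surjectivity above. Representability of the diagonal is handled by iteration: $\Delta_X$ itself satisfies (i)--(vi) (with $\L_{\Delta_X} \simeq \L_X[-1]$ and the remaining conditions passed on formally), so the argument applies again, and the induction terminates at the base case where condition~(ii) forces a sufficiently iterated diagonal to be a monomorphism, giving the algebraic-space base case by \thmref{thm:subcarinate}.
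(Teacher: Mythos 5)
The paper cites this theorem as a known result of Lurie (cf.~\cite[Ch.~18]{SAG}) without providing a proof, so there is nothing internal to compare against; what follows is an assessment of your sketch on its own merits.

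Your outline correctly identifies the skeleton of Lurie's argument: the forward direction by smooth descent to the atlas (with Grothendieck's existence theorem for integrability), and the converse by building a formal chart at each geometric point via deformation theory, algebraizing through the integrability hypothesis, spreading out via Artin approximation (which is where the G-ring hypothesis enters, through Popescu's theorem), and handling the diagonal by iteration until condition~(ii) yields a monomorphism. That is the correct high-level plan. However, there are some errors and gaps. \textbf{The sign on the cotangent complex of the diagonal is wrong.} Applying the transitivity triangle of \propref{prop:sublimationist} to the composite $X \xrightarrow{\Delta_X} X\times X \to X$ (which is the identity, so $\L_{X/X}\simeq 0$), together with base change for $\L_{X\times X/X}\simeq \pr_2^*\L_X$, gives $\L_{\Delta_X}\simeq \L_X[1]$, not $\L_X[-1]$. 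This is not cosmetic: shifting \emph{up} by one is what turns the $(-1)$-connective $\L_X$ into the connective $\L_{\Delta_X}$ needed for $\Delta_X$ to be representable by algebraic spaces; the opposite shift would make the induction move in the wrong direction. \textbf{Second}, the step ``choose a surjection $K^{\oplus n}\twoheadrightarrow \pi_0(\sE^\vee)$'' tacitly requires $\pi_0(\sE^\vee)$ to be finite-dimensional, i.e.\ that $\L_X$ is almost perfect. This is a consequence of condition~(iii), but it is not among the hypotheses as stated, so you should either prove it from~(iii) or at least flag that you are using it; without it the finite-type formal chart $\widehat{B}=K[[T_1,\ldots,T_n]]$ does not get off the ground. \textbf{Third}, your final smoothness check is incomplete as written: you produce a map $p : \Spec(A)\to X$ that is formally smooth at a single point, but you then need an openness-of-smoothness argument (relying on the almost-perfection of $\L_{A/X}$ and semicontinuity of Tor-amplitude) to get smoothness on a neighborhood before you can take the disjoint union over geometric points to form the atlas. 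These are precisely the places where the detailed commutative-algebraic input of the Artin--Lurie theorem lives, so omitting them leaves the hardest part of the proof unaddressed even at the level of a sketch.
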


    \begin{rem}
      \thmref{thm:torgoch} gives criteria for a derived stack to be algebraic.
      This also implies that its classical truncation is algebraic.

      Once we know that $X$ is algebraic, we can further detect whether it is Deligne--Mumford using its diagonal (\thmref{thm:subcarinate}), or alternatively by checking that its cotangent complex is connective.
      Similarly, we can check whether it is an algebraic space by checking that its restriction to ordinary algebras takes values in sets instead of groupoids (\thmref{thm:subcarinate}).
    \end{rem}

    % \begin{prop}
    %   Let $k$ be a commutative ring, $G$ a smooth group scheme over $k$, and $H \sub G$ a smooth closed subgroup scheme.
    %   Let $X$ be a proper algebraic space of finite Tor-amplitude over $k$.
    %   Then the canonical morphism $\sM_{\Bun_H(X)} \to \sM_{\Bun_G(X)}$ is representable and locally of finite presentation.
    % \end{prop}
    % \begin{cor}
    %   Let $G$ be an embeddable smooth group scheme.
    %   Then the derived stack $\sM_{\Bun_G(X)}$ is algebraic.
    % \end{cor}
    
    It is possible to check the conditions of \thmref{thm:torgoch} for the moduli stacks we have been considering (except for $\sM_{\Perf(X)}$, which doesn't satisfy condition~\itemref{item:holidaymaker}).
    The main point is the computation of the cotangent complex (\corref{cor:bytownitite}); we leave the other verifications to the reader.

    \begin{thm}\label{thm:splenius}
      Let $k$ be a $G$-ring and $X$ an algebraic space which is proper and of finite Tor-amplitude over $k$.
      Then the following derived stacks are algebraic:
      \begin{thmlist}
        \item
        The moduli stack $\sM_{\Vect(X)}$ of vector bundles over $X$.

        \item
        The moduli stack $\sM_{\Bun_G(X)}$ of $G$-bundles over $X$, for every smooth group scheme $G$ over $k$.

        \item
        The moduli stack $\sM_{\Coh(X)}$ of coherent sheaves on $X$, if $X$ is smooth over $k$.
      \end{thmlist}
      Moreover, they are smooth (hence classical) if $X$ is a relative curve, and quasi-smooth if $X$ is a relative surface.
    \end{thm}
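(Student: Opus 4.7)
The plan is to verify the six conditions of the Artin--Lurie representability criterion (\thmref{thm:torgoch}) for each of $\sM_{\Vect(X)}$, $\sM_{\Bun_G(X)}$, and $\sM_{\Coh(X)}$. The cotangent complex condition is the main content and is already handled: for $\sM_{\Perf(X)}$ it is given by \corref{cor:bytownitite}, and since $\sM_{\Vect(X)}$ and $\sM_{\Coh(X)}$ are open substacks, they inherit a cotangent complex by \propref{prop:sublimationist}\itemref{item:sublimationist/bc}; for $\sM_{\Bun_G(X)}$, the same formula of \thmref{thm:skainsmate} applies with $\L_{BG} \simeq \mfr{g}^\vee[-1]$ as computed in \examref{exam:unexpanding}. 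The $1$-truncatedness condition \itemref{item:holidaymaker} holds by construction for $\sM_{\Vect(X)}$, $\sM_{\Bun_G(X)}$, and $\sM_{\Coh(X)}$: on an ordinary $k$-algebra $R$, the groupoid of $R$-points is equivalent to the ordinary groupoid of vector bundles, $G$-torsors, or flat coherent sheaves respectively (using \lemref{lem:unresembling} for the last case).

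Next I would dispatch the remaining conditions (iii)--(vi) more or less uniformly. Almost finite presentation reduces to the analogous property for $\MPerf$ and $BG$: a perfect complex on $X_R$ is determined by finitely many compatibilities, and filtered colimits of $n$-truncated algebras are preserved because $X$ is of finite presentation and proper of finite Tor-amplitude, so perfect complexes on $X_R$ can be tested fibrewise on a finite smooth atlas. Nil-completeness and infinitesimal cohesion for $\MPerf$ and $BG$ follow from the corresponding properties of the \inftyCat $\Dperf(A)^\simeq$ (resp. $\mrm{BG}(A)$) as $A$ varies over animated rings: both respect Postnikov limits and pullback squares of animated rings with nilpotent kernels, by descent for $\D(-)$ (\thmref{thm:jargonish}) together with the characterization of perfectness. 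Integrability (Grothendieck's existence theorem) for perfect complexes and coherent sheaves on the proper $X_R$ reduces, after formal completion of $R$, to the classical formal GAGA for proper algebraic spaces over a complete local noetherian ring. These verifications all propagate from $\MPerf$, $BG$, and $\Dcoh$ to the mapping stacks by standard properties of $\uMaps(X, -)$ when $X$ is proper of finite Tor-amplitude, and restrict to the open substacks $\sM_{\Vect(X)} \sub \sM_{\Perf(X)}$ and $\sM_{\Coh(X)} \sub \sM_{\Dcoh(X)}$ without extra work.

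Having shown the stacks are algebraic, the smoothness statements follow from \propref{prop:caboceer} combined with \remref{rem:bridgepot}. Concretely, by \corref{cor:bytownitite} the cotangent complex of $\sM = \sM_{\Vect(X)}$ or $\sM_{\Coh(X)}$ is
\[
\L_{\sM} \simeq \pr_{2,\sharp}\bigl(\sE_X \otimesL \sE_X^\vee[1]\bigr),
\]
which is perfect because $\sE_X \otimesL \sE_X^\vee$ is perfect and $\pr_2$ is proper of finite Tor-amplitude. If $X$ is a relative curve, then $\pr_2$ has cohomological dimension $\le 1$ in the fibre direction, so $\pr_{2,\sharp}$ raises Tor-amplitude by at most $1$; combined with the $[1]$-shift, one gets that $\L_\sM$ has Tor-amplitude in $[-1,0]$, i.e. $\sM$ is smooth, hence classical. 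When $X$ is a relative surface, the same bookkeeping gives Tor-amplitude in $[-1,1]$, i.e. quasi-smoothness in the sense of \examref{exam:qsm}. The argument for $\sM_{\Bun_G(X)}$ is identical after replacing $\sE_X \otimesL \sE_X^\vee[1]$ with the shifted adjoint bundle $\ev^*(\mfr{g}^\vee[-1])$.

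The hard part is really the algebraicity, and within that, integrability is the step that requires actual input (formal GAGA for coherent sheaves/perfect complexes on proper algebraic spaces) rather than formal manipulation; the other conditions reduce mechanically to statements about $\Dperf$, $\Dcoh$, and $BG$ that are built into the descent and deformation theory of derived $\infty$-categories we have set up.
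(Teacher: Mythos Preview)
Your proposal is correct and follows essentially the same approach as the paper: the paper explicitly says that one checks the conditions of \thmref{thm:torgoch}, that the main point is the cotangent complex computation (\corref{cor:bytownitite}), and that the remaining verifications are left to the reader, with the smoothness claims deduced from \remref{rem:bridgepot}. Your write-up simply fills in more of the details that the paper omits.
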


    The last claim also follows from our computation of the cotangent complex, see \remref{rem:bridgepot}.

%!TEX root = ../stacksncts.tex

\section{Cohomology of stacks}
\label{sec:cohomology}

\ssec{Abelian sheaves}

  Let $k$ denote a base field and $\Sch_k$ the category of locally of finite type $k$-schemes.
  Given $X \in \Sch_k$ and a commutative ring $\Lambda$ we denote by $\D(X; \Lambda)$ either:
  \begin{enumerate}
    \item If $k=\bC$, the stable \inftyCat $\Shv(X(\bC), \D(\Lambda))$ of sheaves on the topological space $X(\bC)$ with values in the derived \inftyCat of $\Lambda$-modules.
    \item If $\Lambda=\bZ/n\bZ$ where the $n\neq \on{char}(k)$, the stable \inftyCat $\Shv(X_\et, \D(\Lambda))$ of sheaves on the small étale site $X_\et$ with values in the derived \inftyCat of $\Lambda$-modules.
  \end{enumerate}
   
  \begin{thm}\label{thm:vesania}
    The presheaf $\D^* : \Sch_k^\op \to \Catoo$ determined by the assignment
    \begin{equation}\label{eq:vesania}
      X \mapsto \D(X; \Lambda),
      \quad f \mapsto f^*
    \end{equation}
    satisfies descent for the étale topology.
  \end{thm}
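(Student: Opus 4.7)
The plan is to apply the descent criterion \thmref{thm:descent criterion} to the augmented cosimplicial diagram
\[
  \D(X; \Lambda) \xrightarrow{f^*} \D(U; \Lambda)
  \rightrightarrows \D(U \fibprod_X U; \Lambda)
  \rightrightrightarrows \cdots
\]
associated to an arbitrary étale surjection $f : U \to X$ in $\Sch_k$. Combined with the evident fact that $\D^*$ sends finite coproducts to products (both $\Shv(X(\bC); \D(\Lambda))$ and $\Shv(X_\et; \D(\Lambda))$ split as a product over the connected components of $X$), this will establish étale descent in exactly the same pattern as the proof of \thmref{thm:module descent}.

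I would check the three hypotheses of \thmref{thm:descent criterion} as follows. First, for \emph{conservativity} of $f^*$: in the étale case, surjectivity of $f$ ensures that every geometric point of $X$ lifts to a geometric point of $U$, so $f^*$ detects equivalences stalk by stalk; in the analytic case, the analytification $f^{\mrm{an}} : U(\bC) \to X(\bC)$ is a surjective local homeomorphism, and the same stalk-wise argument applies. Second, the \emph{base change} condition (ii) follows from étale (respectively topological, for local homeomorphisms) base change; the cleanest formulation uses that for an étale morphism $g$ the pullback $g^*$ is simultaneously left adjoint to $g_*$ and right adjoint to $g_!$, which forces the relevant Beck--Chevalley map to be invertible. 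Third, condition (iii) is automatic, because $f^*$ preserves \emph{all} limits: being itself a right adjoint (to $f_!$), it commutes in particular with totalizations of $f^*$-split cosimplicial diagrams.

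The main obstacle is the coherent formulation of étale base change in (ii). The underlying $1$-categorical statement is classical, but assembling it into an invertible natural transformation of functors of \inftyCats presupposes that the étale (respectively topological) six-functor formalism has already been set up at the $\infty$-categorical level, including the existence of the exceptional adjoints $f_!$ for étale and for local-homeomorphism morphisms. Once one takes this input for granted, as is by now standard through the work of Liu--Zheng, Mann, and others, the three verifications above combine with \thmref{thm:descent criterion} to yield the theorem.
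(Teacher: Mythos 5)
The paper states \thmref{thm:vesania} without proof; it is cited as a known input, with the $\infty$-categorical six-functor formalism deferred to \cite{LiuZheng} in \thmref{thm:cowherd}. So there is no paper proof to compare against — but your argument is correct and is the natural one given the paper's own toolkit: you run \thmref{thm:descent criterion} against the Čech coaugmentation of an étale cover, exactly as the paper does for $\Mod_R$ and $\QCoh$ in \thmref{thm:module descent} and \thmref{thm:descent}. Your verification of (i) by lifting geometric points along a surjective étale map (resp. a surjective local homeomorphism), and of (iii) by the adjunction $f_! \dashv f^*$ so that $f^*$ preserves all limits, are both clean.

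Two small refinements worth making explicit. First, for (ii), the sentence ``$g^*$ is simultaneously left adjoint to $g_*$ and right adjoint to $g_!$, which forces the relevant Beck--Chevalley map to be invertible'' is slightly elliptical: the adjoint triple alone does not force it, but the adjoint triple together with $!$-base change does. Concretely, for the relevant cartesian square of étale maps, $g^*p_*$ and $p'_*g'^*$ are right adjoint to $p^*g_!$ and $g'_!p'^*$ respectively, and these are identified by the $!$-base change isomorphism, so the two right adjoints agree. In the étale-over-étale Čech setting you could also avoid invoking the six operations at all: all the schemes $U_n$ are objects of the small étale site of $X$, so $\D(U_n;\Lambda) \simeq \D(X;\Lambda)_{/\underline{U_n}}$ and the Beck--Chevalley condition is the tautological locality of pushforward/pullback in a slice of a sheaf $\infty$-topos. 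This sidesteps your flagged obstacle: one does not actually need the full $\infty$-categorical six-functor machinery, only descent for sheaves of $\infty$-categories on a site (e.g. \cite[Thm.~6.1.3.9, \S 7.2.2]{HTT}), which is independent of and prior to Liu--Zheng. Second, when checking the finite-coproducts-to-products axiom, note that $X(\bC)$ for $X$ disconnected need not decompose as a disjoint union of connected spaces in a set-indexed way in general, but here the $X_i$ are open and closed, so the splitting $\D(\coprod X_i;\Lambda) \simeq \prod \D(X_i;\Lambda)$ is immediate.
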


  \begin{rem}
    By \examref{exam:pointless}, it follows moreover that $\D^*$ satisfies descent for smooth surjections.
  \end{rem}

  \begin{constr}
    Let $\AlgStk$ denote the \inftyCat of  algebraic stacks locally of finite type over $k$.
    By \thmref{thm:vesania}, there exists a unique étale sheaf $\D^* : \AlgStk_k^\op \to \Catoo$ extending \eqref{eq:vesania}.
    More precisely, it is the right Kan extension, given on $X\in\AlgStk_k$ by the formula
    \begin{equation*}
      \D(X) \simeq \lim_{(T,t)} \D(T)
    \end{equation*}
    where the limit is taken over the category of pairs $(T,t)$ where $T$ is a scheme and $t : T \to X$ is a smooth morphism.
  \end{constr}

  \begin{thm}[Six operations]\label{thm:cowherd}
    We have the following operations on the \inftyCats $\D(X)$ for $X \in \AlgStk_k$:
    \begin{thmlist}
      \item
      An adjoint pair of bifunctors
      \begin{align*}
        \otimes &: \D(X) \times \D(X) \to \D(X),\\
        \uHom &: \D(X)^\op \times \D(X) \to \D(X)
      \end{align*}
      for all $X \in \AlgStk_k$.

      \item
      For every morphism $f : X \to Y$ in $\AlgStk_k$, an adjoint pair
      \[
        f^* : \D(Y) \to \D(X),
        \quad
        f_* : \D(X) \to \D(Y).
      \]

      \item
      For every morphism $f : X \to Y$ in $\AlgStk_k$, an adjoint pair
      \[
        f_! : \D(X) \to \D(Y),
        \quad
        f^! : \D(Y) \to \D(X).
      \]
    \end{thmlist}
    Moreover, they satisfy the following properties:
    \begin{thmlist}
      \item
      \emph{Base change formula:}
      For every cartesian square
      \begin{equation*}
        \begin{tikzcd}
          X' \ar{r}{g}\ar{d}{p}
          & Y' \ar{d}{q}
          \\
          X \ar{r}{f}
          & Y
        \end{tikzcd}
      \end{equation*}
      there is a canonical isomorphism
      \begin{equation*}
        q^*f_! \simeq g_! p^*.
      \end{equation*}

      \item
      \emph{Projection formula:}
      For every morphism $f : X \to Y$ in $\AlgStk_k$, there is a canonical isomorphism
      \begin{equation*}
        f_!(-) \otimes (-)
        \simeq f_!(- \otimes f^*(-)).
      \end{equation*}

      \item
      \emph{Forgetting supports:}
      If $f$ has proper diagonal, there is a canonical morphism
      \begin{equation*}
        f_! \to f_*
      \end{equation*}
      which is invertible when $f$ is proper.

      \item
      \emph{Étale pull-back:}
      If $f$ is étale, there is a canonical isomorphism $f^! \simeq f^*$.

      \item\emph{Localization:}
      If $X \in \Stk_k$ and $i : Z \hook X$ is a closed immersion with complementary open immersion $j : U \hook X$, then there are canonical exact triangles
      \begin{align*}
        &j_!j^* \to \id \to i_!i^*\\
        &i_*i^! \to \id \to j_*j^!.
      \end{align*}
    \end{thmlist}
  \end{thm}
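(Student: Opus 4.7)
The plan is to bootstrap from the six operations for schemes (which we take as known, in either the topological or étale setting) and extend to algebraic stacks using the descent result of \thmref{thm:vesania} combined with the flexibility of $\infty$-categorical descent. The starless operations $f^*$, $\otimes$, $\uHom$ and $f_*$ are the easy half: $f^*$ exists by the very construction of $\D$ on $\AlgStk_k$ as a right Kan extension, its right adjoint $f_*$ is produced by adjoint functor theorem since each $\D(X)$ is presentable and $f^*$ preserves colimits, and the tensor product $\otimes$ (with internal hom $\uHom$) descends because it is compatible with smooth pullback at the level of schemes.

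The real content is in constructing the pair $(f_!, f^!)$ and verifying base change and the projection formula functorially in $f$. First I would handle the representable case: if $f : X \to Y$ is representable, then for any smooth atlas $q : V \twoheadrightarrow Y$ from a scheme, the base change $p : X \fibprod_Y V \to V$ is a morphism of (say) algebraic spaces, for which $(p_!, p^!)$ are classical, satisfy smooth base change, and thus descend along $q$ to give $(f_!, f^!)$ on $\D(X) \to \D(Y)$. Next, for a smooth morphism $f$ of pure relative dimension $d$ (in particular for a smooth atlas), I would \emph{define} $f^! := f^*[2d](d)$ and take $f_!$ to be its left adjoint (this is the shifted version of the isomorphism $f^! \simeq f^*$ from the étale case in the theorem statement; smoothness of $f$ guarantees that this left adjoint exists and agrees with the classical $f_!$ on schemes by the relative purity / absolute purity theorem). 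Finally, a general morphism $f : X \to Y$ of algebraic stacks factors, after pulling back along a smooth atlas $U \twoheadrightarrow X$, through a scheme and then through $Y$; one defines $f_!$ by choosing a factorization of $f$ as a representable morphism followed by (the opposite of) a smooth surjection, and checks the definition is independent of choices.

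The verification of the compatibilities (base change, projection formula, $f_! \to f_*$ for proper-diagonal morphisms, localization) then proceeds in two stages: for representable morphisms they reduce, via the smooth descent built into the construction, to the corresponding statements for schemes which are classical; for the shifted-pullback definition of $f_!$ along smooth morphisms, they follow from smooth base change for schemes and an easy cofinality argument. The localization triangles $j_!j^* \to \id \to i_! i^*$ and its dual are obtained by descent from the scheme-theoretic localization triangles, using that $i$ and $j$ are representable and that formation of the cofiber commutes with the smooth-atlas limits defining $\D$ on stacks.

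The hard part will be the functoriality and coherence of the assignment $f \mapsto (f_!, f^!)$: the independence of the chosen factorization, and more fundamentally, the construction of $f_!$ as an $\infty$-functor rather than merely a functor between homotopy categories, requires a substantial amount of machinery. In practice one either invokes the $(\infty,2)$-categorical enhancement of Gaitsgory--Rozenblyum (where $f_!$ arises as part of a functor out of a correspondence category) or the $\infty$-categorical descent/gluing theorem of Liu--Zheng \emph{à la} their simplicial-set manipulations. In either incarnation, once the bookkeeping is done, the base change and projection formulas propagate from the scheme case essentially formally, but the bookkeeping itself is the nontrivial step that is typically black-boxed in a lecture-style treatment like the one here.
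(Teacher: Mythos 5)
The paper offers no proof of this theorem: it simply states ``This theorem was proven by Y.~Liu and W.~Zheng, see [LZ]'' and moves on. Your sketch is a fair account of the broad strategy that Liu--Zheng (and, in a different formal framework, Gaitsgory--Rozenblyum) actually follow, and you correctly identify both the key ingredients (descent from schemes via the smooth atlas, smooth base change as the gluing datum, purity to define $f^!$ along smooth morphisms) and the genuine difficulty (constructing $f_!$ as an $\infty$-functor coherently, not just on homotopy categories, and proving independence of the chosen factorization). Since you yourself flag that this coherence step must be black-boxed into [LZ], your proposal is at the same level of rigor as the paper and defers to the same reference for the hard part; in that sense you are taking essentially the same approach.

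Two small imprecisions worth noting, since they are precisely the places where the Liu--Zheng machinery is doing work that a naive descent argument cannot. First, ``factor $f$ as a representable morphism followed by (the opposite of) a smooth surjection'' is not quite the right way to phrase it: for $f : X \to Y$ with smooth atlas $p : U \twoheadrightarrow X$, the composite $U \to Y$ need not be representable, so one cannot simply define $f_!$ by composing a representable $!$-pushforward with a smooth-surjection ``wrong-way'' map; instead one has to take a colimit over the whole Čech nerve of $p$, and it is checking that this colimit is functorial in $f$ and compatible with the other operations that forces the passage to either the $(\infty,2)$-category of correspondences or the Liu--Zheng gluing formalism. Second, defining $f^! := f^*\langle d\rangle$ for smooth $f$ and then taking $f_!$ as its left adjoint presupposes both that this left adjoint exists (presentability suffices) and that it agrees, after descent, with the classical $f_!$ on schemes; this agreement is Poincaré duality/relative purity and is an input, not a formal consequence. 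Neither of these is an error so much as a glossed-over subtlety, consistent with the lecture-note register of the source.
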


  This theorem was proven by Y.~Liu and W.~Zheng, see \cite{LiuZheng}.

  \begin{rem}
    There is a unique way to extend all the above constructions to \emph{derived} algebraic stacks in such a way that we still have localization triangles: since the inclusion of the classical truncation $i : X_\cl \hook X$ is a surjective closed immersion, we must have $\D(X) \simeq \D(X_\cl)$.
    By base change formulas, all four operations associated with a morphism $ f : X \to Y$ must also be identified with the corresponding operations for $f_\cl : X_\cl \to Y_\cl$.
  \end{rem}

  \begin{rem}
    Moreover, if we extend $\D(-)$ to \emph{higher} Artin stacks (and thus to all derived Artin stacks) with the same definition, then we still have the six operations in this generality.
  \end{rem}

\ssec{Co/homology}

  Given a (derived) algebraic stack $X$ locally of finite type over $k$, let $a_X : X \to \Spec(k)$ denote the projection.
  We define
  \begin{align*}
    \Ccoh(X; \Lambda) &:= R\Gamma(f_*f^*\Lambda) \simeq R\Gamma(X; \Lambda_X),\\
    \CBM(X; \Lambda) &:= R\Gamma(f_*f^!\Lambda) \simeq R\Gamma(X; \omega_X),
  \end{align*}
  where $\Lambda_X = f^*\Lambda$ and $\omega_X = f^!\Lambda$ denote the constant and dualizing sheaves, respectively.
  These are the complexes of cochains and Borel--Moore chains on $X$, respectively.
  We also write
  \begin{align*}
    \H^*(X; \Lambda) &:= \H^*(\Ccoh(X; \Lambda)) \simeq \H^*(X; \Lambda_X),\\
    \H^\BM_*(X; \Lambda) &:= \H^{-*}(\CBM(X; \Lambda)) \simeq \H^{-*}(X; \omega_X).
  \end{align*}

  \thmref{thm:cowherd} yields the following consequences:

  \begin{prop}\label{prop:mush}\leavevmode
    \begin{thmlist}
      \item
      \emph{Proper push-forward:}
      Let $f : X \to Y$ be a proper morphism in $\AlgStk_k$.
      Then there is a canonical morphism
      \begin{equation*}
        f_* : \CBM(X; \Lambda) \to \CBM(Y; \Lambda).
      \end{equation*}

      \item
      \emph{Étale pull-back:}
      Let $f : X \to Y$ be an étale morphism in $\AlgStk_k$.
      Then there is a canonical morphism
      \begin{equation*}
        f^! : \CBM(Y; \Lambda) \to \CBM(X; \Lambda).
      \end{equation*}

      \item
      \emph{Localization triangle:}
      Let $X \in \Stk_k$ and $i : Z \hook X$ a closed immersion with complementary open immersion $j : U \hook X$.
      Then there is a canonical exact triangle
      \begin{equation*}
        \CBM(Z; \Lambda)
        \xrightarrow{i_*} \CBM(X; \Lambda)
        \xrightarrow{j^!} \CBM(U; \Lambda).
      \end{equation*}
    \end{thmlist}
  \end{prop}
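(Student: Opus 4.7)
The plan is to obtain all three statements by applying the six-functor formalism of \thmref{thm:cowherd} to the dualizing complex $\omega_X = a_X^!\Lambda$ and then pushing down via $R\Gamma$ to $\Spec(k)$. The foundational observation, used throughout, is that for any morphism $f : X \to Y$ in $\AlgStk_k$, the factorization $a_X = a_Y \circ f$ together with functoriality of $(-)^!$ provides a canonical identification $\omega_X \simeq f^!\omega_Y$. Coupled with the identity $R\Gamma(X,-) \simeq R\Gamma(Y,-) \circ f_*$, this reduces each assertion to an elementary manipulation of the units and counits of the relevant adjunctions.

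For part (i), suppose $f : X \to Y$ is proper. The counit $f_! f^! \to \id$ of the adjunction $f_! \dashv f^!$, applied to $\omega_Y$, yields a morphism $f_!\omega_X \simeq f_! f^!\omega_Y \to \omega_Y$ in $\D(Y)$. Since $f$ is proper, \thmref{thm:cowherd} provides an isomorphism $f_! \simeq f_*$, so this becomes a morphism $f_*\omega_X \to \omega_Y$. Applying $R\Gamma(Y,-)$ produces the desired morphism $f_* : \CBM(X;\Lambda) \to \CBM(Y;\Lambda)$.

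For part (ii), suppose $f : X \to Y$ is étale. By \thmref{thm:cowherd} we have an isomorphism $f^! \simeq f^*$, and the unit $\id \to f_*f^*$ applied to $\omega_Y$ gives $\omega_Y \to f_*f^*\omega_Y \simeq f_*f^!\omega_Y \simeq f_*\omega_X$. Applying $R\Gamma(Y,-)$ yields the desired morphism $f^! : \CBM(Y;\Lambda) \to \CBM(X;\Lambda)$.

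For part (iii), apply the localization triangle $i_*i^! \to \id \to j_*j^!$ of \thmref{thm:cowherd} to the object $\omega_X \in \D(X)$. Using the identifications $i^!\omega_X \simeq \omega_Z$ and $j^!\omega_X \simeq \omega_U$, this becomes an exact triangle $i_*\omega_Z \to \omega_X \to j_*\omega_U$ in $\D(X)$. Since $R\Gamma(X,-)$ is an exact functor between stable \inftyCats and $R\Gamma(X, i_*\omega_Z) \simeq R\Gamma(Z,\omega_Z) = \CBM(Z;\Lambda)$, $R\Gamma(X, j_*\omega_U) \simeq R\Gamma(U,\omega_U) = \CBM(U;\Lambda)$, applying it produces the desired exact triangle. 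Because each part is a direct unwinding of the six-operation axioms already established, there is no serious obstacle; the only bookkeeping needed is to confirm that the boundary maps of the triangle in (iii) coincide with the $i_*$ and $j^!$ constructed in (i) and (ii), which follows by tracing the counit and unit through $R\Gamma$.
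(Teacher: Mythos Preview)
Your proof is correct and is precisely the standard unwinding of the six-functor axioms from \thmref{thm:cowherd}; the paper itself gives no explicit proof, simply declaring the proposition a consequence of that theorem. Your elaboration (counit of $f_!\dashv f^!$ plus $f_!\simeq f_*$ for (i), unit of $f^*\dashv f_*$ plus $f^!\simeq f^*$ for (ii), and the localization triangle applied to $\omega_X$ for (iii)) is exactly what the paper has in mind.
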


  We also have the following consequence of \thmref{thm:vesania}:

  \begin{cor}\label{cor:ectopy}
    On the \inftyCat $\AlgStk_k$, the presheaves
    \begin{align*}
      X &\mapsto \Ccoh(X; \Lambda), \quad f \mapsto f^*\\
      X &\mapsto \CBM(X; \Lambda), \quad f \mapsto f^!
    \end{align*}
    satisfy descent for the étale topology.
  \end{cor}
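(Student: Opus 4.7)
The plan is to deduce descent for both presheaves directly from the descent statement for the presheaf of \inftyCats $\sD^* : \AlgStk_k^\op \to \Catoo$, by rewriting cohomology and Borel--Moore homology as mapping complexes in $\sD(X)$ between objects that behave well under étale pullback.

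First, I would show the cohomology statement. By definition $\Ccoh(X; \Lambda) \simeq \R\Gamma(X; \Lambda_X)$, and by adjunction this coincides with the mapping complex $\Maps_{\sD(X)}(\Lambda_X, \Lambda_X)$. For any morphism $f : U \to X$ we have $f^*\Lambda_X \simeq \Lambda_U$, and for a Čech nerve $U_\bullet \to X$ of an étale cover this means that the object $\Lambda_X \in \sD(X)$ corresponds, under the equivalence $\sD(X) \simeq \Tot\, \sD(U_\bullet)$ provided by \thmref{thm:vesania} (and its Kan extension to stacks), to the compatible system $(\Lambda_{U_n})_n$. Since mapping complexes in a limit of \inftyCats are computed as limits of the mapping complexes between corresponding components, one gets
\[
  \Ccoh(X; \Lambda)
  \simeq \Maps_{\sD(X)}(\Lambda_X, \Lambda_X)
  \simeq \lim_n \Maps_{\sD(U_n)}(\Lambda_{U_n}, \Lambda_{U_n})
  \simeq \Tot \Ccoh(U_\bullet; \Lambda),
\]
with transition morphisms induced by $*$-pullback (which for étale maps is the functoriality structure of $\Ccoh$).

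Next, the Borel--Moore case is formally identical once we know that the dualizing sheaf is pullback-compatible along étale maps. Writing $\omega_X = a_X^!\Lambda$ where $a_X : X \to \Spec(k)$, for any étale morphism $f : U \to X$ we have $f^! \simeq f^*$ by the étale pull-back property of \thmref{thm:cowherd}, hence
\[
  f^*\omega_X \simeq f^!\omega_X \simeq f^! a_X^! \Lambda \simeq a_U^! \Lambda \simeq \omega_U.
\]
Thus $\omega_X$ corresponds to the compatible system $(\omega_{U_n})_n$ under the same descent equivalence $\sD(X) \simeq \Tot\,\sD(U_\bullet)$. Applying $\Maps_{\sD(-)}(\Lambda_{-}, \omega_{-})$ and using again that mapping complexes commute with limits of \inftyCats yields
\[
  \CBM(X; \Lambda)
  \simeq \Maps_{\sD(X)}(\Lambda_X, \omega_X)
  \simeq \lim_n \Maps_{\sD(U_n)}(\Lambda_{U_n}, \omega_{U_n})
  \simeq \Tot \CBM(U_\bullet; \Lambda),
\]
and the transition morphisms, a priori induced by $*$-pullback, are identified via $f^! \simeq f^*$ with the $!$-pullback functoriality of $\CBM$.

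The only nontrivial point in carrying this out is the identification of the natural functorialities of $\Ccoh$ and $\CBM$ under the descent equivalence: one must check that the structure maps $\Ccoh(X) \to \Ccoh(U_n)$, a priori defined by $f^*$, and $\CBM(X) \to \CBM(U_n)$, a priori defined by $f^!$, really agree with the maps induced by passing to the limit of the mapping complexes. Both checks are immediate from the fact that the equivalences $f^*\Lambda_X \simeq \Lambda_U$ and $f^*\omega_X \simeq \omega_U$ used above are the canonical ones, but they constitute the essential content of the argument. Everything else is a formal consequence of \thmref{thm:vesania} and the six-functor formalism of \thmref{thm:cowherd}.
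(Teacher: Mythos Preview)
Your argument is correct and is precisely the natural expansion of what the paper has in mind: the paper states this result as an immediate consequence of \thmref{thm:vesania} without further proof, and your approach---expressing $\Ccoh$ and $\CBM$ as mapping complexes in $\D(X)$ between objects stable under $*$-pullback, combined with the fact that mapping objects in a limit of \inftyCats are computed termwise---is exactly how one unpacks that consequence. The identification $f^!\simeq f^*$ for étale $f$ from \thmref{thm:cowherd} is the only extra ingredient needed for the Borel--Moore case, and you invoke it correctly.
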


\ssec{Intersection theory}

  We are finally in position to see how working with complexes of chains (as objects in the derived \inftyCat) rather than their homology groups leads to a streamlined approach to (virtual, stacky) intersection theory.
  Details of the following constructions can be found in \cite{virtual}.

  Recall the normal bundle from \defnref{defn:normal}.
  The following is a generalization of Verdier's deformation to the normal bundle \cite{Verdier}:

  \begin{defthm}
    Let $f : X \to Y$ be a homotopically smooth morphism of derived Artin stacks.
    The \emph{normal deformation} $D_{X/Y}$ is the derived mapping stack
    \begin{equation*}
      D_{X/Y} = \uMaps_{Y\times\A^1}(Y \times \{0\}, X \times \A^1).
    \end{equation*}
    \begin{thmlist}
      \item
      If $X$ and $Y$ are $n$-Artin, then $D_{X/Y}$ is $(n+1)$-Artin.

      \item
      There is a commutative diagram of cartesian squares
      \[\begin{tikzcd}
        X \ar{r}{0}\ar{d}{0}
        & X \times \A^1 \ar[leftarrow]{r}\ar{d}{\widehat{f}}
        & X \times \bG_m \ar{d}{f\times\id}
        \\
        N_{X/Y} \ar{r}\ar{d}
        & D_{X/Y} \ar[leftarrow]{r}\ar{d}
        & Y \times \bG_m\ar{d}
        \\
        \{0\} \ar{r}
        & \A^1 \ar[leftarrow]{r}
        & \bG_m.
      \end{tikzcd}\]
    \end{thmlist}
  \end{defthm}

  See \cite[\S 1.4]{virtual} and \cite{HekkingKhanRydh}.

  \begin{constr}
    Let $f : X \to Y$ be a homotopically smooth morphism of derived algebraic stacks locally of finite type over $k$.
    There is a canonical map
    \begin{equation}
      \sp_{X/Y} : \CBM(Y; \Lambda) \to \CBM(N_{X/Y}; \Lambda)
    \end{equation}
    defined as the composite
    \begin{equation*}
      \begin{multlined}
        \CBM(Y; \Lambda)
        \xrightarrow{\mrm{incl}} \CBM(Y; \Lambda) \oplus \CBM(Y; \Lambda)(1)[1]\\
        \simeq \CBM(Y\times\bG_m; \Lambda)[-1]
        \xrightarrow{\partial} \CBM(N_{X/Y}; \Lambda)
      \end{multlined}
    \end{equation*}
    where the splitting comes from the unit section of $\bG_m$ and $\partial$ is the boundary map in the localization triangle
    \begin{equation*}
      \CBM(N_{X/Y}; \Lambda)
      \to \CBM(D_{X/Y}; \Lambda)
      \to \CBM(Y\times\bG_m; \Lambda)
      \xrightarrow{\partial}
    \end{equation*}
  \end{constr}

  \begin{notat}
    For an integer $d\in\bZ$, we set $\vb{d} := (d)[2d]$, where $(d)$ denotes the Tate twist.
  \end{notat}

  \begin{constr}
    Let $f : X \to Y$ be a morphism in $\AlgStk_k$.
    Suppose $f : X \to Y$ is quasi-smooth, i.e., homotopically $1$-smooth (\examref{exam:qsm}), of relative virtual dimension $d$.
    Then $\bL_{X/Y}$ is in Tor-amplitude $[-1, 1]$ and $N_{X/Y}$ is a ``vector bundle stack''.
    We have the generalized homotopy invariance isomorphism
    \begin{equation*}
      \CBM(X; \Lambda) \simeq \CBM(N_{X/Y}; \Lambda)\vb{d}.
    \end{equation*}
    since the projection $N_{X/Y} \to X$ is of relative dimension $-d$.
    The \emph{quasi-smooth pull-back}, or \emph{virtual pull-back}, is the canonical map
    \begin{equation}
      f^! : \CBM(Y; \Lambda) 
      \xrightarrow{\sp_{X/Y}} \CBM(N_{X/Y}; \Lambda)
      \simeq \CBM(X; \Lambda)\vb{-d}.
    \end{equation}
  \end{constr}

  \begin{rem}
    Note that, even if $X$ and $Y$ are schemes, the above construction passes through the algebraic stacks $N_{X/Y}$ and $D_{X/Y}$ (which are not schemes unless $f : X \to Y$ is a closed immersion).
    Similarly, if $X$ and $Y$ are $1$-Artin, we need to make use of the extension of $\D(-)$ and the six operations to higher Artin stacks.
  \end{rem}

  \begin{defn}
    Let $X$ be a quasi-smooth derived algebraic stack of relative virtual dimension $d$ over $\Spec(k)$.
    The projection $a_X : X \to \Spec(k)$ gives rise to the pull-back
    \begin{equation*}
      a_X^! : \CBM(\Spec(k)) \to \CBM(X)\vb{-d}
    \end{equation*}
    and hence to the canonical element
    \begin{equation*}
      [X] \in \CBM(X)\vb{-d}
      \quad \leadsto \quad
      [X] \in \H^\BM_{2d}(X)(-d)
    \end{equation*}
    called the \emph{virtual fundamental class} of $X$.
  \end{defn}

  \begin{rem}
    The element $[X] \in \CBM(X)\vb{-d}$ corresponds to a canonical morphism
    \begin{equation*}
      \Lambda_X\vb{d} \to a_X^!(\Lambda)
    \end{equation*}
    in $\D(X; \Lambda)$.
    This gives rise to a natural transformation
    \begin{equation}
      a_X^*(-)\vb{d}
      \to a_X^*(-) \otimes a_X^!(\Lambda)
      \xrightarrow{\mrm{can}} a_X^!(-)
    \end{equation}
    or by adjunction a trace map $a_{X,!}a_X^*\vb{d} \to \id$.
    In the relative case, where $f : X \to Y$ is a quasi-smooth morphism of relative virtual dimension $d$, we similarly get a natural transformation
    \begin{equation}
      \tr_f : f_!f^*\vb{d} \to \id.
    \end{equation}
  \end{rem}

  \begin{thm}[Poincaré duality]\leavevmode
    \begin{thmlist}
      \item
      If $f : X\to Y$ is smooth, then the natural transformation $f^*(-)\vb{d} \to f^!(-)$ is invertible.
      Equivalently, $\tr_f$ is the counit of an adjunction $(f_!, f^*\vb{d})$.

      \item
      For any smooth algebraic stack $X$ in $\AlgStk_k$, cap product with $[X]$ determines a canonical isomorphism
      \begin{equation*}
        (-)\cap [X] : \Ccoh(X) \to \CBM(X)\vb{-d}.
      \end{equation*}
    \end{thmlist}
  \end{thm}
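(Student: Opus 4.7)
The plan is to deduce part (ii) from part (i), then prove (i) by a local-model reduction.

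Granting (i), part (ii) is immediate: apply (i) to $a_X : X \to \Spec(k)$, which is smooth of relative dimension $d$. This yields an isomorphism $a_X^*\Lambda\vb{d} \simeq a_X^!\Lambda = \omega_X$ in $\D(X;\Lambda)$. Applying $\R\Gamma$ gives an equivalence $\Ccoh(X)\vb{d} \simeq \CBM(X)$, i.e. $\Ccoh(X) \to \CBM(X)\vb{-d}$; unwinding the definition of $[X]$ as the image of $1$ under $a_X^!$, one checks this map is cap product with $[X]$.

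For (i), recall that the natural transformation $\alpha_f : f^*(-)\vb{d} \to f^!(-)$ is built by tensoring with the fundamental class $[f] : \Lambda_X\vb{d} \to f^!\Lambda_Y$ (obtained from the quasi-smooth pull-back applied to $1 \in \CBM(Y)$) and composing with the projection-formula map $f^*(-) \otimes f^!\Lambda \to f^!(-)$. It is adjoint to $\tr_f$, so invertibility is equivalent to $\tr_f$ being the counit of $(f_!, f^*\vb{d})$. The strategy is a three-stage reduction. \emph{Stage one} reduces to schemes: $\alpha_f$ is compatible with smooth base change on $Y$ and with pullback along smooth atlases of $X$, since the ingredients $D_{X/Y}$, $\sp_{X/Y}$, and the base change isomorphisms of \thmref{thm:cowherd} all are. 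Since $\D(-)$ satisfies smooth descent (\thmref{thm:vesania} extended to higher Artin stacks), invertibility may be checked after pulling back to smooth atlases. \emph{Stage two} reduces to affine space: $\alpha_{g\circ f}$ is built multiplicatively out of $\alpha_f$ and $f^*(\alpha_g)$, so invertibility propagates under composition; every smooth morphism of schemes factors Zariski-locally as an étale morphism followed by a projection $\bA^n \to \Spec(k)$, and for étale $f$ one has $d=0$ and $f^! \simeq f^*$ by \thmref{thm:cowherd}, so $\alpha_f = \id$. A further induction on $n$ reduces to $n=1$. \emph{Stage three} handles $\pi : \bA^1 \to \Spec(k)$: the localization triangle for $\{0\} \hook \bA^1 \hookleftarrow \bG_m$ together with the classical computation of $\omega_{\bG_m/k}$ (via the Kummer sequence étale-locally, or $\bG_m(\bC) \simeq S^1$ in the topological setting) pin down $\omega_{\bA^1/k} \simeq \Lambda_{\bA^1}\vb{1}$ compatibly with $\alpha_\pi$, giving invertibility.

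The main obstacle is not the local computation in Stage three, which is classical, but rather the compatibility assertions invoked in Stages one and two: that $\alpha_f$ is well defined for any quasi-smooth morphism of derived Artin stacks, is functorial under smooth base change on both source and target, and is multiplicative under composition. These rest on the careful construction of the normal deformation $D_{X/Y}$ and its specialization map in the stacky setting, which in turn require the extension of the six operations to higher Artin stacks (since for $X, Y$ in $\AlgStk_k$ the spaces $N_{X/Y}$ and $D_{X/Y}$ are already $2$-Artin). Once these compatibilities are in hand, the three stages assemble into a proof of (i), and (ii) follows by the opening paragraph.
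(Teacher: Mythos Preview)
Your deduction of (ii) from (i) matches the paper. Your argument for (i) is the classical local-model reduction and is correct in outline, but the paper takes a genuinely different and shorter route. Rather than reducing to $\bA^1 \to \Spec(k)$, the paper observes that when $f : X \to Y$ is smooth, the diagonal $\Delta : X \to X \fibprod_Y X$ is quasi-smooth (even though it is not smooth in general); hence one has a trace map $\tr_\Delta$ for the diagonal as well. This $\tr_\Delta$ is used to manufacture a \emph{unit} $\id \to f^*f_!\vb{d}$, and then one checks the triangle identities directly to see that $(f_!, f^*\vb{d})$ is an adjunction with counit $\tr_f$. The paper cites \cite{KhanDualet} for the details.

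The trade-off: your approach is the traditional one (essentially SGA4-style), but as you note yourself, the heavy lifting is in the compatibility of $\alpha_f$ under smooth descent and composition, which forces you through the six-functor machinery for higher Artin stacks even when $X$ and $Y$ are $1$-Artin. The paper's diagonal trick sidesteps the local computation entirely and works uniformly at whatever Artin level $X$ and $Y$ live; the only input needed is the construction of the trace for quasi-smooth morphisms, which was already set up in the preceding paragraphs. In exchange, one must verify the triangle identities for the candidate unit/counit pair, which is a formal manipulation with base change and projection formulas rather than a cohomological calculation.
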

  \begin{proof}
    If $f : X \to Y$ is smooth, then the diagonal $\Delta : X \to X \fibprod_Y X$ is still quasi-smooth.
    Thus we have a natural transformation $\tr_\Delta$, which gives rise to a unit for the adjunction $(f_!, f^*\vb{d})$.
    The second statement follows from the first.
    See \cite{KhanDualet}.
  \end{proof}

  \begin{exam}
    Let $\sM_S$ denote the moduli stack $\sM_{\Coh(S)}$ (or $\sM_{\Vect(S)}$, $\sM_{\Bun_G(S)}$) for $S$ an algebraic surface.
    Then since $\sM_S$ is quasi-smooth (\thmref{thm:splenius}), we have constructed a (virtual) fundamental class $[\sM_S] \in \H^\BM_*(\sM_S)$.
    We remark that the traditional method \cite{BehrendFantechi} does not apply here since $\sM_S$ is far from being Deligne--Mumford.
  \end{exam}

  In this framework it is easy to prove the following formula for intersection products.
  If $X$ is a smooth $k$-scheme, the cap product in cohomology gives rise by Poincaré duality to an intersection product
  \begin{equation*}
    \CBM(X)\vb{-p} \otimes \CBM(X)\vb{-q}
    \to \CBM(X)\vb{-p-q+d}.
  \end{equation*}
  If $Y$ is quasi-smooth of virtual dimension $d$ and proper over $X$, the virtual fundamental class gives rise to a class in $\CBM(X)\vb{-d}$ by proper push-forward.

  \begin{thm}[Non-transverse Bézout formula]
    Let $Y$ and $Z$ be smooth or lci closed subvarieties of $X$, of dimension $p$ and $q$ respectively.
    Then there is a canonical homotopy
    \begin{equation*}
      [Y] \cdot [Z] \simeq [Y \fibprodR_X Z]
    \end{equation*}
    in $\CBM(X)\vb{-p-q+d}$.
  \end{thm}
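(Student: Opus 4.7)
The plan is to realize the intersection product on the smooth variety $X$ as pullback along the diagonal, and then use base change for virtual pullback to reduce to the Künneth product of fundamental classes.

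First I would recall that the intersection product on $\CBM(X)\vb{-p} \otimes \CBM(X)\vb{-q} \to \CBM(X)\vb{-p-q+d}$, defined via Poincaré duality and the cap product, agrees with the quasi-smooth pullback along the diagonal: for classes $\alpha, \beta \in \CBM(X)$,
\[
\alpha \cdot \beta \simeq \Delta^!(\alpha \boxtimes \beta),
\]
where $\Delta \colon X \to X \times X$ is lci of virtual codimension $d$ (so of virtual dimension $-d$), giving $\Delta^! \colon \CBM(X\times X)\vb{-p-q} \to \CBM(X)\vb{-p-q+d}$. This equivalence is a formal consequence of Poincaré duality on the smooth scheme $X$ and the projection/trace formulas in the six-functor formalism.

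Next, since $i_Y \colon Y \hook X$ and $i_Z \colon Z \hook X$ are proper (closed immersions), $[Y] = i_{Y,*}[Y]_Y$ and $[Z] = i_{Z,*}[Z]_Z$, where I write $[Y]_Y = a_Y^!(1) \in \CBM(Y)\vb{-p}$. Thus
\[
[Y] \boxtimes [Z] = (i_Y \times i_Z)_* \bigl([Y]_Y \boxtimes [Z]_Z\bigr) \in \CBM(X\times X)\vb{-p-q},
\]
and by the Künneth compatibility of virtual fundamental classes, $[Y]_Y \boxtimes [Z]_Z \simeq [Y\times Z]$. The derived fibre product forms the cartesian square
\[
\begin{tikzcd}
W := Y \fibprodR_X Z \ar{r}{q}\ar{d}{\pi} & Y \times Z \ar{d}{i_Y \times i_Z}\\
X \ar{r}{\Delta} & X \times X,
\end{tikzcd}
\]
in which $q$ is a closed immersion (proper) and quasi-smooth (as the base change of the quasi-smooth $\Delta$). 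Applying base change for proper pushforward and virtual pullback to this square yields
\[
\Delta^! (i_Y \times i_Z)_* \simeq \pi_* \, q^!,
\]
so that
\[
[Y]\cdot[Z] \simeq \Delta^!\bigl([Y]\boxtimes[Z]\bigr) \simeq \pi_* q^! [Y\times Z].
\]

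Finally I would invoke functoriality of virtual pullback: the composite $W \xrightarrow{q} Y\times Z \to \Spec(k)$ is a factorization of the structure map $a_W$, and both factors are quasi-smooth of the expected dimensions, so
\[
q^! [Y\times Z] = q^! a_{Y\times Z}^!(1) \simeq a_W^!(1) = [W].
\]
Pushing forward along $\pi$ gives $\pi_*[W] = [Y\fibprodR_X Z]$ (our chosen notation for the pushforward to $X$), yielding the desired homotopy.

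The main obstacle will be invoking the right compatibilities cleanly: namely the base change isomorphism $\Delta^! (i_Y\times i_Z)_* \simeq \pi_* q^!$ for virtual pullback against proper pushforward, the Künneth formula $[Y]_Y \boxtimes [Z]_Z \simeq [Y\times Z]$, and functoriality $q^! a^!_{Y\times Z} \simeq a^!_W$. All three are standard in the $\infty$-categorical intersection theory of \cite{virtual}, but each relies on a careful identification of normal bundles and deformation spaces under composition and base change; these are the technical heart of the construction of $\sp_{X/Y}$ and the virtual pullback. Once they are in hand, the proof is a three-line diagram chase.
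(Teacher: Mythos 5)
Your proof is correct. The paper itself gives no argument for this theorem (it is stated right after the remark that it ``is easy to prove [\ldots] in this framework''), but the route you describe --- rewriting the intersection product as the Gysin pullback $\Delta^!$ of the exterior product, then invoking base change of virtual pullback against proper pushforward across the square $W = Y \fibprodR_X Z \rightrightarrows Y\times Z \rightrightarrows X \times X$, the Künneth formula for virtual fundamental classes, and functoriality of $(-)^!$ along $W \to Y\times Z \to \Spec(k)$ --- is exactly the one the framework is designed to make routine, and each of the three compatibilities you flag is proved in \cite{virtual}.
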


  Note that while the left-hand side consists of usual cycle classes, the right-hand side is genuinely virtual unless the intersection is transverse (that is to say, unless the derived intersection $Y \fibprodR_X Z$ reduces to the classical scheme-theoretic intersection).

\ssec{Quotient stacks}

  \begin{defn}
    Let $G$ be a linear algebraic group over the base field $k$.
    Let $X \in \AlgStk_k$ be an algebraic stack with $G$-action.
    The complex of \emph{equivariant Borel--Moore chains} is defined by
    \begin{equation*}
      \Chom^{\BM,G}(X) := R\Gamma([X/G], f^!(\Lambda_{BG})) \simeq \CBM([X/G]; \Lambda)\vb{g}
    \end{equation*}
    where $f : [X/G] \to BG$ is the projection of the quotient stack to the classifying stack, $g=\dim(G)$, and the isomorphism is Poincaré duality for $BG$.
  \end{defn}

  The following two statements, proven in \cite{Equilisse}, show that this construction can be described by (algebraic approximations to) the Borel construction.

  Choose a filtered system $(V_\alpha)_\alpha$ of $G$-representations where the transition maps $V_\alpha \hook V_\beta$ are monomorphisms.
  Let $W_\alpha \sub V_\alpha$ be $G$-invariant closed subschemes such that:
  \begin{enumerate}
    \item $G$ acts freely on $U_\alpha := V_\alpha \setminus W_\alpha$,
    \item $U_\alpha \sub U_{\alpha+1}$ for all $\alpha$,
    \item We have $\codim_{V_\alpha}(W_\alpha) \to \infty$ as $n \to \infty$.
  \end{enumerate}
  Let $U_\infty$ denote the ind-algebraic space $\{U_\alpha\}_\alpha$.
  For example, for $G=\bG_m$ the obvious choices give $[U_\alpha/G] = \P_k^\infty$.

  \begin{thm}
    There is a canonical isomorphism
    \begin{equation*}
      \Chom^{\BM,G}(X)
      \simeq \CBM(X \fibprod^G U_\infty)\vb{-\dim(U_\infty/G)}
      := \lim_\alpha \CBM(X \fibprod^G U_\alpha)\vb{-d_\alpha}
    \end{equation*}
    where $X \fibprod^G U_\alpha := [(X \fibprod U_\alpha)/G]$ is the quotient by the (free) diagonal action and $d_\alpha=\dim(U_\alpha/G)$.
  \end{thm}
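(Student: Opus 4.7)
The plan is to reformulate the right-hand side using Poincar\'e duality and reduce to a convergence statement for constant sheaves, which is then handled by smooth descent and cohomological purity. Each projection $\pi_\alpha : X \fibprod^G U_\alpha \to [X/G]$ is smooth of relative dimension $\dim U_\alpha = d_\alpha + g$, being the base change of the smooth morphism $[U_\alpha/G] \to BG$ (with fibre $U_\alpha$). Poincar\'e duality for $\pi_\alpha$ gives $\omega_{X \fibprod^G U_\alpha} \simeq \pi_\alpha^*\omega_{[X/G]}\vb{\dim U_\alpha}$, and pushing forward to $[X/G]$ yields
\[
\CBM(X \fibprod^G U_\alpha)\vb{-d_\alpha} \simeq R\Gamma([X/G]; \pi_{\alpha,*}\pi_\alpha^*\omega_{[X/G]})\vb{g},
\]
using $\dim U_\alpha - d_\alpha = g$. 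Combined with the identification $\Chom^{\BM,G}(X) \simeq \CBM([X/G])\vb{g} \simeq R\Gamma([X/G];\omega_{[X/G]})\vb{g}$ from the definition, the theorem thus reduces to proving that the canonical unit-induced map $\omega_{[X/G]} \to \lim_\alpha \pi_{\alpha,*}\pi_\alpha^*\omega_{[X/G]}$ is an isomorphism in $\D([X/G])$.

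Next, by \thmref{thm:vesania} and its extension to algebraic stacks, $\D^*$ satisfies smooth descent, so the preceding isomorphism can be checked after pullback along any smooth morphism $t: T \to [X/G]$ from a scheme. Smooth base change (applicable since $\pi_\alpha$ is smooth) gives $t^*\pi_{\alpha,*}\pi_\alpha^*\omega_{[X/G]} \simeq (\pi_{\alpha,T})_*\pi_{\alpha,T}^*\omega_T$, where $\pi_{\alpha,T}$ is the base change of $\pi_\alpha$ along $t$. Since the composite $T \to [X/G] \to BG$ classifies a $G$-torsor which trivializes \'etale-locally on $T$, the map $\pi_{\alpha,T}$ is an \'etale-locally trivial $U_\alpha$-bundle; passing to an \'etale refinement on which the torsor is trivial and invoking K\"unneth (the projection formula for the smooth projection $T \times U_\alpha \to T$), one reduces to the topological statement that $\lim_\alpha R\Gamma(U_\alpha; \Lambda) \simeq \Lambda$.

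For this last claim, apply the localization triangle for the decomposition $U_\alpha = V_\alpha \setminus W_\alpha$:
\[
R\Gamma_{W_\alpha}(V_\alpha; \Lambda) \to R\Gamma(V_\alpha; \Lambda) \to R\Gamma(U_\alpha; \Lambda).
\]
The middle term is $\Lambda$ by contractibility of the vector space $V_\alpha$, while absolute cohomological purity applied to the closed immersion $W_\alpha \hookrightarrow V_\alpha$ into the regular ambient scheme $V_\alpha$ of codimension $\ge c_\alpha$ forces $R\Gamma_{W_\alpha}(V_\alpha;\Lambda)$ to be concentrated in cohomological degrees $\ge 2c_\alpha$. Since $c_\alpha \to \infty$ by hypothesis, the map $\Lambda \to R\Gamma(U_\alpha; \Lambda)$ is an isomorphism in an arbitrarily wide range of degrees, so the limit is $\Lambda$ as required. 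The main technical obstacle is this purity step in the generality stated: $W_\alpha$ itself may be singular, and one must therefore invoke \emph{absolute} purity (Gabber's theorem in the \'etale setting; a standard consequence of Verdier duality in the topological setting), which only requires regularity of the ambient $V_\alpha$. A secondary concern is justifying the commutation of the limit over $\alpha$ with the various functors ($R\Gamma$, $t^*$, $\otimes$) appearing in the reductions; this is enabled by these same connectivity bounds, which let one compute each cohomological degree of the limit from finitely many terms of the system.
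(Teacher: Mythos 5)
The paper does not supply a proof of this theorem; it is stated with a pointer to \cite{Equilisse}, so there is no in-text argument for me to compare yours against. Judged on its own terms, your strategy is the natural one and I believe it is essentially the one carried out in that reference: use the smoothness of $\pi_\alpha : X\fibprod^G U_\alpha \to [X/G]$ (relative dimension $\dim U_\alpha$) together with Poincar\'e duality to rewrite $\CBM(X\fibprod^G U_\alpha)\vb{-d_\alpha} \simeq R\Gamma([X/G];\pi_{\alpha,*}\pi_\alpha^*\omega_{[X/G]})\vb{g}$, and then reduce the theorem to showing that the unit map $\omega_{[X/G]} \to \lim_\alpha\pi_{\alpha,*}\pi_\alpha^*\omega_{[X/G]}$ is invertible; the engine is a codimension/degree bound coming from semi-purity for $W_\alpha \hookrightarrow V_\alpha$ (and your appeal to Gabber's absolute purity is fine: semi-purity for a possibly singular $Z$ in a regular ambient $X$ follows by the usual stratification/noetherian-induction argument). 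Your twist bookkeeping ($\dim U_\alpha - d_\alpha = g$) is also correct.

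There are two places where the argument, as written, needs to be tightened, and they are not merely cosmetic. First, the logical order: you want to check the limit isomorphism by pulling back along a smooth atlas $t : T \to [X/G]$, but $t^*$ is a left adjoint and does not commute with $\lim_\alpha$ in general. You acknowledge that the connectivity bounds rescue this, but the clean way to organize it is to \emph{first} prove, for each fixed $\alpha$, that the cofibre $C_\alpha$ of $\omega_{[X/G]} \to \pi_{\alpha,*}\pi_\alpha^*\omega_{[X/G]}$ lives in cohomological degrees $\ge N_\alpha$ with $N_\alpha \to \infty$ — this is a statement about a single object and \emph{can} be checked smooth-locally, since $t^*$ for $t$ smooth is $t$-exact up to a known shift — and only \emph{then} conclude $\lim_\alpha C_\alpha \simeq 0$ (using, e.g., the Milnor sequence, with no further pullbacks needed). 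As phrased, your argument pulls the limit back before knowing the bounds that justify doing so.

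Second, the ``K\"unneth/projection formula'' step is not a direct consequence of the six operations as stated in \thmref{thm:cowherd}: the projection formula there concerns $f_!$, whereas you need to control $(\pr_1)_*$ for the non-proper smooth projection $T\times U_\alpha \to T$. What actually makes the local computation work is to factor through the closed ambient $T \times V_\alpha$: use $\bA^1$-homotopy invariance to identify $q_*q^*\omega_T \simeq \omega_T$ for $q : T\times V_\alpha \to T$, and then the localization triangle for $T\times W_\alpha \hookrightarrow T\times V_\alpha \hookleftarrow T\times U_\alpha$ identifies the cofibre of $\omega_T \to (\pr_1)_*\pr_1^*\omega_T$ with (a twist of) $(qi)_*\omega_{T\times W_\alpha}[1]$, whose connectivity bound then follows directly from the dimension/codimension hypothesis on $W_\alpha$. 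This route never needs a genuine K\"unneth isomorphism $(\pr_1)_*\Lambda_{T\times U_\alpha}\simeq R\Gamma(U_\alpha;\Lambda)\otimes\Lambda_T$, which is fortunate since that isomorphism is itself a nontrivial statement for non-proper fibres.
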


  \begin{thm}
    There is a cartesian square of \inftyCats
    \begin{equation*}
      \begin{tikzcd}
        \D([X/G]) \ar[hookrightarrow]{r}\ar{d}
        & \D(X \fibprod^G U_\infty) \ar{d}
        \\
        \D(X) \ar[hookrightarrow]{r}
        & \D(X \times U_\infty)
      \end{tikzcd}
    \end{equation*}
    where every arrow is $*$-pullback, and the horizontal arrows are fully faithful.
  \end{thm}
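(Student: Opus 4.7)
The strategy is to reduce the statement, via smooth descent along the two $G$-torsors, to the single cohomological input $\R\Gamma(U_\infty; \Lambda) \simeq \Lambda$, which in turn follows from the codimension hypothesis $\codim_{V_\alpha}(W_\alpha) \to \infty$. The proof splits into two steps: first I would establish that both horizontal pullback functors are fully faithful; then I would deduce the cartesian property via a base-change argument.

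For fully faithfulness of the lower horizontal $r^* : \D(X) \to \D(X \times U_\infty)$, with $r$ the projection, it suffices to show $r_* r^* \simeq \id$; by the projection formula and base change this reduces to $\R\Gamma(U_\infty; \Lambda) \simeq \Lambda$. For each $\alpha$ the localization triangle
\[
  \R\Gamma_{W_\alpha}(V_\alpha; \Lambda) \to \R\Gamma(V_\alpha; \Lambda) \to \R\Gamma(U_\alpha; \Lambda),
\]
combined with the fact that $V_\alpha$ is a vector space (so the middle term is $\Lambda$) and the semi-purity bound that the first term is concentrated in cohomological degrees $\ge 2c_\alpha$ (with $c_\alpha := \codim_{V_\alpha}(W_\alpha)$), shows $\R\Gamma(U_\alpha; \Lambda) \simeq \Lambda$ in degrees $< 2c_\alpha - 1$. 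Taking the limit over $\alpha$ then yields the claim. For the upper horizontal $\bar{r}^* : \D([X/G]) \to \D(X \fibprod^G U_\infty)$, I would use that the square
\[
  \begin{tikzcd}
    X \times U_\infty \ar[r, "q"]\ar[d, "r"'] & X \fibprod^G U_\infty \ar[d, "\bar{r}"] \\
    X \ar[r, "p"'] & {[X/G]}
  \end{tikzcd}
\]
is cartesian with $p$ and $q$ being $G$-torsors; smooth base change gives $p^* \bar{r}_* \bar{r}^* \simeq r_* r^* p^* \simeq p^*$, and descent along $p$ then promotes this to $\bar{r}_* \bar{r}^* \simeq \id$.

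For the cartesian property, let $\Phi : \D([X/G]) \to \sP$ denote the canonical functor into the pullback $\sP := \D(X) \times_{\D(X \times U_\infty)} \D(X \fibprod^G U_\infty)$. Fully faithfulness of $\Phi$ is obtained by unpacking $\Maps_{\sP}(\Phi F_1, \Phi F_2)$ via adjunctions; the resulting fibre product collapses to $\Maps(F_1, F_2)$ because $\bar{r}_* \bar{r}^* F_2 \simeq F_2$ (Step~1) and $p_* r_* r^* p^* F_2 \simeq p_* p^* F_2$. For essential surjectivity, given a triple $(A, B, \phi : r^* A \simeq q^* B)$ in $\sP$, set $F := \bar{r}_* B$; conservativity of $q^*$ together with the base-change identity $q^* \bar{r}^* \bar{r}_* B \simeq r^* r_* q^* B$ and Step~1 applied to $q^* B \simeq r^* A$ yields $\bar{r}^* F \simeq B$, while $p^* F \simeq r_* q^* B \simeq r_* r^* A \simeq A$.

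The main obstacle, as I see it, is the careful handling of the ind-structure on $U_\infty$: since $U_\infty$ is not itself an algebraic space, the categories $\D(X \times U_\infty)$ and $\D(X \fibprod^G U_\infty)$ must be defined as limits of $\D(X \times U_\alpha)$ and $\D(X \fibprod^G U_\alpha)$, and the projection formula, base change, and smooth descent must all be shown compatible with these limits. The key technical ingredient is precisely the connectivity bound $c_\alpha \to \infty$, which guarantees that $\R\Gamma(U_\infty; \Lambda) = \lim_\alpha \R\Gamma(U_\alpha; \Lambda)$ has the expected value in each cohomological degree.
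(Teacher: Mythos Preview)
The paper does not supply its own proof of this theorem; it is stated and attributed to \cite{Equilisse} (Khan--Ravi, \emph{Equivariant generalized cohomology via stacks}), so there is no in-paper argument to compare against.

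That said, your outline is sound and is essentially the standard route. The reduction of full faithfulness of $r^*$ to $\R\Gamma(U_\infty;\Lambda)\simeq\Lambda$ via the projection formula is correct, and the connectivity estimate $\R\Gamma_{W_\alpha}(V_\alpha;\Lambda)\in\D(\Lambda)^{\ge 2c_\alpha}$ (semi-purity on a smooth ambient space) is the right input. The transfer to $\bar r^*$ via smooth base change along the $G$-torsor $p$ is also correct. In the essential surjectivity step, one small point worth making explicit: to conclude $\bar r^*\bar r_* B\simeq B$ from conservativity of $q^*$, you are really checking that the \emph{counit} $\bar r^*\bar r_* B\to B$ becomes invertible after $q^*$; after base change this becomes the counit $r^*r_*(q^*B)\to q^*B$, which is invertible because $q^*B\simeq r^*A$ lies in the essential image of $r^*$ (triangle identity). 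You should also verify that the recovered object $F=\bar r_*B$ comes with the correct identification $\Phi(F)\simeq(A,B,\phi)$ in $\sP$, not merely that $p^*F\simeq A$ and $\bar r^*F\simeq B$ separately; this is a routine coherence check but should be mentioned.

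Your caveat about the ind-structure is well placed: all four categories over $U_\infty$ are limits over $\alpha$, and the identities you use (projection formula, smooth base change, descent) must be checked to pass to the limit. Since $*$-pullback along the transition maps $U_\alpha\hookrightarrow U_\beta$ is compatible with all of these, and the relevant right adjoints commute with the limit by construction, this goes through, but a sentence acknowledging it would strengthen the write-up.
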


  Informally speaking, this means that a sheaf on $[X/G]$ amounts to the data of a sheaf $\sF$ on $X$, a sheaf $\sG$ on $X \fibprod^G U_\infty$, and an isomorphism $\sF|_{X\times U_\infty} \simeq \sG|_{X\times U_\infty}$.

\ssec{Concentration and localization}

  Let $X\in\AlgStk_k$ and let $i : Z \hook X$ be a closed immersion.
  Let $\Sigma$ be a set of line bundles on $X$.

  \begin{quest}[Concentration]
    When is the induced map
    \begin{equation}\label{eq:fenite}
      i_* : \CBM(Z; \Lambda)[c_1(\Sigma)^{-1}]
      \to \CBM(X; \Lambda)[c_1(\Sigma)^{-1}]
    \end{equation}
    invertible?
  \end{quest}

  The following was proven in \cite{virloc}:

  \begin{thm}
    Assume that $X$ has affine stabilizers.
    Suppose that for every point $x \in X\setminus Z$ there exists a line bundle $L \in \Sigma$ whose restriction along $B\Aut(x) \hook X$ is trivial.
    Then concentration holds, i.e., \eqref{eq:fenite} is invertible.
  \end{thm}

  \begin{cor}\label{cor:fugitively}
    Let $T$ be a split algebraic torus acting on a Deligne--Mumford stack $X \in \AlgStk_k$.
    Let $Z$ be the closed substack of fixed points\footnote{%
      Since $X$ is a stack, the appropriate definition of $Z$ here is subtle.
      Briefly, $Z$ is the homotopy fixed point stack with respect to an appropriate reparametrization of the torus action.
      See \cite[Cor.~3.7]{virloc}.
    }.
    Then concentration holds with $\Sigma$ the set of all nontrivial characters of $BT$ (pulled back to $[X/T]$): in particular, we have a canonical isomorphism
    \begin{equation*}
      i_* : \Chom^{\BM,T}(Z;\Lambda)[c_1(\Sigma^{-1})]
      \to \Chom^{\BM,T}(X;\Lambda)[c_1(\Sigma)^{-1}].
    \end{equation*}
  \end{cor}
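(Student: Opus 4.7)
The plan is to derive the corollary from the concentration theorem above, applied to the quotient stack $\mathcal{X} := [X/T]$ and its closed substack $\mathcal{Z} := [Z/T]$. Unwinding the definition $\Chom^{\BM,T}(-;\Lambda) \simeq \CBM([-/T];\Lambda)\vb{\dim T}$, the map $i_*$ in the statement coincides, up to the same Tate twist on both sides (which commutes with inverting first Chern classes), with the ordinary pushforward $\CBM(\mathcal{Z};\Lambda) \to \CBM(\mathcal{X};\Lambda)$ along the closed immersion $\mathcal{Z}\hookrightarrow\mathcal{X}$. The stack $\mathcal{X}$ has affine stabilizers: at a geometric point $x$ over $\bar x\in X$, the automorphism group fits into
\[
1 \to \Aut_X(\bar x) \to \Aut_{\mathcal{X}}(x) \to T_{\bar x} \to 1,
\]
realizing it as an extension of a closed subgroup $T_{\bar x}\subseteq T$ by the finite group $\Aut_X(\bar x)$ (finite because $X$ is DM).

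It then suffices to check the concentration hypothesis: for every geometric point $x\in\mathcal{X}\setminus\mathcal{Z}$, some nontrivial character $\chi\in\Sigma$ pulls back to a trivial line bundle on $B\Aut_{\mathcal{X}}(x)$. Granting that the reparametrized fixed substack $\mathcal{Z}$ is set up, as in the footnote, so that ``$x\notin\mathcal{Z}$'' translates to $T_{\bar x}\subsetneq T$, the contravariant equivalence $D\mapsto X^*(D)$ between diagonalizable groups and finitely generated abelian groups applied to $1\to T_{\bar x}\to T\to T/T_{\bar x}\to 1$ yields
\[
0 \to X^*(T/T_{\bar x}) \to X^*(T) \to X^*(T_{\bar x}) \to 0,
\]
in which $X^*(T/T_{\bar x})$ has positive rank (since $\dim T > \dim T_{\bar x}^0$). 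Any nonzero $\chi\in X^*(T/T_{\bar x})\subseteq X^*(T)$ is a nontrivial character of $T$ whose restriction to $T_{\bar x}$ vanishes; since $B\Aut_{\mathcal{X}}(x)\to\mathcal{X}\to BT$ is induced on groups by the surjection $\Aut_{\mathcal{X}}(x)\twoheadrightarrow T_{\bar x}$, the line bundle $L_\chi$ restricts trivially to $B\Aut_{\mathcal{X}}(x)$, as required. Invoking the concentration theorem concludes the argument.

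The main delicate point is the correct definition of $\mathcal{Z}$: the naive set-theoretic fixed substack can fail to capture every ``fixed'' orbit on a DM stack, since disconnected finite subgroups of $T$ may appear as stabilizers at points one wants to regard as fixed. The remedy, as in \cite[Cor.~3.7]{virloc}, is to precompose with $T\xrightarrow{(\cdot)^n}T$ for $n\gg 0$ and take the homotopy fixed stack of the reparametrized action, after which $x\notin\mathcal{Z}$ genuinely forces $T_{\bar x}\subsetneq T$. Once this technical input is available, the rest is the purely character-theoretic computation above.
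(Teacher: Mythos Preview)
Your proposal is correct and matches the paper's intent: the corollary is stated without its own proof, as an immediate consequence of the preceding concentration theorem applied to $[X/T]$ and $[Z/T]$, and your argument supplies exactly that deduction, including the necessary verification that $[X/T]$ has affine stabilizers and that the nontrivial-character hypothesis is met on the complement. Your acknowledgment of the reparametrization subtlety (deferring to \cite[Cor.~3.7]{virloc}) is also in line with the paper's own footnote.
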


  The localization triangle (\propref{prop:mush}) gives a very useful way to prove results of 
  this form, since it reduces the problem to $\Sigma$-acyclicity of Borel--Moore chains on the complement $X\setminus Z$.

  From this one can derive:

  \begin{cor}[Virtual localization]
    Let $T$ be a split algebraic torus acting on a Deligne--Mumford stack $X \in \AlgStk_k$.
    Assume $X$ is quasi-smooth and let $Z$ be the fixed locus as in \corref{cor:fugitively}.
    Then we have a canonical homotopy
    \begin{equation*}
      [X] \simeq i_*([Z] \cap e(N_{Z/X})^{-1})
    \end{equation*}
    in $\Chom^{\BM,T}(X)[c_1(\Sigma^{-1})]$.
  \end{cor}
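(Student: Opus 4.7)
The plan is to reduce, via the concentration equivalence of \corref{cor:fugitively}, to a self-intersection computation on the virtual normal bundle $N_{Z/X}$.

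First I would verify the derived structure on the fixed locus: since $X$ is quasi-smooth, the restriction $i^*\bL_X$ should admit a canonical weight decomposition $(i^*\bL_X)^{\mrm{fix}} \oplus (i^*\bL_X)^{\mrm{mov}}$ for the $T$-action, with the fixed part identified with $\bL_Z$. Consequently $\bL_{Z/X}[-1] \simeq (i^*\bL_X)^{\mrm{mov}}$ is perfect, $i : Z \hookrightarrow X$ is a quasi-smooth closed immersion, and $T$ acts on $N_{Z/X}$ with only nontrivial weights, so that $e(N_{Z/X})$ becomes invertible after inverting $c_1(\Sigma)$.

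By \corref{cor:fugitively} the pushforward $i_*$ is an equivalence on $\Chom^{\BM,T}(-;\Lambda)[c_1(\Sigma)^{-1}]$. It therefore suffices to prove the identity obtained by applying the quasi-smooth virtual pullback $i^!$, namely
\[
  i^!([X]) \simeq i^!\bigl(i_*([Z] \cap e(N_{Z/X})^{-1})\bigr).
\]
On the left-hand side, functoriality of virtual pullbacks applied to the composite $Z \xrightarrow{i} X \xrightarrow{a_X} \Spec(k)$ yields $i^!([X]) \simeq a_Z^!(1) = [Z]$; this functoriality is established by iterating the normal deformation construction to produce a double deformation space $D_{Z/X/k}$ and identifying the composite specialization map with $\mrm{sp}_{Z/k}$. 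On the right-hand side, I would invoke the self-intersection formula
\[
  i^! \circ i_* \simeq (-) \cap e(N_{Z/X}),
\]
which yields $i^!\bigl(i_*([Z] \cap e(N_{Z/X})^{-1})\bigr) \simeq [Z] \cap e(N_{Z/X})^{-1} \cap e(N_{Z/X}) \simeq [Z]$, matching the left-hand side.

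For the self-intersection formula I would use the normal deformation $D_{Z/X} \to \A^1$ to deform $i$ to the zero-section inclusion $s : Z \hookrightarrow N_{Z/X}$ of the vector bundle stack, reducing the problem to the parallel identity $s^! \circ s_* \simeq (-) \cap e(N_{Z/X})$. For a vector bundle stack this is standard after smooth descent from an atlas, where $s^!$ is the inverse of the Thom/generalized homotopy invariance isomorphism $\CBM(Z;\Lambda) \simeq \CBM(N_{Z/X};\Lambda)\vb{-r}$ and $s_*(1)$ is the Thom class, whose pullback along $s$ is by definition the Euler class. The hard part will be the first step: rigorously identifying the derived structure on the torus fixed locus $Z$ and proving the weight decomposition $i^*\bL_X \simeq \bL_Z \oplus (i^*\bL_X)^{\mrm{mov}}$ for quasi-smooth $X$, since the degree-$1$ ``obstruction part'' of $\bL_X$ contributes to $N_{Z/X}$ and its $T$-weights must all be verified nontrivial so that $e(N_{Z/X})$ is indeed invertible in the localization appearing in \corref{cor:fugitively}.
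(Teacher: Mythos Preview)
Your approach is correct and matches the standard derivation the paper has in mind: the paper itself gives no detailed proof, only indicating that the formula ``can be derived'' from the concentration isomorphism of \corref{cor:fugitively} and citing \cite{virloc} for details, and your sketch---apply $i^!$ to both sides, use functoriality $i^!a_X^! \simeq a_Z^!$ for the left-hand side and the self-intersection formula $i^!i_* \simeq (-)\cap e(N_{Z/X})$ for the right---is exactly how that reference carries it out. You have also correctly identified the genuinely delicate point, namely the derived structure on the fixed locus and the splitting $i^*\bL_X \simeq \bL_Z \oplus (i^*\bL_X)^{\mrm{mov}}$, which is precisely what \cite{virloc} handles via their reparametrized homotopy fixed point construction alluded to in the footnote to \corref{cor:fugitively}.
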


  When $X$ is smooth this is the Atiyah--Bott localization formula.
  In the quasi-smooth case it is the virtual localization formula of Graber--Pandharipande \cite{GraberPandharipande}.
  Unlike \emph{op. cit.} we do not need to assume $X$ admits a global embedding into an ambient smooth stack, or that the cotangent complex $\bL_{Z/X}$ admits a global resolution by vector bundles.
  Again, these improvements are possible because we work at the level of Borel--Moore chains as objects of the derived \inftyCat $\D(\Lambda)$.

%%%%%%%%%%%%%%%%%%%%%%%%%%%%%%%%%%%%%%%%%%%%%%%%%%%%%%%%%%%%%%%%%%%%%%%%%%%

%!TEX root = stacksncts.tex

\bibliographystyle{alpha}

\begin{thebibliography}{EHKSY}

  \bibitem[AKLPR]{virloc} D.~Aranha, A.\,A.~Khan, A.~Latyntsev, H.~Park, C.~Ravi, \textit{Localization theorems for algebraic stacks}.  \arXiv{2207.01652} (2022).

  \bibitem[Aok]{Aoki} K.~Aoki, \textit{Tensor triangular geometry of filtered objects and sheaves}.  \arXiv{2001.00319} (2020).

  \bibitem[Avr]{Avramov} L.~Avramov, \textit{Locally complete intersection homomorphisms and a conjecture of Quillen on the vanishing of cotangent homology}.  Ann. Math.~{\bf{150}} (1999), no.~2, 455–487.

  \bibitem[BF]{BehrendFantechi} K.~Behrend, B.~Fantechi, \textit{The intrinsic normal cone}, Invent.~Math.~{\bf{128}} (1997), no. 1, 45--88.

  \bibitem[CS]{CesnaviciusScholze} K.~\c{C}esnavicius, P.~Scholze, \textit{Purity for flat cohomology}. \arXiv{1912.10932} (2019).

  \bibitem[Ci]{CisinskiHCHA} D.-C.~Cisinski, \textit{Higher categories and homotopical algebra}.  Cambridge Studies in Advances Mathematics~{\bf{180}} (2019).
  
  \bibitem[Con]{ConradDrinfeldLevel} B.~Conrad, \textit{Arithmetic moduli of generalized elliptic curves}.  J. Inst. Math. Jussieu~{\bf{6}} (2007), no.~2, 209--278.

  \bibitem[GP]{GraberPandharipande} T.~Graber, R.~Pandharipande, \textit{Localization of virtual classes}. Invent.~Math.~{\bf{135}} (1999), no.~2, 487--518.
  
  \bibitem[GR]{GaitsgoryRozenblyum} D.~Gaitsgory, N.~Rozenblyum, \textit{A study in derived algebraic geometry. Vols. I--II}. Math.~Surv.~Mono.~{\bf{221}} (2017).

  \bibitem[GZ]{GabrielZisman} P.~Gabriel, M.~Zisman, \textit{Calculus of fractions and homotopy theory}.
  Ergebnisse der Mathematik und ihrer Grenzgebiete, Band~{\bf{35}} (1967). Springer--Verlag.

  \bibitem[Gro]{Gross} Ph.~Gross, \textit{Tensor generators on schemes and stacks}.  Algebr.~Geom.~{\bf{4}}, no. 4, 501--522 (2017).

  \bibitem[HKR]{HekkingKhanRydh} J.~Hekking, A.\,A.~Khan, D.~Rydh, \textit{Deformation to the normal cone and blow-ups via derived Weil restrictions}.  In preparation.
  
  \bibitem[HS]{HirschowitzSimpson} A.~Hirschowitz, C.~Simpson, \textit{Descente pour les n-champs}.  \mathAG{9807049} (2001).

  \bibitem[KRy]{KhanRydh} A.\,A.~Khan, D.~Rydh, \textit{Virtual Cartier divisors and blow-ups}. \arXiv{1802.05702} (2018).

  \bibitem[KRa]{Equilisse} A.\,A.~Khan, C.~Ravi, \textit{Equivariant generalized cohomology via stacks}.  \arXiv{2209.07801} (2022).

  \bibitem[Kha1]{virtual} A.\,A.~Khan, \textit{Virtual fundamental classes for derived stacks I}. \arXiv{1909.01332} (2019).

  \bibitem[Kha2]{KhanKstack} A.\,A.~Khan, \textit{K-theory and G-theory of derived algebraic stacks}.  Jpn. J. Math.~{\bf{17}} (2022), 1--61.

  \bibitem[Kha3]{KhanDualet} A.\,A.~Khan, \textit{Absolute Poincaré duality in étale cohomology}.  Forum Math. Sigma~{\bf{10}} (2022), no.~10, e99.  \arXiv{2111.02875}.

  \bibitem[Kon]{Kontsevich} M.~Kontsevich, \textit{Enumeration of rational curves via torus actions}, in: The moduli space of curves (Texel Island, 1994), 335--368, Progr. Math.~{\bf{129}} (1995).

  \bibitem[LMB]{LaumonMoretBailly} G.~Laumon, L.~Moret-Bailly, \textit{Champs algébriques}.  Ergebnisse der Mathematik und ihrer Grenzgebiete~{\bf{39}} (2000).

  \bibitem[LZ]{LiuZheng} Y.~Liu, W.~Zheng, \textit{Enhanced six operations and base change theorem for higher Artin stacks}. \arXiv{1211.5948} (2012).

  \bibitem[Lur1]{HTT} J.~Lurie, \textit{Higher Topos Theory}.  Ann. Math. Stud.~{\bf{170}} (2009).
  
  \bibitem[Lur2]{HA} J.~Lurie, \textit{Higher Algebra}, version of 2017-09-18.  Available at: \url{https://www.math.ias.edu/~lurie/papers/HA.pdf}.

  \bibitem[Lur3]{SAG} J.~Lurie, \textit{Spectral algebraic geometry}, version of 2018-02-03.  Available at: \url{https://www.math.ias.edu/~lurie/papers/HA.pdf}.

  \bibitem[Lur4]{Kerodon} J.~Lurie, \textit{Kerodon}, \url{https://kerodon.net}.

  \bibitem[Ryd1]{RydhApprox} D.~Rydh, \textit{Noetherian approximation of algebraic spaces and stacks}.  J.~Algebra~{\bf{422}} (2015), 105--147.

  \bibitem[Ryd2]{RydhCompleteness} D.~Rydh, \textit{Approximation of sheaves on algebraic stacks}.  Int.~Math.~Res.~Not.~{\bf{2016}} (2016), no. 3, 717--737.

  \bibitem[SP]{Stacks} J.~de Jong, \textit{The Stacks Project}, \url{https://stacks.math.columbia.edu}.

  \bibitem[Se]{SerreTor} J.-P. Serre, \textit{Algèbre locale. Multiplicités}.  Lecture Notes in Mathematics~{\bf{11}} (1989).

  \bibitem[TV]{ToenVaquie} B.~To\"en, M.~Vaqui\'e, \textit{Moduli of objects in dg-categories}.  Ann.~Sci.~Éc.~Norm.~Supér.~{\bf{40}} (2007), no.~3, 387--444.

  \bibitem[TV2]{HAG2} B.~Toën, G.~Vezzosi, \textit{Homotopical algebraic geometry. II: Geometric stacks and applications}.  Mem. Am. Math. Soc.~{\bf{902}} (2008).

  \bibitem[Tho]{ThomasonResolution} R.\,W.~Thomason, \textit{Equivariant resolution, linearization, and Hilbert’s fourteenth problem over arbitrary base schemes}.  Adv.~Math.~{\bf{65}} (1987), 16--34.

  % \bibitem[ARV]{AdamekRosickyVitale} AdamekRosickyVitale, \textit{Algebraic theories}.

  \bibitem[Ver]{Verdier} J.-L. Verdier, \textit{Le théorème de Riemann-Roch pour les intersections complètes}, in: Séminaire de géométrie analytique, Paris, France, 1974--75.  Astérisque~{\bf{36}}-{\bf{37}} (1976), 189--228.

\end{thebibliography}

Institute of Mathematics,
Academia Sinica,
Taipei 10617,
Taiwan

\end{document}